\newtheorem{thm}{Theorem}[section]
\newtheorem{cor}[thm]{Corollary}
\newtheorem{lem}[thm]{Lemma}
\newtheorem{prop}[thm]{Proposition}
\newtheorem{thmintro}{Theorem}
\theoremstyle{definition}
\newtheorem{defn}[thm]{Definition}
\newtheorem{construct}[thm]{Construction}
\newcommand{\N}{\mathbb N}
\newcommand{\Z}{\mathbb Z}
\newcommand{\Q}{\mathbb Q}
\newcommand{\R}{\mathbb R}
\newcommand{\C}{\mathbb C}
\newcommand{\mf}{\mathfrak}
\newcommand{\mc}{\mathcal}
\newcommand{\mb}{\mathbf}
\newcommand{\mh}{\mathbb}
\def\Irr{{\rm Irr}}
\newcommand{\mr}{\mathrm}
\newcommand{\enuma}[1]{\begin{enumerate}[\textup{(}a\textup{)}] {#1} \end{enumerate}}
\newcommand{\cusp}{\mathrm{cusp}}
\newcommand{\nr}{\mathrm{nr}}
\newcommand{\unr}{\mathrm{unr}}
\newcommand{\Rep}{\mathrm{Rep}}
\newcommand{\End}{\mathrm{End}}
\newcommand{\Mod}{\mathrm{Mod}}
\newcommand{\Hom}{\mathrm{Hom}}
\newcommand{\isom}{\xrightarrow{\;\sim\;}}
\begin{document}

\title{Hochschild homology of reductive $p$-adic groups}
\date{\today}
\subjclass[2010]{16E40, 20G25, 22E50, 20C08}
\maketitle
\begin{center}
{\Large Maarten Solleveld} \\[1mm]
IMAPP, Radboud Universiteit Nijmegen\\
Heyendaalseweg 135, 6525AJ Nijmegen, the Netherlands \\
email: m.solleveld@science.ru.nl 
\end{center}
\vspace{2mm}

\begin{abstract}
Consider a reductive $p$-adic group $G$, its (complex-valued) Hecke algebra $\mc H (G)$ and the 
Harish-Chandra--Schwartz algebra $\mc S (G)$. We compute the Hochschild homology groups of 
$\mc H (G)$ and of $\mc S (G)$, and we describe the outcomes in several ways.

Our main tools are algebraic families of smooth $G$-representations. With those we construct
maps from $HH_n (\mc H (G))$ and $HH_n (\mc S (G))$ to modules of differential $n$-forms on
affine varieties. For $n = 0$ this provides a description of the cocentres of these algebras 
in terms of nice linear functions on the Grothendieck group of finite length (tempered) 
$G$-representations.

It is known from \cite{SolEnd} that every Bernstein ideal $\mc H (G)^{\mf s}$ of $\mc H (G)$
is closely related to a crossed product algebra of the form $\mc O (T) \rtimes W$. Here
$\mc O (T)$ denotes the regular functions on the variety $T$ of unramified characters of a
Levi subgroup $L$ of $G$, and $W$ is a finite group acting on $T$. We make this relation even
stronger by establishing an isomorphism between $HH_* (\mc H (G)^{\mf s})$ and 
$HH_* (\mc O (T) \rtimes W)$, although we have to say that in some cases it is necessary to 
twist $\C [W]$ by a 2-cocycle.

Similarly we prove that the Hochschild homology of the two-sided ideal $\mc S (G)^{\mf s}$ of
$\mc S (G)$ is isomorphic to $HH_* (C^\infty (T_u) \rtimes W)$, where $T_u$ denotes the Lie
group of unitary unramified characters of $L$. In these pictures of $HH_* (\mc H (G))$ and 
$HH_* (\mc S (G))$ we also show how the Bernstein centre of $\mc H (G)$ acts.

Finally, we derive similar expressions for the (periodic) cyclic homology groups of $\mc H (G)$ 
and of $\mc S (G)$ and we relate that to topological K-theory.
\end{abstract}
\vspace{2mm}

\tableofcontents

\section*{Introduction}

The Hochschild homology of an algebra $A$ (by default over $\C$) is a fairly subtle invariant.
For finitely generated commutative algebras it gives more or less the differential forms on 
the underlying affine variety -- exactly that when the algebra is smooth, and otherwise 
$HH_* (A)$ detects some singularities of the variety. For general algebras Hochschild homology
is related to noncommutative versions of differential forms \cite[Chapter 1]{Lod}.

The vector space $HH_0 (A)$ is particularly interesting, because it equals the cocentre $A / [A,A]$ 
and via the trace pairing contains information about the set of irreducible representations of $A$. 
The higher Hochschild homology groups $HH_n (A)$ also have their uses: they say something about 
higher extensions of $A$-modules (via Hochschild cohomology) and they interact with further 
invariants of algebras like (periodic) cyclic homology. When $A$ is the group algebra of a 
discrete group $\Gamma$, $HH_* (A)$ computes the group cohomology of the groups 
$Z_\Gamma (\gamma)$ with $\gamma \in \Gamma$ \cite{Bur}.\\

\textbf{Categories of representations of reductive $p$-adic groups} \\
Let $G$ be a reductive group over a non-archimedean local field, connected as algebraic group.
We aim to determine the Hochschild homology of $G$, by which we mean the Hochschild homology
of a suitable group algebra of $G$. The most natural choice is the Hecke algebra $\mc H (G)$,
because the category Mod$(\mc H (G))$ of nondegenerate left $\mc H (G)$-modules is naturally 
equivalent to the category $\Rep (G)$ of complex smooth $G$-representations. By definition
\[
HH_n (\mc H (G)) = \mr{Tor}_n^{\mc H (G) \otimes \mc H (G)^{op}} (\mc H (G), \mc H (G)),
\]
so $HH_n (\mc H (G))$ depends only on the category of $\mc H (G)$-bimodules, which is
equivalent to the category of smooth $G \times G^{op}$-representations.

Alternatively we have the Harish-Chandra--Schwartz algebra $\mc S (G)$, whose category of
nondegenerate left modules equals the category $\Rep^t (G)$ of tempered $G$-representations,
according to the conventions from the appendix of \cite{SSZ}.
We consider $\mc S (G)$ as a bornological algebra and use the complete bornological tensor 
product $\hat \otimes$ \cite{Mey2}. In that setting
\[
HH_n (\mc S (G)) = \mr{Tor}_n^{\mc S (G) \hat{\otimes} \mc S (G)^{op}} (\mc S (G), \mc S (G)) ,
\]
which depends only on the category of bornological $\mc S (G)$-bimodules. 

On the other hand, the full group $C^*$-algebra $C^* (G)$ or its reduced version $C_r^* (G)$ 
would not be suitable here, because
\[
HH_n (C^* (G)) = HH_n (C_r^* (G)) = 0 \qquad \text{for } n > 0.
\]
We approach our main goal with representation theory. We start with the Bernstein decomposition
\[
\Rep (G) = \prod\nolimits_{\mf s \in \mf B (G)} \Rep (G)^{\mf s},
\]
which induces decompositions in two-sided ideals 
\begin{equation}\label{eq:3}
\mc H (G) =  \bigoplus\nolimits_{\mf s \in \mf B (G)} \mc H (G)^{\mf s} \quad \text{and} \quad
\mc S (G) =  \bigoplus\nolimits_{\mf s \in \mf B (G)} \mc S (G)^{\mf s} .
\end{equation}
Hochschild homology decomposes accordingly, so we may focus on the algebras $\mc H (G)^{\mf s}$
and $\mc S (G)^{\mf s}$. We will make ample use of the Morita equivalence between $\mc H (G)^{\mf s}$ 
and the opposite algebra of $\End_G (\Pi_{\mf s})$, where $\Pi_{\mf s}$ is a progenerator of 
$\Rep (G)^{\mf s}$ \cite[Theorem 1.8.2.1]{Roc}. In \cite{SolEnd} we made a detailed analysis of 
$\End_G (\Pi_{\mf s})^{op}$, which links it to algebras whose Hochschild homology groups have 
already been determined. 

Let $\sigma$ be a supercuspidal representation of a Levi subgroup $L$ of $G$, representing 
$\mf s = [L,\sigma]$. Let $L^1 \subset L$ the group generated by all compact subgroups of $L$.
Then the compactly induced representation $\mr{ind}_{L^1}^L (\sigma)$ is a progenerator of
$\Rep (L)^{[L,\sigma]}$. Let $P_L$ be a parabolic subgroup of $G$ with Levi factor $L$. 
As shown first in \cite[\S III.4.1]{BeRu}, it follows from Bernstein's second adjointness 
theorem that the parabolically induced representation 
\begin{equation}\label{eq:4}
\Pi_{\mf s} = I_{P_L}^G \mr{ind}_{L^1}^L (\sigma)
\end{equation}
is a progenerator of $\Rep (G)^{\mf s}$. This progenerator was especially convenient for the 
computations in \cite{SolEnd}, and therefore we use it throughout this paper.

To $\mf s$ one can associate a finite group $W(L,\mf s)$ of transformations
of the complex torus of unramified characters $X_\nr (L)$, satisfying 
\[
Z (\Rep (G)^{\mf s}) \cong Z (\End_G (\Pi_{\mf s})^{op}) \cong \mc O (X_\nr (L))^{W(L,\mf s)} .
\]
There exists a 2-cocycle $\natural_{\mf s}$ of $W(L,\mf s)$ such that the twisted group algebra\\
$\C [W(L,\mf s), \natural_{\mf s}]$ acts ``almost" on the objects of $\Rep (G)^{\mf s}$ by 
intertwining operators. Here ``almost" means that these intertwining operators depend
rationally on $\chi \in X_\nr (L)$, and they can have poles. In this setting 
\cite[Theorem A]{SolEnd} provides an isomorphism of $\mc O (X_\nr (L))^{W(L,\mf s)}$-algebras
\begin{equation}\label{eq:1}
\C (X_\nr (L))^{W(L,\mf s)} \underset{\mc O (X_\nr (L))^{W(L,\mf s)}}{\otimes} 
\End_G (\Pi_{\mf s})^{op} \; \cong \; \C (X_\nr (L)) \rtimes \C [W(L,\mf s),\natural_{\mf s}] .
\end{equation}
This isomorphism is canonical on $\C (X_\nr (L))$ and on a Weyl group contained in
$W(L,\mf s)$. However, for the remaining elements of $W(L,\mf s)$ the images on the left hand side 
of \eqref{eq:1} are in general only canonical up to scalars.

Although $\mc H (G)^{\mf s}$ and $\mc O (X_\nr (L)) \rtimes \C [W(L,\mf s),\natural_{\mf s}]$
are usually not Morita equivalent, it has turned out that these algebras nevertheless share
many properties. By \eqref{eq:3} and the definition of temperedness for $G$-representations, 
the category $\Rep^t (G)^{\mf s}$ of tempered representations in $\Rep (G)^{\mf s} \cong 
\Mod (\mc H (G)^{\mf s})$ is $\Mod (\mc S (G)^{\mf s})$. This
subcategory is stable under tensoring with elements of $X_\unr (G)$, the group of unitary
unramified characters of $G$. Like above, from \cite{SolComp} one can expect strong similarities
between $\mc S (G)^{\mf s}$ and $C^\infty (X_\unr (L)) \rtimes \C [W(L,\mf s),\natural_{\mf s}]$.

Let $R(A)$ denote the Grothendieck group of the category of finite length $A$-representations.
We abbreviate $R(G)^{\mf s} = R (\mc H (G)^{\mf s})$ and $R^t (G)^{\mf s} = R(\mc S (G)^{\mf s})$.

\begin{thmintro}\label{thm:A}
\textup{(see Theorem \ref{thm:4.1})} \\
There exists a group isomorphism
\[
\zeta^\vee : R (G)^{\mf s} \to 
R \big( \mc O (X_\nr (L)) \rtimes \C [W(L,\mf s),\natural_{\mf s}] \big)
\]
which restricts to a bijection
\[
\zeta^\vee_t : R^t (G)^{\mf s} \to 
R \big( C^\infty (X_\unr (L)) \rtimes \C [W(L,\mf s),\natural_{\mf s}] \big) .
\]
These bijections are compatible with parabolic induction and with twists by unramified characters.
When an isomorphism \eqref{eq:1} has been fixed, $\zeta^\vee$ and $\zeta_t^\vee$ are canonical.
\end{thmintro}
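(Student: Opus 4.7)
The plan is to construct $\zeta^\vee$ by comparing the parametrizations of irreducible representations on both sides of the desired map. First I would use the Morita equivalence between $\mc H (G)^{\mf s}$ and $\End_G (\Pi_{\mf s})^{op}$ to reduce the problem to constructing a bijection between the irreducibles of $\End_G (\Pi_{\mf s})^{op}$ and those of $\mc O (X_\nr (L)) \rtimes \C [W(L,\mf s), \natural_{\mf s}]$. Both algebras are finite over the common centre $\mc O (X_\nr (L))^{W(L,\mf s)}$, so every irreducible module has a well-defined central character, namely a single $W(L,\mf s)$-orbit in $X_\nr (L)$.

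For a fixed $\chi \in X_\nr (L)$, Clifford theory parametrizes the irreducibles of $\mc O (X_\nr (L)) \rtimes \C [W(L,\mf s), \natural_{\mf s}]$ with central character $W(L,\mf s)\chi$ by the irreducibles of the twisted group algebra $\C [W(L,\mf s)_\chi, \natural_{\mf s}]$ of the stabilizer, where the cocycle is restricted in the obvious way. On the Hecke-algebra side, the structural analysis in \cite{SolEnd} that underlies \eqref{eq:1} shows that the irreducible representations of $\End_G (\Pi_{\mf s})^{op}$ with the same central character are classified by the very same data, via an $R$-group-type mechanism with matching stabilizers and cocycle. Since \eqref{eq:1} identifies the two algebras over the field $\C (X_\nr (L))^{W(L,\mf s)}$, these parametrizations coincide at the generic point of each connected component of $X_\nr (L)$; I would then extend the bijection to every $\chi$ by passing to a formal neighbourhood of the fiber over $W(L,\mf s)\chi$ and invoking the local description of $\End_G (\Pi_{\mf s})^{op}$ from \cite{SolEnd}. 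Defining $\zeta^\vee$ on irreducibles and extending by linearity yields the desired group isomorphism.

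For the tempered statement, I would use that the tempered irreducible representations of $\mc H (G)^{\mf s}$ are exactly those whose central character lies in $X_\unr (L)/W(L,\mf s)$, by the Plancherel decomposition for reductive $p$-adic groups. Finite-length modules over $C^\infty (X_\unr (L)) \rtimes \C [W(L,\mf s), \natural_{\mf s}]$ likewise have central characters in $X_\unr (L)/W(L,\mf s)$, and their irreducibles are parametrized in exactly the Clifford-theoretic way described above but restricted to unitary $\chi$. Restricting $\zeta^\vee$ to central characters lying in $X_\unr (L)$ therefore yields $\zeta_t^\vee$. Compatibility with parabolic induction follows because parabolic induction from $L$ corresponds in the parametrization to the trivial $R$-group datum, matching induction from $\mc O (X_\nr (L))$ into the crossed product; compatibility with unramified twists is immediate, since $X_\unr (G)$ acts by translation on $X_\nr (L)$ on both sides. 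Canonicity, once a choice of \eqref{eq:1} has been made, is built into the construction, as every step only uses data derived from \eqref{eq:1}.

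The main obstacle will be extending the generic matching of irreducibles to a bijection at every central character, including the most degenerate $\chi$ where the stabilizer $W(L,\mf s)_\chi$ is largest and where the two algebras related by \eqref{eq:1} can differ most drastically. This step requires careful bookkeeping of how the cocycle $\natural_{\mf s}$ restricts to each stabilizer, so that the two Clifford-theoretic parametrizations truly coincide on both sides, and it is precisely the local control on $\End_G (\Pi_{\mf s})^{op}$ near a single fiber, supplied by the analysis in \cite{SolEnd}, that should make the extension possible.
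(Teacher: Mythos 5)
Your approach rests on two claims that are not correct, and both already fail for the $\mathrm{ord}(\chi_0)=1$ principal-series block of $SL_2(F)$ worked out in Section \ref{sec:ex}.

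The central proposal is to match irreducibles fibrewise over $X_\nr(L)/W(L,\mf s)$ by pairing irreducibles with a common Bernstein-centre character through the Clifford data of the stabilizer $W(L,\mf s)_\chi$. This cannot work, because the number of irreducibles lying over a fixed central character is different on the two sides. For $SL_2(F)$ with $\mf s=[T,\mathrm{triv}]$: over the orbit $W\{1\}$ the Hecke algebra has one irreducible, namely the irreducible principal series $I_B^G(\mathrm{triv}_T)$, whereas $\mc O(X_\nr(T))\rtimes\C[W]$ has two (one for each character of $W$); over $W\{q_F\}$ the Hecke algebra has two (the Steinberg representation and the trivial $G$-representation), whereas the crossed product has one, since $q_F$ has trivial $W$-stabilizer. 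Accordingly, $\zeta^\vee$ does \emph{not} preserve central characters; it only preserves the unitary factor, with the $X_\nr^+(L)$-part allowed to move, and that is exactly the content of Theorem \ref{thm:4.1}.c. No fibrewise matching, with or without passage to a formal neighbourhood of the fibre, can produce such a map. The mechanism the paper uses is the Langlands classification: via Theorem \ref{thm:5.1} one transports the question to twisted graded Hecke algebras $\mh H_u^G$, where the $\Z$-linear bijection $\zeta_u^\vee$ of \cite[Theorem 2.4]{SolHomAHA} (with bijectivity from \cite[Theorem 1.9]{SolK}) is unitriangular with respect to the Langlands parametrizations on both sides; that triangularity is what reconciles the differing fibrewise counts. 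Your $R$-group argument for $\End_G(\Pi_{\mf s})^{op}$, which you borrow from the generic isomorphism \eqref{eq:1}, is only valid at generic points of $X_\nr(L)$ and gives no handle on this structure.

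The second gap is the characterization you use to define $\zeta^\vee_t$. You claim that the tempered irreducibles of $\mc H(G)^{\mf s}$ are exactly those whose $Z(\mc H(G)^{\mf s})$-character lies in $X_\unr(L)/W(L,\mf s)$. This is false: the Steinberg representation of $SL_2(F)$ is square-integrable, hence tempered, yet its cuspidal support is a non-unitary unramified twist of the trivial character of $T$, so its Bernstein-centre character lies over $W\{q_F\}$, not over $W X_\unr(T)$. Temperedness is detected by exponents, not by the central character; restricting $\zeta^\vee$ to unitary central characters would only capture the representations with unitary cuspidal support, which is a proper subset of $R^t(G)^{\mf s}$. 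The paper obtains $\zeta^\vee_t$ instead by tracking temperedness through Theorem \ref{thm:5.1} (which preserves it) and through the tempered compatibility built into $\zeta_u^\vee$ in \cite[Theorem 2.4]{SolHomAHA}; this is Theorem \ref{thm:4.1}.b and \ref{thm:4.1}.d, not a restriction on central characters.
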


\textbf{Hochschild homology and twisted extended quotients} \\
In a sense that we will make precise later, Theorem \ref{thm:A} induces isomorphisms on 
Hochschild homology.

\begin{thmintro}\label{thm:B}
\textup{(see Theorems \ref{thm:4.2} and \ref{thm:6.11})}\\
There exist $\C$-linear bijections (canonical when \eqref{eq:1} has been fixed)
\[
\begin{array}{cccc}
HH_n (\zeta^\vee ) : & HH_n \big( \mc O (X_\nr (L)) \rtimes \C [W(L,\mf s),\natural_{\mf s}] \big) &
\to & HH_n (\mc H (G)^{\mf s}) , \\
HH_n (\zeta_t^\vee ) : & HH_n \big( C^\infty (X_\unr (L)) \rtimes 
\C [W(L,\mf s),\natural_{\mf s}] \big) & \to & HH_n (\mc S (G)^{\mf s}) .
\end{array}
\]
\end{thmintro}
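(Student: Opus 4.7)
The plan is to reduce each claimed isomorphism to the level of endomorphism algebras of progenerators, and then to lift the bijection $\zeta^\vee$ (respectively $\zeta_t^\vee$) from Grothendieck groups to Hochschild homology by a local analysis over the torus $T = X_\nr(L)$ (respectively the compact torus $T_u = X_\unr(L)$).

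First I would invoke Morita invariance of Hochschild homology to replace $\mc H(G)^{\mf s}$ by $\End_G(\Pi_{\mf s})^{op}$, and the bornological analogue to replace $\mc S(G)^{\mf s}$ by the tempered endomorphism algebra of a tempered progenerator in $\Rep^t(G)^{\mf s}$. After this reduction, both sides of each claimed isomorphism are $HH_n$ of algebras that are finite over their common centre $\mc O(T)^{W(L,\mf s)}$ (respectively $C^\infty(T_u)^{W(L,\mf s)}$), and that become isomorphic after base-change to $\C(T)^{W(L,\mf s)}$ by \eqref{eq:1}. Thus the obstruction to a direct comparison is torsion supported on the branch locus of $T \to T/W(L,\mf s)$.

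Next I would compute both sides locally over $\mathrm{Spec}(\mc O(T)^{W(L,\mf s)})$. For the twisted crossed product $\mc O(T) \rtimes \C[W(L,\mf s), \natural_{\mf s}]$, a Hochschild--Kostant--Rosenberg type formula yields a direct sum decomposition of $HH_n$ indexed by conjugacy classes of $W(L,\mf s)$, the summand for $[w]$ being a $\natural_{\mf s}$-twisted $Z_{W(L,\mf s)}(w)$-equivariant space of algebraic $n$-forms on the fixed-point subvariety $T^w$. For $\End_G(\Pi_{\mf s})^{op}$ the structural results of \cite{SolEnd} should provide, near each point $\chi \in T$, a local model Morita equivalent to a localised twisted crossed product with matching 2-cocycle data, yielding the same $HH_n$ stratum by stratum. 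The bijection $\zeta^\vee$ of Theorem \ref{thm:A} records precisely how the strata on the two sides correspond, and gluing these local identifications defines the map $HH_n(\zeta^\vee)$.

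The main obstacle is the stratum-by-stratum matching of the 2-cocycle data. Although $\End_G(\Pi_{\mf s})^{op}$ and the crossed product are typically not Morita equivalent, their Hochschild homologies must agree locally, which requires showing that any correction 2-cocycle arising in the local analysis of \cite{SolEnd} is cohomologous to the restriction of $\natural_{\mf s}$ to the relevant centraliser subgroup. This compatibility, combined with the compatibility of $\zeta^\vee$ with parabolic induction and with unramified twists already recorded in Theorem \ref{thm:A}, should pin down the isomorphism uniquely. The Schwartz-algebra version then follows by the same scheme, with algebraic localisation on $T$ replaced by $C^\infty$-localisation via partitions of unity on the compact Lie group $T_u$; one additionally has to verify that the bornological tensor product interacts correctly with these decompositions, for which the compactness of $T_u$ and the results of \cite{SolComp} should suffice.
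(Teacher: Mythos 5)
Your high-level strategy — Morita reduction to $\End_G(\Pi_{\mf s})^{op}$, local computation over $X_\nr(L)$, stratum-by-stratum identification of the extended-quotient picture of $HH_*$, and gluing — matches the paper's overall plan. But there is a genuine gap in how you identify the local model for $\End_G(\Pi_{\mf s})^{op}$.

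You claim that near each $\chi \in X_\nr(L)$ the structural results of \cite{SolEnd} provide a local model \emph{Morita equivalent to a localised twisted crossed product}. That is not correct. The local model that \cite{SolEnd} actually provides, and that the paper uses (Theorem~\ref{thm:5.1}, Construction~\ref{cons:1}, Proposition~\ref{prop:5.7}), is a twisted \emph{graded Hecke algebra} $\mh H^G_u = \mh H(\mf t, W(L,\mf s)_u, k^u, \natural_u^{-1})$ with generically nonzero parameter function $k^u$. For $k^u \neq 0$ such an algebra is not Morita equivalent to $\mc O(\mf t) \rtimes \C[W(L,\mf s)_u, \natural_u^{-1}]$, nor to any analytic localisation of it. The equality of their Hochschild homology (the key input from \cite{SolTwist}) is a nontrivial theorem, not a consequence of a Morita equivalence, and it is proved there via a comparison of the images of algebraic families of (virtual) representations. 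Your plan, as stated, would give the result essentially for free by a Morita argument, but that free step does not exist; without importing the \cite{SolTwist} computation for graded Hecke algebras, the strata need not match. Related to this, you never invoke the specific algebraic families of virtual representations $\nu_{w,\chi}$ and $\nu^1_{w,\chi}$; these are precisely what makes the resulting identification canonical in the paper (Theorems~\ref{thm:4.2} and \ref{thm:6.11}), and without them your ``gluing should pin down the isomorphism uniquely'' is unsubstantiated.

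For the Schwartz version, your route via smooth partitions of unity on $X_\unr(L)$ is a genuinely different idea from the paper's (the paper compares formal completions of $HH_n(\mc S(G,K)^{\mf d})$ with those of $HH_n(\mc H(G,K)^{\mf s})^{\mf d}$, via Proposition~\ref{prop:6.3} and the $C^\infty$-flatness results of \cite{KaSo}). A partition-of-unity approach is plausible in spirit, but Hochschild homology of a Fr\'echet algebra is not obviously a cosheaf over the spectrum of its centre, and the various summands $\mc S(G,K)^{\mf d}$ live over different tori $X_\unr(M)$ with different Weyl actions, so a direct localisation on $X_\unr(L)$ does not immediately apply. You would still need to pass through the square-integrable decomposition \eqref{eq:2} and some form of the $C^\infty$-module comparison in Proposition~\ref{prop:6.12}, which is precisely the route the paper takes; the partitions of unity do not replace that.
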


The Hochschild homology of twisted crossed product algebras like
\[
\mc O (X_\nr (L)) \rtimes \C [W(L,\mf s),\natural_{\mf s}] \quad \text{and} \quad
C^\infty (X_\unr (L)) \rtimes \C [W(L,\mf s),\natural_{\mf s}] 
\]
was determined in \cite[\S 1]{SolTwist}. It can be interpreted in terms of the twisted extended
quotient
\[
(X_\nr (L) /\!/ W(L,\mf s) )_{\natural_{\mf s}} := \big\{ (\chi,\pi_\chi) : \chi \in X_\nr (L), 
\pi_\chi \in \Irr (\C [W(L,\mf s),\natural_{\mf s}]) \big\} / W(L,\mf s) .
\]
Loosely speaking, $HH_n \big( \mc O (X_\nr (L)) \rtimes \C [W(L,\mf s),\natural_{\mf s}] \big)$ 
is the $\mc O (X_\nr (L))^{W(L,\mf s)}$-module of differential $n$-forms on 
$(X_\nr (L) /\!/ W(L,\mf s) )_{\natural_{\mf s}}$, and similarly for
\[
HH_n \big( C^\infty (X_\unr (L)) \rtimes \C [W(L,\mf s),\natural_{\mf s}] \big) 
\quad \text{and} \quad (X_\unr (L) /\!/ W(L,\mf s) )_{\natural_{\mf s}}.
\]
Let $\Irr_\cusp (L)$ be the set of supercuspidal irreducible $L$-representations (up to 
isomorphism), so that $\Irr (L)^{\mf s}$ is one $X_\nr (L)$-orbit in $\Irr_\cusp (L)$. 
The group $W(G,L) = N_G (L) / L$ acts naturally on $\Irr (L)$. Let $W(G,L)_{\mf s}$ be the 
stabilizer of $\Irr (L)^{\mf s}$ in $W(G,L)$. The covering map
\[
X_\nr (L) \to \Irr (L)^{\mf s} : \chi \mapsto \sigma \otimes \chi 
\]
induces a bijection 
\[
\big( X_\nr (L) /\!/ W(L,\mf s) \big)_{\natural_{\mf s}} \longrightarrow
\big( \Irr (L)^{\mf s} /\!/ W(G,L)_{\mf s} \big)_{\natural_{\mf s}} .
\]
Combining such maps we obtain a bijection 
\[
\bigsqcup\nolimits_{\mf s \in \Irr_\cusp (L) / X_\nr (L) \rtimes W(G,L)}
\big( X_\nr (L) /\!/ W(L,\mf s) \big)_{\natural_{\mf s}} \longrightarrow
\big( \Irr_\cusp (L) /\!/ W(G,L) \big)_{\natural_L} ,
\]
where $\natural_L$ is a shorthand for the data from the various 2-cocycles $\natural_{\mf s}$.
This twisted extended quotient (in the more general sense from  \cite[\S 2.1]{ABPS2}) is related
to the ideal 
\[
\bigoplus\nolimits_{\mf s \in \Irr_\cusp (L) / X_\nr (L) \rtimes W(G,L)} \mc H (G)^{\mf s} 
\quad \text{ of } \quad \mc H (G) .
\]
Let $\mf{Lev}(G)$ be a set of representatives for the conjugacy classes of Levi subgroups of $G$.
Theorem \ref{thm:B} and the above entail that $HH_n (\mc H (G))$ can be regarded as
the $Z(\Rep (G))$-module of algebraic differential $n$-forms on
\[
\bigsqcup\nolimits_{L \in \mf{Lev}(G)} \big( \Irr_\cusp (L) /\!/ W(G,L) \big)_{\natural_L} .
\] 
Similarly we may interpret $HH_n (\mc S (G))$ as the $Z(\Rep^t (G))$-module of smooth differential
$n$-forms on 
\[
\bigsqcup\nolimits_{L \in \mf{Lev}(G)} \big( \Irr^t_\cusp (L) /\!/ W(G,L) \big)_{\natural_L} .
\] 
Notice that these descriptions mainly involve data that are much easier than $\Rep (G)^{\mf s}$,
only the 2-cocycles $\natural_{\mf s}$ contain information about non-supercuspidal representations.
Fortunately $\natural_{\mf s}$ is known to be trivial in many cases, and we expect that it is
trivial whenever $G$ is quasi-split. We find it remarkable that such a simple description of a
strong invariant of very complicated algebras is possible.\\

\textbf{Hochschild homology via families of representations} \\
For more precise statements we employ algebraic families of $G$-representations. The families
relevant for us come from a parabolic subgroup $P = MU$ of $G$ and a tempered representation
$\eta$ of a Levi factor $M$ of $P$. All the representations $I_P^G (\eta \otimes \chi)$ with
$\chi \in X_\nr (M)$ can be realized on the same vector space $V_{P,\eta}$, and their matrix 
coefficients depend algebraically on $\chi$. The family of representations
\[
\mf F (M,\eta) = \{ I_P^G (\eta \otimes \chi) : \chi \in X_\nr (M) \} 
\]
induces a $\mc O (X_\nr (L))^{W(L,\mf s)}$-algebra homomorphism
\[
\begin{array}{cccc}
\mc F_{M,\eta} : & \mc H (G)^{\mf s} & \to & 
\mc O (X_\nr (M)) \otimes \End_{\C,\mr{fr}} (V_{P,\eta}) \\
 & f & \mapsto & [\chi \mapsto I_P^G (\sigma \otimes \chi)(f)]
\end{array},
\]
where the subscript fr stands for finite rank. Via Morita equivalences and the 
Hochschild--Kostant--Rosenberg theorem, that yields a map
\[
HH_n (\mc F_{M,\eta}) : HH_n (\mc H (G)^{\mf s}) \to \Omega^n (X_\nr (M)) .
\]
For $\chi \in X_\unr (M)$ the members of $\mf F (M,\eta)$ are tempered. Then 
Harish-Chandra's Plancherel isomorphism (Theorem \ref{thm:3.1}) shows that for $f \in
\mc S (G)^{\mf s}$ the matrix coefficients of $I_P^G (\eta \otimes \chi) (f)$ are smooth
functions on $X_\unr (M)$. We obtain a map
\[
HH_n (\mc F_{M,\eta}^t) : HH_n (\mc S (G)^{\mf s}) \to \Omega^n_{sm} (X_\unr (M)) ,
\]
where the subscript sm means smooth differential forms on a real manifold. With 
\cite[\S 1.2]{SolTwist} this setup can be generalized to algebraic families of virtual
representations, then we may speak of algebraic families in $\C \otimes_\Z R(G)^{\mf s}$ or
in $\C \otimes_\Z R^t (G)^{\mf s}$. 

For each $w \in W(L,\mf s)$ and each connected component $X_\nr (L)^w_c$ of $X_\nr (L)^w$,
we will construct a particular algebraic family 
\[
\mf F (w,c) = \big\{ \nu^1_{w,\chi} : \chi \in X_\nr (L)^w_c \big\} \quad \text{in} \quad 
\C \otimes_\Z R(G)^{\mf s}.
\]
From $\natural_{\mf s} : W(L,\mf s) \times W(L,\mf s) \to \C^\times$ we get a character
$\natural_{\mf s}^w$ of $Z_{W(L,\mf s)}(w)$.

\begin{thmintro}\label{thm:C}
\textup{(see Theorems \ref{thm:5.9}.b and \ref{thm:6.4})}
\begin{enumerate}[(i)]
\item The algebraic families $\mf F (w,c)$ induce a $\C$-linear bijection
\[
HH_n (\mc H (G)^{\mf s}) \to \Big( \bigoplus\nolimits_{w \in W(L,\mf s)} 
\Omega^n (X_\nr (L)^w) \otimes \natural_{\mf s}^w \Big)^{W(L,\mf s)} .
\]
\item Their tempered versions $\mf F^t (w,c) = \big\{ \nu^1_{w,\chi} : \chi \in X_\unr (L)^w_c 
\big\}$ induce an isomorphism of Fr\'echet spaces
\[
HH_n (\mc S (G)^{\mf s}) \to \Big( \bigoplus\nolimits_{w \in W(L,\mf s)} 
\Omega^n_{sm} (X_\unr (L)^w) \otimes \natural_{\mf s}^w \Big)^{W(L,\mf s)} .
\]
\end{enumerate}
\end{thmintro}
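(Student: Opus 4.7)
The plan is to combine Theorem B with the twisted Hochschild--Kostant--Rosenberg-type computation from \cite[\S 1]{SolTwist}, and then verify that the maps induced by the concrete algebraic families $\mf F(w,c)$ realize the abstract isomorphism explicitly.

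\textbf{Step 1: Reduction via Theorem B.} By Theorem B, $HH_n(\zeta^\vee)$ identifies $HH_n(\mc H(G)^{\mf s})$ with $HH_n\bigl(\mc O(X_\nr(L)) \rtimes \C[W(L,\mf s), \natural_{\mf s}]\bigr)$, and similarly in the tempered setting. It therefore suffices to produce the stated bijection for these twisted crossed products and then check that the maps coming from the families $\mf F(w,c)$ agree with the composite.

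\textbf{Step 2: Twisted HKR on the crossed-product side.} The computation in \cite[\S 1]{SolTwist} gives a canonical isomorphism
\[
HH_n\bigl(\mc O(T) \rtimes \C[W, \natural]\bigr) \;\isom\;
\Bigl( \bigoplus_{w \in W} \Omega^n(T^w) \otimes \natural^w \Bigr)^W,
\]
where $T^w$ is the fixed-point subvariety, $\natural^w$ is the character of $Z_W(w)$ obtained by transgressing $\natural$, and $W$ acts by permuting summands via $g:T^w\to T^{gwg^{-1}}$. Applied with $T=X_\nr(L)$, $W=W(L,\mf s)$, this yields the right-hand side of (i). The same argument carried out in the bornological category, with $\mc O(T)$ replaced by $C^\infty(T_u)$ for $T_u = X_\unr(L)$, yields the Fr\'echet space on the right-hand side of (ii).

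\textbf{Step 3: Identification via the families.} For fixed $w$ and a component $X_\nr(L)^w_c$, the family $\mf F(w,c) = \{\nu^1_{w,\chi}\}$ is built from intertwining operators whose matrix coefficients depend algebraically on $\chi$. Extending $\mc F_{M,\eta}$ to the setting of virtual families following \cite[\S 1.2]{SolTwist} produces an algebra homomorphism from $\mc H(G)^{\mf s}$ into an algebra of $\mc O(X_\nr(L)^w_c)$-valued endomorphisms of finite rank, so the classical HKR theorem yields a map $HH_n(\mc H(G)^{\mf s}) \to \Omega^n(X_\nr(L)^w_c)$. Summing over $c$ and over $w \in W(L,\mf s)$, the cocycle relation for intertwiners forces the image to be $W(L,\mf s)$-equivariant after twisting by $\natural_{\mf s}^w$, so we land in the claimed invariants. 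The key verification is that, under the isomorphisms of Step 1 and Step 2, this map is exactly the twisted HKR isomorphism. This reduces to a check on generators (an idempotent times a group element $w$ in the crossed product), and amounts to showing that the family $\mf F(w,c)$ computes the standard ``$w$-twisted trace'' appearing in the HKR formula.

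\textbf{Step 4: Fr\'echet structure for (ii), and the main obstacle.} For the tempered case, Harish-Chandra's Plancherel isomorphism (Theorem \ref{thm:3.1}) guarantees that matrix coefficients of tempered parabolically induced representations depend smoothly on $\chi \in X_\unr(L)^w_c$, so the same construction lands in $\Omega^n_{sm}$. Continuity of the map with respect to the bornological/Fr\'echet topologies is automatic from Plancherel, and bijectivity follows from part (i) together with the analogous bornological HKR computation, since both sides carry compatible localizations over $Z(\Rep^t(G)^{\mf s})$. The principal difficulty lies in Step 3: verifying that the geometrically defined families $\mf F(w,c)$ produce precisely the algebraic twisted HKR isomorphism. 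This requires controlling how the poles of the intertwiners on $X_\nr(L)^w$ are cancelled by the virtual-representation combination defining $\nu^1_{w,\chi}$, and tracking the $\natural_{\mf s}^w$-twist through the cocycle identities for the intertwiners. Once this matching is established on a single ``diagonal'' component, equivariance propagates it to the whole $W(L,\mf s)$-orbit and completes the proof.
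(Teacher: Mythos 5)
Your proposal inverts the logical dependency of the paper and, as written, is circular. In the paper, Theorem~\ref{thm:B} (i.e.\ Theorem~\ref{thm:4.2}) is \emph{derived from} Theorem~\ref{thm:C} (Theorem~\ref{thm:5.9}.b), not the other way around: the map $HH_n(\zeta^\vee)$ is \emph{defined} as the composition of the twisted-HKR isomorphism~\eqref{eq:4.12} with $\bigl(HH_n(\tilde\phi_{\mf s}^*)\circ HH_n(\mc F_{\mf s})\bigr)^{-1}$, and the latter inverse only exists once the bijectivity claimed in Theorem~\ref{thm:5.9}.b has been proved. Invoking Theorem~\ref{thm:B} to ``identify'' $HH_n(\mc H(G)^{\mf s})$ with $HH_n(\mc O(X_\nr(L))\rtimes\C[W(L,\mf s),\natural_{\mf s}])$ presupposes exactly what you are trying to prove. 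You would need an independent proof of Theorem~\ref{thm:B}, and no such route is offered; indeed the paper emphasizes at~\eqref{eq:4.6} that $\End_G(\Pi_{\mf s})^{op}$ is \emph{not} isomorphic or Morita equivalent to the crossed product, only generically so after inverting the centre~\eqref{eq:4.5}, which by itself does not control Hochschild homology.

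The substantive content of the theorem is precisely what your Step~3 defers. There is no single algebra homomorphism $\mc H(G)^{\mf s}\to\mc O(X_\nr(L)^w_c)\otimes\End$ realizing the family $\mf F(w,c)$: the element $\nu^1_{w,\chi}$ of~\eqref{eq:5.31} is a $\C$-linear combination of genuine induced representations with coefficients $\lambda_{w,i}$, and the associated map on Hochschild homology is the corresponding $\C$-linear combination $HH_n(\phi_{w,c}^*)=\sum_i\lambda_{w,i}HH_n(\chi_{\eta_i}^{-1}\phi_{w,i}^*)$, not an HKR map for a single homomorphism. Establishing that these combinations give (a) a map landing in the $\natural_{\mf s}$-twisted invariants, (b) an injection, and (c) a surjection requires the entire local apparatus of the paper: the twisted graded Hecke algebra models $\mh H^G_u$ and their enlargements $\mh H^G_{W(L,\mf s)u}$ (Theorem~\ref{thm:5.1}, Proposition~\ref{prop:5.7}), the formal-completion comparison (Proposition~\ref{prop:5.2}), the density argument with~\eqref{eq:4.18} for injectivity (Lemma~\ref{lem:5.6}) and surjectivity (Theorem~\ref{thm:5.3}, in particular the Nakayama-type step~\eqref{eq:5.34}), and the inductive reorganization over pairs $(w,c)$ in Construction~\ref{cons:3} (in particular Lemma~\ref{lem:5.8} to pass from the $u$-local pictures to the $(w,c)$-stratification). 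Your proposal names the pole cancellation and the cocycle tracking as ``the principal difficulty'' but does not carry it out; in the paper this is~\eqref{eq:5.29} coming from \cite[Lemma 2.5.a]{SolTwist}, and it is only one ingredient of several. For part~(ii) you also need the Fr\'echet-module machinery of Paragraph~\ref{par:top} (Proposition~\ref{prop:6.6}, Proposition~\ref{prop:6.3}, Lemma~\ref{lem:6.7}) to transfer the algebraic result to the Schwartz algebra; asserting that continuity and bijectivity ``are automatic'' is not justified.
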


The canonicity of Theorem \ref{thm:B} can be formulated in similar terms. Namely, for each
algebraic family $\mf F (M,\eta)$ in $\Rep (G)^{\mf s}$ there are equalities
\[
\begin{array}{ccc}
HH_n (\mc F_{M,\eta}) \circ HH_n (\zeta^\vee) & = & HH_n (\mc F_{M,\zeta^\vee (\eta)}) ,\\
HH_n (\mc F^t_{M,\eta}) \circ HH_n (\zeta_t^\vee) & = & HH_n (\mc F^t_{M,\zeta^\vee (\eta)}) .
\end{array}
\]

\textbf{Hochschild homology groups in degree 0}\\
Theorem \ref{thm:C} admits a nice alternative description in degree $n = 0$. Let us say that
a linear function on $\C \otimes_\Z R (G)^{\mf s}$ is regular if it transforms every algebraic
family $\mf F (M,\eta)$ into a regular function on $X_\nr (M)$. Similarly we call a linear
function on $\C \otimes_\Z R^t (G)^{\mf s}$ smooth if it transforms $\mf F^t (M,\eta)$ into
a smooth function on $X_\unr (M)$.

\begin{thmintro}\label{thm:D}
\textup{(see Propositions \ref{prop:5.4} and \ref{prop:6.10})}
\begin{enumerate}[(i)]
\item The trace pairing $\mc H (G)^{\mf s} \times R(G)^{\mf s} \to \C$ 
induce a natural isomorphism of\\ $Z (\Rep (G)^{\mf s})$-modules
\[
HH_0 (\mc H (G)^{\mf s}) \to (\C \otimes_\Z R(G)^{\mf s})^*_{\mr{reg}} .
\]
\item The trace pairing $\mc S (G)^{\mf s} \times R^t (G)^{\mf s} \to \C$
induces a natural isomorphism of $Z(\Rep^t (G)^{\mf s})$-modules
\[
HH_0 (\mc S (G)^{\mf s}) \to (\C \otimes_\Z R_t (G)^{\mf s})^*_\infty .
\]
\end{enumerate}
\end{thmintro}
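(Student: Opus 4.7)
The plan for part (i) is to define the trace map directly, verify that it lands in the regular functionals, and then deduce bijectivity from Theorem \ref{thm:C}. For any finite-length $\mc H(G)^{\mf s}$-module $V$ and $f \in \mc H(G)^{\mf s}$, admissibility ensures $V(f)$ has finite rank, so $(f,[V]) \mapsto \mr{tr}(V(f))$ is well defined and additive on short exact sequences. Because traces vanish on commutators, the resulting map descends to $HH_0(\mc H(G)^{\mf s})$. The image consists of regular functionals because $\mc F_{M,\eta}$ takes values in $\mc O(X_\nr(M)) \otimes \End_{\C,\mr{fr}}(V_{P,\eta})$, so $\chi \mapsto \mr{tr}(I_P^G(\eta\otimes\chi)(f))$ is the trace of a finite-rank element over $\mc O(X_\nr(M))$ and therefore regular on $X_\nr(M)$.

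For bijectivity, both sides of (i) receive natural maps to $\bigl(\bigoplus_{w \in W(L,\mf s)} \mc O(X_\nr(L)^w) \otimes \natural_{\mf s}^w\bigr)^{W(L,\mf s)}$: the source via part (i) of Theorem \ref{thm:C}, which sends $[f]$ to the tuple $\bigl(\chi \mapsto \mr{tr}(\nu^1_{w,\chi}(f))\bigr)_{w,c}$; the target via evaluation of a regular functional on the algebraic families $\mf F(w,c)$. These maps fit in a commutative triangle with the trace map as hypotenuse, and the left arrow is an isomorphism by Theorem \ref{thm:C}. Hence (i) reduces to showing the evaluation map is bijective.

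The main obstacle is surjectivity of that evaluation: a $W(L,\mf s)$-equivariant tuple of regular functions must extend to a regular functional on all of $\C \otimes_\Z R(G)^{\mf s}$. To construct the inverse I would transport via Theorem \ref{thm:A} to $\mc O(X_\nr(L)) \rtimes \C[W(L,\mf s),\natural_{\mf s}]$, whose irreducibles are indexed by the twisted extended quotient $(X_\nr(L)/\!/W(L,\mf s))_{\natural_{\mf s}}$. The trace on the irreducible attached to $(\chi,\rho)$ of an element $\sum_w f_w w$ is a finite orbit sum of values $f_w(\chi)\,\mr{tr}_\rho(w)$, depending regularly on $\chi$; this prescription produces a regular functional whose restriction to each $\mf F(w,c)$ recovers the prescribed tuple. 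Injectivity is easier: an invariant tuple whose associated functional vanishes on all irreducibles is necessarily zero. The $Z(\Rep(G)^{\mf s})$-linearity is automatic because the Bernstein centre acts by scalars on each irreducible constituent.

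Part (ii) proceeds identically. Harish-Chandra's Plancherel isomorphism (Theorem \ref{thm:3.1}) guarantees that for $f \in \mc S(G)^{\mf s}$ the matrix coefficients of $I_P^G(\eta\otimes\chi)(f)$ are smooth functions of $\chi \in X_\unr(M)$, so the trace map lands in smooth functionals; part (ii) of Theorem \ref{thm:C} then supplies the smooth analogue of the commutative triangle, with $X_\nr$ and regular functions replaced by $X_\unr$ and $C^\infty$ functions throughout.
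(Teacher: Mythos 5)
Your overall strategy — pinning the trace pairing against the bijection of Theorem \ref{thm:C} via a common target $\big( \bigoplus_w \mc O(X_\nr(L)^w)\otimes\natural_{\mf s}^w\big)^{W(L,\mf s)}$ — is in the same spirit as the paper's Propositions \ref{prop:5.4} and \ref{prop:6.10} (the paper goes first through the $\bigoplus_i \mc O(X_\nr(M_i))$ picture and only then to $X_\nr(L)^w$, but that is a cosmetic difference), and your first paragraph on well-definedness and regularity of the trace map is fine. The problem is in the step you yourself flag as the main obstacle: the surjectivity of the evaluation map. The orbit-sum formula you write down defines a linear function on the irreducibles of $\mc O(X_\nr(L))\rtimes\C[W(L,\mf s),\natural_{\mf s}]$, hence by $\C$-linear extension a functional on $\C\otimes_\Z R(G)^{\mf s}$, but nowhere do you check that this functional is \emph{regular} — i.e.\ pairs to a regular function against every algebraic family $\mf F(M,\eta)$, not merely against the chosen $\mf F(w,c)$. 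Since the irreducible decomposition of $\mf F(M,\eta,\chi)$ jumps at reducibility points, regularity in $\chi$ is not automatic from regularity of the trace at a fixed irreducible series, so this step is a genuine gap. Your injectivity claim is also stated wrongly (what is needed is that a regular functional killing all $\nu^1_{w,\chi}$ vanishes, not that a tuple whose associated functional kills all irreducibles vanishes), though it can be repaired.

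The good news is that the whole surjectivity-of-evaluation detour is unnecessary. Once you know that the trace map lands in $(\C\otimes_\Z R(G)^{\mf s})^*_{\mr{reg}}$, that the diagram commutes, and that the left arrow is bijective (Theorem \ref{thm:C}.i), only injectivity of the evaluation map is needed: given regular $\lambda$, pick $[f]\in HH_0(\mc H(G)^{\mf s})$ with left image equal to $\mathrm{eval}(\lambda)$; then the trace pairing of $[f]$ agrees with $\lambda$ on every $\nu^1_{w,\chi}$, and since those span $\C\otimes_\Z R(G)^{\mf s}$ (via Theorem \ref{thm:A} and the basis result cited in the proof of Theorem \ref{thm:4.2}) and both functionals are regular, they coincide. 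This is essentially what Proposition \ref{prop:5.4}.b does: a regular $\lambda$ yields a tuple in $\bigoplus_i \mc O(X_\nr(M_i))$ that descends to a linear functional, so by Proposition \ref{prop:5.4}.a it lies in the image of $HH_0(\mc F_{\mf s})$, and unwinding $HH_0(\mc F_{\mf s})$ as the trace pairing finishes the argument. The same remarks and the same fix apply verbatim to part (ii), with $X_\unr$ and smooth functions in place of $X_\nr$ and regular ones.
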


We note that Theorem \ref{thm:D}.(i) was already shown in \cite{BDK}, with much
more elementary methods. Theorem \ref{thm:D}.(ii) implies that the traces of
irreducible tempered representations in $\Rep (G)^{\mf s}$ span a dense subspace of the
space of trace functions on $\mc S (G)^{\mf s}$.\\

\textbf{The action of the Bernstein centre}\\
Theorems \ref{thm:B} and \ref{thm:C} do not yet reveal how the Bernstein centre
\[
Z( \Rep (G)^{\mf s}) \cong \mc O (X_\nr (L))^{W(L,\mf s)}
\] 
acts on $HH_n (\mc H (G)^{\mf s})$.
That action is more tricky than it could seem, because the bijections in Theorem \ref{thm:A}
do not always match the canonical actions of $\mc O (X_\nr (L))^{W(L,\mf s)}$ on the two sides.
We are aided by the finer decomposition of $R^t (G)$ and $\mc S (G)$ in ``Harish-Chandra 
blocks". Namely, to each square-integrable (modulo centre) representation $\delta$ of a Levi 
subgroup $M$ of $G$ one canonically associates a direct factor $R^t (G)^{\mf d}$ of $R^t (G)$, 
and a two-sided ideal $\mc S (G)^{\mf d}$ of $\mc S (G)$. If the supercuspidal support of 
$\delta$ is $(L,\sigma)$, then $R^t (G)^{\mf d} \subset R^t (G)^{\mf s}$ where 
$\mf d = [M,\delta]$ and $\mf s = [L,\sigma]$. The Plancherel isomorphism 
(see \cite{Wal} or Theorem \ref{thm:3.1}) entails:
\begin{equation}\label{eq:2}
\mc S (G)^{\mf s} = \bigoplus\nolimits_{\mf d \in \Delta_G^{\mf s}} \mc S (G)^{\mf d}
\end{equation}
for a suitable finite set $\Delta_G^{\mf s}$ of square-integrable (modulo centre) 
representations of Levi subgroups of $G$. This gives rise to a decomposition
\[
HH_n (\mc S (G)^{\mf s}) = 
\bigoplus\nolimits_{\mf d \in \Delta_G^{\mf s}} HH_n (\mc S (G)^{\mf d}) .
\]
For $\mc H (G)^{\mf s}$ no decomposition like \eqref{eq:2} exists. Nevertheless something
similar can be achieved with Hochschild homology groups, see below.

Again by Harish-Chandra's Plancherel isomorphism (Theorem \ref{thm:3.1}) 
\[
Z( \Rep^t (G)^{\mf d}) \cong C^\infty (X_\unr (M))^{W(M,\mf d)} ,
\]
for a certain finite group $W(M,\mf d)$ of transformations of $X_\nr (M)$.
Represent the $\mc O (X_\nr (L))^{W(L,\mf s)}$-character of $\delta$ by $\chi_\delta 
t_\delta^+$, where $\chi_\delta \in X_\unr (L)$ and $t_\delta^+ \in \Hom (L,\R_{>0})$. 
The natural map $Z(\Rep (G)^{\mf s}) \to Z(\Rep^t (G)^{\mf d})$ makes
$C^\infty (X_\unr (M))^{W(M,\mf d)}$ into a set of functions on $\chi_\delta t_\delta^+ 
X_\unr (M) \subset X_\nr (L)$. 

\begin{thmintro}\label{thm:E}
\textup{(see Theorem \ref{thm:6.1}.b, Lemma \ref{lem:6.8} and Lemma \ref{lem:5.5})}
\begin{enumerate}[(i)]
\item There exists a canonical decomposition 
\[
HH_n (\mc H (G)^{\mf s}) = \bigoplus\nolimits_{\mf d \in \Delta_G^{\mf s}}
HH_n (\mc H (G)^{\mf s})^{\mf d} ,
\]
where $HH_n (\mc H (G)^{\mf s})^{\mf d}$ is the inverse image of $HH_n (\mc S (G)^{\mf d})$
under the natural map $HH_n (\mc H (G)^{\mf s}) \to HH_n (\mc S (G)^{\mf s})$.
\item Suppose that $Z(\Rep^t (G)^{\mf d})$ does not annihilate the contribution (via Theorem 
\ref{thm:C}) of $\Omega^n_{sm}(X_\unr (L)^w_c)$ to $HH_n (\mc S (G)^{\mf s})$ . Then we can
arrange that $X_\unr (L)^w_c$ is contained in $\chi_\delta X_\unr (M)$. For $\chi \in 
X_\unr (L)^w_c$, $Z (\Rep^t (G)^{\mf d})$ acts on the fibre of $HH_n (\mc S (G)^{\mf s})$ over 
$W(L,\mf s)(w,\chi)$ via the character $W(M,\mf d) \chi_\delta^{-1} \chi$.
\item In the setting of part (ii), for $\chi \in X_\nr (L)^w_c$, $Z(\Rep (G)^{\mf s})$ 
acts on the fibre of $HH_n (\mc H (G)^{\mf s})$ over $W(L,\mf s)(w,\chi)$ via the 
character $W(L,\mf s)t_\delta^+ \chi$.
\end{enumerate}
\end{thmintro}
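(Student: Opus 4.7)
The plan is to transport all three statements to the explicit models supplied by Theorem \ref{thm:C}, which present $HH_n(\mc H (G)^{\mf s})$ and $HH_n(\mc S (G)^{\mf s})$ as $W(L,\mf s)$-invariants in direct sums of differential-form spaces on $X_\nr (L)^w$ and $X_\unr (L)^w$. Part (i) then becomes a compatibility between \eqref{eq:2} and the indexing by $(w,c)$, while (ii)--(iii) reduce to central-character computations along the algebraic families $\mf F (w,c)$ and $\mf F^t (w,c)$.

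\textbf{Part (i).} The dense algebra inclusion $\mc H (G)^{\mf s} \hookrightarrow \mc S (G)^{\mf s}$ induces a natural map $\iota$ on $HH_n$. Since $\mf F^t (w,c)$ is the restriction of $\mf F (w,c)$ to the compact real form, Theorem \ref{thm:C} intertwines $\iota$ with the restriction of algebraic forms on $X_\nr (L)^w$ to smooth forms on $X_\unr (L)^w$. This restriction is injective componentwise, because $X_\unr (L)^w_c$ is Zariski-dense in $X_\nr (L)^w_c$. On the right-hand side, \eqref{eq:2} partitions the $W(L,\mf s)$-orbits of unitary components $X_\unr (L)^w_c$ according to the unique coset $\chi_\delta X_\unr (M)$ containing each of them. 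Each such unitary component is the unitary part of exactly one algebraic component $X_\nr (L)^w_c$, so this partition lifts to the algebraic side, and the preimages under $\iota$ yield the claimed canonical decomposition.

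\textbf{Part (ii).} Theorem \ref{thm:3.1} identifies $Z(\Rep^t (G)^{\mf d})$ with $C^\infty (X_\unr (M))^{W(M,\mf d)}$, acting on each $\pi \in \Rep^t (G)^{\mf d}$ through its central character. Under the non-annihilation hypothesis we may choose the representative of $\mf d$ so that $X_\unr (L)^w_c \subset \chi_\delta X_\unr (M)$. For $\chi$ in that component, $\nu^1_{w,\chi}$ is a virtual combination of constituents of $I_P^G (\delta \otimes \chi_\delta^{-1} \chi)$, so its central character equals the $W(M,\mf d)$-orbit of $\chi_\delta^{-1} \chi$. Transporting this scalar action through Theorem \ref{thm:C} yields the claimed action on the fibre of $HH_n (\mc S (G)^{\mf s})$ over $W(L,\mf s)(w,\chi)$.

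\textbf{Part (iii) and main obstacle.} The algebraic extension of the argument in (ii) identifies the $\mf d$-labelled component $X_\nr (L)^w_c$ with a subset of $\chi_\delta X_\nr (M)$; for $\chi$ in it, $\nu^1_{w,\chi}$ again lives in $I_P^G (\delta \otimes \chi_\delta^{-1} \chi)$. Since $\delta$ has cuspidal support $(L, \sigma \otimes \chi_\delta t_\delta^+)$, the cuspidal support of $\nu^1_{w,\chi}$ is $(L, \sigma \otimes t_\delta^+ \chi)$, and so $Z(\Rep (G)^{\mf s}) \cong \mc O (X_\nr (L))^{W(L,\mf s)}$ acts on the fibre of $HH_n (\mc H (G)^{\mf s})$ over $W(L,\mf s)(w,\chi)$ by evaluation at $W(L,\mf s) t_\delta^+ \chi$. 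The delicate ingredient throughout is the compatibility between two parametrizations of a given algebraic component: the one furnished by Theorem \ref{thm:C}, and the one coming from the Harish-Chandra datum $(M,\delta)$ as a translate inside $X_\nr (M)$. Reconciling them requires tracking normalizations in the intertwining operators behind \eqref{eq:1}, in the families $\mf F (w,c)$, and in the Plancherel decomposition; once this bookkeeping is pinned down, each of (i)--(iii) reduces to reading off an eigenvalue on an explicit fibre.
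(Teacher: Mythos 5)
Your argument for part (i) contains a genuine gap. The central claim---that the cosets $\chi_\delta X_\unr (M)$ \emph{partition} the unitary components $X_\unr (L)^w_c$, each being contained in a \emph{unique} such coset---is false in general. The decomposition \eqref{eq:2} of $\mc S (G)^{\mf s}$ does induce a direct-sum decomposition of $HH_n (\mc S (G)^{\mf s})$, but under the isomorphism of Theorem \ref{thm:C} the summands do \emph{not} correspond to unions of components $X_\unr (L)^w_c$; they cut across them. Consider $G = SL_2 (F)$, the unramified Bernstein block, with $L = T$ and $W = W(T,\mf s) = \{e,w\}$. Here $\Delta_G^{\mf s} = \{[T,\mr{triv}_T], [G,\mr{St}]\}$, and the components are $X_\unr (T)^e = X_\unr (T)$ and $X_\unr (T)^w = \{1\} \sqcup \{-1\}$. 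The component $\{1\}$ lies in $\chi_\delta X_\unr (M)$ for \emph{both} $\mf d$'s: for $[T,\mr{triv}_T]$ one has $\chi_\delta = 1$, $X_\unr (M) = X_\unr (T)$, and $1 \in X_\unr (T)$; for $[G,\mr{St}]$ one has $\chi_\delta = 1$, $X_\unr (G) = \{1\}$, and $1 \in \{1\}$. Moreover the virtual family at $(w,\{1\})$ is $\nu^1_{w,1} = I_B^G (\mr{triv}_T)/2 - \mr{St}$, which genuinely mixes the principal-series part and the discrete-series part; so the fibre of $HH_n$ over $(w,1)$ receives nonzero contributions from both $\mf d$-summands. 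Hence the partition you invoke does not exist, and "the preimages under $\iota$" cannot be organised in the way you describe without further input.

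The paper instead indexes the decomposition by the Harish-Chandra datum of the \emph{families} $\mf F (M_i,\eta_i)$: one declares $i \prec \mf d$ when $\eta_i$ has tempered support $\mf d$, and the $\mf d$-summand of $HH_n (\mc H (G,K)^{\mf s})$ is $HH_n (\mc F_{\mf s})^{-1}$ applied to $\bigoplus_{i \prec \mf d} \Omega^n (X_\nr (M_i)) \cap HH_n (\mc F_{\mf s}) HH_n (\mc H (G,K)^{\mf s})$ (Lemma \ref{lem:5.5}.a). The fact that this actually produces a direct-sum decomposition of $HH_n (\mc H (G,K)^{\mf s})$ (rather than just a span) is precisely the nontrivial content; it rests on the corresponding decompositions for twisted graded Hecke algebras established in \cite[Lemma 2.12, (2.25), Corollary 2.13]{SolTwist}, transported via Theorem \ref{thm:5.1} and Proposition \ref{prop:5.7}. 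The "inverse image" description of $HH_n (\mc H (G)^{\mf s})^{\mf d}$ in the statement of Theorem \ref{thm:E}.(i) is then a \emph{consequence} obtained by combining this with Lemma \ref{lem:6.7}.b (injectivity of $\iota$) and Theorem \ref{thm:6.1}.b; it is not a direct definition one can feed back in to prove the existence of the decomposition.

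Your arguments for parts (ii) and (iii), once one has the decomposition from (i), track the central characters of $\nu^1_{w,\chi}$ in a way that matches Lemma \ref{lem:6.8} and Lemma \ref{lem:5.5}.b in spirit. However, they still need the observation that \emph{all} the families with $i \prec \mf d$ share the same positive part $t_\delta^+$ of the central character (this is checked in the proof of Lemma \ref{lem:5.5}.b via the discrete-series shift in \cite[Corollary 2.13]{SolTwist}); without that, the candidate $\mc O (X_\nr (L))^{W(L,\mf s)}$-module structures for different $i \prec \mf d$ need not agree, and the claimed uniform character $W(L,\mf s) t_\delta^+ \chi$ would not be well-defined. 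You gesture at this under "tracking normalizations," but it is exactly the step that needs to be made precise.
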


\textbf{Other homology theories}\\
There are standard techniques to derive the cyclic homology $HC_* (A)$ and the periodic cyclic
homology $HP_* (A)$ from the Hochschild homology of a $\C$-algebra $A$ \cite{Lod}. In our
cases $A = \mc H (G)^{\mf s}$ and $A = \mc S (G)^{\mf s}$, we can get them as the homology of
$HH_* (A)$ with respect to the usual exterior differential on forms.

\begin{thmintro}\label{thm:F}
\textup{(see \eqref{eq:1.8}, \eqref{eq:1.9} and Corollary \ref{cor:1.3})} \\
Theorem \ref{thm:C} induces isomorphisms
\[
HP_n (\mc H (G)^{\mf s}) \cong HP_n (\mc S (G)^{\mf s}) \cong \bigoplus\nolimits_{m \in \Z} 
\Big( \bigoplus\nolimits_{w \in W(L,\mf s)} H_{dR}^{n + 2m} (X_\nr (L)^w) \otimes
 \natural_{\mf s}^w \Big)^{W(L,\mf s)} .
\]
\end{thmintro}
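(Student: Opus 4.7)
The plan is to leverage Theorem \ref{thm:C} to reduce the computation of periodic cyclic homology to a purely geometric one, exploiting the fact that Connes' $B$-operator on $HH_*$ corresponds, under the HKR-type identification, to the ordinary exterior derivative on differential forms. This compatibility is already built into the computation of Hochschild homology for twisted crossed products in \cite[\S 1]{SolTwist}, so the first task is to verify that the transfer maps produced from the algebraic families $\mf F (w,c)$ intertwine Connes' $B$ with the de Rham $d$.

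Granted this, the passage $HH_* \Rightarrow HP_*$ via the standard Connes--Tsygan (SBI) sequence becomes a computation of (twisted, equivariant) de Rham cohomology. Theorem \ref{thm:C}(i) gives
\[
HH_n (\mc H (G)^{\mf s}) \cong \Big( \bigoplus\nolimits_{w \in W(L,\mf s)} \Omega^n (X_\nr (L)^w) \otimes \natural_{\mf s}^w \Big)^{W(L,\mf s)} ,
\]
and taking cohomology with respect to $d$ yields the formula in the theorem, with $H_{dR}$ read as algebraic de Rham cohomology. Each $X_\nr (L)^w$ is a finite disjoint union of translates of subtori of a complex algebraic torus, hence smooth affine, so Grothendieck's comparison theorem identifies its algebraic and topological de Rham cohomology. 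The same argument applied to Theorem \ref{thm:C}(ii) yields the analogous formula for $HP_n (\mc S (G)^{\mf s})$ in terms of smooth de Rham cohomology of the compact manifolds $X_\unr (L)^w$, using the bornological version of the SBI sequence for the Fr\'echet algebra $\mc S (G)^{\mf s}$.

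To match the two answers, observe that the polar decomposition
\[
X_\nr (L) \cong X_\unr (L) \times \Hom (L,\R_{>0})
\]
is $W(L,\mf s)$-equivariant because $W(L,\mf s)$ acts by algebraic group automorphisms that preserve the unique maximal compact subgroup. Hence $X_\unr (L)^w$ is a deformation retract of $X_\nr (L)^w$, so $H^*_{dR} (X_\nr (L)^w) = H^*_{dR} (X_\unr (L)^w)$. The resulting isomorphism $HP_n (\mc H (G)^{\mf s}) \cong HP_n (\mc S (G)^{\mf s})$ is canonical because the inclusion $\mc H (G)^{\mf s} \hookrightarrow \mc S (G)^{\mf s}$ induces on $HP$ precisely the restriction from algebraic to smooth differential forms, which is a quasi-isomorphism of de Rham complexes.

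The main obstacle is verifying that the isomorphism of Theorem \ref{thm:C} intertwines Connes' $B$-operator with the exterior derivative. This is expected from the equivariant HKR philosophy, but it must be tracked through the chain of identifications: the Morita equivalence $\mc H (G)^{\mf s} \sim \End_G (\Pi_{\mf s})^{op}$, the rational comparison \eqref{eq:1} to the twisted crossed product, and the construction of the families $\mf F (w,c)$. Once this compatibility is established, everything else reduces to standard homological algebra together with the elementary topology of complex algebraic tori.
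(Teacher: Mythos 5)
Your proposal follows the same route as the paper: you correctly identify the key technical step (showing that the transfer maps of Theorem \ref{thm:C} intertwine Connes' $B$-operator with the exterior derivative $d$, which the paper handles by upgrading the Hochschild-level maps to a morphism of cyclic bicomplexes as in \eqref{eq:1.5}--\eqref{eq:1.6}), you then pass to $HP_*$ via the SBI/periodicity sequence and read off de Rham cohomology, and you match the Hecke and Schwartz answers via the $W(L,\mf s)$-equivariant deformation retraction of $X_\nr(L)$ onto $X_\unr(L)$. The only small departure is your explicit appeal to Grothendieck's algebraic/analytic comparison, where the paper instead makes the identification directly for tori via the K\"unneth computation in Lemma \ref{lem:1.4}; this is an inessential difference.
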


The periodic cyclic homology of a Fr\'echet algebra relates to its topological K-theory via a
Chern character. We can pass from $\mc S (G)^{\mf s}$ to its $C^*$-completion via
suitable Morita equivalent Fr\'echet subalgebras. In this way we compute the topological 
K-theory of any Bernstein block in the reduced $C^*$-algebra of $G$:

\begin{thmintro}\label{thm:G}
\textup{(see Theorem \ref{thm:1.5})}\\
There is an isomorphism of vector spaces
\[
K_* \big( C_r^* (G)^{\mf s} \big) \otimes_\Z \C \cong 
K^*_{W(L,\mf s), \natural_{\mf s}} (X_\unr (L)) \otimes_\Z \C .
\]
\end{thmintro}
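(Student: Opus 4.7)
The plan is to compare both sides with periodic cyclic homology via Chern characters, and then use Theorem \ref{thm:F} as the bridge.

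First, I would squeeze a Fr\'echet subalgebra $\mc A^{\mf s}$ between $\mc S (G)^{\mf s}$ and $C_r^* (G)^{\mf s}$ that is stable under holomorphic functional calculus inside $C_r^* (G)^{\mf s}$. The Harish-Chandra Plancherel isomorphism (Theorem \ref{thm:3.1}) realises $\mc S (G)^{\mf d}$, up to Morita equivalence, as smooth sections of an endomorphism bundle over $X_\unr (M) / W(M,\mf d)$ (twisted by the appropriate 2-cocycle), while $C_r^* (G)^{\mf d}$ is the corresponding algebra of continuous sections. Choosing $\mc A^{\mf s}$ to be, say, a sufficiently many-times-differentiable version of such sections on each Harish-Chandra block and summing over $\mf d \in \Delta_G^{\mf s}$ gives a smooth subalgebra for which the standard density-and-holomorphic-calculus argument yields
\[
K_* (\mc A^{\mf s}) \cong K_* (C_r^* (G)^{\mf s}) .
\]

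Next, I would apply the Chern character $\mathrm{ch} : K_* (\mc A^{\mf s}) \otimes_\Z \C \to HP_* (\mc A^{\mf s})$. For the nuclear, m-convex Fr\'echet algebras at hand this is an isomorphism, and moreover $HP_* (\mc A^{\mf s}) \cong HP_* (\mc S (G)^{\mf s})$: both algebras are built from smooth sections of the same bundle and are related by a chain of inclusions between smooth subalgebras of the same $C^*$-algebra, which do not alter periodic cyclic homology. Combining these steps with Theorem \ref{thm:F} gives
\[
K_* (C_r^* (G)^{\mf s}) \otimes_\Z \C \; \cong \; HP_* (\mc S (G)^{\mf s}) \; \cong \;
\bigoplus\nolimits_{m \in \Z} \Big( \bigoplus\nolimits_{w \in W(L,\mf s)}
H_{dR}^{* + 2m} (X_\nr (L)^w) \otimes \natural_{\mf s}^w \Big)^{W(L,\mf s)} .
\]

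Finally, I would match this with twisted equivariant K-theory via the delocalised equivariant Chern character: for a finite group $W$ with 2-cocycle $\natural$ acting on a compact manifold $X$, there is a natural isomorphism
\[
K^n_{W,\natural} (X) \otimes_\Z \C \;\cong\;
\bigoplus\nolimits_{m \in \Z} \Big( \bigoplus\nolimits_{w \in W}
H^{n + 2m} (X^w) \otimes \natural^w \Big)^{W} .
\]
Applying this to $X = X_\unr (L)$ and $W = W(L,\mf s)$, and observing that $X_\unr (L) \hookrightarrow X_\nr (L)$ is a $W(L,\mf s)$-equivariant deformation retract whose restriction to each fixed point set $X_\unr (L)^w \hookrightarrow X_\nr (L)^w$ is also a deformation retract (the polar decomposition $X_\nr (L) = X_\unr (L) \times \Hom (L,\R_{>0})$ is $W(L,\mf s)$-equivariant), one gets an identification of cohomologies $H_{dR}^*(X_\nr (L)^w) \cong H^*(X_\unr (L)^w)$ that assembles $W(L,\mf s)$-equivariantly. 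The two sides of the claimed isomorphism are then visibly equal.

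The main obstacle is the first paragraph: producing a single Fr\'echet algebra $\mc A^{\mf s}$ that is simultaneously close enough to $C_r^* (G)^{\mf s}$ to share its topological K-theory and close enough to $\mc S (G)^{\mf s}$ to share its periodic cyclic homology. Once the Plancherel-theoretic model is in place (smooth versus continuous sections over a compact orbifold), this reduces to a holomorphic-functional-calculus check for a concrete algebra of operator-valued functions on $X_\unr (M) / W(M,\mf d)$, but it requires careful tracking of the bundle structure coming from the 2-cocycles $\natural_{\mf s}$ and the action of $W(L,\mf s)$ across the different Harish-Chandra blocks in $\Delta_G^{\mf s}$.
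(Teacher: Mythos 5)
Your plan is essentially the paper's argument, modulo one organizational shortcut the paper takes. The paper does \emph{not} construct an intermediate smooth subalgebra $\mc A^{\mf s}$ from scratch; the entire first paragraph of your proposal (the part you flag as the main obstacle) is replaced by two citations to earlier work \cite{SolHPadic}: equation (3.2) there gives $K_*(C_r^*(G)^{\mf s}) \cong K_*(\mc S(G,K)^{\mf s})$ directly, and Theorem 3.2 there establishes that the Chern character $K_*(\mc S(G,K)^{\mf s}) \otimes_\Z \C \to HP_*(\mc S(G,K)^{\mf s})$ is already an isomorphism, without needing to interpolate with a further algebra $\mc A^{\mf s}$. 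What you describe is roughly the content of the argument in \cite{SolHPadic} (where the Plancherel picture of $\mc S(G,K)^{\mf d}$ as smooth sections of a bundle over $X_\unr(M)/W(M,\mf d)$ is indeed used), so you are proposing to inline that proof rather than cite it; the obstacle you identify is real but is not left open by the paper. From there both arguments proceed identically: use \eqref{eq:1.9} (Theorem \ref{thm:F}) to rewrite $HP_*(\mc S(G,K)^{\mf s})$, observe it equals $HP_*(C^\infty(X_\unr(L)) \rtimes \C[W(L,\mf s),\natural_{\mf s}])$, and apply the equivariant (delocalised) Chern character of \cite{BaCo} to identify this with $K^*_{W(L,\mf s),\natural_{\mf s}}(X_\unr(L)) \otimes_\Z \C$. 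Your cohomological formula for twisted equivariant K-theory is the same Chern character stated as a formula, and your observation about the $W(L,\mf s)$-equivariant deformation retraction $X_\unr(L) \hookrightarrow X_\nr(L)$ matching $H^*_{dR}(X_\nr(L)^w) \cong H^*(X_\unr(L)^w)$ is correct and also appears in the paper (just before Corollary \ref{cor:1.3}).
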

Here $K^*_{W,\natural}$ denotes $W$-equivariant K-theory, twisted by a 2-cocycle $\natural$.
Theorem \ref{thm:G} confirms \cite[Conjecture 5]{ABPS2}, modulo torsion elements in the K-groups.\\

\textbf{Relation with previous work and outlook}\\
The Hochschild homology of $\mc H (G)$ has been determined earlier in \cite{Nis1}. The methods
of Nistor are completely different from ours, he obtains a description of $HH_n (\mc H (G))$
in terms of several algebraic subgroups of $G$ and of the continuous group cohomology of
certain modules. This arises from a generalization of the standard techniques for discrete
groups, a filtration of $\mc H (G)$ as bimodule, and spectral sequences. In \cite[\S 6]{Nis1}
a ``parabolic induction map" $HH_n (\mc H (G)) \to HH_n (\mc H (M))$ is constructed, for a
Levi subgroup $M$ of $G$. It would be interesting to relate this to our methods and results,
maybe that could provide some information about supercuspidal representations.

A technique prominent in Nistor's work is localization of $HH_* (\mc H (G))$ at conjugacy
classes in $G$. That can be regarded as a higher order version of taking the trace of a
representation at a conjugacy class. Of particular interest is the localization of 
$HH_* (\mc H (G))$ at the set of compact elements of $G$, for that yields the periodic
cyclic homology $HP_* (\mc H (G))$ \cite{HiNi}. While localization at one conjugacy class 
in $G$ appears to be intractable in our setup, localization at all compact elements is
within reach. Since every compact element lies in the kernel of every unramified character,
such localization removes all differential forms that are not locally constant on (subvarieties 
of) $X_\nr (L)$. Moreover, in the description from Theorem \ref{thm:C} the locally constant
differential forms constitute a set of representatives for $HP_* (\mc H (G))$, that follows
from Lemma \ref{lem:1.4} (and with Theorem \ref{thm:F} it also works for $\mc S (G)$). 
Hence the localization of $HH_* (\mc H (G))$ at the compact elements of $G$ is given precisely 
by the subspace of locally constant differential forms.

At the same time $HP_* (\mc H (G))$ is naturally isomorphic to the equivariant homology of
the Bruhat--Tits building of $G$, which yields yet another, more geometric, picture of
$HH_* (\mc H (G))$ and $HH_* (\mc S (G))$. It would be nice if the Bernstein decomposition of 
$\mc H (G)$ and of $\mc S (G)$ could be expressed in such geometric terms, as suggested in 
\cite{BHP}.\\

\textbf{Structure of the paper}\\
This paper is part of a larger project that includes \cite{SolTwist} and \cite{KaSo}.
Initially those two and the current text were conceived as one paper. When that grew too 
big, two parts were split off and transformed into independent papers. Although neither 
\cite{SolTwist} nor \cite{KaSo} deals with $p$-adic groups, both prepare for this paper. 
Many results in Section \ref{sec:HeckeG} rely on the study of the Hochschild homology of slightly 
simpler algebras in \cite{SolTwist}. In Section \ref{sec:HHSG} we need several nontrivial
results about topological algebras and modules involving smooth functions. These are 
formulated and proven in larger generality in \cite{KaSo}.

Section \ref{sec:familiesG} is preparatory, its main purpose is to describe precisely what
kind of families of representations we will use. Already there we see that it is
convenient to replace $\mc H (G)^{\mf s}$ by its subalgebra of functions that are biinvariant
under a well-chosen compact open subgroup $K$.

We start our investigations of the Hecke algebra in earnest by transforming it into simpler
algebras via Morita equivalences, in Paragraph \ref{par:structure}. This relies largely on
\cite{SolEnd}, but we go a little further and establish Theorem \ref{thm:A}. In Paragraph
\ref{par:HHHG} we set up a good array of algebraic families of $G$-representations, and
we approach $HH_n (\mc H (G)^{\mf s})$ via formal completions at central characters. That
yields a rough description in terms of differential forms on varieties like $X_\nr (M)$,
not yet indexed by $W(L,\mf s)$ as desired, but already sufficient for Theorem \ref{thm:D}.i.
The local results thus obtained are glued together in Paragraph \ref{par:component}. When
that is done, Theorems \ref{thm:B}, \ref{thm:C} and \ref{thm:E} for $\mc H (G)^{\mf s}$
follow quickly.

For the Schwartz algebra $\mc S (G)$ no such simplifying Morita equivalences are available,
but Harish-Chandra's Plancherel isomorphism from Theorem \ref{thm:3.1} works better than
for $\mc H (G)$. Our main technique to determine $HH_n (\mc S (G)^{\mf s})$ is to derive
it from $HH_n (\mc H (G)^{\mf s})$ via a comparison of formal completions with respect to
central characters. To carry out that strategy completely, we need to check that the 
relevant modules are Fr\'echet spaces, which is done in Paragraph \ref{par:top}. In
Paragraph \ref{par:computation} we first show Theorem \ref{thm:E}.ii, so that we can work
with $C^\infty (X_\unr (M))^{W(M,\mf d)}$-modules. That plays a role in the proof of
Theorem \ref{thm:C}.ii, from which Theorem \ref{thm:D}.ii follows readily. Then
we establish Theorem \ref{thm:B}.ii and we compare $HH_n (\mc S (G)^{\mf d})$ with
$HH_n (\mc H (G)^{\mf s})^{\mf d}$.

Section \ref{sec:cyclic} contains the derivation of the (periodic) cyclic homology of
$\mc H (G)^{\mf s}$ and of $\mc S (G)^{\mf s}$. We also draw conclusions for the topological
K-theory of $G$. 
In the final section we work out the examples $G = SL_2 (F)$ and $G = GL_n (F)$.\\

\textbf{Acknowledgements.}\\
We thank David Kazhdan, Roman Bezrukavnikov and Alexander Braverman for interesting email
discussions about group algebras of reductive $p$-adic groups. We are grateful to Roger
Plymen for his feedback on an earlier version, which inspired us to add the last two sections.
We also thank the referee for his or her encouraging report.

\numberwithin{equation}{section}

\section{Algebraic families of $G$-representations}
\label{sec:familiesG}

Let $\mc G$ be a connected reductive group defined over a non-archimedean local field $F$,
and consider the group of rational points $G = \mc G (F)$. Let Rep$(G)$ be the category of smooth 
$G$-representations and let $\mr{Rep}_f (G)$ be the subcategory of finite length representations. 
Let $R(G)$ be the Grothendieck group of $\mr{Rep}_f (G)$. By imposing temperedness we obtain 
the category $\mr{Rep}_f^t (G)$ and the Grothendieck group $R^t (G)$.

We fix a Haar measure on $G$ we let $\mc H (G)$ be the algebra of locally constant compactly
supported complex-valued functions on $G$, endowed with the convolution product. Recall that the 
Schwartz algebra $\mc S (G)$ \cite[\S III.6]{Wal} satisfies $\Irr (\mc S (G)) = \Irr^t (G)$,
where the latter denotes the space of irreducible tempered $G$-representations.
We fix a compact open subgroup $K$ of $G$ and we consider the algebras $\mc H (G,K)$ and 
$\mc S (G,K)$ of $K$-biinvariant functions in, respectively, $\mc H (G)$ and $\mc S (G)$. 
By definition 
\[
\mc H (G) = \varinjlim\nolimits_K \mc H (G,K) \quad \text{and} \quad  
\mc S (G) = \varinjlim\nolimits_K \mc S (G,K) ,
\]
where the inductive limit runs over the set of all compact subgroups $K$ of $G$, partially ordered 
by reverse inclusion.

Let $X_\nr (G)$ be the group of unramified characters of $G$ and let $X_\unr (G)$ be the subgroup
of unitary unramified characters. The first is a complex algebraic torus and the second is a compact
real torus of the same dimension.

Let $P$ be a parabolic subgroup of $G$ with a Levi factor $M$, and let $I_P^G : \mr{Rep} (M) \to
\mr{Rep} (G)$ be the normalized parabolic induction functor.
Let $\sigma \in \mr{Rep}^t_f (M)$ and suppose that the space $I_P^G (V_\sigma )^K$, which 
has finite dimension by the admissibility of $I_P^G (\sigma)$, is nonzero. Since $I_P^G (V_\sigma )$ 
can be realized as a space of functions on a good maximal compact subgroup of $G$, we may identify 
the vector spaces
\[
I_P^G (V_\sigma )^K \quad \text{and} \quad I_P^G (V_\sigma \otimes \chi )^K 
\quad \text{for } \chi \in X_\nr (M).
\]
Every $f \in \mc S (G,K)$ gives a family of operators $I_P^G (\sigma \otimes \chi)(f)$ 
on $I_P^G (V_\sigma )^K$, parametrized by $\chi \in X_\unr (M)$. It turns out 
\cite[Proposition VII.1.3]{Wal} that $I_P^G (\sigma \otimes \chi)(f)$ depends smoothly on $\chi$. 
When $f \in \mc H (G,K)$, this even works for all $\chi \in X_\nr (M)$, and the outcome depends
algebraically on $\chi$. More precisely, this enables us to define algebra homomorphisms
\begin{equation}\label{eq:3.9}
\begin{array}{cccc}
\mc F_{M,\sigma} : & \mc H (G,K) & \to & 
\mc O (X_\nr (M)) \otimes \mr{End}_\C (I_P^G (V_\sigma )^K) \\
\mc F^t_{M,\sigma} : & \mc S (G,K) & \to & 
C^\infty (X_\unr (M)) \otimes \mr{End}_\C (I_P^G (V_\sigma)^K) \\
 & f & \mapsto & [\chi \mapsto I_P^G (\sigma \otimes \chi)(f)]
\end{array}.
\end{equation}
Recall the natural pairing
\begin{equation}\label{eq:3.6}
\begin{array}{ccc}
HH_0 (\mc H (G,K)) \times \mr{Rep}_f (G) & \to & \C \\
(h,\pi) & \mapsto & \mr{tr} (\pi (h),V_\pi) = \mr{tr} (\pi (h), V_\pi^K) 
\end{array} .
\end{equation}
This and its analogue for $\mc S (G,K)$ induce bilinear maps
\begin{equation}\label{eq:3.11}
\begin{array}{ccc}
HH_0 (\mc H (G,K)) \times \C \otimes_\Z R(G) & \to & \C , \\
HH_0 (\mc S (G,K)) \times \C \otimes_\Z R^t (G) & \to & \C .
\end{array}
\end{equation}
We say that a linear function $f$ on $\C \otimes_\Z R(G)$ is regular if
\[
X_\nr (M) \to \C : \chi \mapsto f (I_P^G (\sigma \otimes \chi)) \quad
\text{is a regular function,}
\]
for all $(M,\sigma)$ as above. Similarly we call $f \in (\C \otimes_\Z R^t (G) )^*$ smooth if
\[
X_\unr (M) \to \C : \chi \mapsto f (I_P^G (\sigma \otimes \chi)) \quad
\text{is a smooth function,}
\]
for all $(M,\sigma)$ as above. We write
\[
\begin{array}{lll}
(\C \otimes_\Z R (G) )^*_{\mr{reg}} & = & \{ f \in (\C \otimes_\Z R (G) )^* : f \text{ is regular} \} ,\\
(\C \otimes_\Z R^t (G) )^*_\infty & = & \{ f \in (\C \otimes_\Z R^t (G) )^* : f \text{ is smooth} \} . 
\end{array}
\]
With these notations, \eqref{eq:3.9} and \eqref{eq:3.11} induces maps
\begin{equation}\label{eq:3.12}
\begin{array}{ccc}
HH_0 (\mc H (G,K)) & \to & (\C \otimes_\Z R (G) )^*_{\mr{reg}} ,\\
HH_0 (\mc S (G,K)) & \to & (\C \otimes_\Z R^t (G) )^*_\infty .
\end{array}
\end{equation}
It is easy to see that the former is a homomorphism of $Z(\mc H (G,K))$-modules and that the
latter is is a homomorphism of $Z(\mc S (G,K))$-modules.

The normalized parabolic induction functor $I_P^G$ induces a $\Z$-linear map
\[
I_M^G : R(M) \to R(G) .
\]
It may be denoted this way, because given a Levi subgroup $M$ of $G$ it does not depend on the choice
of the parabolic subgroup $P$ with Levi factor $M$. We define
\[
R_I (G) = R(G) \; \cap \; \Q \otimes_\Z \sum\nolimits_{M \subsetneq G} I_M^G (R (M)) ,
\]
where the sum runs over all proper Levi subgroups $M$ of $G$. We say that a finite dimensional 
$G$-representation is elliptic if it admits a central character and does not belong to $R_I (G)$.
By \cite[Proposition 3.1]{BDK} every Bernstein component of $\Irr (G)$ contains only a finite number
of $X_\nr (G)$-orbits of irreducible elliptic representations. It follows from the Langlands 
classification that every such $X_\nr (G)$-orbit contains a tempered $G$-representation. 

\begin{defn}\label{def:3}
Let $\eta \in \Irr (M)$ be elliptic and tempered. Then
\[
\mf F (M,\eta) = \{ I_P^G (\eta \otimes \chi) : \chi \in X_\nr (M) \}
\]
is an algebraic family of $G$-representations. Its dimension is $\dim_\C (X_\nr (M))$, that is,
the dimension of the maximal split torus in $Z(M)$. The subset 
\[
\mf F^t (M,\eta) = \{ I_P^G (\eta \otimes \chi) : \chi \in X_\unr (M) \}
\]
is a tempered algebraic family of $G$-representations, also of dimension\\ 
$\dim_\R (X_\unr (M)) = \dim_\C (X_\nr (M))$.
\end{defn}

We fix a minimal parabolic subgroup $P_0$ of $G$ and a maximal split torus $S_0$ of $P_0$. 
A parabolic (resp. Levi) subgroup of $G$ is standard if it contains $P_0$ (resp. $S_0$). 
In the above definition it suffices to consider standard parabolic and standard Levi subgroups of
$G$, because every pair $(P,M)$ is $G$-conjugate to a standard such pair. 

Consider a Bernstein block Rep$(G)^{\mf s}$ of Rep$(G)$, determined by a tempered supercuspidal
representation of a standard Levi subgroup $L$ of $G$. Let $R(G)^{\mf s}$ be the 
Grothendieck group of $\mr{Rep}_f (G)^{\mf s}$. Similarly we define $R^t (G)^{\mf s}$ as the
Grothendieck group of the category $\mr{Rep}_f^t (G)^{\mf s}$ of tempered modules in
$\mr{Rep}_f (G)^{\mf s}$. If we restrict to standard 
parabolic/Levi subgroups of $G$ (as we will often do tacitly), Rep$(G)^{\mf s}$ contains only 
finitely algebraic families of $G$-representations as in Definition \ref{def:3}.
Moreover, by \cite[Corollary 3.1]{BDK} these families span $\Q \otimes_\Z R(G)^{\mf s}$. 

We want to minimize the redundancy, by choosing a smaller collection of algebraic families of 
$G$-representations. One step in that direction is to determine which members of an algebraic family 
are equivalent in $R(G)$. To that end we briefly recall Harish-Chandra's Plancherel isomorphism 
for $G$ \cite{Wal}.

Consider a Levi subgroup $M$ of $G$ and an irreducible square-integrable modulo centre representation
$(\delta, V_\delta)$ of $M$. Harish-Chandra's disjointness theorem \cite[Proposition III.4.1]{Wal}
asserts that every irreducible tempered $G$-representation is a direct summand of $I_P^G (\delta)$
for such a pair $(M,\delta)$, which moreover is unique up to $G$-conjugation. Irreducible 
square-integrable modulo centre representations of $M$ become discrete series upon restriction to
the derived subgroup of $M$, so the only way to deform them continuously is twisting with unitary
unramified characters of $M$. Therefore the connected components of the space $\Irr^t (G)$ 
are parametrized by pairs $(M,\delta)$ modulo the equivalence relation
\begin{equation}\label{eq:3.4}
(M,\delta) \sim (g M g^{-1}, g \cdot (\delta \otimes \chi)) \qquad g \in G, \chi \in X_\unr (M).
\end{equation}
We denote such an equivalence class by $\mf d = [M,\delta]$. (When $\delta$ is supercuspidal, we
also have the equivalence class $\mf s = [M,\delta]$, which includes the tensoring by non-unitary
unramified characters and determines an entire Bernstein component of $\Irr (G)$.)

Let $P$ be a parabolic subgroup of $G$ with Levi factor $M$ and let $\chi \in X_\unr (M)$. To
$(M,\delta,\chi)$ we associate the tempered $G$-representation $I_P^G (\delta \otimes \chi)$, 
whose isomorphism class does not depend on the choice of $P$. Then the connected 
component of $\Irr^t (G)$ associated to $(M,\delta)$ consists of the irreducible summands 
(or equivalently subquotients) of the representations $I_P^G (\delta \otimes \chi)$ with 
$\chi \in X_\unr (M)$. 

The Plancherel isomorphism describes the image of $\mc F^t_{M,\delta}$, as the invariants for 
an action of a certain finite group. The group 
\[
X_\nr (M,\delta) = \{ \chi \in X_\nr (M) : \delta \otimes \chi \cong \delta \}
\]
is finite and contained in $X_\unr (M)$, because it consists of characters that are trivial on
$Z(M)$. Consider the subset 
\[
\Irr (M)^{\mf d} = \{ \delta \otimes \chi : \chi \in X_\nr (M) \} \text{ of } \Irr (M). 
\]
The map $\chi \mapsto \delta \otimes \chi$ provides a diffeomorphism 
\begin{equation}\label{eq:3.1}
X_\nr (M) / X_\unr (M,\delta) \to \Irr (M)^{\mf d} .
\end{equation}
It is not canonical, because it depends on the choice of $\delta$ in $\Irr (M)^{\mf d}_t$. 
For each $\chi' \in X_\nr (M,\delta)$ we fix a unitary $M$-isomorphism $\delta \cong \delta 
\otimes \chi'$, and we induce it to a family of $G$-isomorphisms
\begin{equation}\label{eq:3.2}
I(\chi',P,\delta,\chi) : I_P^G (\delta \otimes \chi) \to I_P^G (\delta \otimes \chi' \chi) .
\end{equation}
We write
\[
W_{\mf d} = \{ w \in N_G (M)/M : w \text{ stabilizes } \Irr (M)^{\mf d} \} . 
\]
By \cite[Lemma 3.3]{SolComp} the action of an element $w \in W_{\mf d}$ on $\Irr (M)^{\mf d}$ can
be lifted (non-canonically) along \eqref{eq:3.1}, to an automorphism of the complex algebraic
variety $X_\nr (M)$ such that 
\[
w \cdot (\delta \otimes \chi) \cong \delta \otimes w (\chi)  \qquad
\text{for all } \chi \in X_\unr (M) .
\]
By \cite[Lemme V.3.1]{Wal} there exists a unitary $G$-isomorphism
\begin{equation}\label{eq:3.3}
I(w,P,\delta,\chi) : I_P^G (\delta \otimes \chi) \to I_P^G (\delta \otimes w (\chi)) ,
\end{equation}
depending smoothly and rationally on $\chi \in X_\unr (M)$. Let $W(M,\mf d)$ be the group of
transformations of $X_\nr (M)$ generated by $X_\nr (M,\delta)$ and the actions of elements of 
$W_{\mf d}$. Now we apply \cite[Lemma 3.3]{SolComp} to the covering of tori
\[
X_\nr (M) \to X_\nr (M) / X_\nr (M,\delta), 
\]
and we obtain a short exact sequence
\begin{equation}\label{eq:3.10}
1 \to X_\nr (M,\delta) \to W(M,\mf d) \to W_{\mf d} \to 1 .
\end{equation}
The intertwining operators \eqref{eq:3.2} and \eqref{eq:3.3} give rise to analogous families of
$G$-iso\-mor\-phisms for any element of $W(M,\mf d)$. These are far from unique, but
for any fixed $\chi \in X_\unr (M)$ they are unique up to scalars. The group 
$W(M,\mf d)$ acts on $C^\infty (X_\unr (M)) \otimes \mr{End}_\C (I_P^G (V_\delta)^K)$ by
\[
(w \cdot (f \otimes A))(\chi) = 
f ( w^{-1} \chi) \otimes I(w,P,\delta,w^{-1}\chi) A I(w,P,\delta,w^{-1}\chi)^{-1} .
\]
Let $\Delta_{G,K}$ be a set of representatives for the $(M,\delta)$ with $I_P^G (V_\delta)^K
\neq 0$, modulo the equivalence relation \eqref{eq:3.4}. We assume that every $M$ occurring here
is a Levi factor of a standard parabolic subgroup $P$.

\begin{thm}\label{thm:3.1} \textup{\cite[\S VIII.1]{Wal}} \\
There is an isomorphism of Fr\'echet algebras
\[
\begin{array}{ccl}
\mc S (G,K) & \to & \bigoplus\nolimits_{(M,\delta) \in \Delta_{G,K}} \Big( C^\infty (X_\unr (M)) 
\otimes \mr{End}_\C \big( I_P^G (V_\delta)^K \big) \Big)^{W(M,\mf d)} \\
f & \mapsto & \bigoplus\nolimits_{(M,\delta) \in \Delta_{G,K}} \mc F^t_{M,\delta}(f)
\end{array}.
\]
\end{thm}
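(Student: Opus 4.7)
Since this is attributed to Waldspurger, my plan is to outline the strategy that his book carries out in detail. The main point is to construct both the map from $\mc{S}(G,K)$ to the invariant smooth sections and its inverse, a wave packet operator, and show they are mutually inverse Fr\'echet-algebra homomorphisms.

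First I would verify that $\mc{F}^t_{M,\delta}$ is well defined as a map into $C^\infty(X_\unr(M)) \otimes \End_\C(I_P^G(V_\delta)^K)$: smoothness in $\chi$ for $f \in \mc{S}(G,K)$ is exactly \cite[Proposition VII.1.3]{Wal}, and the image is $W(M,\mf{d})$-invariant because the intertwining operators $I(w,P,\delta,\chi)$ of \eqref{eq:3.2} and \eqref{eq:3.3} conjugate $I_P^G(\delta \otimes \chi)(f)$ to $I_P^G(\delta \otimes w(\chi))(f)$. That the map is an algebra homomorphism follows from $I_P^G(\delta \otimes \chi)(f_1 * f_2) = I_P^G(\delta \otimes \chi)(f_1) \, I_P^G(\delta \otimes \chi)(f_2)$. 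Fr\'echet continuity in one direction reduces to the fact that for fixed $f$, the operator norm of $I_P^G(\delta \otimes \chi)(f)$ is bounded by Schwartz-type seminorms of $f$, with derivatives in $\chi$ giving seminorms involving polynomial weights on $G$.

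Next, to get the inverse, I would construct the wave packet operator. For a $W(M,\mf{d})$-invariant section $\phi_{M,\delta} \in C^\infty(X_\unr(M)) \otimes \End_\C(I_P^G(V_\delta)^K)$, set
\[
\Psi(\phi)(g) = \sum_{(M,\delta) \in \Delta_{G,K}} d(\delta) \int_{X_\unr(M)} \mr{tr}\!\bigl( \phi_{M,\delta}(\chi) \circ I_P^G(\delta \otimes \chi)(g^{-1}) \bigr) \, d\mu(\chi),
\]
where $d\mu$ is Harish-Chandra's Plancherel measure on the tempered dual component and $d(\delta)$ is the formal degree. The hard part is showing $\Psi(\phi) \in \mc{S}(G,K)$: this requires Harish-Chandra's asymptotic expansion of matrix coefficients of $I_P^G(\delta \otimes \chi)$, the weak inequality, and careful estimates on how the integral over $X_\unr(M)$ interacts with the Harish-Chandra $\Xi$-function. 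The combinatorics of reconciling sums over different Levis uses the Maass--Selberg relations and residue calculus of the Plancherel measure near walls, which is the technical heart of \cite[\S V]{Wal}.

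Then I would establish that $\Psi$ is the two-sided inverse of $f \mapsto \bigoplus_{(M,\delta)} \mc{F}^t_{M,\delta}(f)$. The composition $\mc{F}^t \circ \Psi = \mr{id}$ is the Plancherel inversion formula, proved by Fourier-inverting over $X_\unr(M)$ against the orthogonality relations for matrix coefficients of the family $\{I_P^G(\delta \otimes \chi) : \chi \in X_\unr(M)\}$. For $\Psi \circ \mc{F}^t = \mr{id}$, use that the irreducible tempered representations occurring in the families $I_P^G(\delta \otimes \chi)$ as $(M,\delta,\chi)$ varies exhaust $\Irr^t(G)$ (each exactly once up to the $W(M,\mf{d})$-action, by \eqref{eq:3.4}), so equality of Fourier transforms forces pointwise equality after inversion. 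Injectivity of $\mc{F}^t$ alone follows from density of matrix coefficients of tempered representations in the trace-class dual of $\mc{S}(G,K)$. The main obstacle in this whole plan is the convergence and Schwartz-decay estimates for the wave packets, which is where one genuinely needs Harish-Chandra's full analytic machinery and cannot shortcut the argument.
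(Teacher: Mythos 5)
The paper does not prove Theorem \ref{thm:3.1}; it is stated with a citation to Waldspurger \cite[\S VIII.1]{Wal} and used as a black box. So there is no ``paper's own proof'' to compare against — the relevant comparison is with Waldspurger's argument, and your outline is a faithful high-level summary of it: verify that the Fourier transform lands in the $W(M,\mf d)$-invariants and is a continuous algebra map, build the wave packet operator as a candidate inverse, and then establish Plancherel inversion via orthogonality of matrix coefficients, asymptotic expansions, the weak inequality, and the Maass--Selberg relations. One small imprecision worth flagging: the Plancherel density $d\mu(\chi)$ in your wave packet formula already incorporates Harish-Chandra's $\mu$-function (the $c$-function factors), and the normalization constant is not simply the formal degree $d(\delta)$ alone but the combination $d(\delta)\,\mu(\delta\otimes\chi)$ together with combinatorial factors $|W(M,\mf d)|^{-1}$; separating $d(\delta)$ out as you write it is fine only if $d\mu$ is understood to already contain the $\mu$-function. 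Also, the claim that ``the operator norm of $I_P^G(\delta\otimes\chi)(f)$ is bounded by Schwartz seminorms'' is the easy direction; the genuinely delicate point for Fr\'echet continuity is that \emph{all} derivatives in $\chi$ of $\mc F^t_{M,\delta}(f)$ are controlled by seminorms of $f$, which is precisely \cite[Prop.\ VII.1.3]{Wal} as you cite. With those caveats your proposal is an accurate roadmap to the cited theorem, and appropriately identifies the wave packet estimates as the analytic core that cannot be abbreviated.
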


An important ingredient of the proof of this theorem is Harish-Chandra's commuting algebra
theorem \cite[Theorem 5.5.3.2]{Sil}, a description of the involved spaces
of $G$-homomorphisms. Namely, for $\chi_1,\chi_2 \in X_\unr (M)$:
\begin{equation}\label{eq:3.5}
\mr{Hom}_G \big( I_P^G (\delta \otimes \chi), I_P^G (\delta \otimes \chi') \big) =
\mr{span} \{ I (w,P,\delta,\chi) : w \in W(M,\mf d), w(\chi) = \chi' \} .
\end{equation}
Consider an algebraic family $\mf F (M',\eta')$ contained in Rep$(G)^{\mf s}$. We may assume
that $M' \supset M$ and $\eta' \subset I^{M'}_{M' \cap P_0 M}(\delta \otimes \chi_0)$ for
some $(M,\delta) \in \Delta_{G,K}$. Let $W(M',M,\eta')$ be the subgroup of $W(M,\mf s)$
that stabilizes 
\[
\mf F (\mc M',\eta') = \{ I_{P_0 M'}^G (\eta' \otimes \chi') : \chi' \in X_\nr (M') \}, 
\]
with respect to the action via the intertwining operators $I(w,P,\delta,?)$.

\begin{lem}\label{lem:3.2}
Two members of $\mf F (M',\eta')$ in the same $W(M',M,\eta')$-orbit have the same trace. 
Two generic members of $\mf F (M',\eta')$ have the same trace if and only if they belong
to the same $W(M',M,\eta')$-orbit. Here a generic point if $\mf F (M',\eta')$ means: if an 
element $w \in W(M,\mf s)$ fixes the point or the intertwining operator associated to $w$ has 
a singularity at the cuspidal support of the point, then then $w$ has that property for all 
members of $\mf F (M',\eta')$. 
\end{lem}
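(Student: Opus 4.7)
The plan is to treat the two directions separately, with the substance concentrated in the converse, where Harish-Chandra's commuting algebra theorem \eqref{eq:3.5} does the heavy lifting.

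For the forward direction, suppose $\chi'_2 = w \cdot \chi'_1$ with $w \in W(M',M,\eta')$. By the very definition of $W(M',M,\eta')$, the intertwining operator $I(w,P,\delta,\chi_0 \chi'_1)$ sends the subspace of $I_P^G(\delta \otimes \chi_0 \chi'_1)$ realizing $I_{P_0 M'}^G(\eta' \otimes \chi'_1)$ isomorphically onto the corresponding subspace for $\chi'_2$. Isomorphic $G$-representations have identical characters, and the rational dependence of these intertwining operators on $\chi'$ extends the equality of traces from a Zariski-dense set to all of $X_\nr(M')$.

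For the converse, let $\pi_i = I_{P_0 M'}^G(\eta' \otimes \chi'_i)$, $i = 1, 2$, be generic members with equal trace. Genericity guarantees a Zariski-open subset of $X_\nr(M')$ on which the stabilizers in $W(M,\mf s)$ and the pole loci of the intertwining operators are constant; on this set each $\pi_i$ is semisimple with uniform, multiplicity-free decomposition inside the ambient $I_P^G(\delta \otimes \chi_0 \chi'_i)$. Equality of traces then upgrades to a $G$-isomorphism $\pi_1 \cong \pi_2$. Composing with the embedding $\pi_1 \hookrightarrow I_P^G(\delta \otimes \chi_0 \chi'_1)$ and the projection $I_P^G(\delta \otimes \chi_0 \chi'_2) \twoheadrightarrow \pi_2$ (both of which are regular on the generic locus by semisimplicity) produces a nonzero element of
\[
\Hom_G \bigl( I_P^G(\delta \otimes \chi_0 \chi'_1),\, I_P^G(\delta \otimes \chi_0 \chi'_2) \bigr) .
\]
By \eqref{eq:3.5} this element is a linear combination $\sum_{w} c_w \, I(w,P,\delta,\chi_0 \chi'_1)$ taken over those $w$ with $w(\chi_0 \chi'_1) = \chi_0 \chi'_2$.

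At a generic $\chi'_1$ the surviving $I(w,P,\delta,\chi_0 \chi'_1)$ are nonsingular and linearly independent, so each summand with $c_w \neq 0$ gives on its own a $G$-homomorphism between the ambient representations that sends the $\eta'$-subspace for $\chi'_1$ into its counterpart for $\chi'_2$. Such a $w$ therefore stabilizes $\mf F(M', \eta')$, i.e.\ lies in $W(M',M,\eta')$, and satisfies $w \cdot \chi'_1 = \chi'_2$. The main obstacle will be justifying precisely this extraction step: one must argue that in a decomposition $\sum c_w \, I(w,P,\delta,\cdot)$ each nonzero component, not just the whole sum, must preserve the $\eta'$-subspace. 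This is where genericity is decisive, since the various $I(w,P,\delta,\chi_0 \chi'_1)$ land on distinct isotypic constituents of the semisimple target, and so the projection of a morphism onto each isotypic piece — corresponding to a single $w$ — must independently respect the inclusion of $\pi_1$.
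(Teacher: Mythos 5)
Your overall strategy matches the paper's: both directions go through Harish-Chandra's commuting algebra theorem \eqref{eq:3.5} and genericity, and the forward direction is extended to all of $X_\nr(M')$ by algebraicity of the trace. However, there are two concrete issues.

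First, you apply \eqref{eq:3.5} to a pair $\chi_0\chi'_1,\chi_0\chi'_2 \in X_\nr(M)$, but as stated in the paper \eqref{eq:3.5} is only asserted for $\chi_1,\chi_2 \in X_\unr(M)$. The extension to non-unitary but generic $\chi$ (avoiding the poles of the $I(w,P,\delta,\chi)$) is precisely what the paper's appeal to \cite[Theorem 1.6]{ABPS1} supplies, and the argument does need it: you cannot directly quote \eqref{eq:3.5} at a generic algebraic (non-tempered) point of the family.

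Second, the justification you give for the extraction step is not correct as stated. You claim the various $I(w,P,\delta,\chi_0\chi'_1)$ with $w(\chi_0\chi'_1)=\chi_0\chi'_2$ ``land on distinct isotypic constituents'' of $I_P^G(\delta\otimes\chi_0\chi'_2)$. In fact the opposite is what happens and is what makes the argument work: each $I(w,\cdot)$ is a $G$-isomorphism, so it carries $\pi_1$ onto a subrepresentation of $I_P^G(\delta\otimes\chi_0\chi'_2)$ isomorphic to $\pi_1\cong\pi_2$; by multiplicity-freeness at a generic point there is exactly one such subrepresentation, namely $\pi_2$. Hence \emph{every} $w$ appearing in \eqref{eq:3.5} (not just those with $c_w\neq 0$) sends the $\eta'$-subspace for $\chi'_1$ to the one for $\chi'_2$, so any such $w$ lies in $W(M',M,\eta')$ and satisfies $w\cdot\chi'_1=\chi'_2$. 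Your ``distinct constituents'' reasoning would not even yield the conclusion you want, since a morphism whose projection onto each isotypic piece involves only a single $w$ is not automatically forced to preserve the $\eta'$-subspaces. With these two repairs the argument is essentially the paper's.
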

\begin{proof}
The action of $w \in W(M,\mf d)$ on the collection of direct summands of the 
$I_P^G (\delta \otimes \chi)$ comes from an algebraic action on $X_\nr (M)$ and conjugation
by some operator. Hence, for a generic $\pi = I_{P_0 M'}^G (\eta' \otimes \chi') \in 
\mf F (M',\eta')$, the representation $w \pi$ lies in $\mf F (M',\eta')$ if and 
only if $w \in W(M',M,\eta')$. In combination with \eqref{eq:3.5}, that implies the second
claim for generic tempered members of $\mf F (M',\eta')$. 

In fact Harish-Chandra's commuting algebra theorem \eqref{eq:3.5} also holds for generic 
$\chi_1,\chi_2 \in X_\nr (M)$, one only needs to avoid the poles of the intertwining operators 
$I (w,P,\delta,\chi)$. This follows for instance from \cite[Theorem 1.6]{ABPS1}. Then the 
above argument can be applied to all generic members, and yields the first claim.

For any $f \in \mc H (G)$ and $w \in W(M',M,\eta')$,
\[
\mr{tr} \big( f, I_{P_0 M'}^G (\eta' \otimes \chi') \big) \quad \text{ and } \quad
\mr{tr} \big( f, I_{P_0 M'}^G (\eta' \otimes w \chi') \big)
\]
are algebraic functions of $\chi' \in X_\nr (M')$. These two functions agree for generic 
tempered $\chi' \in X_\unr (M')$, so they agree on the whole of $X_\nr (M')$. 
\end{proof}

Now we can finally describe how to choose a minimal set of algebraic families of $G$-representations
in Rep$(G)^{\mf s}$.

We start with the family $\mf F (L,\sigma)$ and proceed recursively. Suppose that for every
dimension $D > d$ we have chosen a set of $D$-dimensional algebraic families $\mf F (M_i,\omega_i)$, 
where $i$ runs through some index set $I_D$, with the following property:
for generic $\chi_i \in X_\nr (M_i)$ the set 
\[
\big\{ I_{P_j}^G (\omega_j \otimes \chi_j) : j \in I_D, D > d, \mb{Sc}(I_{P_j}^G (\omega_j \otimes 
\chi_j)) = \mb{Sc}(I_{P_i}^G (\omega_i \otimes \chi_i)) \big\}
\]
is linearly independent in $\Q \otimes_\Z R(G)^{\mf s}$. Here we regard all $\chi_j$ in one 
$W(M_j,M,\omega_j)$-orbit as the same, because by Lemma \ref{lem:3.2} they yield the same
element $I_{P_j}^G (\omega_j \otimes \chi_j)$ in $R(G)^{\mf s}$.

Next we consider the set of $d$-dimensional algebraic families $\mf F (M'_i,\omega'_i)$ with
$(P'_i,M'_i)$ standard. Suppose that for generic $\chi'_i \in X_\nr (M'_i)$, the representation
$I_{P'_i}^G (\omega'_i \otimes \chi'_i)$ is $\Q$-linearly independent from
\[
\big\{ I_{P_j}^G (\omega_j \otimes \chi_j) : j \in I_D, D > d, \mb{Sc}(I_{P_j}^G (\omega_j \otimes \chi_j))
= \mb{Sc}(I_{P'_i}^G (\omega'_i \otimes \chi'_i)) \big\} ,
\]
were we still regard $\chi_j$ as an element of $X_\nr (M_j) / W(M_j ,M,\omega_j)$.
Then we add $\mf F (M'_i,\omega'_i)$ to our collection of algebraic families.

Consider the remaining $d$-dimensional algebraic families. For $\mf F (M'_j, \omega'_j)$ we look at 
the same condition as for $\mf F (M'_i,\omega'_i)$, but now with respect to the index set 
$\cup_{D > d} I_D \cup \{i'\}$ instead of $\cup_{D > d} I_D$. If that condition is fulfilled, we add 
$\mf F (M'_j, \omega'_j)$ to our set of algebraic families. We continue this process until none of the
remaining $d$-dimensional algebraic families is (over generic points of that family) $\Q$-linearly
independent from the algebraic families that we chose already. At that point our set of 
$d$-dimensional algebraic families is complete, and we move on to families of dimension $d-1$.

In the end, this algorithm yields a collection 
\[
\{ \mf F (M_i,\omega_i) : i \in I_d, 0 \leq d \leq \dim X_\nr (L) \}
\]
such that:
\begin{itemize}
\item the representations
\begin{equation}\label{eq:3.13}
\{ I_{P_i}^G (\omega_i \otimes \chi_i ), i \in \cup_d I_d, \chi_i \in X_\nr (M_i) \}
\end{equation}
span $\Q \otimes_\Z R(G)^{\mf s}$,
\item if we remove any index from $\cup_d I_d$, the previous bullet does not hold any more,
\item for generic $\chi \in X_\nr (M_i)$, $I_{P_i}^G (\omega_i \otimes \chi_i)$ does not belong
to the span in $\Q \otimes_\Z R(G)^{\mf s}$ of the other families $\mf F (M_j, \omega_j)$.
\end{itemize}
We note that these conditions do not imply that \eqref{eq:3.13} is a basis of
$\Q \otimes_\Z R(G)^{\mf s}$. Some linear dependence is still possible for representations
with a specific cuspidal support $(L,\sigma \otimes \chi)$, namely when the algebraic R-group
of $\sigma \otimes \chi$ acts on $I_{P_0 L}^G (\sigma \otimes \chi)$ via a projective,
non-linear representation. That does not happen often though.

The formula \eqref{eq:3.9} for the partial Fourier transform $\mc F_{M,\delta}$ also applies
with any elliptic $M$-representation instead of $\delta$ (which is square-integrable modulo 
centre). For each $i \in \cup_{d=0}^{\dim X_\nr (L)} I_d$, this provides algebra homomorphisms
\begin{equation}\label{eq:3.7}
\begin{array}{cccc}
\mc F_{M_i,\eta_i} : & \mc H (G,K) & \to & 
\mc O (X_\nr (M_i)) \otimes \mr{End}_\C (I_{P_i}^G (V_{\eta_i})^K) \\
\mc F^t_{M_i,\eta_i} : & \mc S (G,K) & \to & 
C^\infty (X_\unr (M_i)) \otimes \mr{End}_\C (I_{P_i}^G (V_{\eta_i})^K) \\
 & f & \mapsto & [\chi \mapsto I_{P_i}^G (\eta_i \otimes \chi)(f)]
\end{array}.
\end{equation}
These induce maps on Hochschild homology
\begin{equation}\label{eq:3.8}
\begin{array}{cccc}
HH_n ( \mc F_{M_i,\eta_i}) : & HH_n (\mc H (G,K)) & \to & \Omega^n (X_\nr (M_i)) , \\
HH_n ( \mc F^t_{M_i,\eta_i}) : & HH_n (\mc S (G,K)) & \to & \Omega^n_{sm} (X_\unr (M_i)) .
\end{array}
\end{equation}
We added a subscript sm to emphasize that we consider smooth differential forms
on a real manifold. We will describe $HH_n (\mc H (G,K))$ and $HH_n (\mc S (G,K))$ in terms
of the maps \eqref{eq:3.8}.

\section{The Hecke algebra of $G$}
\label{sec:HeckeG}

Let $\mf B (G)$ be the set of inertial equivalence classes $\mf s = [L,\sigma ]_G$.
Let $\mc H (G)^{\mf s}$ be the two-sided ideal of $\mc H (G)$ corresponding to $\Rep (G)^{\mf s}$,
so that
\begin{equation}\label{eq:5.14}
\mc H (G) = \bigoplus\nolimits_{\mf s \in \mf B (G)} \mc H (G)^{\mf s}.
\end{equation}
At this point we need the following continuity property of the functors $HH_n$
from \cite[E.1.13]{Lod}. Namely, let $A = \varinjlim_i A_i$ be an inductive limit of algebras. Then
\begin{equation}\label{eq:5.9}
HH_n (A) \cong \varinjlim\nolimits_i HH_n (A_i) \qquad n \in \Z_{\geq 0}.
\end{equation}
In particular
\begin{equation}\label{eq:5.13}
HH_n (\mc H (G)) = \bigoplus\nolimits_{\mf s \in \mf B (G)} HH_n (\mc H (G)^{\mf s}) .
\end{equation}
We fix a Bernstein block $\Rep (G)^{\mf s}$ in $\Rep (G)$, where $\mf s = [L,\sigma]_G$.
According to \cite{BeDe} there exist arbitrarily small compact open subgroups $K$ of $G$ such that
\[
\mc H (G,K)^{\mf s} = \mc H (G)^{\mf s} \cap \mc H (G,K)
\]
is Morita equivalent with $\mc H (G)^{\mf s}$. Notice that $\mc H (G,K)^{\mf s}$ is unital
but $\mc H (G)^{\mf s}$ is not. In fact $\mc H (G)^{\mf s}$ is the direct limit of the algebras
$\mc H (G,K_i)^{\mf s}$, where each $K_i$ has the same property as $K$ and $\cap_{i=1}^\infty K_i
= \{1\}$. All the inclusions $\mc H (G,K_i)^{\mf s} \to \mc H (G,K_j)^{\mf s}$ induce isomorphisms
on Hochschild homology, by Morita invariance. Applying \eqref{eq:5.9} another time, we find
\[
HH_n (\mc H (G)^{\mf s}) \cong \lim_{i \to \infty} HH_n (\mc H (G,K_i )^{\mf s}) \cong
HH_n (\mc H (G,K)^{\mf s}).
\]
The centre of $\mc H (G,K)^{\mf s}$ is isomorphic to the centre of the category $\Rep (G)^{\mf s}$. 
The latter can be made more explicit with the notations from Section \ref{sec:familiesG}. 
Namely, \eqref{eq:3.1} induces an algebra isomorphism
\[
\mc O (\Irr (L)^{\mf s}) \cong \mc O (X_\nr (L) / X_\nr (L,\sigma)) = 
\mc O (X_\nr (L) )^{X_\nr (L,\sigma)} .
\]
An instance of the short exact sequence \eqref{eq:3.10} gives
\[
1 \to X_\nr (L,\sigma) \to W(L,\mf s) \to W_{\mf s} \to 1 ,
\]
By \cite[Proposition 3.14]{BeDe} and \eqref{eq:3.10} there are isomorphisms
\begin{equation}\label{eq:4.2}
Z( \mc H (G,K)^{\mf s}) \cong Z (\Rep (G)^{\mf s}) \cong 
\mc O (\Irr (L)^{\mf s})^{W_{\mf s}} \cong \mc O (X_\nr (L) )^{W (L,\mf s)} .
\end{equation}
It is also known from \cite[\S 3.13]{BeDe} that 
\begin{equation}\label{eq:6.9}
\mc H (G,K)^{\mf s} \text{ has finite rank as } Z(\mc H (G,K)^{\mf s}) \text{-module.}
\end{equation}

\subsection{Structure of the module category} \ 
\label{par:structure}

We aim to describe $\mc H (G,K)^{\mf s}$ and its modules locally on $X_\nr (L)$. We write 
\[
X_\nr^+ (L) = \mr{Hom}(L,\R_{>0})
\]
and we fix $u \in X_\unr (L)$. Let $W(L,\mf s)_u$ be the stabilizer of $u$ in $W(L,\mf s)$. 
We let $U_u \subset X_\nr (L)$ be a connected neighborhood of 
$u$ in $X_\nr (L)$ (for the analytic topology) satisfying \cite[Condition 6.3]{SolEnd}:
\begin{itemize}
\item $U_u$ is stable under $W(L,\mf s)_u$ and under $X_\nr^+ (L)$,
\item $W(L,\mf s)_u \cap U_u = \{u\}$,
\item a technical condition to ensure that $u$ is the ``most singular" point of $U_u$.
\end{itemize} 
The tangent space at 1 of the complex torus $X_\nr (L)$ is 
\[
\mf t := \C \otimes_\Z X^* (L),
\]
and the exponential map $\exp : \mf t \to X_\nr (L)$ is equivariant for $W(L,\mf s)_1$. We modify
it to a $W(L,\mf s)_u$-equivariant map
\[
\begin{array}{cccc}
\exp_u : & \mf t & \to & X_\nr (L) \\
& \lambda & \mapsto & u \exp (\lambda) 
\end{array}.
\]
This means that we regard $\mf t$ also as the tangent space of $X_\nr (L)$ at $u$. 
Let $\log_u$ be the branch of $\exp_u^{-1}$ with $\log_u (u) = 0$. By \cite[Condition 6.3]{SolEnd}
$\exp_u$ restricts to a diffeomorphism, $\log_u (U_u) \to U_u$. 
From \cite[\S 7]{SolEnd} we get a root system $\Phi_u$ in $\mf t$, whose Weyl group is a subgroup
of $W(L,\mf s)_u$, a basis $\Delta_u$ of $\Phi_u$, a parameter function $k^u : \Delta_u \to
\R_{\geq 0}$ and a 2-cocyle $\natural_u$ of $W(L,\mf s) / W (\Phi_u)$. In \cite{SolEnd} some of
these objects have a subscript $\sigma \otimes u$ instead of $u$, but since $W(L,\mf s)_u$ is
naturally isomorphic with $(W_{\mf s})_{\sigma \otimes u}$, we may omit $\sigma \otimes$.
To these data one can associate a twisted graded Hecke algebra
$\mh H (\mf t, W(L,\mf s)_u, k^u, \natural_u)$.
For any Levi subgroup $M$ of $G$ containing $L$, there is a parabolic subalgebra
$\mh H (\mf t, W(M,L,\mf s)_u, k^u, \natural_u)$, constructed in the same way.

\begin{thm}\textup{\cite[Corollary 8.1 and its proof, Proposition 9.5.a]{SolEnd}}
\label{thm:5.1}\\
There is an equivalence between the following categories:
\begin{itemize}
\item finite length $G$-representations, all whose irreducible subquotients have cuspidal
support in $(L,\sigma \otimes W(L,\mf s) U_u) = \{ (L,\sigma \otimes \chi) : \chi \in W(L,\mf s) U_u \}$,
\item finite length right $\mh H (\mf t, W(L,\mf s)_u, k^u, \natural_u)$-modules, all whose\\
$\mc O (\mf t)^{W(L,\mf s)_u}$-weights belong to $\log_u (U_u)$.
\end{itemize}
This equivalence of categories commutes with parabolic induction and preserves temperedness.
\end{thm}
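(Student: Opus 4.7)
The plan is to reduce to the structural theorems of \cite{SolEnd}. First, I would invoke the Morita equivalence between $\mc H (G)^{\mf s}$ and $\End_G(\Pi_{\mf s})^{op}$. The category of finite length $G$-representations with cuspidal support in $(L, \sigma \otimes W(L,\mf s) U_u)$ then corresponds to finite length $\End_G(\Pi_{\mf s})^{op}$-modules whose central characters (for $Z \cong \mc O(X_\nr(L))^{W(L,\mf s)}$) are supported on the $W(L,\mf s)$-orbit of $U_u$. The choice of $U_u$ satisfying \cite[Condition 6.3]{SolEnd} ensures that this orbit decomposes cleanly: since $W(L,\mf s)_u \cap U_u = \{u\}$, the local picture near any point of the orbit is governed by the stabilizer $W(L,\mf s)_u$ and the contribution near $u$ itself.

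Next, I would apply \cite[Corollary 8.1]{SolEnd} to identify the formal completion of $\End_G(\Pi_{\mf s})^{op}$ at the $W(L,\mf s)$-orbit through $u$ with the formal completion of the twisted graded Hecke algebra $\mh H (\mf t, W(L,\mf s)_u, k^u, \natural_u)$ at $0$, transported via the diffeomorphism $\exp_u : \log_u(U_u) \to U_u$. Combined with \eqref{eq:6.9} (finite-generation over the centre), this lifts to an equivalence of the corresponding module categories of finite length: modules with $Z$-weights in $W(L,\mf s) U_u$ on one side correspond to modules with $\mc O(\mf t)^{W(L,\mf s)_u}$-weights in $\log_u(U_u)$ on the other. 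The $W(L,\mf s)_u$-stability of $U_u$ is exactly what makes the weight constraint on the Hecke side well-defined.

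For compatibility with parabolic induction, I would invoke \cite[Proposition 9.5.a]{SolEnd}, which asserts that the equivalence intertwines $I_P^G$ (for $P = MU$ with $L \subset M$) with parabolic induction from the parabolic subalgebra $\mh H(\mf t, W(M,L,\mf s)_u, k^u, \natural_u)$ of $\mh H(\mf t, W(L,\mf s)_u, k^u, \natural_u)$. Preservation of temperedness would be checked using the polar decomposition $X_\nr(L) = X_\unr(L) \cdot X_\nr^+(L)$: tempered $G$-representations correspond to $\End_G(\Pi_{\mf s})^{op}$-modules whose central character lies in the compact part, and under $\log_u$ this translates to the graded Hecke algebra condition that all $\mc O(\mf t)^{W(L,\mf s)_u}$-weights have purely imaginary real part in a suitable sense, recovering the usual Casselman criterion for $\mh H$-modules.

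The main substantive obstacle, already handled in \cite{SolEnd}, is upgrading the comparison from a match of centres and Morita-type data to a genuine isomorphism of formal completions that carries the 2-cocycle $\natural_u$ and parameters $k^u$ correctly; this requires controlling poles of the intertwining operators near $u$ and is the content of \cite[\S 7--8]{SolEnd}. Once that local isomorphism is in hand, the passage to an equivalence of module categories with prescribed central character support is formal, and the compatibility assertions reduce to checking that parabolic induction and temperedness are characterized intrinsically on each side by data (Levi subgroups, weights) that correspond under $\exp_u$.
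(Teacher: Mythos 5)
The paper does not prove this theorem; it is imported directly by citation to \cite[Corollary 8.1, Proposition 9.5.a]{SolEnd}, and the proof steps are only sketched afterwards in Construction \ref{cons:1}. Your outline follows that sketch at a high level, but you substitute \emph{formal completion} where the actual device is \emph{analytic localization}, and this is not a harmless paraphrase. The theorem concerns modules whose $\mc O (X_\nr (L))^{W(L,\mf s)}$-weights range over the whole open set $W(L,\mf s) U_u$, not just the finite orbit $W(L,\mf s)u$. The mechanism in Construction \ref{cons:1}(ii)--(iv) is to extend the centre to the ring of invariant analytic functions $C^{an}(W(L,\mf s) U_u)^{W(L,\mf s)}$ on $\End_G(\Pi_{\mf s})^{op}$, and analogously to $C^{an}(\log_u(U_u))^{W(L,\mf s)_u}$ on $\mh H^G_u$, together with the check (step (v)) that this does not change the categories of finite length modules with weights in the localizing set. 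Formal completion at a finite orbit would only capture modules supported on that orbit; it is used later in the paper (Propositions \ref{prop:5.2} and \ref{prop:6.3}) as a pointwise probe and then glued, but it is not what furnishes the equivalence of Theorem \ref{thm:5.1} itself.

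Two smaller points. You gesture at the reduction to the stabilizer $W(L,\mf s)_u$ but do not name the actual mechanism, namely passing to the Morita equivalent corner algebra $1_{U_u} \End_G(\Pi_{\mf s})^{op}_{W(L,\mf s) U_u} 1_{U_u}$ (Construction \ref{cons:1}(iii)); this idempotent truncation is exactly what trades a $W(L,\mf s)$-equivariant picture for a $W(L,\mf s)_u$-equivariant one. And the attributions to \cite{SolEnd} are swapped: the text around Theorem \ref{thm:5.1} and the proof of Theorem \ref{thm:4.1}(e) credit \cite[Lemma 6.6 and Proposition 7.3]{SolEnd} for compatibility with parabolic induction, while \cite[Proposition 9.5]{SolEnd} is the input for preservation of temperedness and for matching square-integrable modulo centre representations with essentially discrete series.
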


The opposite algebra of $\mh H (\mf t, W(L,\mf s)_u, k^u, \natural_u)$ is naturally isomorphic to
\[
\mh H^G_u := \mh H (\mf t, W(L,\mf s)_u, k^u, \natural_u^{-1}).
\] 
via the simple map 
\begin{equation}\label{eq:5.1}
T_w f \mapsto T_{w^{-1}} f \qquad w \in W(L,\mf s)_u, , f \in \mc O (\mf t).
\end{equation}
Hence we may replace right $\mh H (\mf t, W(L,\mf s)_u, k^u, \natural_u)$-modules
by left $\mh H^G_u$-modules in Theorem \ref{thm:5.1}. We note that 
$Z(\mc H (G,K)^{\mf s}) \cong \mc O (X_\nr (L))^{W(L,\mf s)}$ corresponds to\\ 
$Z(\mh H^G_u) \cong \mc O (\mf t)^{W(L,\mf s)_u}$ via the maps $\exp_u$ and $\log_u$.
For later use we sketch the steps taken in \cite{SolEnd} to obtain the algebra $\mh H^G_u$ 
in Theorem \ref{thm:5.1}.

\begin{construct}\label{cons:1}
\begin{enumerate}[(i)]
\item Let $\Pi_{\mf s}$ be the progenerator of Rep$(G)^{\mf s}$ from \eqref{eq:4}. Then there are 
equivalences of categories
\begin{equation}\label{eq:4.1}
\Mod (\mc H (G,K)^{\mf s}) \cong \Rep (G)^{\mf s} \cong \Mod (\End_G (\Pi_{\mf s})^{op} ) .
\end{equation}
In particular the centres of the algebras $\mc H (G,K)^{\mf s}$ and $\End_G (\Pi_{\mf s})^{op}$ 
are canonically isomorphic.
\item Localize $\mr{End}_G (\Pi_{\mf s})^{op}$ at $W(L,\mf s) U_u$ by extending its centre
$\mc O (X_\nr (L))^{W(L,\mf s)}$ to the algebra of $W(L,\mf s)$-invariant analytic functions
$C^{an}(W(L,\mf s) U_u)^{W(L,\mf s)}$ on $W(L,\mf s) U_u$. We call the resulting algebra
$\mr{End}_G (\Pi_{\mf s})^{op}_{W(L,\mf s) U_u}$.
\item The maximal commutative subalgebra of $\mr{End}_G (\Pi_{\mf s})^{op}_{W(L,\mf s) U_u}$ is 
\[
C^{an}(W(L,\mf s) U_u) = \bigoplus\nolimits_{w \in W(L,\mf s) / W(L,\mf s)_u} C^{an}(w U_u) .
\]
The algebra $1_{U_u} \mr{End}_G (\Pi_{\mf s})^{op}_{W(L,\mf s) U_u} 1_{U_u}$ is a Morita 
equivalent subalgebra of $\mr{End}_G (\Pi_{\mf s})^{op}_{W(L,\mf s) U_u}$.
\item Localize $\mh H^G_u$ at $\log_u (U_u)$ by extending its centre $\mc O (\mf t)^{W(L,\mf s)_u}$
to the algebra $C^{an}(\log_u (U_u))^{W(L,\mf s)_u}$, and call the result $\mh H^G_{U_u}$.
\item Check that the above localizations do not change the categories of finite dimensional
modules with $\mc O (X_\nr L))$-weights (respectively $\mc O (\mf t)$-weights) in the set on 
which one localizes.
\item Show that the isomorphism 
\[
C^{an}(\log_u (U_u))^{W(L,\mf s)_u} \cong C^{an}(W(L,\mf s)_u U_u)^{W(L,\mf s)_u} \cong
C^{an}(W(L,\mf s) U_u)^{W(L,\mf s)}
\]
induced by $\exp_u$ induces an isomorphism
\begin{equation}\label{eq:5.16}
1_{U_u} \mr{End}_G (\Pi_{\mf s})^{op}_{W(L,\mf s) U_u} 1_{U_u} \cong \mh H^G_{U_u} .
\end{equation} 
\end{enumerate}
\end{construct}
To make full use of Theorem \ref{thm:5.1}, we also need a variation on step (iii) above. 
We will construct an algebra $\mh H^G_{W(L,\mf s) u}$ which is Morita equivalent with $\mh H^G_u$
and closer to $\End_G (\Pi_{\mf s})^{op}$ than $\mh H^G_u$. We start with
$\bigoplus_{w u \in W(L,\mf s)u} \mh H^G_{wu}$. In this algebra the unit element of
$\mh H^G_{wu}$ is denoted $e_{wu}$. For every element $wu \in W(L,\mf s)u$ we fix a $w$ which 
has minimal length in $w W(L,\mf s)_u$ (see \cite[end of \S 3]{SolEnd} for the definition of 
the length function). From \cite[Lemma 8.3]{SolEnd} we get an isomorphism
\[
\mr{Ad} (\mc T_w) : \mh H (\mf t, W(L,\mf s)_u, k^u, \natural_u) \to
\mh H (\mf t, W(L,\mf s)_{wu}, k^{wu}, \natural_{wu}) ,
\]
and hence also an isomorphism between their opposite algebras:
\[
\mr{Ad} (\mc T_w) : \mh H^G_u \to \mh H^G_{wu} .
\]
The advantage of this particular isomorphism comes from \cite[Lemma 8.3.b]{SolEnd}:
\[
\mr{Ad}(\mc T_w )^* : \Rep (\mh H^G_{wu}) \to \Rep (\mh H^G_u)
\]
intertwines Theorem \ref{thm:5.1} for $wu$ with Theorem \ref{thm:5.1} for $u$.
Ad$(\mc T_w)$ really is conjugation by an element $\mc T_w$ in a larger algebra, and satisfies:
\begin{align*}
& \mr{Ad}(\mc T_w) f = f \circ w^{-1} \quad \text{for } f \in \mc O (\mf t) = 
\mc O \big( T_u (X_\nr (L)) \big) ,\\
& \mr{Ad}(\mc T_w) \big|_{\C [W(L,\mf s)_u,\natural_u^{-1}]} = \text{ conjugation by }
T_w \text{ in } \C [W(L,\mf s), \natural_{\mf s}] .
\end{align*}
Here we use the 2-cocycle $\natural_{\mf s} = \natural^{-1}$ of $W(L,\mf s)$ from 
\cite[Lemma 5.7]{SolEnd}, which by \cite[Lemma 7.1]{SolEnd} extends $\natural_u^{-1}$.
As vector spaces we define
\[
\mh H^G_{W(L,\mf s) u} = \bigoplus\nolimits_{wu \in W(L,\mf s) u} \big( \mh H^G_{wu} \otimes
\bigoplus\nolimits_{\tilde w u \in W(L,\mf s) u} \C e_{wu} \mc T_w \mc T_{\tilde w}^{-1} 
e_{\tilde w u} \big) ,
\]
The multiplication of $\mh H^G_{W(L,\mf s) u}$ is given by
\begin{multline*}
(h_1 \otimes e_{w_1 u} \mc T_{w_1} \mc T_{w_2}^{-1} e_{w_2 u})
(h_2 \otimes e_{w_2 u} \mc T_{w_2} \mc T_{w_3}^{-1} e_{w_3 u}) = \\
h_1 \mr{Ad}(\mc T_{w_1}) \mr{Ad}(\mc T_{w_2})^{-1}(h_2) \otimes 
e_{w_1 u} \mc T_{w_1} \mc T_{w_3}^{-1} e_{w_3 u} ,
\end{multline*}
where $h_i \in \mh H^G_{w_i u}$ and all the $w_i$ are as chosen above. The elements 
$e_{wu} \mc T_w \mc T_{\tilde w}^{-1} e_{\tilde w u}$ of $\mh H^G_{W(L,\mf s)u}$ multiply like 
matrices with just one nonzero entry. It follows readily that 
\begin{equation}\label{eq:5.17}
\mh H^G_{W(L,\mf s)u} \cong M_{|W(L,\mf s)u|} \big( \mh H^G_u \big) .
\end{equation} 
We note that this algebra is of the form $\mh H (V,G,k,\natural)$, as in 
\cite[\S 2.3]{SolTwist}. The centre of this algebra is
\begin{multline}\label{eq:5.18}
Z \big( \mh H^G_{W(L,\mf s)u} \big) \cong \big( \bigoplus\nolimits_{wu \in W(L,\mf s)u} 
\mc O \big( T_{wu}(X_\nr (L)) \big) \big)^{W(L,\mf s)} \\
\cong \mc O \big( T_u (X_\nr (L)) \big)^{W(L,\mf s)_u} = Z \big( \mh H^G_u \big) .
\end{multline}
Let $\mh H^G_{W(L,\mf s) U_u}$ be the algebra obtained from $\mh H^G_{W(L,\mf s)u}$ by extending
its centre to
\[
\big( \bigoplus\nolimits_{wu \in W(L,\mf s)u} C^{an} ( \log_{wu} (w U_u) ) \big)^{W(L,\mf s)} .
\]
This algebra contains $\mh H^G_{U_u}$ as a Morita equivalent subalgebra, analogous to \eqref{eq:5.17}.

\begin{prop}\label{prop:5.7}
\enuma{
\item The diffeomorphism
\[
\sqcup_{wu \in W(L,\mf s)u} \exp_{wu} : \sqcup_{wu \in W(L,\mf s)u} \log_{wu}(w U_u) \to
W(L,\mf s) U_u
\]
induces an algebra isomorphism $\End_G (\Pi_{\mf s})^{op}_{W(L,\mf s) U_u} \cong
\mh H^G_{W(L,\mf s) U_u}$. That fits in a commutative diagram
\[
\begin{array}{ccccc}
\End_G (\Pi_{\mf s})^{op}_{W(L,\mf s) U_u} & \cong & \mh H^G_{W(L,\mf s) U_u} &
\longleftarrow & \mh H^G_{W(L,\mf s) u} \\
\uparrow & & \uparrow & & \uparrow \\
1_{U_u} \End_G (\Pi_{\mf s})^{op}_{W(L,\mf s) U_u} 1_{U_u} & \cong & \mh H^G_{U_u} &
\longleftarrow & \mh H^G_u 
\end{array}.
\]
\item In this diagram the vertical arrows are inclusions of Morita equivalent subalgebras and
each of the two horizontal arrows induces an equivalence between the categories of finite length
modules all whose weights for \eqref{eq:5.18} belong to $\sqcup_{wu \in W(L,\mf s)u} \log_{wu}(w U_u)$. 
}
\end{prop}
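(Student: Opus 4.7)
The plan is to bootstrap from the bottom row of the diagram, which is essentially given by Construction \ref{cons:1}, and then extend along the $W(L,\mf s)$-orbit of $u$ using the matrix structure \eqref{eq:5.17}. Throughout, the key point is that the finite set $W(L,\mf s)u$ indexes the connected components of $W(L,\mf s)U_u$ in both pictures: geometrically as $\{w U_u\}$, and algebraically via the orthogonal central idempotents $\{e_{wu}\}$. The diffeomorphism $\sqcup_{wu} \exp_{wu}$ thus promotes the local isomorphism \eqref{eq:5.16} (based at the single point $u$) to a global one over the whole orbit.

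For part (a), I will first construct the isomorphism on the right-hand square. Extend the centre of $\End_G(\Pi_{\mf s})^{op}$ to $C^{an}(W(L,\mf s)U_u)^{W(L,\mf s)}$, and use the central idempotents $e_{wu}$ of $C^{an}(W(L,\mf s)U_u) = \bigoplus_{wu} C^{an}(wU_u)$ (see step (iii) of Construction \ref{cons:1}) to decompose
\[
\End_G (\Pi_{\mf s})^{op}_{W(L,\mf s) U_u} \;=\; \bigoplus_{wu,\tilde wu} e_{wu} \End_G (\Pi_{\mf s})^{op}_{W(L,\mf s) U_u} e_{\tilde w u}.
\]
For each diagonal block, I apply Construction \ref{cons:1}(vi) based at $wu$ instead of $u$ to obtain an isomorphism $e_{wu}\End_G(\Pi_{\mf s})^{op}_{W(L,\mf s) U_u}e_{wu} \cong \mh H^G_{wU_u}$. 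For the off-diagonal blocks, I identify $e_{wu}\End_G(\Pi_{\mf s})^{op}_{W(L,\mf s) U_u}e_{\tilde w u}$ with $\mh H^G_{wU_u} \cdot \mc T_w \mc T_{\tilde w}^{-1}$ using the intertwining operators $\mc T_w$ of \cite[Lemma 8.3]{SolEnd} that implement the change of basepoint. The resulting isomorphism of vector spaces is then checked to be multiplicative by comparing with the matrix multiplication rule defining $\mh H^G_{W(L,\mf s) U_u}$; the compatibility of Ad$(\mc T_w)$ with $\mc O(\mf t)$-translation and with $\C[W(L,\mf s)_u,\natural_u^{-1}]$, spelled out just before \eqref{eq:5.17}, is precisely what makes the diagonal and off-diagonal identifications agree. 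Commutativity of the diagram reduces to checking that restricting to $e_u = 1_{U_u}$ recovers \eqref{eq:5.16}, which is immediate from the construction.

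For part (b), the vertical arrows are Morita equivalences because $1_{U_u}$ picks out a single diagonal block of the matrix algebra \eqref{eq:5.17} (or the analytic version of it), and the matrix units $e_{wu}\mc T_w \mc T_{\tilde w}^{-1} e_{\tilde w u}$ provide the explicit Morita bimodule generators; this also transfers back to the left column through the isomorphisms already established. For the horizontal arrows, the statement about module categories is Construction \ref{cons:1}(v) for the bottom row (the localization at a $W(L,\mf s)_u$-stable analytic neighborhood does not change the weight picture for modules supported there), and for the top row it follows by combining the same principle with the matrix equivalence \eqref{eq:5.17}: a finite length module with weights in $\sqcup_{wu}\log_{wu}(wU_u)$ is determined by its $e_u$-component, which is a finite length $\mh H^G_u$-module with weights in $\log_u(U_u)$, and this correspondence intertwines the localization maps in both rows.

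The main obstacle is the bookkeeping in constructing the top isomorphism of (a): one must verify that the identification of off-diagonal blocks via the $\mc T_w$ is independent of the chosen minimal length representatives and compatible with multiplication of elements crossing three different components of $W(L,\mf s)U_u$. This is where the 2-cocycle $\natural_{\mf s}$ extending $\natural_u^{-1}$ from \cite[Lemma 7.1]{SolEnd} is essential, since it controls precisely the scalar ambiguities that appear when composing Ad$(\mc T_{w_1})$Ad$(\mc T_{w_2})^{-1}$ with Ad$(\mc T_{w_2})$Ad$(\mc T_{w_3})^{-1}$.
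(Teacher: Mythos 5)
Your plan is essentially the paper's proof, just unpacked a bit more explicitly. Both hinge on the same two facts from \cite[Lemma 8.3]{SolEnd}: that $\mc T_w 1_{U_u}$ lives in $\End_G(\Pi_{\mf s})^{op}_{W(L,\mf s)U_u}$, and that $\mathrm{Ad}(\mc T_w)$ is an algebra isomorphism between the $1_{U_u}$- and $1_{wU_u}$-corners; and for part (b) both invoke what amounts to \cite[Lemma 7.2.a]{SolEnd} (your Construction \ref{cons:1}(v)) for the localization step and the matrix-unit structure of \eqref{eq:5.17} for the Morita vertical arrows. The one place your plan does slightly more work than the paper: you re-run Construction \ref{cons:1}(vi) at each basepoint $wu$ to build the diagonal blocks and then must check that this agrees with the $\mathrm{Ad}(\mc T_w)$-conjugate of the $u$-block, whereas the paper simply transports the single isomorphism \eqref{eq:5.16} along the fixed $\mc T_w$, so the diagonal identifications are forced rather than re-derived. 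Your flagged worry about ``independence of the minimal length representatives'' is a non-issue as stated, since those representatives are fixed once and for all in the definition of $\mh H^G_{W(L,\mf s)u}$; the content of the multiplicativity check is just the compatibility of the $\mathrm{Ad}(\mc T_w)$'s, which you correctly identify.
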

\begin{proof}
(a) The elements $\mc T_w$ involved in $\mh H^G_{W(L,\mf s)u}$ stem from \cite[\S 5]{SolEnd}.
It was shown in the proof of \cite[Lemma 8.3]{SolEnd} that 
\[
\mc T_w 1_{U_u} \in \End_G (\Pi_{\mf s})^{op}_{W(L,\mf s) U_u}
\]
and that
\[
\mr{Ad}(\mc T_w) : 1_{U_u} \End_G (\Pi_{\mf s})^{op}_{W(L,\mf s) U_u} 1_{U_u} \to
1_{w U_u} \End_G (\Pi_{\mf s})^{op}_{W(L,\mf s) U_u} 1_{w U_u}
\]  
is an algebra isomorphism. It follows that the isomorphism \eqref{eq:5.16} extends canonically
to the required isomorphism.\\
(b) The vertical arrows were already discussed before. The claim about the lower horizontal arrow
was shown in \cite[Lemma 7.2.a]{SolEnd}, based on \cite[Proposition 4.3]{Opd}. The same argument
applies to the upper horizontal arrow.
\end{proof}

We define the parabolic subalgebras of $\mh H^G_u$ to be the analogous algebras 
\[
\mh H^M_u = \mh H (\mf t, W(M,L,\mf s)_u, k^u, \natural_u^{-1}) ,
\]
constructed from Levi subgroups $M$ of $G$ containing $L$, as in \cite[Lemma 7.2.b]{SolEnd}. 
The translation from right to left modules via \eqref{eq:5.1} commutes with parabolic induction.
Namely, for a right $\mh H (\mf t, W(M,L,\mf s)_u, k^u, \natural_u)$-module $V$ there is a
natural isomorphism of $\mh H^G_u$-modules
\begin{equation}\label{eq:5.2}
\begin{array}{ccc}
V \underset{\mh H (\mf t, W(M,L,\mf s)_u, k^u, \natural_u)}{\otimes} \hspace{-11mm}
\mh H (\mf t, W(L,\mf s)_u, k^u, \natural_u) & \to & \mh H^G_u \underset{\mh H^M_u}{\otimes} V \\
v \otimes T_w & \mapsto & T_{w^{-1}} \otimes v
\end{array} \quad v \in V, w \in W(L,\mf s)_u .
\end{equation}
The subalgebras $\mh H^M_u$, with
\[
\mf t^M = \C \otimes_\Z X^* (M) \quad \text{and} \quad
\mf t_M = \C \otimes_\Z X^* (L \cap M_{\mr{der}}) ,
\] 
fulfill the conditions from \cite[pages 13--14]{SolTwist}. Indeed, that follows 
from \eqref{eq:5.2}, Theorem \ref{thm:5.1} and the properties of elliptic $G$-representations 
discussed at the start of Section \ref{sec:familiesG}.

Let $\mf F (M,\eta)$ be an algebraic family in $\Rep (G)^{\mf s}$, with $\eta$ irreducible
and elliptic. We may and will assume that $M$ is standard and we let $P$ be the unique
standard parabolic subgroup of $G$ with Levi factor $M$.
All the representations $I_P^G (\eta \otimes \chi)$ with $\chi \in X_\nr (M)$
admit a central character, so $Z(\mc H (G,K)^{\mf s})$ acts by a character on 
$I_P^G (\eta \otimes \chi)^K$. With \cite[Lemma 2.3]{SolTwist} we see that 
\[
\mc F_{M,\delta} : \mc H (G,K)^{\mf s} \to 
\mc O (X_\nr (M)) \otimes \mr{End}_\C (I_P^G (V_\delta)^K) \\
\]
is a homomorphism of $Z(\mc H (G,K)^{\mf s})$-algebras and that $HH_n (\mc F_{M,\delta})$ is
a homomorphism of $Z(\mc H (G,K)^{\mf s})$-modules.

\hspace{-4mm} Assume that some members of $\mf F (M,\eta)$ have cuspidal support in 
$(L,\sigma \otimes W(L,\mf s) U_u)$. Then the image of $\mf F (M,\eta)$ under Theorem \ref{thm:5.1} 
is an algebraic family $\mf F (M,\tilde \eta)$ of $\mh H^G_u$-modules, where $\tilde \eta
\in \Irr (\mh H^M_u)$ is elliptic and tempered. More precisely Theorem \ref{thm:5.1} 
only applies to an open part of $\mf F (M,\eta)$, and the image of that is the part of
$\mf F (M,\tilde \eta)$ with $\mc O (\mf t)^{W(L,\mf s)_u}$-weights in $\log_u (U_u)$.
By the Langlands classification (for graded Hecke algebras in \cite{Eve}, generalized to our 
setting with the method from \cite[\S 2.2]{SolAHA}) every such family of
$\mh H^G_u$-modules arises from an elliptic representation of 
\begin{equation}\label{eq:5.12}
\mh H_{u,M} = \mh H (\mf t_M, W(M,L,\mf s)_u, k^u, \natural_u^{-1}) .
\end{equation}
Hence there exists a $\lambda \in i \R \otimes_\Z X^* (M)$ such that $\mc O (\mf t^M) \subset
\mh H^M_u$ acts on $\C_\lambda \otimes \tilde \eta$ by evaluation at 0. We may replace 
$\tilde \eta$ by $\C_\lambda \otimes \tilde \eta$ without changing $\mf F (M,\tilde \eta)$.
Then $\tilde \eta$ has $\mc O (\mf t_M)$-weights in 
\begin{equation}\label{eq:5.3}
\R \Delta_{M,u} \subset \R \otimes_\Z X^* (L \cap M_{\mr{der}}) .
\end{equation}
In general the full structure of the algebra $\End_G (\Pi_{\mf s})$ (or its opposite) seems to
be rather complicated. Fortunately, it can be approximated with simpler algebras. The normalized
parabolic induction functor $I_{P_0 L}^G$ gives an embedding 
\[
\mc O (X_\nr (L)) \to \End_G (\Pi_{\mf s}) .
\]
From \eqref{eq:4.1} and \eqref{eq:4.2} we know that 
\[
Z (\End_G (\Pi_{\mf s})) \cong \mc O (X_\nr (L))^{W(L,\mf s)} .
\]
Let $\C (X_\nr (L))$ be the quotient field of $\mc O (X_\nr (L))$, i.e. the field of rational
functions on the complex affine variety $X_\nr (L)$. It is easy to see that the multiplication map
\begin{equation}\label{eq:4.3}
\C (X_\nr (L))^{W(L,\mf s)} \underset{\mc O (X_\nr (L))^{W(L,\mf s)}}{\otimes}
\mc O (X_\nr (L)) \longrightarrow \C (X_\nr (L))
\end{equation}
is a field isomorphism. According to \cite[Corollary 5.8]{SolEnd}, \eqref{eq:4.3} extends to 
an algebra isomorphism
\begin{equation}\label{eq:4.4}
\C (X_\nr (L))^{W(L,\mf s)} \underset{\mc O (X_\nr (L))^{W(L,\mf s)}}{\otimes}
\End_G (\Pi_{\mf s}) \isom \C (X_\nr (L)) \rtimes \C [W(L,\mf s),\natural] .
\end{equation}
With \eqref{eq:5.1} we also obtain the opposite version
\begin{equation}\label{eq:4.5}
\C (X_\nr (L))^{W(L,\mf s)} \underset{\mc O (X_\nr (L))^{W(L,\mf s)}}{\otimes}
\End_G (\Pi_{\mf s})^{op} \isom \C (X_\nr (L)) \rtimes \C [W(L,\mf s),\natural_{\mf s}] .
\end{equation}
Unfortunately the isomorphisms \eqref{eq:4.4} and \eqref{eq:4.5} are not canonical, they depend
on the choice of a suitable $\sigma \in \Irr (L)^{\mf s}$ and on the normalization of
certain intertwining operators. In the remainder of this paragraph we fix those choices.

We emphasize that (except in very special cases)
\begin{equation}\label{eq:4.6}
\End_G (\Pi_{\mf s})^{op} \text{ is not isomorphic with }
\mc O (X_\nr (L)) \rtimes \C [W(L,\mf s),\natural_{\mf s}] .
\end{equation}
Remarkably, it turns out that nevertheless there is a canonical bijection
\begin{equation}\label{eq:4.7}
\zeta^\vee : R(G)^{\mf s} \cong R(\mc H (G,K)^{\mf s}) \longrightarrow
R \big( \mc O (X_\nr (L)) \rtimes \C [W(L,\mf s),\natural_{\mf s}] \big) .
\end{equation}
We describe step-by-step how it is obtained.
\begin{construct}\label{cons:2}
\begin{enumerate}[(i)]
\item With the equivalences of categories \eqref{eq:4.1} we go from $R(G)^{\mf s}$ to
$R (\End_G (\Pi_{\mf s})^{op})$.
\item By decomposing finite length $\End_G (\Pi_{\mf s})^{op}$-modules along their
$\mc O (X_\nr (L))^{W(L,\mf s)}$-weights, it suffices to consider $G$-representations $\pi$
as in Theorem \ref{thm:5.1}. 
\item Via Theorem \ref{thm:5.1} and \eqref{eq:5.1} we obtain the $\mh H_u^G$-module
$1_{U_u} \Hom_G (\Pi_{\mf s}, \pi)$.
\item There is a canonical $\Z$-linear bijection
\[
\zeta_u^\vee : R \big( \mh H_u^G \big) \to 
R \big( \mc O (\mf t) \rtimes \C [W(L,\mf s)_u,\natural_u^{-1}] \big) .
\]
The construction is given in \cite[Theorem 2.4]{SolHomAHA}, while the bijectivity follows
from \cite[Theorem 1.9]{SolK}.
\item The map $\exp_u$ provides a diffeomorphism $\R \otimes_\Z X^* (L) \to  u X_\nr^+ (L)$. 
Translate the action of $\mc O (\mf t)$ on $\zeta_u^\vee \big( 1_{U_u} \Hom_G (\Pi_{\mf s}, \pi) 
\big)$ to an action of $\mc O (X_\nr (L))$, by first replacing $\mc O (\mf t)$ and 
$\mc O (X_\nr (L))$ by analytic functions and then pullback along $\exp_u$. 
This is similar to steps (ii)--(vi) from Construction \ref{cons:1}, and results in a 
$\mc O (X_\nr (L)) \rtimes \C [W(L,\mf s)_u,\natural_u^{-1}]$-module with all
$\mc O (X_\nr (L))$-weights in $u X_\nr^+ (L)$. 
\item From \cite[Lemma 7.1]{SolEnd} we get a canonical algebra isomorphism
\[
\C [W(L,\mf s)_u, \natural_u^{-1}] \longrightarrow \C [W(L,\mf s)_u ,\natural_{\mf s}] .
\]
With that we define
\[
\zeta^\vee (\pi) = \mr{ind}_{\mc O (X_\nr (L)) \rtimes 
\C [W(L,\mf s)_u,\natural_u^{-1}]}^{\mc O (X_\nr (L)) \rtimes \C [W(L,\mf s),\natural_{\mf s}]} 
\zeta_u^\vee \big( 1_{U_u} \Hom_G (\Pi_{\mf s}, \pi) \big) .
\]
\end{enumerate}
\end{construct}

\begin{thm}\label{thm:4.1}
The map $\zeta^\vee$ from \eqref{eq:4.7} has the following properties.
\enuma{
\item $\zeta^\vee$ is $\Z$-linear and bijective.
\item $\pi \in R(G)^{\mf s}$ is tempered if and only if $\zeta^\vee (\pi)$ is tempered
(i.e. all its $\mc O (X_\nr (L))$-weights lie in $X_\unr (L)$).
\item If all the irreducible subquotients of $\pi \in R(G)^{\mf s}$ have cuspidal support
in $\sigma \otimes W(L,\mf s) u X_\nr^+ (L)$, then all $\mc O (X_\nr (L))$-weights of 
$\zeta^\vee (\pi)$ lie in $W(L,\mf s) u X_\nr^+ (L)$.
\item In the setting of (c), suppose that $\pi$ is tempered. Then
\[
\zeta^\vee (\pi) = \mr{ind}_{\mc O (X_\nr (L)) \rtimes \C [W(L,\mf s)_u,\natural_{\mf s}]}^{
\mc O (X_\nr (L)) \rtimes \C [W(L,\mf s),\natural_{\mf s}]} (\C_u \otimes \pi_u) ,
\]
where $\pi_u$ denotes the restriction of $1_{U_u} \Hom_G (\Pi_{\mf s}, \pi)$
to $\C [W(L,\mf s)_u, \natural_{\mf s}]$.
\item $\zeta^\vee$ commutes with parabolic induction and unramified twists, in the sense that
\[
\zeta^\vee (I_P^G  (\chi \otimes \tau)) =
\mr{ind}_{\mc O (X_\nr (L)) \rtimes \C [W(M,L,\mf s),\natural_{\mf s}]}^{
\mc O (X_\nr (L)) \rtimes \C [W(L,\mf s),\natural_{\mf s}]} (\chi \otimes \zeta_M^\vee (\tau)) 
\]
for tempered $\tau \in R(M)^{\mf s}$ and $\chi \in X_\nr (M)$.
}
\end{thm}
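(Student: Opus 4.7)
The plan is to verify each of the five statements by tracing through the six steps of Construction \ref{cons:2} and recording what happens to the Grothendieck class, cuspidal support, weights and temperedness at each step. I would organize the proof around the construction rather than around (a)--(e), because once every step has been analyzed all five assertions follow together.

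For (a), steps (i), (iii) and (v) are equivalences of abelian categories, step (iv) is a $\Z$-linear bijection by \cite[Theorem 2.4]{SolHomAHA} combined with \cite[Theorem 1.9]{SolK}, and steps (ii) and (vi) decompose, respectively reassemble, modules along the $W(L,\mf s)$-orbits in $X_\nr (L)$. The essential point is that $R \bigl( \mc O (X_\nr (L)) \rtimes \C [W(L,\mf s), \natural_{\mf s}] \bigr)$ admits a block decomposition indexed by $W(L,\mf s)$-orbits in $X_\nr (L)$; within each block the canonical induction from $\mc O (X_\nr (L)) \rtimes \C [W(L,\mf s)_u, \natural_{\mf s}]$ is inverse to the functor of taking the $u X_\nr^+ (L)$-component, and this mirrors the analogous decomposition of $R(G)^{\mf s}$ according to the $W(L,\mf s)$-orbit of the unitary part of the cuspidal support. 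Summing over a set of representatives for the $W(L,\mf s)$-orbits on $X_\unr (L)$ therefore produces a bijection.

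For (c), I would trace weights. Step (iii) delivers an $\mh H_u^G$-module whose $\mc O (\mf t)$-weights lie in $\log_u (U_u)$ by Theorem \ref{thm:5.1}. The bijection $\zeta_u^\vee$ of step (iv) preserves $\mc O (\mf t)$-weights up to the $W(L,\mf s)_u$-action, by construction in \cite[Theorem 2.4]{SolHomAHA}. Step (v) exponentiates via $\exp_u$ to obtain a module whose $\mc O (X_\nr (L))$-weights lie in $U_u \subset u X_\nr^+ (L) X_\unr (L)_u$, and induction in step (vi) spreads these weights over the orbit $W(L,\mf s) u X_\nr^+ (L)$, proving (c). For (b), temperedness on both sides amounts to the condition that the $\mc O (X_\nr (L))$-weights all lie in $X_\unr (L)$: Theorem \ref{thm:5.1} preserves it, \cite[Theorem 2.4]{SolHomAHA} preserves it at the level of $\mc O (\mf t)$-weights via the identification $\exp_u$, and induction preserves it as a set-theoretic condition on weights.

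Statement (d) is the specialization to the tempered case: all weights of $1_{U_u} \Hom_G (\Pi_{\mf s}, \pi)$ then lie in the tangent space to $X_\unr (L)$ at $u$, so no $X_\nr^+ (L)$-shift occurs in step (v), and $\zeta^\vee (\pi)$ reduces to the stated induction of $\C_u \otimes \pi_u$. For (e), I would verify step by step that parabolic induction commutes with each step of Construction \ref{cons:2}: for (i) and (iii) this is part of Theorem \ref{thm:5.1}; for (iv) it holds by the induction-in-stages property of $\zeta_u^\vee$ from \cite[Theorem 2.4]{SolHomAHA}; and in (vi) induction from $\mh H^M_u$ to $\mh H^G_u$ matches induction from the parabolic subalgebra $\mc O (X_\nr (L)) \rtimes \C [W(M,L,\mf s), \natural_{\mf s}]$. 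Compatibility with an unramified twist by $\chi \in X_\nr (M)$ is immediate from the $\mc O (X_\nr (M))$-linearity of every step. The main obstacle will be the bookkeeping in step (vi): one must show that the isomorphism $\C [W(L,\mf s)_u, \natural_u^{-1}] \cong \C [W(L,\mf s)_u, \natural_{\mf s}]$ supplied by \cite[Lemma 7.1]{SolEnd} is coherent across different choices of base point in each $W(L,\mf s)$-orbit, so that the assembled map $\zeta^\vee$ is canonical and independent of the choice of $u$.
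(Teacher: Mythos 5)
Your overall strategy -- trace cuspidal support, weights, temperedness and parabolic induction through the six steps of Construction \ref{cons:2} -- is exactly the structure of the paper's proof, and your treatments of (a) and (b) are sound. However, there are two genuine gaps.

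First, in (c) and (d) you misplace where the $X_\nr^+(L)$-shift of weights actually occurs. You assert that step (iv) ``preserves $\mc O(\mf t)$-weights up to the $W(L,\mf s)_u$-action'' and that any $X_\nr^+(L)$-shift would happen in step (v). In fact the opposite is true: step (v) does not change weights at all (it only re-expresses them via $\exp_u$), and it is step (iv) -- the map $\zeta_u^\vee$ -- that shifts weights, by elements of $X_\nr^+(L)$, as recorded in \cite[Theorem 2.4.(3)]{SolHomAHA}. This matters for (d): to get the explicit form $\C_u \otimes \pi_u$ you cannot simply observe that no shift occurs; you need the precise statement of \cite[Theorem 2.4.(4)]{SolHomAHA}, which says that for a tempered input step (iv) returns $\C_0 \otimes \bigl(1_{U_u} \Hom_G(\Pi_{\mf s},\pi)\bigr)\big|_{\C[W(L,\mf s)_u, \natural_u^{-1}]}$. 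Your argument establishes at best that the weights stay put, not that the module factors as this tensor product.

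Second, and more seriously, for (e) you dismiss compatibility with unramified twists as ``immediate from the $\mc O(X_\nr(M))$-linearity of every step.'' It is not: twisting $\pi$ by $\chi \in X_\nr(G)$ moves the cuspidal supports, so the base point $u$ of Construction \ref{cons:2} must be replaced by $\chi\,|\chi|^{-1}u$, the neighbourhood $U_u$ by $U_{\chi|\chi|^{-1}u}$, and the algebras $\mh H^G_u$, $\C[W(L,\mf s)_u,\natural_u^{-1}]$ by their analogues at the new base point. One then has to chase $\log|\chi|$ through $\zeta_u^\vee$ and verify that the $W(L,\mf s)$-invariance of $\chi$ makes the final induction pull the twist outside. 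The paper carries out exactly this half-page explicit computation (using $W(L,\mf s)_{\chi|\chi|^{-1}u} = W(L,\mf s)_u$ and \cite[Theorem 2.4.(2)]{SolHomAHA}); without it, part (e) is not proved. Also your proposed ``main obstacle'' -- coherence of the cocycle identification across base points -- is not where the real work lies; the construction fixes one $u$ per orbit and the issue does not arise.
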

\begin{proof}
(a) Since each step in Construction \ref{cons:2} is $\Z$-linear and bijective, so is $\zeta^\vee$.
The bijectivity of (vi) comes from the Morita equivalence between
\[
C^{an}(U_u) \rtimes \C [W(L,\mf s)_u, \natural_{\mf s}] \quad \text{and} \quad
C^{an}(W(L,\mf s) U_u) \rtimes \C [W(L,\mf s), \natural_{\mf s}] .
\]
(b) By Theorem \ref{thm:5.1}, steps (i)--(iii) respect temperedness. It is known from
\cite[Theorem 2.4]{SolHomAHA} that $\zeta_u^\vee$ in (iv) respects temperedness, and for (v)
that is obvious because the $\mc O (X_\nr (L))$-weights are not changed in that step. Modules
of $\mc O (X_\nr (L)) \rtimes \C [\Gamma,\natural_{\mf s}]$, for any subgroup $\Gamma$ of
$W(L,\mf s)$, are tempered if and only if all their $\mc O (X_\nr (L))$-weights lie in
$X_\unr (L)$. The functor $\mr{ind}_{\mc O (X_\nr (L)) \rtimes \C [W(L,\mf s)_u,
\natural_{\mf s}]}^{\mc O (X_\nr (L)) \rtimes \C [W(L,\mf s),\natural_{\mf s}]}$ respects this,
so step (vi) does that as well.\\
(c) Step (i) translates ``cuspidal support $(L,\sigma \otimes W(L,\mf s) \chi)$" into
``all $\mc O (X_\nr (L))$-weights in $W(L,\mf s) \chi$". After that the only step that changes 
the $\mc O (X_\nr (L))$-weights is (iv), and by \cite[Theorem 2.4.(3)]{SolHomAHA} it only
adjusts $\mc O (X_\nr (L))$-weights by elements of $X_\nr^+ (L)$.\\
(d) The only tricky point is to see that step (iv) of Construction \ref{cons:2} sends 
\[
1_{U_u} \Hom_G (\Pi_{\mf s}, \pi) \quad \text{to} \quad \C_0 \otimes \big( 1_{U_u} 
\Hom_G (\Pi_{\mf s}, \pi) \big) \big|_{\C [W(L,\mf s)_u,\natural_u^{-1}]} ,
\]
where $\C_0$ means that $\mc O (\mf t)$ acts via evaluation at $0 \in \mf t$.
That is the content of \cite[Theorem 2.4.(4)]{SolHomAHA}.\\
(e) First we check that $\zeta^\vee$ respects parabolic induction, at least when the input
is a tempered (virtual) representation tensored with an unramified character.
Steps (i) and (ii) commute with parabolic induction by \cite[Condition 4.1 and Lemma 
6.1]{SolComp}. For step (iii) that follows from \cite[Lemma 6.6 and Proposition 7.3]{SolEnd}.
In step (iv), property (e) is an important part of the construction of $\zeta_u^\vee$ in
\cite[Theorem 2.4]{SolHomAHA}, that is where we need the shape of the input. That step
(v) respects parabolic induction follows from the properties of the isomorphism between the
analytically localized versions of the involved algebras, as in \cite[Proposition 7.3]{SolEnd}.
For step (vi) we obtain the desired behaviour from \cite[Lemma 6.6]{SolEnd}.

The compatibility with unramified twists requires an explicit computation. By the above it
suffices to check that
\begin{align}\label{eq:4.8}
& \zeta^\vee (\chi \otimes \pi) = \chi \otimes \zeta^\vee (\pi) \qquad
\pi \in R^t (G)^{\mf s}, \chi \in X_\nr (G) ,\\
\nonumber \text{where} \quad & (\chi \otimes \zeta^\vee (\pi)) (f T_w) = 
f(\chi) \zeta^\vee (\pi) (f T_w) \qquad f \in \mc O (X_\nr (G)), w \in W(L,\mf s) .
\end{align} 
In steps (i) and (ii), \eqref{eq:4.8} holds by construction, see \cite[\S 6]{SolEnd}. 
In step (iii) the appropriate version of \eqref{eq:4.8} is known from 
\cite[Theorem 2.4.(2)]{SolHomAHA}. Consider the tempered $\End_G (\Pi_{\mf s})^{op}$-module
$\tau := \Hom_G (\Pi_{\mf s},\pi)$. By step (ii) we may assume that all $\mc O (X_\nr (L))$-weights
of $\tau$ lie in $W(L,\mf s) u X_\nr^+ (L)$. Step (iii), with $\chi \, |\chi|^{-1} u$ in the role
of $u$, sends $\chi \otimes \tau$ to 
\begin{equation}\label{eq:4.9}
1_{U_{\chi \, |\chi|^{-1} u}} (\chi \otimes \tau) = 
1_{U_{\chi \, |\chi|^{-1} u}} (|\chi| \otimes \chi \, |\chi|^{-1} \otimes \tau) =
\log |\chi| \otimes 1_{U_{\chi |\chi|^{-1}u}} (\chi \, |\chi|^{-1} \otimes \tau) 
\end{equation}
with $\log |\chi| \in \mf t_\R^{W(L,\mf s)}$ as character of $\mc O (\mf t)$. Step (iv)
transforms \eqref{eq:4.9} to
\[
\log |\chi| \otimes \big( 1_{U_{\chi \, |\chi|^{-1}u}} (\chi \, |\chi|^{-1} \otimes \tau) \big) 
\big|_{\C [W(L,\mf s)_{\chi \, |\chi|^{-1} u}, \natural_{\mf s}]} =
\log |\chi| \otimes ( 1_{U_u} \tau ) \big|_{\C [W(L,\mf s)_u, \natural_u^{-1}]} ,
\]
where the equality holds because $\chi$ is $W(L,\mf s)$-invariant. Now step (v), again with 
respect to $\chi \, |\chi|^{-1} u$, yields 
\[
|\chi| \otimes \chi \, |\chi|^{-1} u \otimes ( 1_{U_u} \tau ) \big|_{\C [W(L,\mf s)_u, 
\natural_u^{-1}]} = \C_{\chi u} \otimes ( 1_{U_u} \tau ) \big|_{\C [W(L,\mf s)_u, \natural_u^{-1}]} .
\]
Finally, using the $W(L,\mf s)$-invariance of $\chi$ again, step (vi) returns
\begin{multline*}
\mr{ind}_{\mc O (X_\nr (L)) \rtimes \C [W(L,\mf s)_u,\natural_u^{-1}]}^{\mc O (X_\nr (L)) \rtimes 
\C [W(L,\mf s),\natural_{\mf s}]} \big( \C_{\chi u} \otimes ( 1_{U_u} \tau ) 
\big|_{\C [W(L,\mf s)_u, \natural_u^{-1}]} \big) = \\
\chi \otimes \mr{ind}_{\mc O (X_\nr (L)) \rtimes \C [W(L,\mf s)_u,\natural_u^{-1}]}^{
\mc O (X_\nr (L)) \rtimes \C [W(L,\mf s),\natural_{\mf s}]} \big( \C_u \otimes ( 1_{U_u} \tau ) 
\big|_{\C [W(L,\mf s)_u, \natural_u^{-1}]} \big) .
\end{multline*}
Similar computations show that the last expression equals $\chi \otimes \zeta^\vee (\pi)$.
\end{proof}

The properties listed in Theorem \ref{thm:4.1} imply for instance that $\zeta^\vee$ maps algebraic
families in $R(G)^{\mf s}$ (or equivalently in $R(\mc H (G,K)^{\mf s})$ to algebraic families
in $R \big( \mc O (X_\nr (L)) \rtimes \C [W(L,\mf s),\natural_{\mf s}] \big)$.

\subsection{Local descriptions of Hochschild homology} \ 
\label{par:HHHG}

In this section we will determine The Hochschild homology of $\mc H (G)$ with a method 
based on the families of $G$-representations from Section \ref{sec:familiesG}. 
With the procedure from page \pageref{eq:3.13} we pick a finite number (say $n_{\mf s}$) of
algebraic families of $G$-representations $\mf F (M_i,\eta_i)$, such that the $\eta_i$
are irreducible and the members of these families span $\Q \otimes_\Z R (G)^{\mf s}$ in a
minimal way. We may assume that each $M_i$ is the standard Levi factor of a standard 
parabolic subgroup $P_i$ of $G$. Writing 
\begin{equation}\label{eq:5.5}
\mc F_{\mf s} = \bigoplus\nolimits_{i=1}^{n_{\mf s}} \mc F_{M_i,\eta_i}
\end{equation}
we obtain a homomorphism of $Z(\mc H (G,K)^{\mf s})$-modules
\[
HH_n (\mc F_{\mf s}) : HH_n (\mc H (G,K)^{\mf s}) \to 
\bigoplus\nolimits_{i=1}^{n_{\mf s}} \Omega^n (X_\nr (M_i)) ,
\]
where $Z(\mc H (G,K)^{\mf s})$ acts on the right hand side via the central characters of
the involved representations $\pi (M_i,\eta_i,\chi_i)$.

We aim to establish an analogue of \cite[Theorems 1.13 and 2.8]{SolTwist} for 
$HH_n (\mc F_{\mf s})$. The families that have no cuspidal supports in $\sigma \otimes
W(L,\mf s)U_u$ can be ignored for the current purposes (we may call them $U_u$-irrelevant).
For the remaining families, as explained above we may assume without loss of generality that 
\eqref{eq:5.3} holds. Select $\chi_{\eta_i} \in u X_\unr (M_i)$ such that  
\begin{equation}\label{eq:5.4}
\mb{Sc}(\eta_i) \in \sigma \otimes W(M,L,\mf s) \chi_{\eta_i} X_\nr^+ (L) .
\end{equation}
By Theorem \ref{thm:5.1} the algebraic families of $\mh H^G_u$-representations 
$\mf F (M_i,\tilde \eta_i)$ span the part of $\Q \otimes_\Z R( \mh H^G_u)$ with 
$\mc O (\mf t)^{W(L,\mf s)_u}$-weights in $\log_u (U_u)$. As $\log_u (U_u)$ contains
$\mf t_\R = \R \otimes_\Z X^* (L)$ and is open in $\mf t$, the geometric structure of 
$\Irr (\mh H^G_u)$ \cite[\S 11]{SolGHA} entails that the $\mf F (M_i, \tilde \eta_i)$ span the 
whole of $\Q \otimes_\Z R( \mh H^G_u )$. Thus we are in the setting of 
\cite[Lemma 4.2--Theorem 4.8]{SolTwist}.

For $g \in W(L,\mf s)_u$ and $v \in \mf t^g$, in \cite[(1.19)]{SolTwist} an element 
\[
\nu_{g,v} \in \C \otimes_\Z R \big( \mc O (X_\nr (L)) \rtimes \C [W(L,\mf s),\natural_{\mf s}] \big)
\]
was defined, as evaluation at $(g,v)$ in one picture of $HH_0 \big( \mc O (X_\nr (L)) \rtimes 
\C [W(L,\mf s),\natural_{\mf s}] \big)$. As in \cite[(2.8)]{SolTwist}, applying 
$(\zeta_u^\vee)^{-1}$ produces a virtual $\mh H^G_u$-representation 
\begin{equation}\label{eq:4.10}
\nu^1_{g,v} = (\zeta_u^\vee)^{-1} \nu_{g,v} = 
\sum\nolimits_{i = 1, U_u\text{-rel}}^{n_{\mf s}} \lambda_{g,i} \mr{tr}\, 
\pi (M_i, \tilde \eta_i, \phi_{g,i}(v)) \qquad g \in W(L,\mf s)_u, v \in \mf t^g ,
\end{equation}
which also occurs in \cite[(2.10)]{SolTwist}. Here the $U_u$-irrelevant indices $i$ are left out 
of the sum, but we may still include by setting $\lambda_{g,i} = 0$ for those $i$. From 
\cite[Lemma 1.10]{SolTwist} we know that each $\phi_{g,i} : \mf t^g \to \mf t^{M_i}$ is given 
by an element of $W(L,\mf s)_u$. Hence $\phi_{g,i}$ induces regular maps
\begin{equation}\label{eq:5.6}
\begin{aligned}
& \phi_{g,i} : u X_\nr (L)^{g,\circ} = \exp_u (\mf t^g) \to \exp_u (\mf t^{M_i}) = u X_\nr (M_i) ,\\
& \chi_{\eta_i}^{-1} \phi_{g,i} : u X_\nr (L)^{g,\circ} \to X_\nr (M_i) .
\end{aligned}
\end{equation}
The $\phi_{g,i}$ are not defined when $i$ is $U_u$-relevant. Using \eqref{eq:5.6} we put
\[
\nu^1_{g,u'} = \sum\nolimits_{i = 1,U_u\text{-rel}}^{n_{\mf s}} \lambda_{g,i} \mr{tr} \, 
\pi (M_i, \eta_i, \chi_{\eta_i}^{-1} \phi_{g,i}(u')) \qquad g \in W(L,\mf s)_u, u' \in U_u .
\] 
Since the right hand side is well-defined for any $u' \in u X_\nr (L)^{g,\circ}$,
we may extend the definition of $\nu^1_{g,u'}$ to such $u'$. The map \eqref{eq:5.6}
induces a homomorphism of $\mc O (X_\nr (L))^{W (L,\mf s)}$-algebras
\begin{equation}\label{eq:5.15}
\chi_{\eta_i}^{-1} \phi_{g,i}^* : \mc O (X_\nr (M_i)) \otimes \End_\C (I_{P_i}^G (V_{\eta_i})^K) 
\to \mc O (u X_\nr (L)^{g,\circ}) \otimes \End_\C (I_{P_i}^G (V_{\eta_i})^K) .
\end{equation}
Here $\mc O (X_\nr (L))^{W (L,\mf s)}$ acts on the domain via the central characters of the 
members of $\mf F (M_i,\eta_i)$, whereas the $\mc O (X_\nr (L))^{W (L,\mf s)}$-module structure 
on the range is given at $\chi \in u X_\nr (L)^{g,\circ})$ by $W(L,\mf s) \chi t_{\eta_i}^+$,
where the central character of $\eta_i$ is represented by $\chi_{\eta_i} t_{\eta_i}^+$ 
with $t_{\eta_i}^+ \in X_\nr^+ (L)$. In other words, the natural module structure on the right 
hand side of \eqref{eq:5.15} is adjusted by the positive part of the central character of $\eta_i$.
When we consider the map on Hochschild homology induced by \eqref{eq:5.15}, the range does not
depend on $i$, but the $\mc O (X_\nr (L))^{W (L,\mf s)}$-module structure still does.
Like in \cite[(1.33) and (2.14)]{SolTwist}, we can combine the maps on Hochschild homology 
induced by the homomorphisms \eqref{eq:5.15} a $\C$-linear map
\begin{multline}\label{eq:5.25}
HH_n (\phi_u^* ) = \bigoplus\nolimits_{g \in \langle W(L,\mf s)_u \rangle} \sum\nolimits_{i = 1, 
U_u\text{-rel}}^{n_{\mf s}} \lambda_{g,i} HH_n (\chi_{\eta_i}^{-1} \phi^*_{g,i}) : \\
\sum\nolimits_{i = 1, U_u\text{-rel}}^{n_{\mf s}} \Omega^n (X_\nr (M_i))
\to \bigoplus\nolimits_{g \in \langle W(L,\mf s)_u \rangle} \Omega^n (u X_\nr (L)^{g,\circ}) .
\end{multline}
The maps $HH_n (\phi_u^*)$, 
for various $u \in X_\unr (L)$, are our main tools to describe $HH_n (\mc H (G,K)^{\mf s})$.

Recall that the formal completion of a commutative algebra $A$ with respect to a finite set
of characters $X$ is denoted $\widehat{A}_X$. With that notation, for 
$u' \in U_u$ there are algebra isomorphisms
\begin{equation}\label{eq:5.8}
\widehat{\mc O (X_\nr (L))}_{W(L,\mf s) u'}^{W(L,\mf s)} \cong 
\widehat{\mc O (X_\nr (L))}_{u'}^{W(L,\mf s)_{u'}} \!
\cong \widehat{\mc O (\mf t)}_{\log_u (u')}^{W(L,\mf s)_{u'}} \cong
\widehat{\mc O (\mf t)}_{W(L,\mf s)_u \log_u (u')}^{W(L,\mf s)_u} . \!
\end{equation}

\begin{prop}\label{prop:5.2}
For $u' \in U_u$ the following modules over the formal completion \eqref{eq:5.8} are isomorphic:
\enuma{
\item $\widehat{\mc O (X_\nr (L))}_{u'}^{W(L,\mf s)_{u'}} \underset{Z(\mc H (G,K)^{\mf s})}{\otimes}
HH_n (\mc H (G,K)^{\mf s})$, 
\item $\widehat{\mc O (\mf t)}_{\log_u (u')}^{W(L,\mf s)_{u'}} 
\underset{Z(\mh H^G_{W(L,\mf s)u)}}{\otimes} HH_n (\mh H^G_{W(L,\mf s)u})$,
\item $\widehat{\mc O (\mf t)}_{\log_u (u')}^{W(L,\mf s)_{u'}} \underset{Z(\mh H^G_u)}{\otimes}
HH_n (\mh H^G_u)$,
\item $\widehat{\mc O (\mf t)}_{\log_u (u')}^{W(L,\mf s)_{u'}} \underset{\mc O (\mf t)^{W(L,\mf s)_u}
}{\otimes} HH_n (\phi^*)^{-1} \Big( \bigoplus\limits_{g \in \langle W(L,\mf s)_u \rangle} 
\big( \Omega^n (\mf t^g) \otimes \natural_{\mf s}^g \big)^{Z_{W(L,\mf s)_u} (g)} \Big)$,
\item $\widehat{\mc O (X_\nr (L))}_{u'}^{W(L,\mf s)_{u'}} \hspace{-8mm} 
\underset{\mc O (X_\nr (L))^{W(L,\mf s)}}{\otimes} \hspace{-6mm}
HH_n (\phi_u^*)^{-1} \Big( \bigoplus\limits_{g \in \langle W(L,\mf s)_u \rangle} \hspace{-6mm} 
\big( \Omega^n (u X_\nr (L)^{g,\circ}) \otimes \natural_{\mf s}^g \big)^{Z_{W(L,\mf s)_u} (g)} \Big)$.
}
The isomorphism between (a) and (d) is induced by $HH_n (\mc F_{\mf s})$.
\end{prop}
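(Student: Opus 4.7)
The plan is to work up the chain of isomorphisms $(a) \to (b) \to (c) \to (d) \to (e)$ after passing to formal completions, and then verify that the composite agrees with $HH_n(\mc F_{\mf s})$.

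For $(a) \cong (b)$, I would combine the Morita equivalence $\mc H(G,K)^{\mf s} \sim \End_G(\Pi_{\mf s})^{op}$ from Construction \ref{cons:1}(i) with the isomorphism
\[
\End_G(\Pi_{\mf s})^{op}_{W(L,\mf s) U_u} \cong \mh H^G_{W(L,\mf s) U_u}
\]
from Proposition \ref{prop:5.7}(a). Formal completion at $u' \in U_u$ with respect to the centre factors through the analytic localization at $W(L,\mf s) U_u$, since the maximal ideal at $u'$ is supported in $W(L,\mf s) U_u$. Morita invariance of $HH_n$ and the centre identification in \eqref{eq:5.18} then yield an isomorphism of modules over the common formal completion \eqref{eq:5.8}.

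For $(b) \cong (c)$, I would invoke the matrix algebra isomorphism \eqref{eq:5.17}, namely $\mh H^G_{W(L,\mf s)u} \cong M_{|W(L,\mf s)u|}(\mh H^G_u)$. Morita invariance of Hochschild homology together with the matching of centres in \eqref{eq:5.18} gives the isomorphism after completion.

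For $(c) \cong (d)$, I would apply the determination of the Hochschild homology of the twisted graded Hecke algebra $\mh H^G_u$ from \cite[Theorems 1.13 and 2.8]{SolTwist}, which computes $HH_n(\mh H^G_u)$ as a module of $\natural_u^{-1}$-twisted invariants of differential forms on the fixed-point subspaces $\mf t^g$, realised through the algebraic families $\mf F(M_i, \tilde\eta_i)$ via the maps $HH_n(\phi^*)$. Since under Theorem \ref{thm:5.1} the relevant families span the appropriate part of $\Q \otimes_\Z R(\mh H^G_u)$, the pre-image $HH_n(\phi^*)^{-1}$ has the stated form. After formal completion at $\log_u(u')$ the identification via $HH_n(\phi^*)$ becomes an isomorphism of modules over \eqref{eq:5.8}. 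The identification of $\natural_u^{-1}$ with $\natural_{\mf s}$ on $W(L,\mf s)_u$ is provided by \cite[Lemma 7.1]{SolEnd}.

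For $(d) \cong (e)$, I would transport differential forms along the $W(L,\mf s)_u$-equivariant diffeomorphism $\exp_u : \mf t^g \supset \log_u(U_u)^g \isom (U_u)^g = u X_\nr(L)^{g,\circ} \cap U_u$, and similarly transport $\phi_{g,i}$ to $\chi_{\eta_i}^{-1} \phi_{g,i}$ as in \eqref{eq:5.6}. The essential point is that after formal completion at $\log_u(u')$ (resp.\ $u'$), these transports become isomorphisms of the respective differential-form modules, and by construction the maps $HH_n(\phi^*)$ and $HH_n(\phi_u^*)$ are intertwined by them. The final statement, that the composite agrees with $HH_n(\mc F_{\mf s})$, then follows by unwinding: on each factor the family $\mf F(M_i, \eta_i)$ on the $G$-side and $\mf F(M_i, \tilde\eta_i)$ on the $\mh H^G_u$-side are matched by Theorem \ref{thm:5.1} and Proposition \ref{prop:5.7}.

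The main obstacle will be bookkeeping of the $Z(\mc H(G,K)^{\mf s})$-module structure, and in particular the shift by the positive part $t_{\eta_i}^+$ of the central character of $\eta_i$ that enters \eqref{eq:5.15}. One must verify that this shift is exactly compensated by the adjustment of $\chi_{\eta_i}$ in \eqref{eq:5.4} and by the way $\zeta_u^\vee$ handles $X_\nr^+(L)$-twists (Theorem \ref{thm:4.1}(d)), so that all five modules acquire compatible actions of the formal completion \eqref{eq:5.8}. Careful propagation of the 2-cocycles $\natural_{\mf s}^g$ across the Morita equivalences and the matrix isomorphism \eqref{eq:5.17} is a secondary technical point, handled by invoking \cite[Lemma 7.1]{SolEnd} consistently.
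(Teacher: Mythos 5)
Your proposal follows the same route as the paper: Morita equivalence to $\End_G(\Pi_{\mf s})^{op}$ plus Proposition~\ref{prop:5.7} for (a)$\cong$(b)$\cong$(c), the twisted graded Hecke algebra computation from \cite[Theorem 2.8]{SolTwist} for (c)$\cong$(d), transport along $\exp_u$ together with the correspondence $\mf F(M_i,\tilde\eta_i)\leftrightarrow\mf F(M_i,\eta_i)$ for (d)$\cong$(e), and unwinding the composite to identify it with $HH_n(\mc F_{\mf s})$. The only thing the paper makes slightly more precise than your remark that completion ``factors through'' localization is the explicit identity $\mc O (X_\nr (L)) / I_{u'}^m \cong C^{an}(W(L,\mf s)U_u) / I_{u'}^m C^{an}(W(L,\mf s) U_u)$, which guarantees the analytic localization steps in Construction~\ref{cons:1} (and their variants in Proposition~\ref{prop:5.7}) leave the formal completions unchanged; aside from that your argument is essentially the same.
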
  
The character $\natural_{\mf s}^g : Z_{W(L,\mf s)_u} (g) \to \C^\times$ figuring in parts (d) and (e) 
is defined as
\[
\natural_{\mf s}^g (h) = T_g T_h T_g^{-1} T_h^{-1} \qquad h \in Z_{W(L,\mf s)_u} (g) .
\]
It extends naturally to a map $W(L,\mf s) \to \C [W(L,\mf s)), \natural_{\mf s}]$ with 
good properties, see \cite[Lemma 1.3]{SolTwist}.
\begin{proof}
Recall the explanation of Theorem \ref{thm:5.1} between \eqref{eq:5.1} and \eqref{eq:5.15}.
By the Morita invariance of Hochschild homology \cite[\S 1.2]{Lod}:
\begin{equation}\label{eq:5.7}
HH_n (\mc H (G,K)^{\mf s}) \cong HH_n (\End_G (\Pi_{\mf s})^{op} ) \qquad
\text{as } Z(\mc H (G,K)^{\mf s})\text{-modules.}
\end{equation}
Let $I_{u'} \subset \mc O (X_\nr (L))$ be the maximal ideal of functions vanishing at $u'$. As
\[
\mc O (X_\nr (L)) / I_{u'}^m \cong C^{an}(W(L,\mf s)U_u) / I_{u'}^m C^{an}(W(L,\mf s) U_u)
\] 
for any $m \in \N$, the algebras $\mc O (X_\nr (L))^{W(L,\mf s)}$ and 
$C^{an}(W(L,\mf s) U_u)^{W(L,\mf s)}$ have the same formal completion at $u'$. It follows that
in the process described between  \eqref{eq:5.1} and \eqref{eq:5.16} the analytic localization 
steps do not change the formal completions of the involved algebras (at $u'$ and $\log_u (u')$
respectively). Then Proposition \ref{prop:5.7} yields the isomorphism between (a),(b) and (c).

The isomorphism between (c) and (d) is a consequence of \cite[Theorem 2.8]{SolTwist}.
As $\mf F (M_i,\tilde \eta_i)$ is constructed from $\mf F (M_i,\eta_i)$ via Theorem 
\ref{thm:5.1}, \eqref{eq:5.8} induces isomorphisms of 
$\widehat{\mc O (\mf t)}_{\log_u (u')}^{W(L,\mf s)_{u'}}$-modules 
\begin{multline*}
\widehat{\mc O (\mf t)}_{\log_u (u')}^{W(L,\mf s)_{u'}} \underset{\mc O (\mf t)^{W(L,\mf s)_u}}
{\otimes} \bigoplus_{i = 1, U_u\text{-rel}}^{n_{\mf s}} \Omega^n (\mf t^{M_i}) \cong \\
\widehat{\mc O (X_\nr (L))}_{u'}^{W(L,\mf s)_{u'}} \underset{ \mc O (X_\nr (L))^{W(L,\mf s)}}
{\otimes} \bigoplus_{i = 1,U_u\text{-rel}}^{n_{\mf s}} \Omega^n (u X_\nr (M_i)) . 
\end{multline*}
By Theorem \ref{thm:5.1} this restricts to an isomorphism between (d) and (e).

The isomorphism between (c) and (d) is obtained by evaluating elements of $\mh H^G_u$ at the
families $\mf F (M_i, \tilde \eta_i)$. Hence the isomorphism between 
$HH_n (\End_G (\Pi_{\mf s})^{op} )$ and (d) comes from evaluating elements of
$\End_G (\Pi_{\mf s})^{op}$ at the same algebraic families. When we pass from (d) to (e),
the families $\mf F (M_i, \tilde \eta_i)$ are translated to the families 
$\mf F (M_i, \eta_i)$. The isomorphism between (a) and (e) can be constructed from that 
between $HH_n (\End_G (\Pi_{\mf s})^{op} )$ and (e) by composing with \eqref{eq:5.7}, which is 
induced by a Morita equivalence. Thus the isomorphism between (a) and (e) is given by evaluating
$\mc H (G,K)^{\mf s}$ at the families $\mf F (M_i,\eta_i)$. In other words, it is given
by $HH_n (\mc F_{\mf s})$, while ignoring the $U_u$-irrelevant families.
\end{proof}

We will lift Proposition \ref{prop:5.2} to a statement about $HH_n (\mc H (G,K)^{\mf s})$
on the whole of $X_\nr (L)$.

\begin{lem}\label{lem:5.6}
The map $HH_n (\mc F_{\mf s})$ is an injection from $HH_n (\mc H (G,K)^{\mf s})$ to the set
of $\omega \in\bigoplus_{i=1}^{n_{\mf s}} \Omega^n (X_\nr (M_i))$ such that
\[
HH_n (\phi_u^*) \omega \in \bigoplus\nolimits_{g \in \langle W(L,\mf s)_u \rangle} 
\big( \Omega^n (u X_\nr (L)^{g,\circ}) \otimes \natural_{\mf s}^g \big)^{Z_{W(L,\mf s)_u} (g)} 
\qquad \forall u \in X_\unr (L) .
\]
The injection is $\mc O (X_\nr (L))^{W(L,\mf s)}$-linear if we endow each
$\Omega^n (u X_\nr (M_i))$ with the $\mc O (X_\nr (L))^{W(L,\mf s)}$-module structure coming 
from the central characters of $\mc H (G,K)^{\mf s}$-representations in $\mf F (M_i,\eta_i )$. 
\end{lem}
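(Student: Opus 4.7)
The plan is to deduce both halves of the statement from Proposition \ref{prop:5.2} by a local-to-global argument over the Noetherian ring $Z(\mc H(G,K)^{\mf s}) \cong \mc O(X_\nr(L))^{W(L,\mf s)}$. The preparatory observations are: first, $HH_n(\mc H(G,K)^{\mf s})$ is finitely generated as a module over its centre, which follows from \eqref{eq:6.9} together with the bar-complex model for Hochschild homology; second, the neighbourhoods $U_u$ with $u \in X_\unr(L)$ cover $X_\nr(L)$, because any $\chi \in X_\nr(L)$ factors as $u t^+$ with $u \in X_\unr(L)$ and $t^+ \in X_\nr^+(L)$, and $U_u$ is $X_\nr^+(L)$-stable by construction. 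Consequently the orbits $W(L,\mf s)u'$ with $u' \in U_u$, $u \in X_\unr(L)$ exhaust the maximal ideals of $\mc O(X_\nr(L))^{W(L,\mf s)}$.

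For the image-containment part, fix $x \in HH_n(\mc H(G,K)^{\mf s})$ and set $\omega = HH_n(\mc F_{\mf s})(x)$. For each $u \in X_\unr(L)$, Proposition \ref{prop:5.2} identifies items (a) and (e) in such a way that $\omega$ lies, after formal completion at each $u' \in U_u$, in $HH_n(\phi_u^*)^{-1}$ of the invariant subspace. Since the target on the right of \eqref{eq:5.25} is a finitely generated $\mc O(X_\nr(L))^{W(L,\mf s)}$-module and the invariant subspace is an $\mc O$-submodule, membership in it can be detected formally at closed points; letting $u'$ vary over $U_u$ then forces $HH_n(\phi_u^*)(\omega)$ to lie globally in the invariant subspace on $\bigoplus_g \Omega^n(uX_\nr(L)^{g,\circ})$.

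For injectivity, suppose $HH_n(\mc F_{\mf s})(x) = 0$. The same isomorphism in Proposition \ref{prop:5.2} forces the formal completion of $x$ at every $W(L,\mf s)u'$ to vanish; as these orbits exhaust the maximal ideals of $Z(\mc H(G,K)^{\mf s})$ and $HH_n(\mc H(G,K)^{\mf s})$ is finitely generated over this Noetherian ring, one concludes $x = 0$. The $\mc O(X_\nr(L))^{W(L,\mf s)}$-linearity with the prescribed twisted module structure on the target is inherited from the fact, noted immediately after \eqref{eq:5.5}, that each $\mc F_{M_i,\eta_i}$ is already a homomorphism of $Z(\mc H(G,K)^{\mf s})$-modules for exactly this structure.

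The main obstacle I anticipate is the image-containment step: Proposition \ref{prop:5.2} only provides invariance at each formal neighbourhood, so promoting it to global invariance on $uX_\nr(L)^{g,\circ}$ requires both the finite-generation preparation of the first paragraph and the faithfulness of formal completion on coherent modules over a Noetherian ring. Once those are in place, however, the remaining arguments are essentially formal.
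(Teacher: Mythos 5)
Your overall strategy — local-to-global over the Noetherian centre, using Proposition \ref{prop:5.2}, finite generation of $HH_n(\mc H(G,K)^{\mf s})$ as a $Z(\mc H(G,K)^{\mf s})$-module, and faithfulness of formal completion (the paper's \eqref{eq:4.18}) — is the same as the paper's, and your injectivity argument matches it essentially verbatim, modulo contraposition.

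The gap is in the image-containment step. You invoke ``membership in a submodule of a finitely generated module over a Noetherian ring can be detected formally at closed points'' and conclude from invariance at the formal completions over $u' \in U_u$ that $HH_n(\phi_u^*)\omega$ lies globally in the invariant subspace. But that detection principle is only valid when one checks at \emph{all} closed points; checking only at those coming from $u' \in U_u$ (an analytically open, Zariski-dense proper subset) shows only that the quotient class of $HH_n(\phi_u^*)\omega$ in the complementary direct summand $(1-e)\bigoplus_g \Omega^n(u X_\nr(L)^{g,\circ})$ has support disjoint from those points, which does not by itself force it to vanish. To close the gap you need one extra ingredient, and the paper supplies it: the $g$-component of $HH_n(\mc F_{\mf s})x$ is an \emph{algebraic} differential form, the invariance condition is Zariski-closed, and $U_u \cap u X_\nr(L)^{g,\circ}$ is Zariski-dense, so the condition propagates from $U_u$ to all of $u X_\nr(L)^{g,\circ}$. (Equivalently, one could note that $(1-e)\Omega^n(u X_\nr(L)^{g,\circ})$ is a direct summand of a free module over the domain $\mc O(u X_\nr(L)^{g,\circ})$, hence torsion-free, so vanishing of a formal completion at a single point already forces vanishing — but some such observation beyond generic faithfulness of completion is genuinely needed, and your write-up does not supply it.)
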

\begin{proof}
First we consider an arbitrary complex affine variety $V$ and a finitely generated 
$\mc O (V)$-module $M$. It is known from \cite[Lemma 2.9]{SolTwist} that
\begin{equation}\label{eq:4.18}
\text{if the formal completion } \hat M_v \text{ is 0 for all } v \in V, \text{ then } M = 0.
\end{equation}
Since $\mc H (G,K)^{\mf s}$ has finite rank as a module over the Noetherian algebra
$Z(\mc H (G,K)^{\mf s})$, so does $HH_n (\mc H (G,K)^{\mf s})$. Consider a nonzero $x \in HH_n 
(\mc H (G,K)^{\mf s})$. In view of \eqref{eq:4.18}, the $Z(\mc H (G,K)^{\mf s})$-submodule
generated by $x$ has at least one nonzero formal completion, say at $W(L,\mf s) u'$.
Then $x$ is nonzero in that completion, and by Proposition \ref{prop:5.2} the image of
$HH_n (\mc F_{\mf s}) x$ (in a formal completion) is nonzero. Hence $HH_n (\mc F_{\mf s})$
is injective.

Proposition \ref{prop:5.2} shows that the specialization of $HH_n (\mc F_{\mf s}) x$ 
at any central character $W(L,\mf s) u' \subset W(L,\mf s)U_u$ 
has the property involving $HH_n (\phi_u^*)$. Hence $HH_n (\mc F_{\mf s})x$ 
satisfies the stated condition, at least on $U_u$. For each $g$, the required property extends 
from $U_u \cap \Omega^n (u X_\nr (L)^{g,\circ})$ to $\Omega^n (u X_\nr (L)^{g,\circ})$ because 
$U_u$ is Zariski-dense and the $g$-component of $HH_n (\mc F_{\mf s})x$ is an algebraic
differential form. Thus the image of $HH_n (\mc F_{\mf s})$ is contained in the set 
specified in the statement.
\end{proof}

To attain surjectivity in Lemma \ref{lem:5.6}, we have to take the relations between specialization
at $u$ and at $wu$ into account. This is where the algebras from Proposition \ref{prop:5.7}
show their usefulness. Let $HH_n (\tilde \phi_u^*)$ be the map $HH_n (\tilde \phi^*)$ from
\cite[(2.17)]{SolTwist}, for $\mh H^G_{W(L,\mf s)u}$. According to
\cite[Proposition 2.16]{SolTwist} there is a $\C$-linear bijection
\begin{multline}\label{eq:4.19}
HH_n (\tilde \phi_u^*) \circ HH_n (\mc F_1) :\; HH_n ( \mh H^G_{W(L,\mf s)u} ) \;
\longrightarrow \\ 
\Big( \bigoplus_{g \in [W(L,\mf s) / W(L,\mf s)_u]} \bigoplus_{w \in W(L,\mf s)_u} 
\Omega^n (g (u X_\nr (L)^{w,\circ})) \otimes \natural_{\mf s}^{gwg^{-1}} \Big)^{W(L,\mf s)} .
\end{multline}
Here $HH_n (\mc F_1)$ is a version of $HH_n (\mc F_{\mf s})$ for $\mh H^G_{W(L,\mf s)u}$, see 
\cite[around (2.23)]{SolTwist}.

\begin{thm}\label{thm:5.3}
\enuma{
\item The $\mc O (X_\nr (L))^{W(L,\mf s)}$-linear map $HH_n (\mc F_{\mf s})$ is a bijection from
$HH_n (\mc H (G,K)^{\mf s})$ to the set of
$\omega \in \bigoplus_{i=1}^{n_{\mf s}} \Omega^n (X_\nr (M_i))$ such that
\[
HH_n (\tilde \phi_u^*) \omega \in \Big( \bigoplus_{g \in [W(L,\mf s) / W(L,\mf s)_u]} 
\bigoplus_{w \in W(L,\mf s)_u} \Omega^n (g (u X_\nr (L)^{w,\circ})) \otimes 
\natural_{\mf s}^{gwg^{-1}} \Big)^{W(L,\mf s)} 
\]
for all $u \in X_\unr (L)$.
\item The restriction of $HH_n (\mc F_{\mf s}) HH_n (\mc H (G,K)^{\mf s})$ to
$W(L,\mf s) U_u$ is isomorphic to
\[
\Big( \bigoplus\nolimits_{g \in [W(L,\mf s) / W(L,\mf s)_u]} \bigoplus\nolimits_{w \in W(L,\mf s)_u} 
\Omega^n (g (u X_\nr (L)^{w,\circ})) \otimes \natural_{\mf s}^{gwg^{-1}} \Big)^{W(L,\mf s)} 
\]
via $HH_n (\tilde \phi_u^*)$.
}
\end{thm}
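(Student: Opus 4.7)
The plan is to deduce both parts of Theorem \ref{thm:5.3} from Proposition \ref{prop:5.2} combined with the analogue for the algebra $\mh H^G_{W(L,\mf s) u}$, namely the bijection \eqref{eq:4.19} recalled from \cite{SolTwist}. Lemma \ref{lem:5.6} already does most of the work for part (a): it gives the injectivity of $HH_n(\mc F_{\mf s})$ and shows that the image is contained in the (a priori weaker) space defined by the $HH_n(\phi_u^*)$-condition. Part (b) will then be used both to strengthen this to full $W(L,\mf s)$-equivariance and to deduce surjectivity.

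First I would prove part (b). By Proposition \ref{prop:5.2}, after formal completion at any $W(L,\mf s)u'$ with $u' \in U_u$ the module $HH_n(\mc H(G,K)^{\mf s})$ becomes isomorphic, via $HH_n(\mc F_{\mf s})$, to the corresponding completion of $HH_n(\mh H^G_{W(L,\mf s)u})$, viewed as a module over the common completion \eqref{eq:5.8}. Now \eqref{eq:4.19}, applied globally on $\mh H^G_{W(L,\mf s)u}$, identifies that completion with the formal completion of the stated $W(L,\mf s)$-invariant space on the right-hand side of (b), via $HH_n(\tilde\phi_u^*) \circ HH_n(\mc F_1)$. Both sides are finitely generated modules over the Noetherian algebra $C^{an}(W(L,\mf s) U_u)^{W(L,\mf s)}$: the left by \eqref{eq:6.9} and Morita invariance, the right by inspection. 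Since they agree after every formal completion over $W(L,\mf s)U_u$, they must already be isomorphic on the whole of $W(L,\mf s)U_u$.

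For part (a), combining Lemma \ref{lem:5.6} with part (b) shows that the image $M$ of $HH_n(\mc F_{\mf s})$ lies inside the subspace $N \subseteq \bigoplus_i \Omega^n(X_\nr(M_i))$ characterized by the stated $HH_n(\tilde\phi_u^*)$-condition. For the reverse inclusion, observe that $M$ and $N$ are both finitely generated $\mc O(X_\nr(L))^{W(L,\mf s)}$-modules, and by part (b) their formal completions coincide at every $W(L,\mf s)u'$ with $u' \in U_u$ for some $u \in X_\unr(L)$. The polar decomposition $X_\nr(L) = X_\unr(L) \cdot X_\nr^+(L)$, together with the $X_\nr^+(L)$-invariance of each $U_u$, guarantees that every point of $X_\nr(L)$ is covered in this way; by the principle \eqref{eq:4.18} this forces $M = N$.

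The main obstacle will be the upgrade in part (a) from the weaker stabilizer-invariance obtained in Lemma \ref{lem:5.6} to the full $W(L,\mf s)$-equivariance required in the theorem. This is precisely where the passage from $\mh H^G_u$ to its Morita-equivalent matrix extension $\mh H^G_{W(L,\mf s)u}$ of Proposition \ref{prop:5.7} is indispensable: the bijection \eqref{eq:4.19} lives naturally on the larger algebra, whose centre \eqref{eq:5.18} and whose Hochschild homology carry the full $W(L,\mf s)$-action rather than merely the action of a stabilizer. A secondary bookkeeping issue is that the $\mc O(X_\nr(L))^{W(L,\mf s)}$-module structure on $\bigoplus_i \Omega^n(X_\nr(M_i))$ is twisted by the positive parts $t^+_{\eta_i}$ of the central characters, as in \eqref{eq:5.15}; this twist must be tracked through all the formal completions to verify that the isomorphism in (b) is genuinely $\mc O(X_\nr(L))^{W(L,\mf s)}$-linear.
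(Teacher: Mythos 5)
Your overall strategy---combining Proposition \ref{prop:5.2} with the bijection \eqref{eq:4.19} for $\mh H^G_{W(L,\mf s)u}$, then deducing global statements from formal completions via \eqref{eq:4.18}---matches the paper's. Your observation about the polar decomposition $X_\nr(L) = X_\unr(L)\cdot X_\nr^+(L)$ is exactly right, and your remarks about the passage to $\mh H^G_{W(L,\mf s)u}$ being what supplies full $W(L,\mf s)$-equivariance and about the module-structure twists by $t^+_{\eta_i}$ are also in the right spirit.

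However, there is a concrete gap in your proof of part (b). You assert that $C^{an}(W(L,\mf s)U_u)^{W(L,\mf s)}$ is Noetherian and conclude from agreement of formal completions that the map is an isomorphism over all of $W(L,\mf s)U_u$. But $U_u$ is unbounded (it is stable under $X_\nr^+(L)$), so this ring of analytic functions is not Noetherian, and \eqref{eq:4.18} is stated only for finitely generated modules over $\mc O(V)$ with $V$ an affine variety. The passage ``formal completions agree $\Rightarrow$ modules are isomorphic'' does not follow over an analytic localization. Because you prove (b) first and then deduce (a) from it, this gap undermines the whole argument as written.

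The paper circumvents this by doing things in the opposite order. Part (a) is proved first: the quotient of $\bigcap_{u} \eqref{eq:5.19}$ by the image of $HH_n(\mc F_{\mf s})$ is a finitely generated module over the Noetherian ring $\mc O(X_\nr(L))^{W(L,\mf s)}$, and by the polar decomposition every character of that ring lies in some $W(L,\mf s)U_u$, so all formal completions of the quotient vanish and \eqref{eq:4.18} applies directly. For (b) one only controls formal completions at points of $W(L,\mf s)U_u$, not at all of $X_\nr(L)$, so \eqref{eq:4.18} is not applicable and a genuinely different argument is required: the paper writes $\mathrm{coker}\, HH_n(\tilde\phi_u^*)$ as $\mc O(X_\nr(L)/W(L,\mf s))^r/N$ and shows via $I_\lambda^m\,\mathrm{coker} = \mathrm{coker}$ for all $\lambda\in W(L,\mf s)U_u$ and the Zariski-density of $W(L,\mf s)U_u$ that $N$ must be everything. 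You would need to supply some substitute for this Zariski-density step; the naive formal-completion argument does not close this case.
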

\begin{proof}
(a) From Proposition \ref{prop:5.2}.b, \eqref{eq:4.19} and \eqref{eq:5.8} we obtain an
alternative description of the formal completion of $HH_n (\mc H (G,K)^{\mf s})$ at $W(L,\mf s)u'$, 
namely the formal completion at $W(L,\mf s)u'$ of
\begin{equation}\label{eq:5.19}
HH_n (\tilde \phi_u^*)^{-1} \Big( \bigoplus_{g \in [W(L,\mf s) / W(L,\mf s)_u]} 
\bigoplus_{w \in W(L,\mf s)_u} \Omega^n (g (u X_\nr (L)^{w,\circ})) \otimes 
\natural_{\mf s}^{gwg^{-1}} \Big)^{W(L,\mf s)} .
\end{equation} 
Like in Lemma \ref{lem:5.6}, it follows that the image of $HH_n (\mc F_{\mf s})$ is contained in
\eqref{eq:5.19} for all $u \in X_\unr (L)$. The advantage is that now the behaviour at the entire
$W(L,\mf s)$-orbit of $u'$ is captured by \eqref{eq:5.19}. Consider the intersection of the spaces
\eqref{eq:5.19}, over all $u \in X_\unr (L)$. Divide that by the image of $HH_n (\mc F_{\mf s})$.
Proposition \ref{prop:5.2} and \eqref{eq:4.19} tell us that the quotient is a
$\mc O (X_\nr (L))^{W(L,\mf s)}$-module all whose formal completions are zero. As each 
$\mc O (X_\nr (M_i))$ is a finitely generated $\mc O (X_\nr (L))^{W(L,\mf s)}$-module,
so are $\bigoplus_{i=1}^{n_{\mf s}} \Omega^n (X_\nr (M_i))$ and its submodules. Hence we
may apply \eqref{eq:4.18}, which says that the quotient under consideration is the
zero module. In other words, the image of $HH_n (\mc F_{\mf s})$ is precisely the intersection
of the spaces \eqref{eq:5.19}.\\
(b) Here restriction means that we only consider the 
\[
\Omega^n (X_\nr (M_i)) \quad \text{with} \quad 
X_\nr (M_i) \cap W(L,\mf s) U_u \neq \emptyset . 
\]
Suppose that $x \in HH_n (\mc F_{\mf s}) HH_n (\mc H (G,K)^{\mf s})$ is nonzero on
$W(L,\mf s) U_u$. Pick a $u' \in U_u$ at which $x$ is nonzero. Then Proposition \ref{prop:5.2} 
shows that $HH_n (\tilde \phi_u^*) x$ cannot be zero. This proves the injectivity.

The map $HH_n (\tilde \phi_u^*)$ is $\mc O (X_\nr (L))^{W(L,\mf s)}$-linear if we let that
algebra act on\\ $\bigoplus_{i=1}^{n_{\mf s}} \Omega^n (X_\nr (M_i))$ via the maps
\begin{equation}\label{eq:5.27}
X_\nr (M_i) \to X_\nr (L) : \chi \mapsto \chi_{\eta_i} \chi .
\end{equation}
Fix a character $\lambda \in W(L,\mf s) U_u$ of $\mc O (X_\nr (L))^{W(L,\mf s)}$. There are
only finitely many $\mc H (G,K)^{\mf s}$-representations $\pi (Q_i,\eta_i,\lambda_i)$ with
$\chi_{\eta_i} \lambda_i \in W(L,\mf s) \lambda$, so together these support only finitely 
many central characters. By \eqref{eq:5.4} all those central characters lie in $W(L,\mf s) U_u$. 
Then Proposition \ref{prop:5.2} and \eqref{eq:4.19} imply that 
$HH_n (\mc F_{\mf s}) HH_n (\mc H (G,K)^{\mf s})$ and \eqref{eq:5.19} have isomorphic formal 
completions at $W(L,\mf s) \lambda$, with the respect to the 
$\mc O (X_\nr (L))^{W(L,\mf s)}$-module structure coming from \eqref{eq:5.27}. 

Hence the cokernel of $HH_n (\tilde \phi_u^*)$ is a finitely generated 
$\mc O (X_\nr (L))^{W(L,\mf s)}$-module all whose formal completions at points of $W(L,\mf s )U_u$ 
are zero. Thus
\begin{equation}\label{eq:5.34}
I_\lambda^m \text{coker} HH_n (\tilde \phi_u^*) = \text{coker} HH_n (\tilde \phi_u^*)
\qquad \forall \lambda \in W(L,\mf s) U_u, m \in \Z_{>0} .
\end{equation}
Furthermore coker$ HH_n (\tilde \phi_u^*)$ is of form  $\mc O (X_\nr (L)/W(L,\mf s))^r / N$ 
for some submodule $N$ of $\mc O (X_\nr (L)/W(L,\mf s))^r$. Then \eqref{eq:5.34} entails
\[
\mc O (X_\nr (L)/W(L,\mf s))^r = I_\lambda^m \mc O (X_\nr (L)/W(L,\mf s))^r + N
\]
for all $\lambda \in W(L,\mf s) U_u$ and all $m \in \Z_{>0}$. With the Zariski-density of 
$W(L,\mf s) U_u$, it follows that $N = \mc O (X_\nr (L)/W(L,\mf s))^r$. Hence
coker$ HH_n (\tilde \phi_u^*) = 0$ and $HH_n (\tilde \phi_u^*)$ is surjective.
\end{proof}

Recall that $HH_0 (\mc H (G))$ and $HH_0 (\mc H (G,K)^{\mf s})$ were already computed 
in \cite{BDK}. We will now recover those results via families of representations. For
a variation using only tempered representations we refer to \cite{Mui}.

\begin{prop}\label{prop:5.4}
\enuma{
\item $HH_0 (\mc F_{\mf s})$ provides an isomorphism between $HH_0 (\mc H (G,K)^{\mf s})$ 
and the set of elements of $\bigoplus\nolimits_{i=1}^{n_{\mf s}} \mc O (X_\nr (M_i))$ that 
descend to linear functions on $\C \otimes_\Z R (\mc H (G,K)^{\mf s})$.
\item Part (a) yields an isomorphism of $Z(\mc H (G,K)^{\mf s})$-modules
\[
HH_0 (\mc H (G,K)^{\mf s}) \cong (\C \otimes_\Z R(G)^{\mf s})^*_{\mr{reg}} .
\]
}
\end{prop}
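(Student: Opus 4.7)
For $n=0$ one has $\Omega^0(X_\nr(M_i)) = \mc O(X_\nr(M_i))$, and unwinding the construction of $HH_0(\mc F_{M_i,\eta_i})$ via the trace pairing \eqref{eq:3.6} shows that $HH_0(\mc F_{\mf s})$ sends the class of $f \in \mc H(G,K)^{\mf s}$ to the tuple of trace functions
\[
\bigl(\chi \mapsto \mr{tr}(I_{P_i}^G(\eta_i \otimes \chi)(f))\bigr)_{i=1}^{n_{\mf s}}.
\]
Since the trace is additive on short exact sequences, this tuple factors through a linear function on $\C \otimes_\Z R(\mc H(G,K)^{\mf s})$, and since each component is a regular function of $\chi$, the resulting linear functional is regular in the sense of Section \ref{sec:familiesG}. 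This gives the forward inclusion of the image of $HH_0(\mc F_{\mf s})$ into the subspace of descending tuples in (a), and simultaneously exhibits the factorization needed for (b).

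For the reverse inclusion in (a), I would invoke Theorem \ref{thm:5.3}(a), which already characterizes the image of $HH_0(\mc F_{\mf s})$: it is the set of $\omega$ such that $HH_0(\tilde\phi_u^*)(\omega)$ satisfies the twisted $W(L,\mf s)$-invariance condition at every $u \in X_\unr(L)$. The task is then to check that this collection of invariance conditions is equivalent to $\omega$ descending to a linear function on $\C \otimes_\Z R(G)^{\mf s}$. One direction is immediate: any tuple descending to a linear function on $R$ must respect all identifications $\pi \equiv \pi'$ in the Grothendieck group, and by Lemma \ref{lem:3.2} the generic identifications among members of an algebraic family $\mf F(M_i,\eta_i)$ are precisely those within a $W(M',M,\eta')$-orbit, which is exactly what Theorem \ref{thm:5.3}(a) encodes. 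For the opposite direction, one uses that by the minimality in the choice of the collection $\{\mf F(M_i,\eta_i)\}$ (see the bullet list after \eqref{eq:3.13}), every linear relation in $\Q \otimes_\Z R(G)^{\mf s}$ among the $I_{P_i}^G(\eta_i \otimes \chi_i)$ is generated by the generic intra-family orbit identifications together with the (finitely many, Zariski-closed) exceptional coincidences at non-generic cuspidal supports; Zariski density then propagates the equivariance from generic points to all of $X_\nr(M_i)$.

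For (b), combining the factorization from the first paragraph with part (a) yields a $Z(\mc H(G,K)^{\mf s})$-equivariant surjection onto $(\C \otimes_\Z R(G)^{\mf s})^*_{\mr{reg}}$. Injectivity is immediate from the spanning property \eqref{eq:3.13}: a regular linear functional on $\C \otimes_\Z R(G)^{\mf s}$ is determined by its values on $\{I_{P_i}^G(\eta_i \otimes \chi_i) : i, \chi_i\}$, so if two tuples in $\bigoplus_i \mc O(X_\nr(M_i))$ descend to the same linear function on $R$, they agree on a Zariski-dense subset of each $X_\nr(M_i)$ and hence coincide.

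The main obstacle is the equivalence asserted in the middle paragraph, between the twisted equivariance from Theorem \ref{thm:5.3}(a) and descent to $R(G)^{\mf s}$. The delicate point is handling the non-generic cuspidal supports where algebraic $R$-groups can act projectively (the phenomenon noted at the end of Section \ref{sec:familiesG}); here one cannot argue purely generically, and one must use Lemma \ref{lem:3.2} together with the algebraicity of traces to extend equalities from generic tempered $\chi' \in X_\unr(M')$ to all of $X_\nr(M')$, matching exactly the specialization of the Theorem \ref{thm:5.3}(a) conditions at non-generic points.
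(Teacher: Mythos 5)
Your proof of part (b) follows essentially the same route as the paper: the trace pairing shows the map lands in $(\C\otimes_\Z R(G)^{\mf s})^*_{\mr{reg}}$, injectivity comes from the spanning property of the families, and surjectivity comes from the pairing of a regular functional with each $\mf F(M_i,\eta_i)$.

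The genuine gap is in part (a), specifically in the claimed equivalence between the twisted $W(L,\mf s)$-invariance of Theorem \ref{thm:5.3}(a) and descent to a linear function on $\C\otimes_\Z R(\mc H(G,K)^{\mf s})$. You cite Lemma \ref{lem:3.2} and the minimality of the family collection, but Lemma \ref{lem:3.2} only governs intra-family trace equalities along a $W(M',M,\eta')$-orbit, whereas the condition in Theorem \ref{thm:5.3}(a) is twisted $W(L,\mf s)$-invariance of the \emph{weighted sums} $HH_0(\tilde\phi_u^*)\omega = \sum_{w}\sum_i \lambda_{w,i}\,HH_0(\chi_{\eta_i}^{-1}\phi_{w,i}^*)\omega$, which expresses relations in $R$ among specific \emph{cross-family} linear combinations. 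The precise relations encoded are $\nu^1_{gwg^{-1},g\chi} = \natural_{\mf s}^w(g)\,\nu^1_{w,\chi}$, which only appear later in \eqref{eq:5.29} and ultimately rest on \cite[Lemma 2.5.a]{SolTwist}. Your proposal identifies this as the ``main obstacle'' but does not close it; the appeal to minimality and ``Zariski density propagates the equivariance'' in the non-generic case is a plan, not an argument, and in particular does not address why the twisted-invariance conditions capture \emph{all} relations in $R$ rather than only those arising generically inside a single family.

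The paper avoids all of this by a different mechanism: it uses Theorem \ref{thm:5.1} and Proposition \ref{prop:5.7} to translate the statement to the algebra $\mh H^G_{W(L,\mf s)u}$, where the equivalence between the two characterizations — twisted invariance versus descent to the Grothendieck group — has already been established as \cite[Proposition 2.16.a]{SolTwist}. In that simpler twisted graded Hecke algebra setting the relations underlying the invariance conditions are available explicitly. If you wanted to complete your more direct approach, you would need to prove an analogue of \eqref{eq:5.29} before Proposition \ref{prop:5.4}, or else import \cite[Lemma 2.5.a]{SolTwist} through the localization machinery of Construction \ref{cons:1}–\ref{cons:2} and Proposition \ref{prop:5.7}, which is in effect what the paper's two-sentence proof does.
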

\begin{proof}
(a) With Theorem \ref{thm:5.1} and Proposition \ref{prop:5.7} we reduce this to an issue for
$\mh H^G_{W(L,\mf s)u}$. In that setting \cite[Proposition 2.16.a]{SolTwist} is equivalent to 
Theorem \ref{thm:5.3}.a, and the desired description is \eqref{eq:4.19}.\\
(b) The definition of $HH_0 (\mc F_{\mf s})$ involves the generalized trace map and the
Hochschild--Kostant--Rosenberg theorem, like in \cite[Paragraph 1.2]{SolTwist}. Unwinding this,
we find that the map 
\[
HH_0 (\mc H (G,K)^{\mf s}) \to \big( \C \otimes_\Z R(\mc H (G,K)^{\mf s}) \big)^*
\]
from part (a) is just \eqref{eq:3.12}. In particular every element of $HH_0 (\mc H (G,K)^{\mf s})$
determines a regular linear function on $\C \otimes_\Z R(\mc H (G,K)^{\mf s})$. The map is 
injective because $HH_n (\mc F_{\mf s})$ is injective and because for
$f \in \bigoplus_{i=1}^{n_{\mf s}} \mc O (X_\nr (M_i))$ the values $f (P_i,\delta_i,v_i)$
can be recovered from the image of $f$ in $\big( \C \otimes_\Z R(\mc H (G,K)^{\mf s}) \big)^*$.
By Morita equivalence, we may replace $R(\mc H (G,K)^{\mf s})$ with $R (G)^{\mf s}$.

Conversely, for every $\lambda \in \big( \C \otimes_\Z R(\mc H (G,K)^{\mf s}) \big)^*_{\mr{reg}}$ 
the canonical pairing with $\mf F_{M_i,\eta_i}$ produces a regular function on $X_\nr (M_i)$,
so $\lambda$ comes from an element of $\bigoplus_{i=1}^{n_{\mf s}} \mc O (X_\nr (M_i))$.
\end{proof} 

Let $\Delta_G^{\mf s}$ be a set of representatives for the inertial equivalence classes of
square-integrable modulo centre representations $\delta$ of standard Levi subgroups $M$ of $G$,
such that $I_P^G (\delta) \in \Rep (G)^{\mf s}$. From Theorem \ref{thm:3.1} we see that the
category of tempered representations in $\Rep (G)^{\mf s}$ decomposes as
\begin{equation}\label{eq:5.10}
\Rep^t (G)^{\mf s} = \bigoplus\nolimits_{\mf d = [M,\delta] \in \Delta_G^{\mf s}} \Rep^t (G)^{\mf d},
\end{equation} 
where $\Rep^t (G)^{\mf d}$ is the full subcategory generated by the subquotients of $I_P^G (\delta 
\otimes \chi)$ with $\chi \in X_\unr (M)$. With Theorem \ref{thm:5.1} and the same arguments as 
in the proof of \cite[Theorem 2.2]{SolTwist}, \eqref{eq:5.10} induces a decomposition
\begin{equation}\label{eq:5.11}
R^t (\mh H^G_u ) = \bigoplus\nolimits_{\mf d \in \Delta_G^{\mf s}} R^t \big( \mh H^G_u \big)^{\mf d} .
\end{equation}
By Proposition \ref{prop:5.7} $R_t (\mh H^G_{W(L,\mf s)u})$ decomposes in the same way.

It is known from \cite[Proposition 9.5]{SolEnd} that the equivalence of categories in Theorem 
\ref{thm:5.1} sends square-integrable modulo centre representations to tempered essentially discrete 
series representations. With that and the same process that made $\tilde \eta$ out of $\eta$, 
described around \eqref{eq:5.12}, we can associate to $\mf d = [M,\delta] \in \Delta_G^{\mf s}$ 
a discrete series representation $\tilde \delta$ of $\mh H_M$. Thus \eqref{eq:5.11} is a 
decomposition of the kind considered in \cite[Theorem 2.2 and (2.25)]{SolTwist}. 
We define 
\[
\mc F_{\mf d} = \bigoplus\nolimits^{n_{\mf s}}_{i=1,i \prec \mf d} \mc F_{M_i,\eta_i}.
\]

\begin{lem}\label{lem:5.5}
\enuma{
\item There is a canonical decomposition
\[
HH_n (\mc H (G,K)^{\mf s}) = \bigoplus\nolimits_{\mf d \in \Delta_G^{\mf s}}
HH_n (\mc H (G,K)^{\mf s})^{\mf d} ,
\]
where the part indexed by $\mf d = [M,\delta]$ is obtained by applying $HH_n (\mc F_{\mf s})^{-1}$ to
\[
HH_n (\mc F_{\mf d}) HH_n (\mc H (G,K)^{\mf s}) = \bigoplus\nolimits_{i \prec \mf d} 
\Omega^n (\mf t^{M_i}) \cap HH_n ( \mc F_{\mf s}) HH_n (\mc H (G,K)^{\mf s}) .
\]
\item Select $\chi_\delta \in X_\unr (L), t_\delta^+ \in X_\nr^+ (L)$ such that 
$\chi_\delta t_\delta^+$ represents the $Z(\mc H (G,K)^{\mf s}$-character of $\delta$. The map 
\begin{multline*}
HH_n (\tilde \phi_u^*) \circ HH_n (\mc F_{\mf d}) : HH_n (\mc H (G,K)^{\mf s})^{\mf d} 
\longrightarrow \\
\Big( \bigoplus\nolimits_{g \in [W(L,\mf s) / W(L,\mf s)_u]} \bigoplus\nolimits_{w \in W(L,\mf s)_u} 
\Omega^n (g (u X_\nr (L)^{w,\circ})) \otimes \natural_{\mf s}^{gwg^{-1}} \Big)^{W(L,\mf s)} 
\end{multline*}
is $\mc O (X_\nr (L))^{W(L,\mf s)}$-linear if we let $\mc O (X_\nr (L))^{W(L,\mf s)}$ act on 
the target such that:
\begin{itemize}
\item if $g (u X_\nr (L)^{w,\circ}) \subset \chi_\delta X_\nr (M)$, then it acts at
$g u \chi$ with $\chi \in X_\nr (L)^{w,\circ}$ via the character $W(L,\mf s) u \chi t_\delta^+$,
\item in the same situation $\mc O (X_\nr (L))^{W(L,\mf s)}$ acts at $h u \chi$, where
$h \in W(L,\mf s)$ and $\chi \in X_\nr (L)^{w,\circ}$, also via the character 
$W(L,\mf s) u \chi t_\delta^+$,
\item if $h (u X_\nr (L)^{w,\circ}) \not \subset \chi_\delta X_\nr (M)$ for any
$h \in W(L,\mf s)$, then $\mc O (X_\nr (L))^{W(L,\mf s)}$ annihilates 
$\Omega^n (g (u X_\nr (L)^{w,\circ}))$.
\end{itemize}
}
\end{lem}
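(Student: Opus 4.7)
The plan for part (a) is to exploit the tempered decomposition (5.11) pulled through Theorem \ref{thm:5.1}. By Theorem \ref{thm:5.3}.a, $HH_n(\mc F_{\mf s})$ is injective with an explicit image in $\bigoplus_i \Omega^n(X_\nr(M_i))$, so any decomposition of this image lifts uniquely to a decomposition of $HH_n(\mc H(G,K)^{\mf s})$. Each family $\mf F(M_i,\eta_i)$ corresponds under Theorem \ref{thm:5.1} and the Langlands construction of \eqref{eq:5.12} to an elliptic tempered $\mh H^{M_i}_u$-module $\tilde\eta_i$, which by \eqref{eq:5.11} belongs to a unique summand $R^t(\mh H^{M_i}_u)^{\mf d}$. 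This assignment $i \mapsto \mf d$ defines the relation $i \prec \mf d$ and partitions $\{1,\dots,n_{\mf s}\}$, giving $\mc F_{\mf s} = \bigoplus_{\mf d} \mc F_{\mf d}$. The summand $HH_n(\mc H(G,K)^{\mf s})^{\mf d}$ is then defined as the $HH_n(\mc F_{\mf s})$-preimage of $\bigoplus_{i \prec \mf d} \Omega^n(X_\nr(M_i)) \cap HH_n(\mc F_{\mf s})HH_n(\mc H(G,K)^{\mf s})$; canonicity holds because $\mf d$ is intrinsically determined by the central character of $\tilde\eta_i$.

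For part (b), the strategy is to track the $\mc O(X_\nr(L))^{W(L,\mf s)}$-module structure through $HH_n(\mc F_{\mf d})$ and then through $HH_n(\tilde\phi_u^*)$. The first map is $\mc O(X_\nr(L))^{W(L,\mf s)}$-linear when the summand $\Omega^n(X_\nr(M_i))$ is acted on at $\chi_i$ by evaluation at $W(L,\mf s) \chi_{\eta_i} t^+_{\eta_i} \chi_i$, because this orbit is the central character of $I_{P_i}^G(\eta_i \otimes \chi_i)$. A key intermediate fact is that for $i \prec \mf d$ one has $t^+_{\eta_i} = t_\delta^+$, reflecting that in the Langlands picture the positive part of the central exponent of an elliptic tempered representation is governed by the discrete-series summand $\mf d$ over which it sits. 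The map $HH_n(\tilde\phi_u^*)$ pulls back along $\chi_{\eta_i}^{-1}\phi_{g,i}: uX_\nr(L)^{g,\circ} \to X_\nr(M_i)$ from \eqref{eq:5.6}; hence at a point $v \in uX_\nr(L)^{g,\circ}$ satisfying $g(uX_\nr(L)^{w,\circ}) \subset \chi_\delta X_\nr(M)$, the induced module structure is evaluation at $W(L,\mf s)\chi_{\eta_i}t_\delta^+ \chi_{\eta_i}^{-1}\phi_{g,i}(v) = W(L,\mf s) t_\delta^+ v$, using that $\phi_{g,i}$ is given by an element of $W(L,\mf s)_u$ and so descends compatibly to $X_\nr(L)/W(L,\mf s)$. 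This yields the first bullet. The second bullet is $W(L,\mf s)$-equivariant extension to translates $h(uX_\nr(L)^{w,\circ})$, forced because the image of $HH_n(\tilde\phi_u^*)\circ HH_n(\mc F_{\mf d})$ lies in the $W(L,\mf s)$-invariant subspace. The third bullet reflects that if no $W(L,\mf s)$-translate of $uX_\nr(L)^{w,\circ}$ meets $\chi_\delta X_\nr(M)$, then no family with $i \prec \mf d$ contributes to that component (all such families have their cuspidal supports in $\chi_\delta X_\nr(M)$), so the action must be zero.

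The main obstacle is the identification $t^+_{\eta_i} = t_\delta^+$ for $i \prec \mf d$. This requires a clean articulation of how the discrete-series decomposition \eqref{eq:5.11} of $R^t(\mh H^{M_i}_u)$ translates through the Langlands classification used to pass from the elliptic $\eta_i$ to $\tilde\eta_i$, and in particular that the central character shift used to normalize $\tilde\eta_i$ (so that $\mc O(\mf t^{M_i})$ acts through evaluation at $0$) preserves the positive part of the central exponent. Once this compatibility is secured, the remainder is a formal bookkeeping chase through the maps $\phi_{g,i}$ and the formal-completion arguments underlying Theorem \ref{thm:5.3}.b.
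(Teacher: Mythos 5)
Your proposal follows the same overall structure as the paper's proof — reduce to statements about the image of $HH_n(\mc F_{\mf s})$, exploit the tempered decomposition \eqref{eq:5.11}, track module structures through $\phi_{g,i}$ — but there are two places where a genuine argument is needed and you only sketch (or acknowledge) it.

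For part (a), partitioning the index set by $i \prec \mf d$ certainly splits $\bigoplus_i \Omega^n(X_\nr(M_i))$ as a direct sum over $\mf d$, but a subspace of a direct sum need not decompose as the sum of its intersections with the summands. You need to check that $HH_n(\mc F_{\mf s})\, HH_n(\mc H(G,K)^{\mf s})$ actually decomposes this way, i.e.\ that the linear conditions (invariance in Theorem \ref{thm:5.3} under $HH_n(\tilde\phi_u^*)$) never couple an index $i \prec \mf d$ with an index $j \prec \mf d'$, $\mf d \neq \mf d'$. This is a consequence of the Plancherel decomposition \eqref{eq:5.10} separating central characters, but it is not automatic and the paper establishes it by citing \cite[Lemma 2.12, (2.25), Corollary 2.13]{SolTwist} together with Theorem \ref{thm:5.3}. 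Your proposal does not supply a substitute for these facts.

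For part (b), you correctly identify the crux — that the positive parts of the central characters $\chi_{\eta_i} t^+_{\eta_i}$ for all $i \prec \mf d$ must be compatible with $t^+_\delta$ — but you leave it as an unresolved obstacle. Two remarks. First, the claim you want is not the pointwise equality $t^+_{\eta_i} = t_\delta^+$ but rather $W(L,\mf s)\, t^+_{\eta_i} = W(L,\mf s)\, t_\delta^+$; without the orbit, the module structures for different $i$ need not literally coincide yet still agree on $\mc O(X_\nr(L))^{W(L,\mf s)}$. Second, the paper resolves this by quoting \cite[Corollary 2.13]{SolTwist}, which says the module structure on the image of $HH_n(\tilde\phi_u^*) \circ HH_n(\mc F_{\mf d})$ comes from the central characters of the virtual representations $\nu^{\mf d}_{g,w,v}$ and is adjusted uniformly by a representative $cc(\delta) \in \mf t_{\R}$ of the central character of the discrete series summand; translating via Theorem \ref{thm:5.1} and Proposition \ref{prop:5.7} turns $cc(\delta)$ into $t_\delta^+$ and gives the orbit equality for all $i \prec \mf d$ simultaneously. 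Your appeal to ``the Langlands classification preserves the positive part'' is the right intuition but is not a proof; the needed compatibility is exactly what the SolTwist machinery packages, and without it the argument does not close.
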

\begin{proof}
(a) This follows from \cite[Lemma 2.12, (2.25), Corollary 2.13]{SolTwist} and 
Theorem \ref{thm:5.3}.\\
(b) The condition in the third bullet means that for $i \prec \mf d$ no map $\phi_{w,i} :
u X_\nr (L)^{w,\circ} \to \chi_\delta X_\nr (M)$ can exist. In that case $\lambda_{w,i} = 0$
and the image of $HH_n (\mc H (G,K)^{\mf s})^{\mf d}$ in $\Omega^n (g (u X_\nr (L)^{w,\circ})$
is 0. From that and \eqref{eq:5.15} we see that, for each $i$ separately, there exists such a 
$\mc O (X_\nr (L))^{W(L,\mf s)}$-module structure as indicated, only with 
$t^+_{\eta_i} \in X_\nr^+ (L)$ instead of $t^+_\delta$.

By \cite[Corollary 2.13]{SolTwist}, $HH_n (\tilde \phi_u^*) \circ HH_n (\mc F_{\mf d})$ is 
$\mc O (X_\nr (L))^{W(L,\mf s)}$-linear if we let it act according to the central characters of 
the virtual representations $\nu_{g,w,v}^{\mf d}$ from \cite[(2.26)]{SolTwist}. That means that 
the natural module structure is adjusted by a representative 
$cc(\delta) \in \mf t_\R$ of the central character of $\delta$ (as representation of $\mh H^G_u$).
So in that setting $\log (t^+_{\eta_i})$ and $\log (cc (\delta))$ represent the same central
character, for all $i \prec \mf d$. We have translate these to $\mc H (G,K)^{\mf s}$-representations 
with Theorem \ref{thm:5.1} and Proposition \ref{prop:5.7}. Then $cc(\delta)$ becomes $t^+_\delta$. 
Hence $W(L,\mf s) t^+_{\eta_i} = W(L,\mf s) t^+_\delta$ for all $i \prec \mf d$.

Thus the $\mc O (X_\nr (L))^{W(L,\mf s)}$-module structures for the $i \prec \mf d$ agree, and
combine to make $HH_n (\tilde \phi_u^*) \circ HH_n (\mc F_{\mf d})$ a module homomorphism with
the indicated character shift.
\end{proof}

\subsection{Hochschild homology for one entire Bernstein component} \
\label{par:component}

We would like to combine the local conditions involving $HH_n (\phi_u^*)$ to a smaller set of
conditions that describe $HH_n (\mc H (G,K)^{\mf s})$ globally on $X_\nr (L)$. This is difficult
because the algebras $\mh H^G_u$ and $\mh H^G_{W(L,\mf s)u}$ do not vary continuously with $u
\in X_\unr (L)$. To compensate for that, we relate the local conditions coming from $u, u' \in 
X_\unr (L)$ that are close. When $W(L,\mf s)_{u'} \subset W(L,\mf s)_u$, we define
\[
HH_n (\phi_u^*)_{u'} = \bigoplus_{w \in W(L,\mf s)_{u'}} \sum_{i=1, U_u\text{-rel}}^{n_{\mf s}} 
\lambda_{w,i} HH_n (\chi_{\eta_i}^{-1} \phi_{w,i}^*) .
\]

\begin{lem}\label{lem:5.8}
\enuma{
\item Let $u' \in U_u \cap X_\unr (L)$. Then 
\begin{multline*}
HH_n (\phi_{u'}^*)_{u'}^{-1} \Big( \bigoplus\nolimits_{w \in W(L,\mf s)_{u'}} \big( 
\Omega^n (u' X_\nr (L)^{w,\circ}) \otimes \natural_{\mf s}^w \big)^{Z_{W(L,\mf s)_{u'}} (w)} \Big) = \\
HH_n (\phi_{u}^*)_{u'}^{-1} \Big( \bigoplus\nolimits_{w \in W(L,\mf s)_{u'} } \big( 
\Omega^n (u X_\nr (L)^{w,\circ}) \otimes \natural_{\mf s}^w \big)^{Z_{W(L,\mf s)_{u}} (w)} \Big).
\end{multline*}
\item Part (a) also holds for $u \in u' X_\unr (L)^{W(L,\mf s)_{u'},\circ}$.
}
\end{lem}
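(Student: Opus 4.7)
My plan is to prove both (a) and (b) by analyzing, for each $w \in W(L,\mf s)_{u'}$, how the local data at $u$ and at $u'$ compare on a common domain. The starting observation is that $W(L,\mf s)_{u'}\subset W(L,\mf s)_u$: in (a) this follows from the third condition defining $U_u$ (that $u$ is the ``most singular'' point of $U_u$), and in (b) the hypothesis $u\in u' X_\unr(L)^{W(L,\mf s)_{u'},\circ}$ directly implies that $u$ is fixed by every element of $W(L,\mf s)_{u'}$. Moreover, for each $w\in W(L,\mf s)_{u'}$ the points $u$ and $u'$ lie in the same connected component of $X_\nr(L)^w$, so $u X_\nr (L)^{w,\circ} = u' X_\nr (L)^{w,\circ}$ as analytic subsets; in (b) this connectedness is the precise content of the hypothesis, while in (a) it follows from the connectedness of $U_u$ together with $u,u' \in U_u\cap X_\nr(L)^w$.

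Next I would verify that on this common component the maps $HH_n(\phi_u^*)_{u'}$ and $HH_n(\phi_{u'}^*)_{u'}$ coincide. By \cite[Lemma 1.10]{SolTwist} each $\phi_{w,i}$ is induced by a fixed element of $W(L,\mf s)_u$ acting on $\mf t=\C\otimes_\Z X^*(L)$, and this linear map does not depend on whether we identify $\mf t$ with the tangent space of $X_\nr(L)$ at $u$ or at $u'$. Since $\exp_u$ and $\exp_{u'}$ differ by multiplication by $u^{-1}u'$, a translation absorbed after applying the unramified correction $\chi_{\eta_i}^{-1}$, the composite $\chi_{\eta_i}^{-1}\phi_{w,i}$ is independent of the basepoint. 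The scalars $\lambda_{w,i}$ are pinned down by the trace identity \eqref{eq:4.10}, a property of the family $\mf F(M_i,\eta_i)$ itself and insensitive to the basepoint. Hence the two maps agree, and the comparison reduces to comparing the two equivariance conditions on their common target.

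The remaining step, and the principal obstacle, is to reconcile the stronger $Z_{W(L,\mf s)_u}(w)$-invariance (RHS) with the weaker $Z_{W(L,\mf s)_{u'}}(w)$-invariance (LHS). The inclusion RHS$\subseteq$LHS is immediate. For the reverse I would argue by localization: both pre-images are finitely generated $\mc O(X_\nr(L))^{W(L,\mf s)}$-submodules of $\bigoplus_i\Omega^n(X_\nr(M_i))$, and Proposition \ref{prop:5.2} applied at $u$ and at $u'$ identifies their formal completions at every central character in $W(L,\mf s)(U_u\cap U_{u'})$ with the formal completion of $HH_n(\mc H(G,K)^{\mf s})$ there. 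Since $U_u\cap U_{u'}$ is Zariski-dense and the criterion \eqref{eq:4.18} from the proof of Lemma \ref{lem:5.6} says that two finitely generated modules agreeing on all formal completions coincide, we conclude LHS$=$RHS. Intuitively, the extra invariance required on the RHS is automatic on the LHS because it is forced by those $g$-components with $g\in W(L,\mf s)_u\setminus W(L,\mf s)_{u'}$ that vanish upon formal completion at $u'$, so the full $W(L,\mf s)_u$-equivariance from Proposition \ref{prop:5.2}(e) at $u$ leaves no additional constraint beyond the LHS.
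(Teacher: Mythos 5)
Your overall strategy — show the maps agree on a common domain, then compare the invariance conditions by a localization/formal-completion argument — is reasonable in spirit, but there are two genuine gaps.

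First, part (b) cannot be handled by the same argument as part (a). Your final paragraph compares formal completions at central characters in $W(L,\mf s)(U_u \cap U_{u'})$, but in part (b) the point $u$ can lie anywhere in $u' X_\unr (L)^{W(L,\mf s)_{u'},\circ}$, and $U_u \cap U_{u'}$ may well be empty, so the Zariski-density argument has nothing to bite on. This is precisely why the paper's proof of (b) is not a one-liner but a chain: one picks a path $p$ from $u'$ to $u$ inside $u' X_\unr (L)^{W(L,\mf s)_{u'},\circ}$ along which the isotropy group is constantly $W(L,\mf s)_{u'}$, covers $p$ by finitely many $U_y$ (compactness of $X_\unr (L)$), and applies part (a) at each overlap of consecutive neighborhoods. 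Without this step your argument does not establish (b).

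Second, even for (a), the localization step is not quite closed. Proposition \ref{prop:5.2}.(e) identifies the formal completion of $HH_n(\mc H(G,K)^{\mf s})$ at $u'$ with the preimage under $HH_n(\phi_u^*)$ of the \emph{full} $W(L,\mf s)_u$-indexed and $Z_{W(L,\mf s)_u}(g)$-invariant package; it does not directly compute the completion of the preimage under the \emph{truncated} map $HH_n(\phi_u^*)_{u'}$ that appears in Lemma \ref{lem:5.8}. You gesture at the right idea in your last sentence (the extra $g$-components with $g \notin W(L,\mf s)_{u'}$ contribute nothing near $u'$), but you need to make this precise: the reason is that for such $g$ one has $U_{u'} \cap X_\nr (L)^g = \emptyset$ and $g(U_{u'}) \cap U_{u'} = \emptyset$, by the defining conditions on $U_{u'}$. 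The paper sidesteps all of this by not comparing the two maps at all; instead it invokes Theorem \ref{thm:5.3}.b together with the isomorphism from \cite[Proposition 2.16.a]{SolTwist} to show that \emph{each} side of (a) equals the restriction of $HH_n(\mc F_{\mf s}) HH_n(\mc H(G,K)^{\mf s})$ to $W(L,\mf s) U_{u'}$ — a description that is manifestly basepoint-independent — so the two sides coincide without ever identifying the maps $HH_n(\phi_u^*)_{u'}$ and $HH_n(\phi_{u'}^*)_{u'}$. This also makes your second paragraph (claiming the maps themselves coincide) unnecessary, and indeed that claim is delicate because the coefficients $\lambda_{w,i}$ in \eqref{eq:5.25} are extracted from the analysis at the chosen basepoint and the expressions \eqref{eq:5.31} need not be unique; the virtual representations $\nu^1_{w,\chi}$ are canonical, but their realization as linear combinations of the $\mf F(M_i,\eta_i,\chi_i)$ is not obviously so.
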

\begin{proof}
(a) Notice that $W(L,\mf s)_{u'} \subset W(L,\mf s)_u$ by the conditions on $U_u$.
We may choose $U_{u'}$ so small that it is contained in $U_u$. From the proof of 
\cite[Proposition 2.16.a]{SolTwist} we know that
\begin{multline*}
\bigoplus\nolimits_{w \in W(L,\mf s)_{u'} } \big( \Omega^n (u' X_\nr (L)^{w,\circ}) 
\otimes \natural_{\mf s}^w \big)^{Z_{W(L,\mf s)_{u'}} (w)} \cong \\
\Big( \bigoplus\nolimits_{g \in [W(L,\mf s) / W(L,\mf s)_{u'}]} 
\bigoplus\nolimits_{w \in W(L,\mf s)_{u'}} 
\Omega^n (g (u' X_\nr (L)^{w,\circ})) \otimes \natural_{\mf s}^{gwg^{-1}} \Big)^{W(L,\mf s)} .
\end{multline*}
By construction $HH_n (\phi_{u'}^*)_{u'}^{-1}$ of the left hand side equals 
$HH_n (\tilde \phi_{u'}^*)^{-1}$ of the right hand side. By Theorem \ref{thm:5.3}.b, this 
describes precisely the restriction of\\ 
$HH_n (\mc F_{\mf s}) HH_n (\mc H (G,K)^{\mf s})$ to $W(L,\mf s) U_{u'}$. Similarly 
\[
HH_n (\phi_u^*)_u^{-1} \Big( \bigoplus\nolimits_{w \in W(L,\mf s)_u} \big( 
\Omega^n (u X_\nr (L)^{w,\circ}) \otimes \natural_{\mf s}^w \big)^{Z_{W(L,\mf s)_u} (w)} \Big)
\]
describes precisely the restriction of $HH_n (\mc F_{\mf s}) HH_n (\mc H (G,K)^{\mf s})$ to 
$W(L,\mf s) U_{u}$. Restricting that further $W(L,\mf s) U_{u'}$ means that we remove the 
summands for $w \in W(L,\mf s)_u$ that do not fix $u'$, because for those $U_{u'} \cap
X_\nr (L)^w = \emptyset$ and $w(U_{u'}) \cap U_{u'} = \emptyset$, by the properties of $U_{u'}$.
That leaves us with
\[
HH_n (\phi_{u}^*)_{u'}^{-1} \Big( \bigoplus\nolimits_{w \in W(L,\mf s)_{u'}} \big( \Omega^n 
(u X_\nr (L)^{w,\circ}) \otimes \natural_{\mf s}^w \big)^{Z_{W(L,\mf s)_{u}} (w)} \Big).
\]
(b) Pick a path $p$ from $u'$ to $u$ in $u' X_\unr (L)^{W(L,\mf s)_{u'},\circ}$. We even assume 
that $W(L,\mf s)_y = W(L,\mf s)_{u'}$ for all $y$ on $p$, because that condition holds on an
open dense subset of $X_\unr (L)$. By the compactness of $X_\unr (L)$, we can choose a finite 
subset $Y$ of $p$, such that the $U_y$ with $y \in Y$ cover $p$. Choose a finite sequence
$y_1,y_2,\ldots,y_m$ in $Y$, such that $u' \in U_{y_1}, u \in U_{y_m}$ and 
\[
U_{y_i} \cap U_{y_{i+1}} \cap p \neq \emptyset \text{ for all } 1 \leq i < m . 
\]
For $1 \leq i < m$ we pick $z_i \in U_{y_i} \cap U_{y_{i+1}} \cap p$. We follow the new sequence
\[
u', y_1, z_1, y_2, z_2, \ldots, z_{m-1}, y_m, u .
\]
At each step part (a) guarantees that the relevant preimages under $HH_n (\phi^*_? )_{u'}$ do
not change.
\end{proof}

For $c \in \pi_0 (X_\nr (L)^w)$, we denote the corresponding connected component of $w$-fixed
points by $X_\nr (L)^w_c$. Then $W(L,\mf s)$ acts naturally on the set of such components, and on
the set of pairs $(w,c)$. We denote the stabilizer of $(w,c)$ by $W(L,\mf s)_{w,c}$, this
is a subgroup of $Z_{W(L,\mf s)}(w)$. We register these connected components with the list of
pairs $(w,c)$, where $w \in W(L,\mf s)$ and $c \in \pi_0 (X_\nr (L)^w)$. We write $(w',c') \geq 
(w,c)$ if $X_\nr (L)^{w'}_{c'} \supset X_\nr (L)^w_c$, $(w',c') \sim (w,c)$ if
$X_\nr (L)^{w'}_{c'} \supset X_\nr (L)^w_c$ and $(w',c') > (w,c)$ if $X_\nr (L)^{w'}_{c'} 
\supsetneq X_\nr (L)^w_c$.

We ready to reorganize the conditions that describe $H_n (\mc F_{\mf s}) HH_n (\mc H (G,K)^{\mf s})$
in Theorem \ref{thm:5.3}. This is done with decreasing induction on the dimension of the 
connected components $X_\nr (L)^w_c$, or equivalently on the pairs $(w,c)$.
\begin{construct}\label{cons:3}
\begin{enumerate}[(i)]
\item We start with $w=1$ and $X_\nr (L)^w_c = X_\nr (L)$. Pick $u_1 \in X_\unr (L)$ with
$W(L,\mf s)_{u_1} = \{1\}$. Then $HH_n (\phi^*_{1,c}) := HH_n (\phi_{u_1}^*)$ is just a map
\[
\bigoplus\nolimits_{i=1}^{n_{\mf s}} \Omega^n (X_\nr (M_i)) \to 
\Omega^n (u_1 X_\nr (L)) = \Omega^n (X_\nr (L)) ,
\]
and it sends $HH_n (\mc F_{\mf s}) HH_n (\mc H (G,K)^{\mf s})$ to $\Omega^n (X_\nr (L))^{W(L,\mf s)}$.
By Lemma \ref{lem:5.8}, this completely describes the restriction of 
$HH_n (\mc F_{\mf s}) HH_n (\mc H (G,K)^{\mf s})$ to the subset of $X_\nr (L)$ not fixed by any
nontrivial element of $W(L,\mf s)$. Remove $(1,c)$ from the list of pairs.
\item Assume that for some connected components $X_\nr (L)^w_c$ we have already chosen a map
\begin{equation}\label{eq:5.26}
HH_n (\phi_{w,c}^*) : \bigoplus\nolimits_{i=1}^{n_{\mf s}} \Omega^n (X_\nr (M_i)) 
\to \Omega^n (u_{w,c} X_\nr (L)^{w,\circ}) = \Omega^n (X_\nr (L)^w_c) , 
\end{equation}
of the form $\sum_{i=1, U_u\text{-rel}}^{n_{\mf s}}\lambda_{w,i} HH_n (\chi_{\eta_i}^{-1} \phi_{w,i}^*)$ 
coming from $HH_n (\phi_u^*)$ for some $u = u_{w,c}$ with $u_{w,c} \in X_\nr (L)^w_c$ but not in any 
connected component of smaller dimension. Assume that the set of pairs $(w,c)$ for which this has 
been done is closed under passing to larger pairs. Assume that all those pairs 
have been removed from the list. Finally and most importantly, we assume that for all those pairs
$(w,c)$ the restriction of $HH_n (\mc F_{\mf s}) HH_n (\mc H (G,K)^{\mf s})$ to
$X_\nr (L)^w_c$ without the connected components of smaller dimension equals
\begin{equation}\label{eq:5.20}
\bigcap\nolimits_{(w',c') \geq (w,c)} HH_n (\phi_{w',c'}^*)^{-1} 
\big( \Omega^n (X_\nr (L)^{w'}_{c'}) \otimes \natural_{\mf s}^{w'} \big)^{W(L,\mf s)_{w',c'}} .
\end{equation}
\item From the list of remaining pairs, pick a $(g,c)$ with $X_\nr (L)^g_c$ of maximal dimension.
Select $u = u_{g,c}$ in $X_\unr (L)^g_c$ but not in any connected component of smaller dimension.
Define 
\begin{equation}\label{eq:5.28}
HH_n (\phi_{g,c}^*) = \sum\nolimits_{i=1, U_u\text{-rel}}^{n_{\mf s}} \lambda_{g,i} 
HH_n (\chi_{\eta_i}^{-1} \phi_{g,i}^*) ,
\end{equation}
a part of $HH_n (\phi_u^*)$. If there are other $h \in W(L,\mf s)$ with $X_\nr (L)^h_c = 
X_\nr (L)^g_c$, then we take $u_{h,c} = u_{g,c}$ and we define $HH_n (\phi_{h,c}^*)$ in the 
same way. We need to check that 
\begin{multline}\label{eq:5.21}
\bigcap\nolimits_{(w',c') > (g,c)} HH_n (\phi_{w',c'}^*)^{-1} \big( \Omega^n (X_\nr (L)^{w'}_{c'}) 
\otimes \natural_{\mf s}^{w'} \big)^{W(L,\mf s)_{w',c'}} \cap \\
\bigcap\nolimits_{(h,c) \sim (g,c)} HH_n (\phi_{h,c}^*)^{-1} 
\big( \Omega^n (X_\nr (L)^{h}_{c}) \otimes \natural_{\mf s}^{h} \big)^{W(L,\mf s)_{h,c}} 
\end{multline}
equals
\begin{equation}\label{eq:5.22}
HH_n (\phi_{u}^*)^{-1} \bigoplus\nolimits_{w \in \langle W(L,\mf s)_{u} \rangle} 
\big( \Omega^n (u X_\nr (L)^{w,\circ}) \otimes \natural_{\mf s}^w \big)^{Z_{W(L,\mf s)_{u}} (w)} .
\end{equation}
This follows from Lemma \ref{lem:5.8}, which says that all the parts $HH_n (\phi_{u}^*)_{u'}$
with $u' \in U_u$ and $u' \notin X_\nr (L)^g_c$ are accounted for by the $(w',c') > (g,c)$.
Lemma \ref{lem:5.8} also tells us that \eqref{eq:5.21} and \eqref{eq:5.22} describe exactly
the restriction of\\ $HH_n (\mc F_{\mf s}) HH_n (\mc H (G,K)^{\mf s})$ to $X_\nr (L)^g_c$ 
without the components of smaller dimension. 
\item For components $(g',c')$ in the $W(L,\mf s)$-orbit of $(g,c)$ or any of the $(h,c)$, we
define the maps $HH_n (\phi_{g',c'}^*)$ by imposing $W(L,\mf s)$-equivariance (where the group
acting involves the characters $\natural_{\mf s}^g$). This construction ensures that
\begin{multline}\label{eq:5.23}
HH_n (\phi_{g,c}^*)^{-1} \big( \Omega^n (X_\nr (L)^{g}_{c}) \otimes \natural_{\mf s}^{g} 
\big)^{W(L,\mf s)_{g,c}} = \\
\Big( \sum_{(g',c') \in W(L,\mf s)(g,c)} HH_n (\phi_{g',c'}^*) \Big)^{-1} \Big( \bigoplus_{(g',c') 
\in W(L,\mf s)(g,c)} \Omega^n (X_\nr (L)^{h}_{c}) \otimes \natural_{\mf s}^{h} \Big)^{W(L,\mf s)} . 
\end{multline}
\item Remove $(g,c)$ and the pairs $(h,c) \sim (g,c)$ from the list of pairs. Stop if there are
no pairs left, otherwise return to step (iii).
\end{enumerate}
\end{construct}
With \eqref{eq:5.28} we associate to $(w,\chi)$ the virtual $\mc H (G,K)^{\mf s}$-representation
\begin{equation}\label{eq:5.31}
\nu^1_{w,\chi} = \sum\nolimits_{i=1, U_u\text{-rel}}^{n_{\mf s}} 
\lambda_{w,i} \mr{tr} \, \pi (M_i,\eta_i, \chi_{\eta_i}^{-1} \phi_{w,i}(\chi)) .
\end{equation}
In other words, the specialization of $HH_n (\phi_{w,c}^*)$ at $\chi \in X_\nr (L)^w_c$
corresponds to the map on Hochschild homology induced by $\nu^1_{w,\chi}$. This means that
\begin{equation}\label{eq:4.13}
HH_n (\phi^*_{w,c}) \circ HH_n (\mc F_{\mf s}) : HH_n (\mc H (G,K)^{\mf s}) \to
\Omega^n (X_\nr (L)^w_c) 
\end{equation}
is induced by the algebraic family of virtual representations 
$\{ \nu^1_{w,\chi} : \chi \in X_\nr (L)^w_c \}$. From \cite[Lemma 2.5.a]{SolTwist} (translated to 
the current setting with Theorem \ref{thm:5.1}) and step (iv) above we see that
\begin{equation}\label{eq:5.29}
\nu^1_{g w g^{-1},g \chi} = \natural_{\mf s}^w (g) \nu^1_{w,\chi} .
\end{equation}
Considering $\nu^1_{w,\chi}$ as virtual $G$-representation via the equivalence of categories
\eqref{eq:4.1}, we deduce from \eqref{eq:5.28} and \eqref{eq:4.10} that
\begin{equation}\label{eq:4.11}
\zeta^\vee (\nu^1_{w,\chi}) = \nu_{w,\chi} .
\end{equation}
The above procedure gives rise to a description of $HH_n (\mc H (G,K)^{\mf s})$ that is more
concrete than Theorem \ref{thm:5.3}.

\begin{thm}\label{thm:5.9}
For each $w \in W(L,\mf s)$ and each $c \in \pi_0 (X_\nr (L)^w)$, let $HH_n (\phi_{w,c}^*)$
be as above. We define 
\[
HH_n (\tilde \phi_{\mf s}^*) = 
\bigoplus\nolimits_{w \in W(L,\mf s), c \in \pi_0 (X_\nr (L)^w)} HH_n (\phi_{w,c}^*)
\]
\enuma{
\item The map
\[
HH_n (\tilde \phi_{\mf s}^*) \;:\; \bigoplus_{i=1}^{n_{\mf s}} \Omega^n (X_\nr (M_i)) 
\to \bigoplus_{w \in W(L,\mf s), c \in \pi_0 (X_\nr (L)^w)} \Omega^n (X_\nr (L)^w_c)
\]
is injective.
\item $HH_n (\tilde \phi_{\mf s}^*)$ gives a $\C$-linear bijection
\[
HH_n (\mc F_{\mf s}) HH_n (\mc H (G,K)^{\mf s}) \to \Big( \bigoplus\nolimits_{w \in W(L,\mf s)} 
\Omega^n (X_\nr (L)^w) \otimes \natural_{\mf s}^w \Big)^{W(L,\mf s)} .
\]
\item For $\mf d \in \Delta_G^{\mf s}$, the restriction of $HH_n (\tilde \phi_{\mf s}^*) 
\circ HH_n (\mc F_{\mf s})$ to the direct summand $HH_n (\mc H (G,K)^{\mf s})^{\mf d}$ of
$HH_n (\mc H (G,K)^{\mf s})$ becomes $\mc O (X_\nr (L))^{W(L,\mf s)}$-linear if we endow the 
target with the same module structure as in Lemma \ref{lem:5.5}.
}
\end{thm}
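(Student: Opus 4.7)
The heart of the argument is to translate the family of local conditions of Theorem \ref{thm:5.3}, indexed by $u \in X_\unr (L)$, into a single global condition indexed by pairs $(w,c)$, and this is essentially what Construction \ref{cons:3} was engineered to do. I would dispatch (b) first, then derive (a) as a consequence, and read (c) off from Lemma \ref{lem:5.5}.b.

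For (b), take $\omega \in HH_n (\mc F_{\mf s}) HH_n (\mc H (G,K)^{\mf s})$. For each $u \in X_\unr (L)$, Theorem \ref{thm:5.3}.a forces $HH_n (\tilde \phi_u^*) \omega$ into the $W(L,\mf s)$-invariant space there; by Construction \ref{cons:3} step (iii) this local condition is equivalent to saying $HH_n (\phi_{w,c}^*) \omega$ lies in $(\Omega^n (X_\nr (L)^w_c) \otimes \natural_{\mf s}^w )^{W(L,\mf s)_{w,c}}$ for every $(w,c)$ with $u_{w,c} \in U_u$, and Lemma \ref{lem:5.8} lets us propagate this from one basepoint to another along $X_\unr (L)^{w,\circ}$. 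Step (iv) of Construction \ref{cons:3}, which introduces the maps for $W(L,\mf s)$-conjugate pairs by equivariance, then assembles these isotropy-level invariance conditions into the single global $W(L,\mf s)$-invariance displayed in the theorem, via the standard identification
\[
\Big( \bigoplus_{w \in W(L,\mf s)} \Omega^n (X_\nr (L)^w) \otimes \natural_{\mf s}^w \Big)^{W(L,\mf s)} \cong \bigoplus_{[(w,c)]} \big( \Omega^n (X_\nr (L)^w_c) \otimes \natural_{\mf s}^w \big)^{W(L,\mf s)_{w,c}} .
\]
Surjectivity is the reverse: any invariant element restricts at each $u \in X_\unr (L)$ to something in the image of $HH_n (\tilde \phi_u^*) \circ HH_n (\mc F_{\mf s})$ by Theorem \ref{thm:5.3}.b, and these local preimages glue, again by Lemma \ref{lem:5.8}, to a unique global preimage in $HH_n (\mc H (G,K)^{\mf s})$.

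For (a), suppose $\omega \in \bigoplus_i \Omega^n (X_\nr (M_i))$ lies in the kernel of $HH_n (\tilde \phi_{\mf s}^*)$. Specializing at any $u' \in X_\unr (L)$ and comparing with $HH_n (\tilde \phi_u^*)$ via Lemma \ref{lem:5.8} shows that $HH_n (\tilde \phi_u^*) \omega = 0$ for every $u \in X_\unr (L)$. By the injectivity of $HH_n (\tilde \phi_u^*)$ on the relevant formal completions, established in \cite[Proposition 2.16]{SolTwist} and used in the proof of Theorem \ref{thm:5.3}, the formal completion of the $\mc O (X_\nr (L))^{W(L,\mf s)}$-submodule of $\bigoplus_i \Omega^n (X_\nr (M_i))$ generated by $\omega$ is zero at every point. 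Since $\bigoplus_i \Omega^n (X_\nr (M_i))$ is finitely generated over the Noetherian algebra $\mc O (X_\nr (L))^{W(L,\mf s)}$ (using \eqref{eq:6.9}), the criterion \eqref{eq:4.18} forces $\omega = 0$.

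Part (c) follows by combining Lemma \ref{lem:5.5}.b with the construction. Lemma \ref{lem:5.5}.b already identifies the $\mc O (X_\nr (L))^{W(L,\mf s)}$-module structure on the image of $HH_n (\tilde \phi_u^*) \circ HH_n (\mc F_{\mf d})$ as the one given by shifting central characters by $t_\delta^+$. Since Construction \ref{cons:3} builds $HH_n (\phi_{w,c}^*)$ from the local maps $HH_n (\phi_u^*)$ without touching the module structure, and since step (iv) imposes only $W(L,\mf s)$-equivariance (which is compatible with the $W(L,\mf s)$-invariance of $\mc O (X_\nr (L))^{W(L,\mf s)}$), the same character-shifted action transfers verbatim to $HH_n (\tilde \phi_{\mf s}^*) \circ HH_n (\mc F_{\mf d})$. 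The main obstacle I anticipate is the bookkeeping in step (iv) of the construction: one must confirm that the diagonal $W(L,\mf s)$-equivariance used there really does identify the locally defined module structures at $(w,c)$ and $(gwg^{-1}, gc)$, and this is where the assumption, already in Lemma \ref{lem:5.5}.b, that $t_\delta^+$ is a common representative for all $i \prec \mf d$ is indispensable.
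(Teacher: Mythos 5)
Your overall roadmap mirrors the paper: use the local descriptions from Theorem \ref{thm:5.3} together with Construction \ref{cons:3} and Lemma \ref{lem:5.8} to assemble a global statement, invoking the formal-completion criterion \eqref{eq:4.18} wherever finiteness is available. The treatment of (c) and the containment step of (b) are sound, but there are two concrete gaps.

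In (a), you invoke Lemma \ref{lem:5.8} to get from $HH_n(\tilde\phi_{\mf s}^*)\omega = 0$ to $HH_n(\tilde\phi_u^*)\omega = 0$. Lemma \ref{lem:5.8} equates preimages of specified $W(L,\mf s)_{u'}$-invariant subspaces under the truncated maps $HH_n(\phi_{u'}^*)_{u'}$ and $HH_n(\phi_u^*)_{u'}$; it says nothing directly about kernels, nor about the full maps $HH_n(\tilde\phi_u^*)$. What must actually be shown is that $HH_n(\tilde\phi_{\mf s}^*)$ carries at least as much information as each $HH_n(\tilde\phi_u^*)$, and the paper establishes this by a Grothendieck-group argument: the virtual representations $\nu^1_{w,\chi}$ underlying the specializations of $HH_n(\phi_{w,c}^*)$, taken over central characters in $W(L,\mf s)U_u$, span exactly the same subspace of $\C\otimes_\Z R(\mc H(G,K)^{\mf s})$ as the $\nu_{g,u'}$ coming from $\mh H^G_u$. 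Combined with the injectivity of $HH_n(\tilde\phi_u^*)$ from \cite[Lemmas 1.12 and 2.7]{SolTwist}, that yields (a). Without the spanning step, your injectivity claim is unsupported.

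In (b), surjectivity rests on the assertion that the local preimages ``glue by Lemma \ref{lem:5.8}.'' That does not work: the ``restriction to $W(L,\mf s)U_u$'' in Theorem \ref{thm:5.3}.b restricts the index set $\{i : X_\nr(M_i)\cap W(L,\mf s)U_u \neq \emptyset\}$, not the functions, so there is no sheaf-theoretic localization along which to glue, and a priori preimages chosen over different $W(L,\mf s)U_u$ are unrelated. The correct replacement is the paper's module-theoretic argument: once the containment \eqref{eq:5.24} is established (which your local analysis does give), observe that both sides are finitely generated $\mc O(X_\nr(L))^{W(L,\mf s)}$-modules, that their quotient has all formal completions zero by the local isomorphisms from Theorem \ref{thm:5.3}, \eqref{eq:5.20} and \eqref{eq:5.23}, and that \eqref{eq:4.18} then forces the quotient to vanish.

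Finally a structural inconsistency: you announce that (a) will be derived from (b), but then give a self-contained argument for (a). It could not follow from (b) in any case, since (a) asserts injectivity on all of $\bigoplus_i\Omega^n(X_\nr(M_i))$, whereas (b) only concerns the image of $HH_n(\mc F_{\mf s})$.
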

\begin{proof}
(a) Recall that the specialization of $HH_n (\phi_{w,c}^* )$ at $\chi \in X_\nr (L)^w_c$ came 
from a virtual representation  of $\mh H^G_{\chi |\chi|^{-1}}$, translated to a virtual 
representation $\nu_{1,w,\chi}$ of $\mc H (G,K)^{\mf s}$ via Theorem \ref{thm:5.1} and Proposition 
\ref{prop:5.7}. 

Consider one $u \in X_\unr (L)$. From \eqref{eq:5.20} and Theorem \ref{thm:5.3} we see that the 
$\nu_{1,w,\chi}$  with central characters in $W(L,\mf s) U_u$ span the same set of virtual 
representations with central characters in $W(L,\mf s) U_u$ as all the virtual representations 
$\nu_{g,u'}$ defined via $\mh H^G_u$. The latter collection spans the entire part of 
$\C \otimes_\Z R (\mc H (G,K)^{\mf s})$ with central characters in $W(L,\mf s) U_u$, so the 
$\nu_{w,\chi}$ span that as well. It follows that the specialization of $HH_n (\phi_{\mf s}^*) x$ 
at $\chi \in W(L,\mf s)U_u$ is zero if and only if the specialization of $HH_n (\tilde \phi_u^*)x$ 
at $\chi$ is zero.

From \cite[Lemmas 1.12 and 2.7]{SolTwist} we know that $HH_n (\phi_u^*)$ and 
$HH_n (\tilde \phi_u^*)$ are injective, for twisted graded Hecke algebras and for 
$\mh H (V,G,k,\natural)$ as in \cite[Paragraph 2.3]{SolTwist}, and hence also for the algebras
$\mh H^G_u$ and $\mh H^G_{W(L,\mf s)}$. By the above considerations with virtual representations,
$HH_n (\tilde \phi_{\mf s}^*)$ contains at least as much information as $HH_n (\tilde \phi_u^*)$.
Hence $HH_n (\tilde \phi_{\mf s}^*) x$ is nonzero as soon as $x \in \bigoplus_{i=1}^{n_{\mf s}} 
\Omega^n (X_\nr (M_i))$ does not vanish on $U_u$. That holds for for every $u \in X_\unr (L)$,
so $HH_n (\tilde \phi_{\mf s}^*) x$ is injective.\\
(b) We already know from Lemma \ref{lem:5.6} and part (a) that 
$HH_n (\tilde \phi_{\mf s}^*) \circ HH_n (\mc F_{\mf s})$ is injective.
From Theorem \ref{thm:5.3}, \eqref{eq:5.20} and \eqref{eq:5.23} we know that the assertion
holds locally. Hence
\begin{multline}\label{eq:5.24}
HH_n (\tilde \phi_{\mf s}^*) \circ HH_n (\mc F_{\mf s}) HH_n (\mc H (G,K)^{\mf s}) \subset \\
\Big( \bigoplus\nolimits_{w \in W(L,\mf s), c \in \pi_0 (X_\nr (L)^w)} \Omega^n (X_\nr (L)^w_c) 
\otimes \natural_{\mf s}^w \Big)^{W(L,\mf s)} .
\end{multline}
Both sides are finitely generated $\mc O (X_\nr (L))^{W(L,\mf s)}$-modules
(for natural module structure, not the module structure determined by the characters of the
underlying virtual representations). We consider the quotient module $M$. Since the two sides
of \eqref{eq:5.24} are isomorphic locally, all formal completions of $M$ with respect to
characters of $\mc O (X_\nr (L))^{W(L,\mf s)}$ are zero. By \eqref{eq:4.18}
$M = 0$, so the inclusion \eqref{eq:5.24} is an equality. Finally, we note that
\[
\bigoplus\nolimits_{c \in \pi_0 (X_\nr (L)^w)} \Omega^n (X_\nr (L)^w_c) = \Omega^n (X_\nr (L)^w).
\]
(c) This follows from Lemma \ref{lem:5.5}, since every component of 
$HH_n (\tilde \phi_{\mf s}^*)$ occurs as component of a $HH_n (\tilde \phi_u^*)$. 
\end{proof}

Recall that $\mc H (G,K)^{\mf s}$ is Morita equivalent with $\End_G (\Pi_{\mf s})^{op}$ and 
that in \eqref{eq:4.5} we fixed an isomorphism of $\mc O (X_\nr (L))^{W(L,\mf s)}$-algebras
\[
\C (X_\nr (L))^{W(L,\mf s)} \underset{\mc O (X_\nr (L))^{W(L,\mf s)}}{\otimes}
\End_G (\Pi_{\mf s})^{op} \isom \C (X_\nr (L)) \rtimes \C [W(L,\mf s),\natural_{\mf s}] .
\]
Recall the bijection $\zeta^\vee$ from \eqref{eq:4.7} and Theorem \ref{thm:4.1}.

\begin{thm}\label{thm:4.2}
There exists a unique $\C$-linear bijection 
\[
HH_n (\zeta^\vee) : HH_n \big( \mc O (X_\nr (L)) \rtimes \C [W(L,\mf s),\natural_{\mf s}] \big)
\to HH_n (\mc H (G,K)^{\mf s})
\]
such that
\[
HH_n (\mc F_{M,\eta}) \circ HH_n (\zeta^\vee) = HH_n (\mc F_{M,\zeta^\vee (\eta)})
\]
for all algebraic families $\mf F (M,\eta)$ in $\Rep (G)^{\mf s}$.
\end{thm}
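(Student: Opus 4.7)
The plan is to deduce the theorem from matching structural descriptions of both Hochschild homology groups. On the Hecke side, Theorem \ref{thm:5.9}(b) furnishes an isomorphism
\[
\Psi_G \;:\; HH_n(\mc H (G,K)^{\mf s}) \isom T \;:=\; \Big( \bigoplus_{w \in W(L,\mf s)} \Omega^n (X_\nr (L)^w) \otimes \natural_{\mf s}^w \Big)^{W(L,\mf s)} ,
\]
given by $HH_n (\tilde \phi_{\mf s}^*) \circ HH_n (\mc F_{\mf s})$ for the chosen families $\mf F (M_i, \eta_i)$. Applying the crossed product analogue \cite[Theorems 1.13 and 2.8]{SolTwist} to $\mc O (X_\nr (L)) \rtimes \C [W(L,\mf s), \natural_{\mf s}]$, with $\zeta^\vee (\mf F (M_i,\eta_i))$ playing the analogous role, yields a second isomorphism $\Psi_W$ onto the \emph{same} target $T$. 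I would then set $HH_n (\zeta^\vee) := \Psi_G^{-1} \circ \Psi_W$.

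Uniqueness is immediate: by Lemma \ref{lem:5.6} the map $HH_n (\mc F_{\mf s}) = \bigoplus_i HH_n (\mc F_{M_i,\eta_i})$ is already injective, and each $\mf F (M_i,\eta_i)$ is an algebraic family in $\Rep (G)^{\mf s}$, so the asserted compatibility pins down $HH_n (\zeta^\vee)$. Compatibility for the particular families $\mf F (M_i,\eta_i)$ is built into the definition, because $\Psi_G$ and $\Psi_W$ decompose on the nose as direct sums indexed by $i$ (together with the variables $w,c$ from Construction \ref{cons:3}). The crucial algebraic input here is \eqref{eq:4.11}, $\zeta^\vee (\nu^1_{w,\chi}) = \nu_{w,\chi}$, which matches the two systems of ``universal'' virtual representations under $\zeta^\vee$ and hence identifies the two resulting maps to $T$ component by component.

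It remains to promote the compatibility to an arbitrary algebraic family $\mf F (M,\eta)$. By Theorem \ref{thm:4.1}(e), $\zeta^\vee$ commutes with parabolic induction and unramified twists, so $\zeta^\vee (\mf F (M,\eta))$ is a bona fide algebraic family on the crossed product side and $\mc F_{M, \zeta^\vee (\eta)}$ is defined. The two maps
\[
HH_n (\mc F_{M,\eta}) \circ HH_n (\zeta^\vee) \quad \text{and} \quad HH_n (\mc F_{M, \zeta^\vee (\eta)})
\]
both land in $\Omega^n (X_\nr (M))$. Via the Hochschild--Kostant--Rosenberg theorem and the algebraicity of matrix coefficients, an algebraic $n$-form on $X_\nr (M)$ is determined by its pairings with $I_P^G (\eta \otimes \chi)$ for $\chi$ in a Zariski-dense subset. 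At generic $\chi$, the class $[I_P^G (\eta \otimes \chi)] \in \Q \otimes_\Z R(G)^{\mf s}$ lies in the span of the $[I_{P_i}^G (\eta_i \otimes \chi_i)]$ by the choice of the family $\{\mf F (M_i,\eta_i)\}$, and $\zeta^\vee$ transports these decompositions identically by $\Z$-linearity of $\zeta^\vee$ and the already established compatibility for the $\mf F (M_i,\eta_i)$. The two maps therefore agree on a Zariski-dense set of evaluation points, hence coincide as forms.

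The main obstacle is the last paragraph: transferring compatibility from the chosen basis-like families to arbitrary families. For $n = 0$ this is just a statement about linear functions on $R(G)^{\mf s}$ and follows from $\Z$-linearity of $\zeta^\vee$; for higher $n$ one needs that a global $n$-form on $X_\nr (M)$ is pinned down by the traces of its pairings with a Zariski-dense family of representations together with the algebraicity built into $\mc F_{M,\eta}$. This is precisely the mechanism that underlies the proofs of Theorems \ref{thm:5.3} and \ref{thm:5.9}, and it can be applied verbatim to the difference of the two maps above to conclude they vanish.
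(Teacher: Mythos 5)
Your proposal follows essentially the same route as the paper: use Theorem~\ref{thm:5.9}.b on the Hecke side and the twisted-crossed-product analogue from \cite{SolTwist} on the other side, define $HH_n(\zeta^\vee)$ as the composition of the two identifications with the common target $T$, establish compatibility for the chosen families via $\zeta^\vee(\nu^1_{w,\chi})=\nu_{w,\chi}$, and extend to general families by writing them as linear combinations. Two small imprecisions worth flagging: $\Psi_G$ and $\Psi_W$ decompose over the pairs $(w,c)$ (not over $i$), and (4.11) is the correct tool precisely because it matches those $(w,c)$-components; and in the final step, the claim that an $n$-form is ``determined by its pairings with $I_P^G(\eta\otimes\chi)$ at a Zariski-dense set'' is not quite the right formulation for $n>0$ — the paper instead decomposes $\mf F(M,\eta)$ over the basis $\{\nu_{w,v}\}$ of $\C\otimes_\Z R$ from \cite[Lemma 1.9]{SolTwist} with algebraic coefficients, and then applies \eqref{eq:4.15} as an identity of maps ``for all $\chi$ simultaneously,'' which is the mechanism you are gesturing at with the reference to Theorems~\ref{thm:5.3} and~\ref{thm:5.9}.
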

\begin{proof}
By \cite[Theorem 1.2]{SolTwist} there is a $\mc O (X_\nr (L))^{W(L,\mf s)}$-linear bijection
\begin{equation}\label{eq:4.12}
HH_n \big( \mc O (X_\nr (L)) \rtimes \C [W(L,\mf s),\natural_{\mf s}] \big) \to \Big( 
\bigoplus_{w \in W(L,\mf s)} \Omega^n (X_\nr (L)^w) \otimes \natural_{\mf s}^w \Big)^{W(L,\mf s)} .
\end{equation}
It is not canonical, but from \cite[(1.15) and (1.17)]{SolTwist} we know that the non-canonicity
is limited to one scalar factor for each direct summand indexed by a conjugacy class in
$W(L,\mf s)$. We can fix these scalar factors by requiring that \eqref{eq:4.12} on the 
summand indexed by $w$ is induced by the algebraic family of virtual representations 
\begin{equation}\label{eq:4.14}
\{ \nu_{w,\chi} : \chi \in X_\nr (L)^w \}. 
\end{equation}
Indeed, the bijection \eqref{eq:4.12} is recovered in that way in \cite[Theorem 1.13.a]{SolTwist}. 
The only issue is that \cite[\S 1.2]{SolTwist} applies not to tori like $X_\nr (L)$, but to complex 
vector spaces. Fortunately \cite[Theorem 1.13]{SolTwist} can easily be extended to our setting by 
localization of $\mc O (X_\nr (L))^{W(L,\mf s)}$ to sets of the form $W(L,\mf s) U_u / W(L,\mf s)$. 
Thus we make \eqref{eq:4.12} canonical.

By Lemma \ref{lem:5.6} and Theorem \ref{thm:5.9}.b,
\[
HH_n (\tilde{\phi}_{\mf s}^*) \circ HH_n (\mc F_{\mf s}) : HH_n (\mc H (G,K)^{\mf s}) \to \Big( 
\bigoplus_{w \in W(L,\mf s)} \Omega^n (X_\nr (L)^w) \otimes \natural_{\mf s}^w \Big)^{W(L,\mf s)} 
\]
is a $\C$-linear bijection. We define $HH_n (\zeta^\vee)$ as the composition of \eqref{eq:4.12}
with $\big( HH_n (\tilde{\phi}_{\mf s}^*) \circ HH_n (\mc F_{\mf s}) \big)^{-1}$. From 
\eqref{eq:4.13}, \eqref{eq:4.11} and \eqref{eq:4.14} we see that
\begin{equation}\label{eq:4.15}
HH_n (\mc F) \circ HH_n (\zeta^\vee) = HH_n (\zeta^\vee \circ \mc F)
\end{equation}
whenever $\mc F$ is one of the families $\{ \nu^1_{w,\chi} : \chi \in X_\nr (L)^w_c \}$ with
$c \in \pi_0 (X_\nr (L)^w)$. Since every such $\mc F$ is a linear combination of algebraic
families of $\mc H (G,K)^{\mf s})$-representations, \eqref{eq:4.15} is implied the condition
in the theorem. Hence $HH_n (\zeta^\vee)$ is unique.

It remains to check that 
\begin{equation}\label{eq:4.16}
HH_n (\mc F_{M,\eta}) \circ HH_n (\zeta^\vee) = HH_n (\mc F_{M,\zeta^\vee (\eta)})
\end{equation}
for an arbitrary algebraic family in $\Rep (G)^{\mf s}$. By \cite[Lemma 1.9]{SolTwist} the
virtual representations $\nu_{w,\chi}$ with $w \in [W(L,\mf s)]$ and $v \in X_\nr (L)^w / 
Z_{W(L,\mf s)}(w)$ such that $\natural_{\mf s}^w (W(L,\mf s)_v \cap Z_{W(L,\mf s)}(w)) = 1$
form a basis of 
\begin{equation}\label{eq:4.17}
\C \otimes_\Z R\big( \mc O (X_\nr (L)) \rtimes \C [W(L,\mf s),\natural_{\mf s}] \big) .
\end{equation}
Hence there exist coefficients $c (w,v,\chi) \in \C$ such that
\[
\mf F (M,\zeta^\vee (\eta),\chi) = \sum\nolimits_{w,v} c(w,v,\chi) \nu_{w,v} .
\]
Then \eqref{eq:4.11} and the bijectivity of Theorem \ref{thm:4.1} imply
\[
\mf F (M,\eta,\chi) = \sum\nolimits_{w,v} c(w,v,\chi) \nu^1_{w,v} .
\]
Recall from \eqref{eq:5.31} that $\nu^1_{w,v}$ is a linear combination of the 
$\mf F (M_i,\eta_i ,v_i)$. With Theorem \ref{thm:4.1} that can be transferred to \eqref{eq:4.17}.
Hence there exist $c' (i,\chi,v_i) \in \C$ such that
\[
\begin{array}{lll}
\mf F (M,\eta,\chi) & = & \sum\nolimits_{i,v_i} c' (i,\chi,v_i) \mf F (M_i,\eta_i,v_i) , \\
\mf F (M,\zeta^\vee (\eta),\chi) & = &
\sum\nolimits_{i,v_i} c' (i,\chi,v_i) \mf F (M_i,\zeta^\vee (\eta_i),v_i) . 
\end{array}
\]
From \eqref{eq:4.15} we deduce that 
\[
HH_n (\mf F (M,\eta,\chi)) \circ HH_n (\zeta^\vee) = HH_n (\mf F (M,\zeta^\vee (\eta),\chi)) .
\]
The same argument for all $\chi \in X_\nr (M)$ simultaneously yields \eqref{eq:4.16}.
\end{proof}

Theorem \ref{thm:4.2} is a homological counterpart to \cite[Theorem 9.9]{SolEnd}, which matches the
irreducible representations of $\mc H (G,K)^{\mf s}$ with those of 
$\mc O (X_\nr (L)) \rtimes \C [W(L,\mf s),\natural_{\mf s}]$.


\section{The Schwartz algebra of $G$} 
\label{sec:HHSG}

The Harish-Chandra--Schwartz algebra of a reductive $p$-adic group is an inductive limit of
Fr\'echet spaces, but itself not a Fr\'echet algebra. To do homological algebra with such
topological algebras, we have to agree on a suitable topological tensor product. The best choice
is to work in the category of complete bornological vector spaces, with the complete bornological
tensor product \cite[Chapter I]{Mey2}. We denote it by $\hat{\otimes}$, which is reasonable since for 
Fr\'echet algebras it agrees with the projective tensor product \cite[Theorem I.87]{Mey2}. 

The Hochschild homology of a complete bornological algebra $A$ is defined as 
\begin{equation}\label{eq:1.3}
HH_n (A) = \mr{Tor}_n^{A \hat{\otimes} A^{op}} (A,A),
\end{equation}
working in the category of complete bornological $A$-modules. When $A$ is unital, $HH_* (A)$ can be 
computed with the completed bar-complex $C_n (A) = A^{\hat{\otimes} n + 1}$ and the usual differential
\[
b_n (a_0 \otimes \cdots \otimes a_n) = \sum_{i=0}^{n-1} a_0 \otimes \cdots \otimes a_i a_{i+1} \otimes
\cdots \otimes a_n + (-1)^n a_n a_0 \otimes a_1 \otimes \cdots \otimes a_{n-1} .
\]
Under additional conditions, these functors $HH_n$ are continuous:

\begin{lem}\label{lem:1.2}
Suppose that $A = \varinjlim_i A_i$ is a strict inductive limit of nuclear Fr\'echet algebras (where 
strict means that the transition maps $A_i \to A_j$ are injective and have closed range). Then there
is a natural isomorphism 
\[
HH_n (A) \cong \varinjlim\nolimits_i HH_n (A_i) .
\]
\end{lem}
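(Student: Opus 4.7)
My plan is to compute both sides using the bar complex and show that the bar complex itself commutes with the strict inductive limit. First I would note that each $A_i$, being a unital nuclear Fr\'echet algebra, has Hochschild homology equal to the homology of the complete bar complex $C_\ast(A_i)$ with $C_n(A_i) = A_i^{\hat{\otimes}(n+1)}$ and the differential $b$ displayed just above the lemma. The same formula applies to $A$: as a strict colimit of unital algebras it is H-unital, so $HH_n(A) = H_n(C_\ast(A))$, where again $C_n(A) = A^{\hat{\otimes}(n+1)}$.

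Second, I would identify $C_\ast(A)$ with $\varinjlim_i C_\ast(A_i)$ as a chain complex of complete bornological vector spaces. On nuclear Fr\'echet spaces, $\hat{\otimes}$ agrees with the projective topological tensor product (Theorem I.87 of \cite{Mey2}, already invoked at the start of the section), and the projective tensor product with a fixed nuclear Fr\'echet factor commutes with strict inductive limits of nuclear Fr\'echet spaces by Grothendieck's classical results. Iterating this $n$ times yields $A^{\hat{\otimes}(n+1)} \cong \varinjlim_i A_i^{\hat{\otimes}(n+1)}$, and the transition maps $A_i \to A_j$ being algebra homomorphisms make this identification compatible with the Hochschild differential, hence a morphism of chain complexes.

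Third, in the complete bornological category, strict inductive limits of Fr\'echet spaces with injective closed-range transition maps are exact, so homology commutes with such colimits:
\[
HH_n(A) = H_n(C_\ast(A)) = H_n\bigl(\varinjlim_i C_\ast(A_i)\bigr) = \varinjlim_i H_n(C_\ast(A_i)) = \varinjlim_i HH_n(A_i) .
\]

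The main obstacle lies in the second step: $\hat{\otimes}$ is not generally a left adjoint on complete bornological vector spaces and does not commute with arbitrary colimits, so one cannot invoke a formal argument. The essential input is nuclearity, which via Grothendieck's theorem identifies projective and injective tensor products and thereby gives compatibility with strict LF-colimits; absent that hypothesis the lemma would fail. I would carefully verify that after passing to the strict inductive limit, the bornological and topological pictures continue to match so that the tensor-product identification iterates cleanly, which is the technical heart of the proof and the place where one must lean most heavily on \cite{Mey2}.
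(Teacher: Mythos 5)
The paper's proof is a short citation argument: it invokes \cite[Theorem 2]{BrPl}, where the statement is established with respect to the inductive topological tensor product, and then observes that under the present hypotheses (strict inductive limit of nuclear Fr\'echet algebras) the inductive tensor product agrees with the completed bornological tensor product, both for the $A_i$ and for $A$ itself, by \cite[Theorem I.93]{Mey2}. You instead attempt to reprove the underlying Brodzki--Plymen result directly via the bar complex. Your outline is plausible, and is roughly what one would expect the proof of the cited theorem to look like, but as written it is a plan rather than a proof: you explicitly defer the ``technical heart'', and that deferred step is genuinely nontrivial.

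The gap is the identification $A^{\hat{\otimes}(n+1)} \cong \varinjlim_i A_i^{\hat{\otimes}(n+1)}$ of complete bornological vector spaces. You gesture at ``Grothendieck's classical results'' but do not specify which statement you mean or verify its hypotheses. Two concrete worries. First, to iterate the identification in $n$, you need to know that $\varinjlim_i A_i^{\hat{\otimes} k}$ is again a strict inductive system with injective, closed-range transition maps; nuclearity gives injectivity of $A_i^{\hat{\otimes} k} \to A_j^{\hat{\otimes} k}$, but preservation of closed range under completed tensor products is not automatic and needs an argument. Second, after passing to the limit the space $A$ is LF but not Fr\'echet, so the comparison of $\hat{\otimes}$ with the topological tensor products you implicitly use is exactly \cite[Theorem I.93]{Mey2} --- that is, even in your hands-on approach you still need one of the paper's two citations as an essential input, and the other (Brodzki--Plymen) encapsulates precisely the analytic work your step two leaves undone. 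The paper's route is a two-step citation that avoids redoing that analysis; yours redoes part of it and stops short exactly at the part that is hard.
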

\begin{proof}
In \cite[Theorem 2]{BrPl} this was shown with respect to the inductive tensor product. Under the
assumptions of the lemma, inductive tensor products agree with completed bornological tensor products,
for the $A_i$ and for $A$ \cite[Theorem I.93]{Mey2}.
\end{proof}

Recall that $\mc S (G)$ is the inductive limit of the algebras $\mc S (G,K)$, where $K$ runs over
the compact open subgroups of $G$. As $\mc S (G,K)$ is a closed subspace of $\mc S (G,K')$ when 
$K' \subset K$, $\mc S (G)$ is even a strict inductive limit. The Plancherel isomorphism from 
Theorem \ref{thm:3.1} shows that each $\mc S (G,K)$ is nuclear Fr\'echet algebra. 
Thus Lemma \ref{lem:1.2} applies and says that
\begin{equation}\label{eq:6.1}
HH_n (\mc S (G)) \cong \varinjlim\nolimits_K HH_n (\mc S( G,K)) .
\end{equation}
The decomposition \eqref{eq:5.14} induces a decomposition of the Schwartz algebra of $G$ 
as a direct sum of two-sided ideals:
\[
\mc S (G) = \bigoplus\nolimits_{\mf s \in \mf B (G)} \mc S (G)^{\mf s},
\]
indexed by the inertial equivalence classes $\mf s = [ L,\sigma]_G$ of cuspidal pairs 
for $G$. This persists to the subalgebras of $K$-biinvariant functions:
\[
\mc S (G,K) = \bigoplus\nolimits_{\mf s \in \mf B (G)} \mc S (G,K)^{\mf s},
\]
but for each $K$ only finitely many of the $\mc S (G,K)^{\mf s}$ are nonzero. 
The analogue of \eqref{eq:6.1} for $\mc S (G)^{\mf s}$ reads
\begin{equation}\label{eq:6.2}
HH_n (\mc S (G)^{\mf s}) = HH_n \big( \varinjlim\nolimits_K \mc S (G,K)^{\mf s} \big) 
\cong \varinjlim\nolimits_K HH_n (\mc S( G,K)^{\mf s}) .
\end{equation}
Whenever $\mc H (G)^{\mf s}$ and $\mc H (G,K)^{\mf s}$ are Morita equivalent (which by
\cite{BeDe} happens for arbitrarily small $K$), also $\mc S (G)^{\mf s}$ and 
$\mc S (G,K)^{\mf s}$ are Morita equivalent. Let us denote that situation by 
$\mf s \in \mf B (G,K)$. Then \eqref{eq:6.1} and \eqref{eq:6.2} imply
\begin{equation}\label{eq:6.3}
HH_n (\mc S (G)) \cong \varinjlim_K \bigoplus_{\mf s \in \mf B (G,K)} 
HH_n (\mc S( G,K)^{\mf s}) \cong \bigoplus_{\mf s \in \mf B (G)} HH_n (\mc S (G)^{\mf s})
\end{equation}
and each $HH_n (\mc S (G)^{\mf s})$ is isomorphic with $HH_n (\mc S (G,K)^{\mf s})$
when $\mf s \in \mf B (G,K)$. 

\subsection{Topological algebraic aspects} \
\label{par:top}

We want to determine the Hochschild homology of the nuclear Fr\'echet algebra 
$\mc S (G,K)^{\mf s}$, which is also the closure of $\mc H (G,K)^{\mf s}$ in $\mc S (G)$. We have 
to take the topology of $\mc S (G,K)^{\mf s}$ into account, which creates challenges that were
absent in the purely algebraic setting of Section \ref{sec:HeckeG}. Before we start the actual
computation, we first settle most issues of topological-algebraic nature.

Recall from Theorem \ref{thm:3.1} that there is an isomorphism of Fr\'echet algebras
\begin{equation}\label{eq:6.4}
\mc S (G,K)^{\mf s} \cong \bigoplus\nolimits_{\mf d = (M,\delta) \in \Delta_G^{\mf s}} 
\Big( C^\infty (X_\unr (M)) \otimes \mr{End}_\C \big( I_P^G (V_\delta)^K \big) \Big)^{W(M,\mf d)} .
\end{equation}
By \cite[Th\'eor\`eme 0.1]{Hei}, \eqref{eq:6.4} restricts to an algebra isomorphism
\begin{multline}\label{eq:6.5}
\mc H (G,K)^{\mf s} \cong \big( \mc O (X_\nr (L)) \otimes
\End_\C (I_{P_0 L}^G (V_\sigma)^K ) \big) \; \cap \\ 
\bigoplus\nolimits_{\mf d = (M,\delta) \in \Delta_G^{\mf s}} 
\Big( C^\infty (X_\unr (M)) \otimes \mr{End}_\C \big( I_P^G (V_\delta)^K \big) \Big)^{W(M,\mf d)} .
\end{multline}
Let $e_{\mf d} \in \mc S (G,K)$ be the central idempotent corresponding
to the direct summand of \eqref{eq:6.4} indexed by $\mf d$. We define
$\mc S (G,K)^{\mf d} = e_{\mf d} \mc S (G,K)$, so that by \eqref{eq:6.4}
\begin{equation}\label{eq:6.6}
\mc S (G,K)^{\mf d} \cong \Big( C^\infty (X_\unr (M)) \otimes 
\mr{End}_\C \big( I_P^G (V_\delta)^K \big) \Big)^{W(M,\mf d)} .
\end{equation}
Then $\mc S (G,K)^{\mf s} = \bigoplus_{\mf d \in \Delta_G^{\mf s}} \mc S (G,K)^{\mf d}$ and
\begin{equation}\label{eq:6.7}
Z ( \mc S (G,K)^{\mf s}) = \bigoplus\nolimits_{\mf d \in \Delta_G^{\mf s}} Z (\mc S (G,K)^{\mf d}) 
\cong \bigoplus\nolimits_{\mf d \in \Delta_G^{\mf s}} C^\infty (X_\unr (M))^{W(M,\mf d)} .
\end{equation}
For $\mf d \in \Delta_G^{\mf s}$ we have $W_{\mf d} \subset W_{\mf s}$. The relation between
$W(M,\mf d)$ and $W(L,\mf s)$ is less clear, because the groups $X_\nr (M,\delta)$ and
$X_\nr (L,\sigma)$ from \eqref{eq:3.10} may differ.

To analyse (Fr\'echet) modules over \eqref{eq:6.7}, we will make ample use of the following result.
It is the specialization of \cite[Lemma 3.4]{KaSo} to the affine variety $X_\nr (M)$ with the
submanifold $X_\unr (M)$ and the action of $W(M,\mf d)$.

\begin{prop}\label{prop:6.12}
Let $\tilde Y$ be an affine variety with an embedding $\imath$ in $X_\nr (M)$. Suppose that:
\begin{itemize}
\item $\imath (\tilde Y)$ is closed in $X_\nr (M)$ and isomorphic to $\tilde Y$,
\item $Y := \imath^{-1} (X_\unr (M))$ is a real analytic Zariski-dense submanifold of 
$\tilde Y$ and diffeomorphic to $\imath (Y)$.
\end{itemize}
Let $p$ be an idempotent in the ring of continuous $C^\infty (M)^{W(M,\mf d)}$-linear
endomorphisms of $\Omega^n_{sm}(Y)$, such that $p$ stabilizes $\Omega^n (\tilde Y)$. Then the
natural map
\[
C^\infty (X_\unr (M))^{W(M,\mf d)} \underset{\mc O (X_\nr (M))^{W(M,\mf d)}}{\otimes} 
p \Omega^n (\tilde Y) \longrightarrow p \Omega^n_{sm} (Y) 
\]
is an isomorphism of Fr\'echet $C^\infty (X_\unr (M))^{W(M,\mf d)}$-modules.
\end{prop}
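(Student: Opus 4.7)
The plan is to prove the proposition by a sequence of reductions that isolate a purely analytic core, which is the actual content of the cited \cite[Lemma 3.4]{KaSo}. First I would strip away the idempotent $p$: since $p$ is a continuous $C^\infty(X_\nr(M))^{W(M,\mf d)}$-linear endomorphism that stabilizes $\Omega^n(\tilde Y)$, the natural map commutes with $p$, so it suffices to prove the statement with $p$ replaced by the identity. Next I would remove the $W := W(M,\mf d)$-invariants. Because $W$ is finite and we work over $\C$, the averaging projector $|W|^{-1}\sum_{w \in W} w$ gives a $W$-equivariant splitting of both inclusions $\mc O(X_\nr(M))^W \hookrightarrow \mc O(X_\nr(M))$ and $C^\infty(X_\unr(M))^W \hookrightarrow C^\infty(X_\unr(M))$, so taking $W$-invariants is exact and commutes with the relevant tensor products. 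Hence it suffices to show that
\[
C^\infty(X_\unr(M)) \underset{\mc O(X_\nr(M))}{\otimes} \Omega^n(\tilde Y) \longrightarrow \Omega^n_{sm}(Y)
\]
is an isomorphism of Fr\'echet modules.

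Since $\Omega^n(\tilde Y)$ is a finitely presented $\mc O(\tilde Y)$-module, and $\mc O(\tilde Y) = \mc O(X_\nr(M))/I$ where $I$ is the defining ideal of $\tilde Y$, the claim factors into two separate statements:
\begin{enumerate}[(i)]
\item $C^\infty(X_\unr(M)) \otimes_{\mc O(X_\nr(M))} \mc O(\tilde Y) \cong C^\infty(Y)$, i.e., the ideal of smooth functions on $X_\unr(M)$ that vanish on $Y$ is generated, as a $C^\infty(X_\unr(M))$-module, by $I$.
\item $C^\infty(Y) \otimes_{\mc O(\tilde Y)} \Omega^n(\tilde Y) \cong \Omega^n_{sm}(Y)$, i.e., every smooth $n$-form on $Y$ is a $C^\infty(Y)$-linear combination of algebraic $n$-forms on $\tilde Y$, restricted.
\end{enumerate}

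The hard part is (i): it is a Malgrange--Whitney type division/generation theorem for a real-analytic submanifold cut out inside the compact real-analytic manifold $X_\unr(M)$ by polynomial (rather than merely smooth) equations. I would attack it locally by choosing algebraic generators of $I$ whose differentials span the conormal bundle of $Y$ in $X_\unr(M)$ at each smooth point, invoking a Malgrange-type flat-function extension argument to trivialize a tubular neighbourhood, and then globalizing via a smooth partition of unity on the compact manifold $X_\unr(M)$. Possible singularities of $\tilde Y$ along $Y$ would be handled by a local stratification argument, which is where the full strength of the setup in \cite[\S 3]{KaSo} is really needed. Once (i) is in hand, (ii) follows formally by tensoring a finite $\mc O(\tilde Y)$-presentation of $\Omega^n(\tilde Y)$ with $C^\infty(Y)$ and using the fact that, for such finite presentations, the algebraic and Fr\'echet-completed tensor products coincide. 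The Fr\'echet-module statement then follows from the open mapping theorem, since both sides are nuclear Fr\'echet spaces and the natural map is continuous.
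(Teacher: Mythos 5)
The paper does not actually prove Proposition \ref{prop:6.12}: the sentence just before its statement reads ``It is the specialization of \cite[Lemma 3.4]{KaSo}...'', and that is the whole proof. So your task was effectively to reconstruct the content of that cited lemma, and your outline captures the right analytic core; but there are two places where the argument as written does not go through.

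The first gap is the reduction from $W$-invariants to the plain (non-equivariant) tensor product. You assert that ``taking $W$-invariants is exact and commutes with the relevant tensor products'' and conclude it suffices to show $C^\infty(X_\unr(M)) \otimes_{\mc O(X_\nr(M))} \Omega^n(\tilde Y) \cong \Omega^n_{sm}(Y)$. But $\Omega^n(\tilde Y)$ is not a $W$-equivariant module ($\tilde Y$ is not assumed $W$-stable), so there is no $W$-action on the non-equivariant tensor product whose invariants you could compare with $C^\infty(X_\unr(M))^W \otimes_{\mc O(X_\nr(M))^W} \Omega^n(\tilde Y)$. What the reduction really requires is that the natural map
\[
C^\infty(X_\unr(M))^{W} \underset{\mc O(X_\nr(M))^{W}}{\otimes} \mc O(X_\nr(M)) \;\longrightarrow\; C^\infty(X_\unr(M))
\]
be an isomorphism, after which one can write $B^W \otimes_{A^W} N \cong (B^W \otimes_{A^W} A) \otimes_A N \cong B \otimes_A N$. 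That isomorphism is itself the case $\tilde Y = X_\nr(M)$, $p = \mathrm{id}$, $n = 0$ of the proposition; it is a Schwarz--Mather-type theorem on $W$-invariant smooth functions, not a formal consequence of exactness of $(-)^W$ in characteristic zero. So this step needs its own proof (or citation) placed before the reduction, and ``averaging projector'' language does not supply it.

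The second gap is that the genuinely hard part of the statement --- your step (i), that $I\cdot C^\infty(X_\unr(M))$ coincides with the ideal of smooth functions vanishing on $Y$, and more generally that the algebraic tensor product computes $C^\infty(Y)$ --- is exactly the Tougeron/Malgrange-type content that \cite[Lemma~3.4]{KaSo} supplies; your proposal names the right ingredients (local generators of the conormal bundle, flat function extension, partition of unity, Zariski-density of $Y$ in $\tilde Y$) but does not carry them out, and in particular does not address what happens where $\tilde Y$ has singularities meeting the closure of $Y$ or why the uncompleted tensor product is already complete. As an outline of what a proof should look like, this is on the right track; as a proof, it outsources the same work the paper outsources to \cite{KaSo}.

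Finally, a small point: you refer to $p$ as ``$C^\infty(X_\nr(M))^{W(M,\mf d)}$-linear'', but $X_\nr(M)$ is a complex torus; the hypothesis should read $C^\infty(X_\unr(M))^{W(M,\mf d)}$-linear (the $C^\infty(M)$ in the paper's statement is evidently a typo for $C^\infty(X_\unr(M))$), and it is with respect to that ring structure, pulled back along $\mc O(X_\nr(M))^{W(M,\mf d)} \to C^\infty(X_\unr(M))^{W(M,\mf d)}$, that $p$ commutes with the base-change map.
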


The dense subspace $e_{\mf d}\mc H (G,K)^{\mf s}$ of $\mc S (G,K)^{\mf d}$ is a subalgebra
because $e_{\mf d}$ is central, but it is not contained in $\mc H (G,K)$. Its irreducible 
representations are the constituents of the $I_P^G (\delta \otimes \chi)$ with $\chi \in X_\nr (M)$. 
Its centre is the restriction of $Z(\mc H (G,K)^{\mf s})$ to $\mc F (M,\delta)$, so
\begin{equation}\label{eq:6.39}
Z( e_{\mf d} \mc H (G,K)^{\mf s}) = e_{\mf d} Z(\mc H (G,K)^{\mf s}) \cong
\mc O (X_\nr (M))^{W(M,\mf d)} .
\end{equation}

\begin{lem}\label{lem:6.2}
\enuma{
\item The multiplication map
\[
\mu_{\mf d} : Z (\mc S (G,K)^{\mf d}) \underset{e_{\mf d} Z (\mc H (G,K)^{\mf s})}{\otimes}
e_{\mf d} \mc H (G,K)^{\mf s} \longrightarrow \mc S (G,K)^{\mf d}
\]
is an isomorphism of Fr\'echet $Z(\mc S (G,K)^{\mf d})$-modules. 
\item The $Z (\mc S (G,K)^{\mf d})$-module $\mc S (G,K)^{\mf d}$ is generated by a finite 
subset of $e_{\mf d} \mc H (G,K)^{\mf s}$.
}
\end{lem}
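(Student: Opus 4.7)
The plan is to deduce both parts from Proposition~\ref{prop:6.12}, applied with $\tilde Y = X_\nr(M)$, $Y = X_\unr(M)$, $n=0$, and an appropriate $W(M,\mf d)$-averaging idempotent. First, using \eqref{eq:6.6} and \eqref{eq:6.7} I identify
\[
\mc S(G,K)^{\mf d} \cong \bigl( C^\infty(X_\unr(M)) \otimes \End_\C(I_P^G(V_\delta)^K) \bigr)^{W(M,\mf d)}, \quad Z(\mc S(G,K)^{\mf d}) \cong C^\infty(X_\unr(M))^{W(M,\mf d)}.
\]
From Heiermann's description \eqref{eq:6.5} together with \eqref{eq:6.39} I then need the parallel algebraic identifications
\[
e_{\mf d}\mc H(G,K)^{\mf s} \cong \bigl( \mc O(X_\nr(M)) \otimes \End_\C(I_P^G(V_\delta)^K) \bigr)^{W(M,\mf d)}, \quad e_{\mf d}Z(\mc H(G,K)^{\mf s}) \cong \mc O(X_\nr(M))^{W(M,\mf d)}.
\]

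With these in hand, let $p$ be the idempotent on $C^\infty(X_\unr(M)) \otimes \End_\C(I_P^G(V_\delta)^K)$ given by averaging over $W(M,\mf d)$ via the intertwining operators from \eqref{eq:3.3}. It is continuous, $C^\infty(X_\unr(M))^{W(M,\mf d)}$-linear, and stabilises the algebraic subspace $\mc O(X_\nr(M)) \otimes \End_\C(I_P^G(V_\delta)^K)$. Applying Proposition~\ref{prop:6.12} componentwise in a basis of $\End_\C(I_P^G(V_\delta)^K)$ then yields exactly the multiplication isomorphism $\mu_{\mf d}$ of (a), together with its Fr\'echet-module structure.

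For (b), recall from \eqref{eq:6.9} that $\mc H(G,K)^{\mf s}$ has finite rank over its centre; multiplying by the central idempotent $e_{\mf d}$ produces a finite subset $F \subset e_{\mf d}\mc H(G,K)^{\mf s}$ generating it over $e_{\mf d}Z(\mc H(G,K)^{\mf s})$. A base change of a surjection from a finite free module is still a surjection, so part (a) implies that $F$, viewed inside $\mc S(G,K)^{\mf d}$ via $\mu_{\mf d}$, generates $\mc S(G,K)^{\mf d}$ as a $Z(\mc S(G,K)^{\mf d})$-module.

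The main technical obstacle is justifying the identification of $e_{\mf d}\mc H(G,K)^{\mf s}$ with the full module of algebraic $W(M,\mf d)$-invariant $\End_\C(I_P^G(V_\delta)^K)$-valued sections on $X_\nr(M)$. A priori the intersection in \eqref{eq:6.5} could cut out a strictly smaller subspace after projection to the $\mf d$-component, because of compatibility conditions gluing the different Harish-Chandra blocks inside a single Bernstein block. The content to extract, from \cite{Hei} combined with the analytic localisation steps of Construction~\ref{cons:1}, is that the projection $e_{\mf d}$ is already surjective onto the algebraic invariants at the level of $W(M,\mf d)$-equivariant sections; once this is verified, Proposition~\ref{prop:6.12} does the rest.
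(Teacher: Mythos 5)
Your proposal follows essentially the same route as the paper's proof: identify both sides via the Plancherel and Heiermann isomorphisms \eqref{eq:6.6}--\eqref{eq:6.8}, apply Proposition~\ref{prop:6.12} with the $W(M,\mf d)$-averaging idempotent $p$, and deduce part (b) from part (a) and the finite-rank property \eqref{eq:6.9}. The one imprecision worth fixing is your description of $e_{\mf d}\mc H(G,K)^{\mf s}$: the paper identifies it (in \eqref{eq:6.8}) as the \emph{intersection} of $\mc O(X_\nr(M)) \otimes \End_\C\big(I_P^G(V_\delta)^K\big)$ with $\big(C^\infty(X_\unr(M)) \otimes \End_\C\big(I_P^G(V_\delta)^K\big)\big)^{W(M,\mf d)}$, rather than as the space of $W(M,\mf d)$-invariant algebraic sections on $X_\nr(M)$ -- a formulation that sidesteps whether the $W(M,\mf d)$-action via the merely rational intertwining operators is defined on all of $X_\nr(M)$, which is precisely the obstacle you correctly flag at the end.
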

\begin{proof}
(a) From \eqref{eq:6.6} and \eqref{eq:6.7} we see that that $Z(\mc S (G,K)^{\mf d})$-module
$\mc S (G,K)$ is a direct summand of
\[
C^\infty (X_\unr (M)) \otimes \mr{End}_\C \big( I_P^G (V_\delta)^K \big) \cong  
C^\infty (X_\unr (M))^{\dim (I_P^G (V_\delta)^K )^2} ,
\]
namely, it is the image of the idempotent the averages over $W(M,\mf d)$.
It follows from \eqref{eq:6.5} that
\begin{multline}\label{eq:6.8}
e_{\mf d} \mc H (G,K)^{\mf s} \cong \big( \mc O (X_\nr (M)) \otimes \mr{End}_\C 
\big( I_P^G (V_\delta)^K \big) \big) \; \cap \\
\Big( C^\infty (X_\unr (M)) \otimes \mr{End}_\C \big( I_P^G (V_\delta)^K \big) \Big)^{W(M,\mf d)} .
\end{multline}
From this and \eqref{eq:6.6},\eqref{eq:6.7} and \eqref{eq:6.39} we see that we are in the right
position to apply Proposition \ref{prop:6.12}, which yields exactly the statement.\\
(b) This follows from part (a) and \eqref{eq:6.9}. 
\end{proof}

We note that there are natural homomorphisms of $Z(\mc H (G,K)^{\mf s})$-modules
\[
HH_n (\mc H (G,K)^{\mf s})^{\mf d} \to HH_n (\mc H (G,K)^{\mf s}) \to 
HH_n (e_{\mf d} \mc H (G,K)^{\mf s}) ,
\]
and the outer sides should be closely related. However, the composed map is in general not 
bijective, $HH_n (e_{\mf d} \mc H (G,K)^{\mf s})$ can be more intricate.

As discussed around \eqref{eq:1.3}, we compute the Hochschild homology of Fr\'echet algebras with 
respect to the complete projective tensor product. We establish some topological properties of the 
Hochschild homology groups of $\mc S (G,K)^{\mf s}$, making use of \cite{KaSo}.

\begin{prop}\label{prop:6.6}
$HH_n (\mc S (G,K)^{\mf s})$ is a quotient of two closed submodules of a finitely generated
Fr\'echet $Z(\mc S (G,K)^{\mf s})$-module. In particular $HH_n (\mc S (G,K)^{\mf s})$ is a Fr\'echet
$Z(\mc S (G,K)^{\mf s})$-module.
\end{prop}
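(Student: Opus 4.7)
The plan is to reduce to a single Harish-Chandra block via the central idempotent decomposition, and then to transport the finite-rank structure from the algebraic Hecke algebra $e_{\mf d}\mc H(G,K)^{\mf s}$ to its Schwartz completion $\mc S(G,K)^{\mf d}$ by means of Lemma \ref{lem:6.2} and Proposition \ref{prop:6.12}.

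First, by \eqref{eq:6.4}--\eqref{eq:6.7}, $\mc S(G,K)^{\mf s}$ is a finite direct sum of two-sided ideals $\mc S(G,K)^{\mf d}$ cut out by orthogonal central idempotents $e_{\mf d}$, and both $HH_n$ and $Z(\cdot)$ split along this decomposition. Hence it suffices to establish the claim for $A := \mc S(G,K)^{\mf d}$, where I set $Z := Z(A) \cong C^\infty(X_\unr(M))^{W(M,\mf d)}$, $A_0 := e_{\mf d}\mc H(G,K)^{\mf s}$, and $Z_0 := e_{\mf d} Z(\mc H(G,K)^{\mf s}) \cong \mc O(X_\nr(M))^{W(M,\mf d)}$. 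By Lemma \ref{lem:6.2}.a one has $A \cong Z \hat\otimes_{Z_0} A_0$ as Fr\'echet $Z$-modules, and by \eqref{eq:6.9}, $A_0$ is finitely generated as a $Z_0$-module.

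Since $Z_0$ is a finitely generated commutative $\C$-algebra (hence Noetherian) and $A_0$ is of finite rank over $Z_0$, I would choose a resolution $P_\bullet \to A_0$ by finitely generated projective $(A_0 \otimes_\C A_0^{\mr{op}})$-modules. Applying $A_0 \otimes_{A_0 \otimes A_0^{\mr{op}}} (-)$ then gives a complex of finitely generated $Z_0$-modules that computes the algebraic $HH_\bullet(A_0)$. To pass to $A$, I would base-change this complex along $Z_0 \hookrightarrow Z$, using the nuclearity of the Fr\'echet algebras involved together with iterated applications of Proposition \ref{prop:6.12}; this produces a complex of finitely generated Fr\'echet $Z$-modules whose homology coincides with $HH_n(A)$, with continuous and $Z$-linear boundaries. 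The kernel and image of each boundary are then closed $Z$-submodules of a finitely generated Fr\'echet $Z$-module, realising $HH_n(A)$ as the desired quotient of two such closed submodules; the ``in particular'' conclusion follows because a quotient of a Fr\'echet space by a closed subspace is itself Fr\'echet.

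The principal obstacle is ensuring that the base change from $Z_0$- to $Z$-modules — essentially the passage from regular to smooth functions under the inclusion $\mc O(X_\nr(M))^{W(M,\mf d)} \hookrightarrow C^\infty(X_\unr(M))^{W(M,\mf d)}$ — is sufficiently exact to commute with taking homology of the chosen complex, and that the boundary maps have closed range in the Fr\'echet topology. Both points are precisely the sort of statements developed in \cite{KaSo} concerning finitely generated Fr\'echet modules over $C^\infty$-invariant rings, and the proposition will follow once those tools are applied to the base-changed complex constructed above.
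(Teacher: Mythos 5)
Your proposal correctly identifies Lemma \ref{lem:6.2} and Proposition \ref{prop:6.12} as the pivotal inputs, and correctly isolates the ``principal obstacle'' (closed range of the boundary maps). But the route you sketch has two genuine gaps that the paper's argument is designed to avoid.

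First, you assert that base-changing a projective $(A_0\otimes A_0^{\mathrm{op}})$-bimodule resolution of $A_0=e_{\mf d}\mc H(G,K)^{\mf s}$ along $Z_0\hookrightarrow Z$ ``produces a complex \ldots whose homology coincides with $HH_n(A)$.'' This is precisely the step that needs an argument: $Z\otimes_{Z_0}(-)$ is only right exact, and there is no flatness or controlled-exactness result available in the form you need. (The paper does invoke an exactness theorem, but for formal completions in Proposition \ref{prop:6.3}, quoting \cite{OpSo2}; nothing similar is established for the extension $\mc O (X_\nr (M))^{W(M,\mf d)}\hookrightarrow C^\infty(X_\unr(M))^{W(M,\mf d)}$.) The paper sidesteps this entirely: it starts from a ready-made finitely generated projective bimodule resolution of $\mc S(G,K)$ itself, taken from \cite[Theorem 4.2]{OpSo1}, so the complex computing $HH_n(\mc S(G,K)^{\mf s})$ is obtained without base-changing homology. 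It then observes that the algebraic resolution of $\mc H(G,K)^{\mf s}$ sits inside it, and uses Lemma \ref{lem:6.2} only to transfer \emph{generators} from $C'_n$ to $C_n$, not to compare homology.

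Second, and more seriously, you write that ``the kernel and image of each boundary are then closed $Z$-submodules.'' The kernel is closed by continuity, but the image of a continuous map between Fr\'echet spaces is \emph{not} automatically closed; this is the whole point of the proposition. The paper's resolution of this is specific and essential: the boundary module $B_n=d_n(C_{n-1})$ is generated over $Z(\mc S(G,K)^{\mf s})$ by the \emph{finite} set $d_n(Y_{n-1})$, whose elements are analytic (in fact algebraic) functions on $X_\unr(M)\times\{1,\dots,r'\}$, and then by \cite[Corollaire V.1.6]{Tou}, in the equivariant form \cite[Theorem 1.2]{KaSo}, a finitely generated $C^\infty(X_\unr(M))^{W(M,\mf d)}$-submodule of $C^\infty(X_\unr(M))^{r'}$ with analytic generators is closed. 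You gesture at \cite{KaSo} but never pin down this Tougeron-type closedness theorem, nor the fact that the generators of $B_n$ can be taken analytic — which is exactly what Lemma \ref{lem:6.2} and the embedding of the algebraic resolution into the Schwartz one are used for. Without that, the final conclusion ($Z_n/B_n$ Fr\'echet) is unsupported.
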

\begin{proof}
To compute $HH_n (\mc S(G,K)^{\mf s})$ according to the definition \eqref{eq:1.3}, we can use any
(bornological or Fr\'echet) projective bimodule resolution of $\mc S (G,K)^{\mf s}$. One such
resolution was constructed in \cite[Theorem 4.2]{OpSo1}, for $\mc S (G,K)$ but that is enough 
because $\mc S (G,K)^{\mf s}$ is a direct summand of $\mc S (G,K)$. The set of $n$-chains of 
that resolution is a finitely generated projective $\mc S (G,K)^{\mf s} \hat{\otimes} 
\mc S (G,K)^{\mf s,op}$-module. By construction this projective resolution contains a projective 
bimodule resolution of $\mc H (G,K)^{\mf s}$, namely the set of elements that live in powers of
$\mc H (G,K)^{\mf s} \hat{\otimes} \mc H (G,K)^{\mf s,op}$.

By tensoring with $\mc S (G,K)^{\mf s}$ over $\mc S (G,K)^{\mf s} \hat{\otimes} 
\mc S (G,K)^{\mf s,op}$, we obtain a differential complex $(C_*,d_*)$ that computes 
$HH_n (\mc S (G,K)^{\mf s})$. Each term $C_n$ is a direct summand of
\[
(\mc S (G,K)^{\mf s} \hat{\otimes} \mc S (G,K)^{\mf s,op})^r 
\underset{\mc S (G,K)^{\mf s} \hat{\otimes} \mc S (G,K)^{\mf s,op}}{\otimes} \mc S (G,K)^{\mf s}
\cong (\mc S (G,K)^{\mf s})^r ,
\]
for some $r \in \N$. By Theorem \ref{thm:3.1} and \cite[Theorem 3.1.b]{KaSo}, 
$(\mc S (G,K)^{\mf s})^r$ and its direct summand $C_n$ are finitely generated Fr\'echet 
$Z(\mc S (G,K)^{\mf s})$-modules. The set of $n$-cycles $Z_n$ is closed in $C_n$ 
(by the continuity of the boundary map) and hence closed in $(\mc S (G,K)^{\mf s})^r$. 

The intersection $C'_n$ of $C_n$ with $(\mc H (G,K)^{\mf s})^r$ is a finitely generated \\
$Z(\mc H (G,K)^{\mf s})$-module, by \eqref{eq:6.9}. That yields a differential complex $(C'_n,d_n)$ 
which computes $HH_n (\mc H (G,K)^{\mf s})$. 

Choose a finite set $Y_n \subset (\mc H (G,K)^{\mf s})^r$ that generates $C'_n$ as 
$Z(\mc H (G,K)^{\mf s}$-module. With Lemma \ref{lem:6.2} we see that $Y_n$ also generates $C_n$ as 
$Z(\mc S (G,K)^{\mf s})$-module. As the boundary map $d_n$ is  $Z(\mc S (G,K)^{\mf s})$-linear, 
the set of $n$-boundaries $B_n = d_n (C_{n-1})$ is generated as $Z(\mc S (G,K)^{\mf s})$-module by 
$d_n (Y_{n-1})$. There are inclusions
\begin{equation}\label{eq:6.15}
B_n \subset Z_n \subset C_n \subset \Big( \Big( C^\infty (X_\unr (M)) \otimes 
\mr{End}_\C \big( I_P^G (V_\delta)^K \big) \Big)^{W(M,\mf d)} \Big)^r .
\end{equation}
We want to show that $B_n = Z(\mc S (G,K)^{\mf s}) d(Y_{n-1})$ is a closed subspace of the right
hand side, just like $Z_n$ and $C_n$. The right hand side of \eqref{eq:6.15} embeds as 
$C^\infty (X_\unr (M))^{W(M,\mf d)}$-module in  
\[
C^\infty (X_\unr (M))^{r'}, \text{ where } r' = r \dim \big( I_P^G (V_\delta)^K \big)^2 .
\]
Via this embedding the elements of $d(Y_{n-1})$ become analytic (in fact algebraic) functions on 
$X_\unr (M) \times \{1,\ldots,r'\}$. By \cite[Corollaire V.1.6]{Tou}, generalized to an 
$W(M,\mf d)$-invariant setting in \cite[Theorem 1.2]{KaSo}, the finite set $d(Y_{n-1})$
generates a closed $C^\infty (X_\unr (M))^{W(M,\mf d)}$-submodule $C^\infty (X_\unr (M))^{r'}$. 
Hence $B_n$ is closed in any of the modules from \eqref{eq:6.15}. Now
\eqref{eq:6.15} and $HH_n (\mc S (G,K)^{\mf s}) = Z_n / B_n$ provide the required properties.
\end{proof}

We use the same algebraic families $\mf F (M_i,\eta_i)$ in $\Rep (G)^{\mf s}$ as in
Paragraph \ref{par:HHHG}. Recall from Definition \ref{def:3} that $\mf F (M_i,\eta_i)$ naturally
contains a tempered algebraic family
\[
\mf F^t (M_i,\eta_i) = \{ I_{P_i}^G (\eta_i \otimes \chi_i) : \chi_i \in X_\unr (M_i) \} .
\]
By Theorem \ref{thm:3.1} it gives rise to a homomorphism of Fr\'echet $Z(\mc S(G,K)^{\mf s})$-algebras 
\begin{equation}\label{eq:6.10}
\mc F^t_{M_i,\eta_i} : \mc S (G,K)^{\mf s} \to
C^\infty (X_\unr (M_i)) \otimes \End_\C \big( I_{P_i}^G (V_{\eta_i})^K \big) .
\end{equation}
Here $Z(\mc S(G,K)^{\mf s})$ acts on the right hand side via evaluations at the central characters
of the underlying $G$-representations $\pi (M_i,\eta_i,\chi_i)$. In terms of \eqref{eq:6.7}, the
direct summands 
\[
Z (\mc S (G,K)^{\mf d}) = C^\infty (X_\unr (M))^{W(M,\mf d)}
\] 
of $Z(\mc S (G,K)^{\mf s})$ annihilate the range of \eqref{eq:6.10} when $i \not\prec \mf d$. 
When $i \prec \mf d$, pick $\chi_{\eta_i,\delta}$ so that $\eta_i$ is a subquotient of 
$I_P^G (\delta \otimes \chi_{\eta_i,\delta})$. Then $C^\infty (X_\unr (M))^{W(M,\mf d)}$ acts
on the range of \eqref{eq:6.10} via the map
\begin{equation}\label{eq:6.11}
X_\unr (M_i) \to X_\unr (M) : \chi \mapsto \chi_{\eta_i,\delta} \chi .
\end{equation}
We note that
\[
\mc F_{\mf d}^t := \bigoplus\nolimits_{i=1, i \prec \mf d}^{n_{\mf s}} \mc F^t_{M_i,\eta_i}
\]
annihilates all the $\mc S (G,K)^{\mf d'}$ with $\mf d' \in \Delta_G^{\mf s} \setminus \{\mf d\}$.
We write
\[
\mc F_{\mf s}^t = \bigoplus\nolimits_{\mf d \in \Delta_G^{\mf s}} \mc F_{\mf d}^t =
\bigoplus\nolimits_{i=1}^{n_{\mf s}} \mc F^t_{M_i,\eta_i} .
\]
These Fr\'echet algebra homomorphisms induce homomorphisms of Fr\'echet \\
$Z(\mc S(G,K)^{\mf s})$-modules
\begin{align*}
& HH_n (\mc F_{\mf d}^t ) \;:\; HH_n (\mc S (G,K)^{\mf d}) \to
\bigoplus\nolimits_{i=1, i \prec \mf d}^{n_{\mf s}} \Omega^n_{sm} (X_\unr (M_i)) ,\\
& HH_n (\mc F_{\mf s}^t) = \bigoplus\nolimits_{\mf d \in \Delta_G^{\mf s}} HH_n (\mc F_{\mf d}^t ) :
HH_n (\mc S (G,K)^{\mf s}) \to \bigoplus\nolimits_{i=1}^{n_{\mf s}} \Omega^n_{sm} (X_\unr (M_i)) . 
\end{align*}
The algebra inclusion $Z (\mc H (G,K)^{\mf s}) \to Z(\mc S (G,K)^{\mf s})$ induces a surjection
\[
\mr{pr} : \bigsqcup\nolimits_{\mf d = [M,\delta] \in \Delta_G^{\mf s}} X_\unr (M) / W(M,\mf d) \to
X_\nr (L) / W(L,\mf s) .
\]

\begin{prop}\label{prop:6.3}
For $\xi \in X_\unr (M) / W(M,\mf d)$ the following spaces are naturally isomorphic:
\begin{enumerate}[(i)]
\item the formal completion of $HH_n (\mc S (G,K)^{\mf d})$ at $\xi$,
\item the formal completion of $HH_n (e_{\mf d} \mc H (G,K)^{\mf s})$ at pr$(\xi)$,
\item the formal completion of $HH_n (\mc H (G,K)^{\mf s})^{\mf d}$ (as in Lemma \ref{lem:5.5}) 
at pr$(\xi)$.
\end{enumerate}
\end{prop}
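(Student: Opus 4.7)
The plan is to establish the two isomorphisms (i)$\cong$(ii) and (ii)$\cong$(iii) separately, in each case tracking how formal completion interacts with the natural algebra maps
\[
\mc H (G,K)^{\mf s} \to e_{\mf d} \mc H (G,K)^{\mf s} \hookrightarrow \mc S (G,K)^{\mf d}
\]
and with the structural results of Paragraph \ref{par:HHHG}.

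For (i)$\cong$(ii), I would apply the formal-completion functor at $\xi$ to the Fr\'echet bimodule isomorphism of Lemma \ref{lem:6.2}.a. The central point is that the two centres
\[
Z(\mc S (G,K)^{\mf d}) \cong C^\infty (X_\unr (M))^{W(M,\mf d)}
\text{ and } Z(e_{\mf d} \mc H (G,K)^{\mf s}) \cong \mc O (X_\nr (M))^{W(M,\mf d)}
\]
have naturally isomorphic formal completions at $\xi$: the exponential map identifies both with a $W(M,\mf d)_\xi$-invariant formal power-series ring (compare \eqref{eq:5.8}), because formal Taylor series of smooth functions along the real-analytic submanifold $X_\unr (M) \subset X_\nr (M)$ coincide with formal series of algebraic functions. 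The completed tensor product in Lemma \ref{lem:6.2}.a therefore collapses, giving $\widehat{\mc S (G,K)^{\mf d}}_\xi \cong \widehat{e_{\mf d} \mc H (G,K)^{\mf s}}_{\mr{pr}(\xi)}$ as bimodules. To lift this to Hochschild homology, I would appeal to Proposition \ref{prop:6.6}: $HH_n$ of either algebra is a subquotient of finitely generated Fr\'echet modules over the Noetherian algebra $Z(\mc S (G,K)^{\mf s})$, formal completion is exact on such modules (by \cite[Theorem 1.2]{KaSo}), and hence commutes with the subquotient operations defining $HH_n$.

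For (ii)$\cong$(iii), the surjective algebra map $a \mapsto e_{\mf d} a$ induces $HH_n (\mc H (G,K)^{\mf s}) \to HH_n (e_{\mf d} \mc H (G,K)^{\mf s})$. I would use the canonical decomposition of Lemma \ref{lem:5.5}.a to reduce to two claims. First, for $\mf d' \neq \mf d$, the restriction to $HH_n (\mc H (G,K)^{\mf s})^{\mf d'}$ vanishes in $\widehat{HH_n (e_{\mf d} \mc H (G,K)^{\mf s})}_{\mr{pr}(\xi)}$: by Lemma \ref{lem:5.5}.a, $HH_n (\mc F_{M_i,\eta_i})$ is zero on $HH_n (\mc H (G,K)^{\mf s})^{\mf d'}$ whenever $i \prec \mf d$, and the analogue of Lemma \ref{lem:5.6} for $e_{\mf d} \mc H (G,K)^{\mf s}$ (whose relevant representations are exactly the families with $i \prec \mf d$) then forces the image to vanish. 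Second, for the restriction to $HH_n (\mc H (G,K)^{\mf s})^{\mf d}$, I would apply Proposition \ref{prop:5.2}: its formal completion at $\mr{pr}(\xi)$ is described by invariant differential forms on a local slice in $X_\nr (L)$, and the identical local computation, run now for $e_{\mf d} \mc H (G,K)^{\mf s}$, yields the same answer, since only the families $\mf F (M_i,\eta_i)$ with $i \prec \mf d$ enter the analysis.

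The main obstacle is the second step: the map $\mc H (G,K)^{\mf s} \to e_{\mf d} \mc H (G,K)^{\mf s}$ is not a central ring change, and a priori several $\mf d' \in \Delta_G^{\mf s}$ may support the same central character $\mr{pr}(\xi)$, so formal completion could pick up multiple summands of Lemma \ref{lem:5.5}.a simultaneously. The resolution must exploit the shifted module structure of Lemma \ref{lem:5.5}.b: because the shift by $t_{\delta'}^+$ differs for distinct $\mf d'$, the $\mf d'$-summands are localised in distinct formal neighbourhoods of $\mr{pr}(\xi)$, so only the $\mf d$-summand contributes. A secondary technical obstacle is passing from isomorphisms of bimodules to isomorphisms of $HH_n$; this is handled via the finite-generation input from Proposition \ref{prop:6.6} together with the Fr\'echet flatness provided by \cite[Theorem 1.2]{KaSo}.
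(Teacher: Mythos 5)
Your argument for (i)$\cong$(ii) is essentially the paper's: it rests on Lemma \ref{lem:6.2}.a, the identification of the formal completions of $C^\infty (X_\unr (M))^{W(M,\mf d)}$ and $\mc O (X_\nr (M))^{W(M,\mf d)}$ at a unitary character (via Borel's theorem on surjectivity of the Taylor-series map), and exactness of the formal completion functor to move $FP_\xi \otimes$ through $H_n$ of the chain complex. One correction: the exactness on the Fr\'echet side is the content of \cite[Theorem 2.5]{OpSo2}, not \cite[Theorem 1.2]{KaSo}; the latter is used in the paper to show that finite subsets of analytic vectors generate \emph{closed} $C^\infty$-submodules (Proposition \ref{prop:6.6}), which is a different point.

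For (ii)$\cong$(iii) you depart from the paper and attack the algebra map $a \mapsto e_{\mf d}a$ head on, and this is where the gap is. Your argument calls on two unproven facts about $e_{\mf d}\mc H (G,K)^{\mf s}$: that the trace map $\bigoplus_{i \prec \mf d} HH_n(\mc F_{M_i,\eta_i})$ is injective on $HH_n(e_{\mf d}\mc H (G,K)^{\mf s})$ (``the analogue of Lemma \ref{lem:5.6}''), and that ``the identical local computation'' of Proposition \ref{prop:5.2} runs for $e_{\mf d}\mc H (G,K)^{\mf s}$. Both of Lemma \ref{lem:5.6} and Proposition \ref{prop:5.2} depend crucially on the Morita equivalence $\mc H(G,K)^{\mf s} \sim \End_G(\Pi_{\mf s})^{op}$ and the reduction to twisted graded Hecke algebras $\mh H^G_u$ from \cite{SolEnd}; none of that machinery applies to $e_{\mf d}\mc H (G,K)^{\mf s}$, whose centre is $\mc O(X_\nr(M))^{W(M,\mf d)}$ rather than $\mc O(X_\nr(L))^{W(L,\mf s)}$ and whose structure the paper explicitly flags as ``more intricate.'' What you are implicitly asserting is close to a restatement of the conclusion. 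The paper circumvents $e_{\mf d}\mc H(G,K)^{\mf s}$ entirely: since $FP_\xi \otimes_{\mc O(X_\nr(M))^{W(M,\mf d)}} -$ is exact, the formal completion of $HH_n(e_{\mf d}\mc H(G,K)^{\mf s})$ at $\mr{pr}(\xi)$ equals $HH_n\big(FP_\xi \otimes e_{\mf d}\mc H(G,K)^{\mf s}\big)$, which Lemma \ref{lem:6.2}.a identifies with the $FP_\xi$-completion of $HH_n(\mc S(G,K)^{\mf d})$; the passage to (iii) is then made by applying $HH_n(\mc F_{\mf d})$ and invoking Lemma \ref{lem:5.6} and Theorem \ref{thm:5.3} only for the \emph{full} block algebra $\mc H(G,K)^{\mf s}$, where those results are available. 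Your paragraph about distinguishing the $\mf d'$-summands through the shifts $t_{\delta'}^+$ is a correct diagnosis of why unitarity of $\xi$ matters, but it does not supply the missing structure theory for $e_{\mf d}\mc H(G,K)^{\mf s}$.
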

We remark that (ii) and (iii) need not be isomorphic for more general central characters
(e.g. the central character of $\pi (M,\delta,\chi)$ with $\chi \in X_\nr (M)$ not unitary).
\begin{proof}
Let $FP_\xi^{W(M,\mf d)} \cong FP_\lambda^{W(M,\mf d)_\lambda}$ be the algebra of 
$W(M,\mf d)$-invariant formal power series on $(M,\delta,X_\unr (M))$ centred at $\xi$. It is
naturally isomorphic to the formal completion of 
\begin{equation}\label{eq:6.36}
\mc O (X_\nr (M))^{W(M,\mf d)} \cong Z (e_{\mf d} \mc H (G,K)^{\mf s})
\end{equation}
at $W(M,\mf d) \lambda$. By \cite[Theorem 2.5]{OpSo2} the functor
\begin{equation}\label{eq:6.19}
FP_\xi^{W(M,\mf d)} \underset{Z(\mc S (G,K)^{\mf d})}{\otimes} =  
FP_\lambda^{W(M,\mf d)_\lambda} \underset{C^\infty (M)^{W(M,\mf d)}}{\otimes}
\end{equation}
is exact on a large class of $Z(\mc S (G,K)^{\mf d})$-modules. This class contains all modules
which as topological vector spaces are quotients of $\mc S (\Z)$, and all modules that we
need here are of that form. This exactness implies that 
\begin{equation}\label{eq:6.13}
FP_\xi^{W(M,\mf d)} \underset{Z(\mc S (G,K)^{\mf d})}{\otimes} HH_n (\mc S (G,K)^{\mf d}) \cong
H_n \big( FP_\xi^{W(M,\mf d)} \underset{Z(\mc S (G,K)^{\mf d})}{\otimes} 
C_* (\mc S (G,K)^{\mf d}), b_* \big).
\end{equation}
Let $I_{W(M,\mf d) \lambda}^\infty \subset C^\infty (X_\unr (M))^{W(M,\mf d)}$ be the ideal of 
$W(M,\mf d)$-invariant smooth functions that are flat at $W(M,\mf d) \lambda$.
By a theorem of Borel, see \cite[Th\'eor\`eme IV.3.1 and Remarque IV.3.5]{Tou}, the Taylor series map
$C^\infty (M) \to FP_v$ is surjective. Taking $W(M,\mf d)$-invariants leads to an isomorphism
\[
FP_\xi^{W(M,\mf d)} \cong C^\infty (X_\unr (M))^{W(M,\mf d)} / I_{W(M,\mf d) \lambda}^\infty .
\]
In particular the ideal $ I_{W(M,\mf d) \lambda}^\infty$ annihilates \eqref{eq:6.13}. 
With Lemma \ref{lem:6.2}.a we find that \eqref{eq:6.13} is isomorphic with
\begin{multline}\label{eq:6.14}
H_n \big( C_* \big( FP_\xi^{W(M,\mf d)} \underset{Z(\mc S (G,K)^{\mf d})}{\otimes} 
\mc S (G,K)^{\mf d} \big) , b_* \big) \cong \\
H_n \big( C_* \big( FP_\xi^{W(M,\mf d)} \underset{Z(\mc S (G,K)^{\mf d})}{\otimes} Z(\mc S (G,K)^{\mf d})
\underset{e_{\mf d} Z(\mc H (G,K)^{\mf s}}{\otimes}  e_{\mf d} \mc H (G,K)^{\mf s} \big) , b_* \big) \cong \\
H_n \big( C_* \big( FP_\xi^{W(M,\mf d)} \underset{e_{\mf d} Z(\mc H (G,K)^{\mf s})}{\otimes}  e_{\mf d} 
\mc H (G,K)^{\mf s} \big) , b_* \big) \cong \\
HH_n \big( FP_\xi^{W(M,\mf d)} \underset{e_{\mf d} Z(\mc H (G,K)^{\mf s})}{\otimes}  e_{\mf d} 
\mc H (G,K)^{\mf s} \big) .
\end{multline}
By \eqref{eq:6.36} at the exactness of 
$FP_\xi^{W(M,\mf d)} \underset{\mc O (X_\nr (M))^{W(M,\mf d)}}{\otimes}$, the last expression can be
identified with the formal completion of $HH_n (e_{\mf d} \mc H (G,K)^{\mf s})$ at pr$(\xi)$.
Hence (i) and (ii) are naturally isomorphic.

Next we apply $HH_n (\mc F_{\mf d}^t)$ to the last line of \eqref{eq:6.14}, with image in
\[
FP_\lambda^{W(M,\mf d)_\lambda} \underset{C^\infty (M)^{W(M,\mf d)}}{\otimes} 
\bigoplus_{i=1, i \prec \mf d}^{n_{\mf s}} \Omega^n_{sm} (X_\unr (M_i)) .
\]
As $\mc F_{\mf d}^t (e_{\mf d}) = 1$, we may just as well set $\mc F_{\mf d}(e_{\mf d}) = 1$ and 
apply $HH_n (\mc F_{\mf d})$. Then the image becomes
\begin{multline}
HH_n (\mc F_{\mf d}) HH_n \Big( \widehat{\mc O (X_\nr (M))}_{W(M,\mf d)\lambda}^{W(M,\mf d)}
\underset{e_{\mf d} Z(\mc H (G,K)^{\mf s})}{\otimes} e_{\mf d} \mc H (G,K)^{\mf s} \Big) \cong \\
\widehat{\mc O (X_\nr (M))}_{W(M,\mf d)\lambda}^{W(M,\mf d)}
\underset{e_{\mf d} Z(\mc H (G,K)^{\mf s})}{\otimes} HH_n (\mc F_{\mf d}) HH_n (\mc H (G,K)^{\mf s}) = \\
\widehat{\mc O (X_\nr (L))}^{W(L,\mf s)}_{\mr{pr}(\xi)} \underset{Z(\mc H (G,K)^{\mf s})}{\otimes}
HH_n (\mc F_{\mf d}) HH_n (\mc H (G,K)^{\mf s}) .
\end{multline}
To the last expression we apply $\mr{id} \otimes HH_n (\mc F_{\mf s})^{-1}$ (which exists by
Lemma \ref{lem:5.6}), and we obtain the desired
description of \eqref{eq:6.14} and of \eqref{eq:6.13}.
\end{proof}

The injectivity of $HH_n (\mc F_{\mf s}^t)$ is more subtle for these topological algebras
than it was in the earlier purely algebraic settings (e.g. Lemma \ref{lem:5.6}).

\begin{lem}\label{lem:6.7}
\enuma{
\item The continuous $Z(\mc S (G,K)^{\mf s})$-linear map
\[
HH_n (\mc F_{\mf s}^t) : HH_n (\mc S (G,K)^{\mf s}) \to 
\bigoplus\nolimits_{i=1}^{n_{\mf s}} \Omega^n_{sm} (X_\unr (M_i)) 
\]
is injective.
\item The natural map $HH_n (\mc H (G,K)^{\mf s}) \to HH_n (\mc S (G,K)^{\mf s})$ is injective. 
}
\end{lem}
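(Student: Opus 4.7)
I will tackle \textup{(b)} first since it is cleaner and follows from the algebraic case. The natural inclusion $\iota : \mc H (G,K)^{\mf s} \to \mc S (G,K)^{\mf s}$ fits into a commutative square: composing $\iota$ with $\mc F^t_{M_i,\eta_i}$ gives the same algebra homomorphism as composing $\mc F_{M_i,\eta_i}$ with the restriction $\mc O (X_\nr (M_i)) \otimes \End_\C (I^G_{P_i}(V_{\eta_i})^K) \to C^\infty (X_\unr (M_i)) \otimes \End_\C (I^G_{P_i}(V_{\eta_i})^K)$. Applying $HH_n$ and summing over $i$, I obtain
\[
HH_n (\mc F^t_{\mf s}) \circ HH_n (\iota) \;=\; \rho \circ HH_n (\mc F_{\mf s}),
\]
where $\rho : \bigoplus_i \Omega^n (X_\nr (M_i)) \to \bigoplus_i \Omega^n_{sm}(X_\unr (M_i))$ is the restriction of algebraic forms to the compact real torus. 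Since $X_\unr (M_i)$ is Zariski dense in $X_\nr (M_i)$, the map $\rho$ is injective; by Lemma \ref{lem:5.6} so is $HH_n (\mc F_{\mf s})$. Hence the composition on the right is injective, which forces $HH_n (\iota)$ to be injective.

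For \textup{(a)}, by the central idempotent decomposition $\mc S (G,K)^{\mf s} = \bigoplus_{\mf d} \mc S (G,K)^{\mf d}$ it suffices to show that each $HH_n (\mc F^t_{\mf d})$ is injective. Fix $\mf d = [M,\delta]$ and take $x \in \ker HH_n (\mc F^t_{\mf d})$. The strategy is to pass to formal completions at each point $\xi \in X_\unr (M) / W(M,\mf d)$ and compare with the algebraic setting. By Proposition \ref{prop:6.3} the formal completion of $HH_n (\mc S (G,K)^{\mf d})$ at $\xi$ is naturally isomorphic to the formal completion of $HH_n (\mc H (G,K)^{\mf s})^{\mf d}$ at $\mr{pr}(\xi)$, and the analogous identification holds for the target spaces $\bigoplus_{i \prec \mf d} \Omega^n_{sm}(X_\unr (M_i))$ versus $\bigoplus_{i \prec \mf d} \Omega^n (X_\nr (M_i))$ (smooth and algebraic forms have the same formal Taylor series at a point). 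These identifications intertwine $HH_n (\mc F^t_{\mf d})$ with $HH_n (\mc F_{\mf d})$.

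On the Hecke side, $HH_n (\mc F_{\mf d})$ is the restriction of the injective map $HH_n (\mc F_{\mf s})$ to the direct summand $HH_n (\mc H (G,K)^{\mf s})^{\mf d}$ from Lemma \ref{lem:5.5}, hence is itself injective. Since source and target are finitely generated over the Noetherian ring $\mc O (X_\nr (L))^{W(L,\mf s)}$, formal completion at any closed point is exact and thus preserves injectivity. Therefore the formal completion of $x$ at every $\xi$ vanishes. To conclude $x = 0$ I invoke the concrete realization provided by Proposition \ref{prop:6.6}: $HH_n (\mc S (G,K)^{\mf d})$ is a quotient $Z_n / B_n$ of closed Fr\'echet submodules of $(\mc S (G,K)^{\mf d})^r$, which via the Plancherel isomorphism embeds into $C^\infty (X_\unr (M))^{r'}$. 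Combining the exactness of formal completion on such modules, as in \cite{KaSo} (and used already in the proof of Proposition \ref{prop:6.3} via \cite{OpSo2}), with the classical fact that a smooth function whose Taylor series vanishes at every point is identically zero, one deduces that an element whose formal completion vanishes at every $\xi$ is zero.

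\textbf{Main obstacle.} The delicate point is the final step: transferring the vanishing of all formal completions of $x$ to the vanishing of $x$ itself. In the purely algebraic (Noetherian) setting this is Krull's intersection theorem, but in the Fr\'echet/$C^\infty$ context there exist nonzero flat functions, so one cannot argue completion-by-completion in isolation. The resolution must use the \emph{global} injectivity of the Taylor series map on $C^\infty$, together with the specific subquotient structure of $HH_n (\mc S (G,K)^{\mf d})$ inside $C^\infty (X_\unr (M))^{r'}$; this is precisely the kind of smooth-function estimate packaged in \cite{KaSo} and is the only genuinely analytic ingredient in the proof.
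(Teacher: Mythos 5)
Your proof is correct and follows essentially the same strategy as the paper for part (a): decompose by $\mf d$, pass to formal completions via Proposition \ref{prop:6.3}, transfer injectivity from the algebraic side (Theorem \ref{thm:5.3} and Lemma \ref{lem:5.5}), and conclude via the input from \cite{KaSo} (there cited as Lemma 1.1) that a module of this subquotient Fr\'echet type with all formal completions zero must itself vanish --- you correctly flag this as the only genuinely analytic ingredient. Your argument for (b) is a minor but valid variant: the paper deduces (b) from (a) by exhibiting the natural map as $HH_n(\mc F_{\mf s}^t)^{-1}\circ HH_n(\mc F_{\mf s})$, whereas you factor $HH_n(\mc F^t_{\mf s}) \circ HH_n(\iota)$ through the restriction map $\rho$ and Lemma \ref{lem:5.6}, which has the slight advantage of not depending on (a).
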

\begin{proof}
(a) The kernel of $HH_n (\mc F_{\mf s}^t)$ is a closed $Z(\mc S(G,K)^{\mf s})$-submodule of \\
$HH_n (\mc S (G,K)^{\mf s})$, so by Proposition \ref{prop:6.6} it is a quotient of two closed 
submodules of a finitely generated Fr\'echet $Z(\mc S (G,K)^{\mf s})$-module.
Using the central idempotents $e_{\mf d}$, we can decompose
\[
\ker HH_n (\mc F_{\mf s}^t) = 
\bigoplus\nolimits_{\mf d \in \Delta_G^{\mf s}} \ker HH_n (\mc F_{\mf d}^t) . 
\]
Here each $HH_n (\mc F_{\mf d}^t)$ is a quotient of two closed submodules of a finitely
generated Fr\'echet $C^\infty (X_\unr (M))^{W(M,\mf d)}$-module. Suppose that $\ker HH_n 
(\mc F_{\mf d}^t)$ is nonzero for one specific $\delta$. By \cite[Lemma 1.1]{KaSo} at least one 
of its formal completions is nonzero, say at $\xi = W(M,\mf d) (M,\delta,\chi)$. By Proposition 
\ref{prop:6.3} that formal completion of $\ker HH_n (\mc F_{\mf d}^t)$ can be considered as 
a submodule of the formal completion of $HH_n (\mc H (G,K)^{\mf s})^{\mf d}$ at pr$(\xi)$.

From Theorem \ref{thm:5.3} and Lemma \ref{lem:5.5} we know that $HH_n (\mc F_{\mf d})$ is
injective on\\ $\widehat{HH_n (\mc H (G,K)^{\mf s})^{\mf d}}_{\mr{pr}(\xi)}$. That holds for
$HH_n (\mc F_{\mf d}^t)$ as well, because $\mc F_{\mf d}^t = \mc F_{\mf d}$ on these formal
completions. Hence 
\[
HH_n (\mc F_{\mf d}) (\ker HH_n (\mc F_{\mf s}^t))
\]
has a nonzero formal completion at $\xi$, which is clearly a contradiction.\\
(b) The algebra homomorphism $\mc F_{\mf s}^t$ extends
\[
\mc F_{\mf s} : \mc H (G,K)^{\mf s} \to 
\bigoplus\nolimits_{i=1}^{n_{\mf s}} \Omega^n (X_\nr (M_i)) \otimes_\C \End_\C (I_{P_i}^G (\eta_i)^K) .
\]
By Lemma \ref{lem:5.6} the map $HH_n (\mc F_{\mf s})$, is injective, just as $HH_n (\mc F_{\mf s}^t)$.
Hence the natural map 
\[
HH_n (\mc H (G,K)^{\mf s}) \to HH_n (\mc S (G,K)^{\mf s})
\] 
equals the injection $HH_n (\mc F_{\mf s}^t)^{-1} \circ HH_n (\mc F_{\mf s})$.
\end{proof}

\subsection{Computation of Hochschild homology} \
\label{par:computation}

For $u \in X_\unr (L)$ and $g \in W (L,\mf s)_u$, the maps $\phi_{g,i}$ from \cite[(2.12)]{SolTwist} 
and $\chi_{\eta_i}^{-1} \phi_{g,i}^*$ from \eqref{eq:5.15} are well-defined in this setting, 
only now as
\[
\chi_{\eta_i}^{-1} \phi_{g,i}^* : 
C^\infty (X_{\unr}(M_i)) \otimes \End_\C (I_{P_i}^G (V_{\eta_i})^K ) \to
C^\infty (X_{\unr}(L)^{g,\circ}) \otimes \End_\C (I_{P_i}^G (V_{\eta_i})^K ) .
\]
This enables to define the smooth version of \eqref{eq:5.26}:
\[
HH_n (\phi_{w,c}^*) = \sum_{i=1}^{n_{\mf s}} HH_n (\chi_{\eta_i}^{-1} \phi_{w,i}^*) \;:\;
\bigoplus_{i=1, U_u\text{-rel}}^{n_{\mf s}} \Omega^n_{sm} (X_\unr (M_i)) \to 
\Omega^n_{sm} (X_\unr (L)^w_c) .
\]
Like in Theorem \ref{thm:5.9}, we define
\[
HH_n (\tilde \phi_{\mf s}^*) = \bigoplus\nolimits_{w \in W(L,\mf s), c \pi_0 (X_\nr (L)^w_c)}
HH_n (\phi_{w,c}^*) .
\]

\begin{lem}\label{lem:6.9}
The continuous map
\[
HH_n (\tilde \phi_{\mf s}^*) : \bigoplus_{i=1}^{n_{\mf s}} \Omega^n_{sm} (X_\unr (M_i)) 
\to \bigoplus_{w \in W(L,\mf s), c \in \pi_0 (X_\nr (L)^w)} \Omega^n (X_\unr (L)^w_c)
\]
is injective.
\end{lem}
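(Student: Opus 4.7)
The strategy is to mirror the proof of Theorem \ref{thm:5.9}.a, lifting the algebraic injectivity to the smooth setting via formal completions at unitary characters. The crucial observation is that each map $HH_n (\phi^*_{w,c})$ is built from finite sums $\sum_i \lambda_{w,i} HH_n (\chi_{\eta_i}^{-1} \phi^*_{w,i})$ in which the individual $\phi_{w,i}$ are regular (polynomial) maps, so the induced maps on Taylor expansions coincide with their algebraic counterparts.

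First I would note that $\bigoplus_i \Omega^n_{sm}(X_\unr (M_i))$ is a finitely generated projective (hence Fr\'echet) module over $\bigoplus_{\mf d \in \Delta_G^{\mf s}} C^\infty (X_\unr (M))^{W(M,\mf d)}$, where $Z(\mc S (G,K)^{\mf s})$ acts via central characters as in \eqref{eq:6.11}. Therefore the kernel of $HH_n(\tilde \phi_{\mf s}^*)$, being closed, is again a finitely generated Fr\'echet $Z(\mc S (G,K)^{\mf s})$-module (using the Malgrange-type closed image theorem from \cite{Tou} and \cite{KaSo} for the $W(M,\mf d)$-invariant setting, exactly as in the proof of Proposition \ref{prop:6.6}). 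Assume for a contradiction that this kernel is nonzero. By \cite[Lemma 1.1]{KaSo}, at least one of its formal completions is nonzero, say at some $\xi \in \bigsqcup_{\mf d \in \Delta_G^{\mf s}} X_\unr (M)/W(M,\mf d)$.

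Next I would identify this formal completion with its algebraic analogue. As in the proof of Proposition \ref{prop:6.3}, Borel's theorem gives $FP_\xi^{W(M,\mf d)} \cong C^\infty(X_\unr(M))^{W(M,\mf d)} / I^\infty_{W(M,\mf d)\lambda}$, and since $\xi$ is unitary the complex tangent space coordinates of $X_\nr(M)$ at $\lambda$ are the same as the smooth local coordinates of $X_\unr(M)$ at $\lambda$. Consequently the formal completion of $\Omega^n_{sm}(X_\unr(M_i))$ at any point in the fibre over $\xi$ is canonically isomorphic to the algebraic formal completion $\widehat{\Omega^n(X_\nr(M_i))}$ at the same point, and similarly for $\Omega^n_{sm}(X_\unr(L)^w_c)$. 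Under these identifications, the formal completion of $HH_n(\tilde\phi_{\mf s}^*)$ agrees with the formal completion of the algebraic map of Theorem \ref{thm:5.9}.a, because $\chi_{\eta_i}^{-1}\phi_{w,i}$ is algebraic.

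To conclude, I would apply Theorem \ref{thm:5.9}.a, which asserts the injectivity of the algebraic $HH_n(\tilde\phi_{\mf s}^*)$. Since both source and target are finitely generated modules over the Noetherian algebras $\mc O(X_\nr(M_i))$ and $\mc O(X_\nr(L)^w_c)$, completion along the maximal ideal corresponding to $\xi$ is exact and preserves injectivity. Thus the formal completion at $\xi$ of the kernel must vanish, contradicting the choice of $\xi$; the kernel is therefore zero. The main obstacle is the passage between smooth and algebraic formal completions in Step 3, but this is precisely the content of \cite[Theorem 2.5]{OpSo2} and Borel's theorem, already used repeatedly in Paragraph \ref{par:top}; the rest of the argument is a direct transplant of the algebraic proof.
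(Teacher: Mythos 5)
Your proof is correct, but it takes a genuinely different route from the paper's. The paper disposes of Lemma \ref{lem:6.9} in two sentences: it observes that the pointwise argument in the proof of Theorem \ref{thm:5.9}.a transplants verbatim to the smooth setting, the only ingredient requiring care being the smooth analogue of \cite[Lemma 1.12]{SolTwist}, which is asserted to hold and is referenced directly. Your argument instead reduces the smooth statement to the algebraic statement of Theorem \ref{thm:5.9}.a via formal completions at unitary central characters: show the kernel is a closed Fr\'echet submodule, invoke \cite[Lemma 1.1]{KaSo} to locate a nonzero formal completion, identify that smooth formal completion with the algebraic one (Borel plus the fact that $X_\unr(M_i)$ is totally real of half-dimension in $X_\nr(M_i)$ and the $\chi_{\eta_i}^{-1}\phi_{w,i}$ preserve the unitary subtori), and then use flatness of Noetherian completion to import the algebraic injectivity. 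The trade-off: the paper's route is shorter and stays entirely in the smooth differential-form picture, at the price of having to re-examine \cite[Lemma 1.12]{SolTwist} in the smooth setting; your route avoids that re-examination altogether by making Theorem \ref{thm:5.9}.a a black box, at the price of the heavier machinery of \cite[Theorem 2.5]{OpSo2} and Borel's theorem, which the paper anyway invokes repeatedly in Paragraph \ref{par:top}. One small caveat: you assert the kernel is itself a finitely generated Fr\'echet module, which does not obviously follow; but nothing is lost, since \cite[Lemma 1.1]{KaSo} (as used in the proof of Lemma \ref{lem:6.7}) only requires the kernel to be a closed submodule of a finitely generated Fr\'echet module, which it is.
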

\begin{proof}
This can be shown in the same way as Theorem \ref{thm:5.9}.a. Ultimately the argument relies
on \cite[Lemma 1.12]{SolTwist} which holds just as well in a smooth settting, as explained on
\cite[p. 21]{SolTwist}.
\end{proof}

We fix $\mf d = [M,\delta]$ and we represent the central character of $\delta$ by $\chi_\delta
t_\delta^+$ with $\chi_\delta \in X_\unr (L)$ and $t_\delta^+ \in X_\nr^+ (L)$. We let 
\[
Z(\mc S (G,K)^{\mf d}) \cong C^\infty (X_\unr (M))^{W(M,\mf d)} \quad \text{act on} \quad
\bigoplus\nolimits_{w \in W(L,\mf s)} \Omega^n_{sm} (X_\unr (L)^w) 
\]
in the following way:
\begin{itemize}
\item if $g (X_\nr (L)^w_c) \subset \chi_\delta X_\nr (M)$ for some $g,w \in W(L,\mf s)$, then
it acts at $(w,\chi)$ with $\chi \in X_\unr (L)^w_c$ via the character 
$W(M,\mf d) \chi_\delta^{-1} g \chi$,
\item if $W(L,\mf s) X_\nr (L)^w_c \not\subset W(L,\mf s) \chi_\delta X_\nr (M)$, then 
$Z(\mc S (G,K)^{\mf d})$ acts as zero on $\Omega^n_{sm} (X_\unr (L)^w_c)$.
\end{itemize}

\begin{lem}\label{lem:6.8}
The following map is $Z(\mc S (G,K)^{\mf d})$-linear:
\[
HH_n (\tilde \phi_{\mf s}^*) \circ HH_n (\mc F_{\mf d}^t) \;:\;
HH_n (\mc S (G,K)^{\mf d}) \to \bigoplus\nolimits_{w \in W(L,\mf s)} \Omega^n_{sm} (X_\unr (L)^w) .
\]
\end{lem}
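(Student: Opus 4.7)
The plan is to reduce the claim to the algebraic analogue Lemma \ref{lem:5.5}.b via Proposition \ref{prop:6.3} (comparison of formal completions), and then extend by continuity from a dense polynomial subring to $C^\infty (X_\unr (M))^{W(M,\mf d)}$.

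First, I would check that the composition is continuous: $HH_n (\mc F^t_{\mf d})$ is continuous by Theorem \ref{thm:3.1}, and $HH_n (\tilde \phi^*_{\mf s})$ is induced by polynomial maps, so continuous. The subring $\mc O (X_\nr (M))^{W(M,\mf d)} \cong e_{\mf d} Z (\mc H (G,K)^{\mf s})$ embeds densely in $Z (\mc S (G,K)^{\mf d}) \cong C^\infty (X_\unr (M))^{W(M,\mf d)}$ by Stone--Weierstrass on the compact real torus $X_\unr (M) / W(M,\mf d)$. By continuity, it suffices to establish $\mc O (X_\nr (M))^{W(M,\mf d)}$-linearity, equivalently $Z(\mc H (G,K)^{\mf s})$-linearity through the surjection $Z (\mc H (G,K)^{\mf s}) \twoheadrightarrow e_{\mf d} Z (\mc H (G,K)^{\mf s})$.

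Second, I would match the prescribed target action with that of Lemma \ref{lem:5.5}.b. The inclusion $Z(\mc H (G,K)^{\mf s}) \to e_{\mf d} Z(\mc H (G,K)^{\mf s})$ corresponds geometrically to the restriction along $\chi'' \mapsto \chi_\delta t^+_\delta \chi''$ from $X_\unr (M)$ into $X_\nr (L)$. Under the prescription of Lemma \ref{lem:6.8}, at $\chi \in X_\unr (L)^w_c$ with $g(X_\nr (L)^w_c) \subset \chi_\delta X_\nr (M)$, an $f \in \mc O (X_\nr (L))^{W(L,\mf s)}$ acts by the scalar $f(\chi_\delta t^+_\delta \cdot \chi_\delta^{-1} g \chi) = f(g \chi \cdot t^+_\delta) = f(\chi \cdot t^+_\delta)$, using $W(L,\mf s)$-invariance. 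This is exactly the prescription of Lemma \ref{lem:5.5}.b in the tempered case. The annihilation bullet is automatic: if no $W(L,\mf s)$-translate of $X_\nr (L)^w_c$ lies in $\chi_\delta X_\nr (M)$, then $\phi_{w,i}$ is undefined for every $i \prec \mf d$, forcing $\lambda_{w,i} = 0$ and hence vanishing on that component.

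Third, I would verify $Z(\mc H (G,K)^{\mf s})$-linearity on every formal completion. By Proposition \ref{prop:6.3}, for each $\xi \in X_\unr (M)/W(M,\mf d)$ there is a canonical isomorphism
\[
\widehat{HH_n (\mc S (G,K)^{\mf d})}_\xi \cong \widehat{HH_n (\mc H (G,K)^{\mf s})^{\mf d}}_{\mathrm{pr}(\xi)}
\]
under which $HH_n (\mc F^t_{\mf d})$ becomes $HH_n (\mc F_{\mf d})$ (smooth and algebraic evaluations agree on jets); and $HH_n (\tilde \phi^*_{\mf s})$, being algebraic, commutes with completion. Lemma \ref{lem:5.5}.b now gives formal $Z(\mc H (G,K)^{\mf s})$-linearity at each $\mathrm{pr}(\xi)$, hence at each $\xi$. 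Since the target action is pointwise multiplication by the scalar $z (\chi_\delta^{-1} g \chi)$ on each component, pointwise/formal agreement at every $\xi$ implies global $\mc O (X_\nr (M))^{W(M,\mf d)}$-linearity, and then continuity plus the density from Step 1 concludes. The main obstacle will be the careful change-of-variables bookkeeping that transforms the $t^+_\delta$-shift of Lemma \ref{lem:5.5}.b into the $\chi_\delta$-shift of Lemma \ref{lem:6.8}, tracking how the embedding of the Bernstein centre into the Harish-Chandra centre through the Plancherel isomorphism permutes the evaluation points.
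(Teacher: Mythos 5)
Your plan captures the essential content of the paper's proof: both reduce the statement to Lemma \ref{lem:5.5}.b by tracking the map $Z(\mc H (G,K)^{\mf s}) \to Z(\mc S (G,K)^{\mf d})$, $f \mapsto e_{\mf d} f$, on characters (sending $W(M,\mf d)\chi$ to $W(L,\mf s)\chi\chi_\delta t_\delta^+$), and both rely on the module action being \emph{pointwise}, i.e.\ implemented by evaluation at a character on each fibre of $X_\unr(L)^w$. The paper's proof is considerably shorter, however: it does not invoke Proposition \ref{prop:6.3}, formal completions, or any density argument. Instead it simply observes that, since the $Z(\mc H (G,K)^{\mf s})$-action of Lemma \ref{lem:5.5}.b is a pointwise map on characters, it extends tautologically to a pointwise action of $C^\infty (X_\unr(M))^{W(M,\mf d)}$; then it compares the induced map on points with the prescription preceding the lemma. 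Your route is more explicit and verifiable but brings in machinery (formal completions of $HH_n$, comparison of the smooth and algebraic evaluation functors) that the paper already used elsewhere (Proposition \ref{prop:6.3}) and here chooses to bypass.

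One point in your proposal needs tightening: in Step 1 you justify density of $\mc O(X_\nr(M))^{W(M,\mf d)}$ in $C^\infty(X_\unr(M))^{W(M,\mf d)}$ via Stone--Weierstrass. Stone--Weierstrass only gives density in the sup-norm topology, whereas here you must pass to the limit of the equation $T(z\cdot m) = z\cdot T(m)$ in the Fr\'echet ($C^\infty$) topology on $Z(\mc S (G,K)^{\mf d})$, since the module actions are only jointly continuous in that topology. The correct justification on a compact torus is Fourier-series convergence in the $C^\infty$ topology (or equivalently the Ces\`aro-mean smoothing argument), not Stone--Weierstrass. This is easily repaired, but as written the density step is not adequately supported. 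Note also that the paper's direct "pointwise" argument avoids needing density at all, which is another reason it is the cleaner route here.
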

\begin{proof}
From Lemma \ref{lem:5.5}.b and Theorem \ref{thm:5.9}.c we know how $Z(\mc H (G,K)^{\mf s})$ 
acts on
\[
HH_n (\tilde \phi_{\mf s}^*) HH_n (\mc F_\delta) HH_n (\mc H (G,K)^{\mf s}) \; \subset \;
\bigoplus\nolimits_{w \in W(L,\mf s)} \Omega^n (X_\nr (L)^w) .
\]
That action is pointwise, in the sense that upon specialization at any point of
$X_\nr (L)^w$ the $Z(\mc H (G,K)^{\mf s})$-action goes via evaluation at a character
(or is just zero). Via the natural map
\begin{equation}\label{eq:6.35}
Z( \mc H (G,K)^{\mf s}) \to Z( \mc S(G,K)^{\mf d}) : f \mapsto e_{\mf d} f ,
\end{equation}
such an action naturally gives rise to an action of $C^\infty (X_\unr (M))^{W(M,\mf d)}$ on
\[
\bigoplus\nolimits_{w \in W(L,\mf s)} \Omega^n_{sm} (X_\unr (L)^w) ,
\]
which is pointwise in the same sense. The map
\[
X_\unr (M) / W(M,\mf d) \to X_\nr (L) / W(L,\mf s)
\]
induced by \eqref{eq:6.35} sends $W(M,\mf d) \chi$ to $W(L,\mf s) \chi \chi_\delta t_\delta^+$.
When we compare this with Lemma \ref{lem:5.5}.b, we see that 
$Z(\mc S (G,K)^{\mf s})$ acts in the way described just before the lemma.
\end{proof}

Next we prove the most technical step towards our description of $HH_n (\mc S (G,K)^{\mf s})$.

\begin{lem}\label{lem:6.5}
The $Z(\mc S (G,K)^{\mf d})$-module $HH_n (\mc F_{\mf d}^t) HH_n (\mc S (G,K)^{\mf d})$ contains
\[
\bigoplus_{i=1, i \prec \mf d}^{n_{\mf s}} \!\! \Omega^n_{sm} (X_\unr (M_i)) 
\bigcap HH_n (\tilde \phi_{\mf s}^* )^{-1} \Big( \bigoplus_{w \in W(L,\mf s)} \!\!\!
\Omega^n_{sm} (X_\unr (L)^w) \otimes \natural_{\mf s}^w \Big)^{W(L,\mf s)} .
\]
\end{lem}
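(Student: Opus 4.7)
The plan is to derive the containment from the corresponding algebraic identity, via extension of scalars from $\mc O(X_\nr(M))^{W(M,\mf d)}$ to $C^\infty(X_\unr(M))^{W(M,\mf d)}$, with Proposition \ref{prop:6.12} as the key technical tool.

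First I would verify the algebraic analogue of the lemma. Combining Theorem \ref{thm:5.9}(b) with Lemma \ref{lem:5.5}(a) gives
\[
\mc I^{\mr{alg}} \;:=\; HH_n(\mc F_{\mf d}) HH_n(\mc H (G,K)^{\mf s}) \;=\; \bigoplus_{i=1,\, i\prec \mf d}^{n_{\mf s}} \Omega^n(X_\nr(M_i)) \;\cap\; HH_n(\tilde\phi_{\mf s}^*)^{-1} \Big(\bigoplus_{w} \Omega^n(X_\nr(L)^w) \otimes \natural_{\mf s}^w\Big)^{W(L,\mf s)}.
\]
The inclusion $\subseteq$ is immediate from the two cited results; for the reverse inclusion one uses that $HH_n(\tilde\phi_{\mf s}^*) \circ HH_n(\mc F_{\mf s})$ is already a bijection onto the full $W(L,\mf s)$-invariant subspace, so intersecting with $\bigoplus_{i\prec\mf d}$ picks out exactly the $\mf d$-component provided by Lemma \ref{lem:5.5}(a).

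Next I would apply Proposition \ref{prop:6.12}. The idempotent $p$ required there is the continuous $\mc O(X_\nr(M))^{W(M,\mf d)}$-linear endomorphism of $\bigoplus_{i\prec\mf d}\Omega^n_{sm}(X_\unr(M_i))$ obtained by composing $HH_n(\tilde\phi_{\mf s}^*)$, the $\natural_{\mf s}^w$-twisted averaging over $W(L,\mf s)$, and a chosen right inverse onto the image; such a $p$ makes sense both algebraically and smoothly because each component $\chi_{\eta_i}^{-1}\phi_{g,i}^*$ is pullback along an algebraic map of tori and is therefore simultaneously algebraic on $\Omega^n(X_\nr(M_i))$ and continuous on $\Omega^n_{sm}(X_\unr(M_i))$. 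Proposition \ref{prop:6.12} then yields a Fréchet isomorphism
\[
C^\infty(X_\unr(M))^{W(M,\mf d)} \underset{\mc O(X_\nr(M))^{W(M,\mf d)}}{\otimes} \mc I^{\mr{alg}} \;\cong\; \bigoplus_{i\prec\mf d}\Omega^n_{sm}(X_\unr(M_i)) \,\cap\, HH_n(\tilde\phi_{\mf s}^*)^{-1}\Big(\bigoplus_w \Omega^n_{sm}(X_\unr(L)^w) \otimes \natural_{\mf s}^w\Big)^{W(L,\mf s)},
\]
whose right-hand side is precisely the set the lemma requires us to produce.

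The final step is to identify the left-hand side of this isomorphism with $HH_n(\mc F_{\mf d}^t) HH_n(\mc S(G,K)^{\mf d})$. By Lemma \ref{lem:6.2}(a), $\mc S(G,K)^{\mf d}$ is obtained from $e_{\mf d}\mc H(G,K)^{\mf s}$ by centre-extension of scalars, and the formal-completion calculus in the proof of Proposition \ref{prop:6.3} shows that this extension commutes with Hochschild homology pointwise; combined with the Fréchet-module finiteness of Proposition \ref{prop:6.6} and the injectivity of $HH_n(\mc F_{\mf d}^t)$ from Lemma \ref{lem:6.7}, this yields the required identification of images. I expect the main obstacle to be the existence of the idempotent $p$ in the middle step with both the algebraic and smooth stability properties required by Proposition \ref{prop:6.12}: the algebraic subspace $\mc I^{\mr{alg}}$ is cut out by a \emph{global} $W(L,\mf s)$-invariance condition on $\bigoplus_w \Omega^n(X_\nr(L)^w)$, whereas the Fréchet statement is localized to the $\mf d$-block, and reconciling these two viewpoints ultimately rests on the compatibility between the $\mc O(X_\nr(L))^{W(L,\mf s)}$-module structures of Lemma \ref{lem:5.5}(b) and Lemma \ref{lem:6.8}.
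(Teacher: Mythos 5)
Your plan follows the same overall strategy as the paper (reduce to the algebraic statement and extend scalars via Proposition \ref{prop:6.12}), and your step 1 is essentially correct, but there are two issues that prevent this from constituting a proof.

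The central gap is the construction of the idempotent $p$ in step 2, which you yourself flag as the main obstacle. The composition ``$HH_n(\tilde\phi_{\mf s}^*)$, then $\natural_{\mf s}^w$-twisted averaging over $W(L,\mf s)$, then a right inverse onto the image'' is not well-defined: after averaging, the result lies in $\bigl(\bigoplus_w \Omega^n_{sm}(X_\unr(L)^w)\otimes\natural_{\mf s}^w\bigr)^{W(L,\mf s)}$, but there is no a priori reason for it to lie in the image of $HH_n(\tilde\phi_{\mf s}^*)$ restricted to the summands with $i \prec \mf d$, so the ``right inverse'' cannot be applied. The paper avoids this by applying Proposition \ref{prop:6.12} on the \emph{codomain} $\bigoplus_w \Omega^n_{sm}(X_\unr(L)^w)$: there the averaging idempotent $|W(L,\mf s)|^{-1}\sum_w w$ is explicit, the $\mf d$-blocks \eqref{eq:6.41} can be shown (by continuous extension from the algebraic decomposition) to form a direct sum decomposition of \eqref{eq:6.32}, and composing the averaging idempotent with the $\mf d$-projection produces a well-defined $p$; one then pulls back through the injection $HH_n(\tilde\phi_{\mf s}^*)$ (Lemma \ref{lem:6.9}). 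Conjugating that codomain-side $p$ by $HH_n(\tilde\phi_{\mf s}^*)$ is in fact how one would rescue a domain-side idempotent, but this requires knowing that $p_{\text{codomain}}$ preserves the image of $HH_n(\tilde\phi_{\mf s}^*)$, which is exactly the decomposition fact you have not established.

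Separately, your step 3 over-reaches. Identifying $C^\infty(X_\unr(M))^{W(M,\mf d)}\otimes_{\mc O(X_\nr(M))^{W(M,\mf d)}} \mc I^{\mr{alg}}$ with $HH_n(\mc F_{\mf d}^t)HH_n(\mc S(G,K)^{\mf d})$ is essentially Theorem \ref{thm:6.1}.(c), which is proved \emph{after} Lemma \ref{lem:6.5} using Theorem \ref{thm:6.4}, and the combination of Propositions \ref{prop:6.3}, \ref{prop:6.6} and Lemma \ref{lem:6.7} that you cite only controls formal completions, not the global module; passing from equality of all formal completions to equality of submodules of a fixed Fr\'echet module is precisely what is still missing. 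Fortunately step 3 is unnecessary: once Proposition \ref{prop:6.12} gives that the smooth intersection equals the $C^\infty(X_\unr(M))^{W(M,\mf d)}$-span of $\mc I^{\mr{alg}}$, the containment follows directly from the facts that $\mc I^{\mr{alg}} \subset HH_n(\mc F_{\mf d}^t)HH_n(\mc S(G,K)^{\mf d})$ (because the algebraic family homomorphism $\mc F_{\mf d}$ factors through $\mc F_{\mf d}^t$ via the natural inclusion $\mc H(G,K)^{\mf s}\to\mc S(G,K)^{\mf s}$) and that the image of $HH_n(\mc F_{\mf d}^t)$ is a $C^\infty(X_\unr(M))^{W(M,\mf d)}$-module. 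That is the argument the paper uses; only containment is claimed, and only containment is needed.
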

\begin{proof}
From Lemma \ref{lem:6.8} we know that $HH_n (\phi_{\mf s}^*)$
becomes $Z(\mc S (G,K)^{\mf d})$-linear if we restrict its domain to the summands 
$\Omega^n (X_\unr (M_i))$ with $i \prec \mf d$. We consider
\begin{equation}\label{eq:6.40}
HH_n (\tilde \phi_{\mf s}^* ) \Big( \bigoplus_{i=1, i \prec \mf d}^{n_{\mf s}} \!\! 
\Omega^n (X_\nr (M_i)) \Big) \bigcap \Big( \bigoplus_{w \in W(L,\mf s)} \!\!\!
\Omega^n (X_\nr (L)^w) \otimes \natural_{\mf s}^w \Big)^{W(L,\mf s)}
\end{equation}
If we take the direct sum over $\mf d \in \Delta_G^{\mf s}$, then by Theorem \ref{thm:5.9} 
we obtain precisely the term on the right hand side. By continuous extension we find that 
the direct sum over $\mf d \in \Delta_G^{\mf s}$ of the spaces
\begin{equation}\label{eq:6.41}
HH_n (\tilde \phi_{\mf s}^* ) \Big( \bigoplus_{i=1, i \prec \mf d}^{n_{\mf s}} \!\! 
\Omega^n_{sm} (X_\unr (M_i)) \Big) \bigcap \Big( \bigoplus_{w \in W(L,\mf s)} \!\!\!
\Omega^n_{sm} (X_\unr (L)^w) \otimes \natural_{\mf s}^w \Big)^{W(L,\mf s)}
\end{equation}
equals \eqref{eq:6.32}. Lemma \ref{lem:6.8} tells us that \eqref{eq:6.32} is a 
$Z(\mc S (G,K)^{\mf d})$-submodule of
\begin{equation}\label{eq:6.42}
\bigoplus\nolimits_{w \in W(L,\mf s)} \Omega^n_{sm} (X_\unr (L)^w),
\end{equation} 
In fact it is direct summand, namely the image of the idempotent 
\[
|W(L,\mf s)|^{-1} \sum\nolimits_{w \in W(L,\mf s)} w.
\] 
Hence there exists a continuous idempotent $C^\infty (X_\unr (M))^{W(M,\mf d)}$-linear
endomorphism $p$ of\eqref{eq:6.42} with image \eqref{eq:6.41}. Although the 
$Z(\mc S (G,K)^{\mf d})$-action on \eqref{eq:6.42} annihilates some of the direct summands, 
that is not a problem because the action of $Z(\mc S (G,K)^{\mf d})$ on the subspace 
\eqref{eq:6.41} is induced by embeddings $X_\unr (L)^w \to X_\unr (M)$ as usual. We note that
\[
p \Big( \bigoplus\nolimits_{w \in W(L,\mf s)} \Omega^n (X_\nr (L)^w) \Big) = \eqref{eq:6.40}. 
\]
Now we can apply Proposition \ref{prop:6.12}, which yields an isomorphism of Fr\'echet \\
$C^\infty (X_\unr (M))^{W(M,\mf d)}$-modules
\begin{equation}\label{eq:6.43}
C^\infty (X_\unr (M))^{W(M,\mf d)} \underset{\mc O (X_\nr (M)^{W(M,\mf d)}}{\otimes}
\eqref{eq:6.40} \; \cong \; \eqref{eq:6.41} .
\end{equation}
With the injectivity and the $C^\infty (X_\unr (M))^{W(M,\mf d)}$-linearity of 
$HH_n (\tilde \phi_{\mf s}^* )$ we find that the $C^\infty (X_\unr (M))^{W(M,\mf d)}$-module
in the statement is generated by
\begin{equation} \label{eq:6.30} 
\bigoplus_{i=1, i \prec \mf d}^{n_{\mf s}} \!\! \Omega^n (X_\nr (M_i)) 
\bigcap HH_n (\tilde \phi_{\mf s}^* )^{-1} \Big( \bigoplus_{w \in W(L,\mf s)} \!\!\!
\Omega^n (X_\nr (L)^w) \otimes \natural_{\mf s}^w \Big)^{W(L,\mf s)} .
\end{equation}
By Theorem \ref{thm:5.9}.b $HH_n (\mc F_{\mf d}) HH_n (\mc H (G,K)^{\mf s})$ contains 
\eqref{eq:6.30}, so\\ $HH_n (\mc F_{\mf d}^t) HH_n (\mc S (G,K)^{\mf s})$ contains \eqref{eq:6.30} 
as well. Hence the $C^\infty (X_\unr (M))^{W(M,\mf d)}$-module
$HH_n (\mc F_{\mf d}^t) HH_n (\mc S (G,K)^{\mf s})$ contains the module in the statement.
\end{proof}

Everything is in place to establish a smooth version of Theorem \ref{thm:5.9}.

\begin{thm}\label{thm:6.4}
$HH_n (\tilde \phi_{\mf s}^*) \circ HH_n (\mc F_{\mf s})$ gives an isomorphism of 
Fr\'echet spaces
\[
HH_n (\mc S (G,K)^{\mf s}) \to \Big( \bigoplus\nolimits_{w \in W(L,\mf s)} 
\Omega^n_{sm} (X_\unr (L)^w) \otimes \natural_{\mf s}^w \Big)^{W(L,\mf s)} .
\]
\end{thm}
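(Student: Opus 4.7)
Set $\psi := HH_n(\tilde\phi_{\mf s}^*) \circ HH_n(\mc F_{\mf s}^t)$ and let $T$ denote the target Fr\'echet space $\big(\bigoplus_{w \in W(L,\mf s)} \Omega^n_{sm}(X_\unr(L)^w) \otimes \natural_{\mf s}^w\big)^{W(L,\mf s)}$. The plan is to decompose everything along $\Delta_G^{\mf s}$, check that $\psi$ maps $HH_n(\mc S(G,K)^{\mf s})$ into $T$, establish injectivity and surjectivity separately, and then conclude by the open mapping theorem for Fr\'echet spaces. The central idempotents $e_{\mf d}$ give $\mc S(G,K)^{\mf s} = \bigoplus_{\mf d \in \Delta_G^{\mf s}} \mc S(G,K)^{\mf d}$, and via the $Z(\mc S(G,K)^{\mf d})$-action described in Lemma \ref{lem:6.8} the target splits as $T = \bigoplus_{\mf d \in \Delta_G^{\mf s}} T_{\mf d}$, where $T_{\mf d}$ collects the summands whose $W(L,\mf s)$-support lies in $\chi_\delta X_\unr(M)$. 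Distinct $\mf d$ give disjoint central-character loci in $X_\nr(L)/W(L,\mf s)$, so the sum is direct. Write $\psi_{\mf d}$ for the restriction of $\psi$ to $HH_n(\mc S(G,K)^{\mf d})$; it is continuous and $Z(\mc S(G,K)^{\mf d})$-linear by Lemma \ref{lem:6.8}.

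\textbf{Image lies in $T$.} By Proposition \ref{prop:6.6}, $HH_n(\mc S(G,K)^{\mf d})$ is a quotient of two closed submodules of a finitely generated Fr\'echet $Z(\mc S(G,K)^{\mf d})$-module, and the same holds for its image under $\psi_{\mf d}$. Form the finitely generated $C^\infty(X_\unr(M))^{W(M,\mf d)}$-module
\[
Q_{\mf d} := \psi_{\mf d}\bigl(HH_n(\mc S(G,K)^{\mf d})\bigr) \big/ \bigl(\psi_{\mf d}(HH_n(\mc S(G,K)^{\mf d})) \cap T_{\mf d}\bigr).
\]
Proposition \ref{prop:6.3} identifies the formal completion of $\psi_{\mf d}(HH_n(\mc S(G,K)^{\mf d}))$ at any $\xi \in X_\unr(M)/W(M,\mf d)$ with the corresponding algebraic object built from $HH_n(\mc H(G,K)^{\mf s})^{\mf d}$ and localized at $\mr{pr}(\xi)$. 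Theorem \ref{thm:5.9}(b,c) says this algebraic image already sits inside the algebraic $W(L,\mf s)$-invariants, whose formal completion at $\mr{pr}(\xi)$ coincides with that of $T_{\mf d}$ at $\xi$. Hence every formal completion of $Q_{\mf d}$ vanishes, and the smooth analogue of \eqref{eq:4.18} used in \cite{KaSo} forces $Q_{\mf d} = 0$. Summing over $\mf d$ gives $\psi(HH_n(\mc S(G,K)^{\mf s})) \subset T$.

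\textbf{Bijectivity and conclusion.} Injectivity of $\psi$ follows from Lemma \ref{lem:6.7}(a) combined with Lemma \ref{lem:6.9}. For surjectivity, Lemma \ref{lem:6.5} shows that for each $\mf d$ the image $HH_n(\mc F_{\mf d}^t) HH_n(\mc S(G,K)^{\mf d})$ contains $\bigoplus_{i \prec \mf d} \Omega^n_{sm}(X_\unr(M_i)) \cap HH_n(\tilde\phi_{\mf s}^*)^{-1}(T)$; applying $HH_n(\tilde\phi_{\mf s}^*)$ yields $\psi_{\mf d}(HH_n(\mc S(G,K)^{\mf d})) \supset T_{\mf d}$, and summing over $\mf d$ gives $T \subset \psi(HH_n(\mc S(G,K)^{\mf s}))$. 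Thus $\psi$ is a continuous bijection of Fr\'echet spaces (the target $T$ is closed in the countable product of the Fr\'echet spaces $\Omega^n_{sm}(X_\unr(L)^w)$), so by the open mapping theorem it is an isomorphism of Fr\'echet spaces. The main obstacle is the inclusion $\psi(HH_n(\mc S(G,K)^{\mf s})) \subset T$, since in the smooth setting one lacks the polynomial-algebra and Zariski-density arguments that drive Theorem \ref{thm:5.9}; the decisive trick is to use Proposition \ref{prop:6.3} to reduce this inclusion, completion by completion, to the already-settled algebraic case.
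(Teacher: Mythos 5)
Your proof is correct in outline and arrives at the same surjectivity/injectivity/open-mapping-theorem endgame, but the route you take for the hardest step — showing that $\psi(HH_n(\mc S(G,K)^{\mf s})) \subset T$ — is genuinely different from the paper's. The paper establishes this containment by a ``soft'' argument on the level of virtual representations: it observes that the maps $HH_n(\phi^*_{w,c}) \circ HH_n(\mc F_{\mf s}^t)$ are induced by the same families $\nu^1_{w,\chi}$ used algebraically, and deduces $W(L,\mf s)$-invariance directly from the relations $\nu^1_{gwg^{-1},g\chi} = \natural_{\mf s}^w(g)\nu^1_{w,\chi}$ in \eqref{eq:5.29} plus the fact that Hochschild homology cannot distinguish equivalent virtual representations. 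Your argument instead runs everything through Proposition \ref{prop:6.3}: reduce the containment, one formal completion at a time, to the already established algebraic Theorem \ref{thm:5.9}, then invoke the smooth Nakayama-type lemma from \cite{KaSo} to kill the quotient $Q_{\mf d}$. This is a sound strategy and is in the spirit of the surrounding proofs (it is, for instance, how the paper proves injectivity in Lemma \ref{lem:6.7}), but it commits you to more bookkeeping than the paper's argument: you need the decomposition $T = \bigoplus_{\mf d} T_{\mf d}$ to hold as a direct sum (which the paper only extracts in the proof of Lemma \ref{lem:6.5} / Theorem \ref{thm:6.1}, so you should point at that rather than assert it geometrically); you need to know that $Q_{\mf d}$ sits in the class of Fr\'echet $C^\infty(X_\unr(M))^{W(M,\mf d)}$-modules to which the smooth analogue of \eqref{eq:4.18} applies (Proposition \ref{prop:6.6} gives this for the numerator, and the target is closed, so this is fine, but it should be said); and you need the compatibility that the formal completion of the smooth target $T_{\mf d}$ at $\xi$ agrees with the formal completion of the algebraic invariants at $\mr{pr}(\xi)$ --- a consequence of the same circle of ideas as Proposition \ref{prop:6.12}, but an unstated lemma. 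What your route buys is uniformity: the containment and the injectivity are both reduced to the algebraic case by the same formal-completion mechanism, and you never have to reason about normalization of the families $\nu^1_{w,\chi}$. What the paper's route buys is brevity and the avoidance of any explicit completion-compatibility claims.
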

\begin{proof}
Evaluation of 
\begin{equation}\label{eq:6.12}
HH_0 (\tilde \phi_{\mf s}^*) HH_0 (\mc F_{\mf s}) HH_0 (\mc H (G,K)^{\mf s})
\end{equation}
at $(w,\chi)$ corresponds to the map on $HH_0$ induced by the virtual representation
$\nu^1_{w,\chi}$ of $\mc H (G,K)^{\mf s}$ from \eqref{eq:5.31}. If we evaluate at a family of 
$\chi$'s simultaneously, that interpretation becomes valid and nontrivial in degrees $n>0$ as 
well. The $W(L,\mf s)$-invariance of \eqref{eq:6.12} (and its versions in degrees $n > 0$) in 
Theorem \ref{thm:5.9}.b is a consequence of:
\begin{itemize}
\item the relations \eqref{eq:5.29} between these virtual representations, 
\item the fact the Hochschild homology does not distinguish equivalent virtual representations 
\cite[Lemma 1.7]{SolTwist}. 
\end{itemize}
Our maps in the smooth setting are basically the same as the earlier maps in an algebraic setting, 
only restricted to tempered representations and allowing for smooth functions. Therefore
\[
HH_n (\tilde \phi_{\mf s}^*) HH_n (\mc F_{\mf s}^t) HH_n (\mc S (G,K)^{\mf s})
\]
also consists of $W(L,\mf s)$-invariant elements. More explicitly, it is contained in
\begin{equation}\label{eq:6.32}
\Big( \bigoplus\nolimits_{w \in W(L,\mf s)} 
\Omega^n_{sm} (X_\unr (L)^w) \otimes \natural_{\mf s}^w \Big)^{W(L,\mf s)} .
\end{equation}
Comparing this with Lemma \ref{lem:6.5}, we deduce that the inclusion in Lemma \ref{lem:6.5}
is in fact an equality. Now \eqref{eq:6.41} entails that
\[
HH_n (\tilde \phi_{\mf s}^*) HH_n (\mc F_{\mf s}^t) HH_n (\mc S (G,K)^{\mf s}) =
\bigoplus_{\mf d \in \Delta_G^{\mf s}} HH_n (\tilde \phi_{\mf s}^*) 
HH_n (\mc F_{\mf s}^t) HH_n (\mc S (G,K)^{\mf s}) 
\]
equals the right hand side of \eqref{eq:6.32}. Thus $HH_n (\tilde \phi_{\mf s}^*) 
HH_n (\mc F_{\mf s}^t)$ is a continuous bijection between the Fr\'echet spaces 
$HH_n (\mc S (G,K)^{\mf s})$ and \eqref{eq:6.32}. By the open mapping theorem, it is 
an isomorphism of Fr\'echet spaces.
\end{proof}

Like in Proposition \ref{prop:5.4}, there is an alternative description of $HH_0 (\mc S (G))$.
We note that $HH_0 (\mc H (G))$ can also be described with tempered representations only
\cite{Mui}, like in the following proposition.

\begin{prop}\label{prop:6.10}
\enuma{
\item For $\mf d = [M,\delta] \in \Delta_G^{\mf s}$, the map $HH_0 (\mc F_{\mf d}^t)$ is an 
isomorphism of $C^\infty (X_\unr (M))^{W(M,\mf d)}$-modules from $HH_0 (\mc S (G,K)^{\mf d})$ 
to the set of elements of $\bigoplus_{i=1, i \prec \mf d}^{n_{\mf s}} C^\infty (X_\unr (M_i))$
that descend to linear functions on $\C \otimes_\Z R (\mc S (G,K)^{\mf d})$.
\item Part (a) yields an isomorphism of $Z(\mc S (G,K)^{\mf s})$-modules
\[
HH_0 (\mc S(G,K)^{\mf d}) \cong (\C \otimes_\Z R^t (G)^{\mf d})^*_\infty .
\]
}
\end{prop}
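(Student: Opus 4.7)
The plan is to derive this as the degree-$0$ specialization of Theorem \ref{thm:6.4}, following the same strategy as in Proposition \ref{prop:5.4} but adapted to the smooth/tempered setting. First I would apply Theorem \ref{thm:6.4} at $n=0$: since $\Omega^0_{sm}(X_\unr (L)^w) = C^\infty (X_\unr (L)^w)$, the composition $HH_0 (\tilde \phi_{\mf s}^*) \circ HH_0 (\mc F_{\mf s}^t)$ identifies $HH_0 (\mc S (G,K)^{\mf s})$ with $\big( \bigoplus_{w \in W(L,\mf s)} C^\infty (X_\unr (L)^w) \otimes \natural_{\mf s}^w \big)^{W(L,\mf s)}$. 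Using the central idempotent $e_{\mf d}$ and Lemmas \ref{lem:6.8} and \ref{lem:6.5}, the direct summand $HH_0 (\mc S (G,K)^{\mf d})$ maps bijectively onto the subspace of $W(L,\mf s)$-invariants supported on those components $X_\unr (L)^w_c$ whose $W(L,\mf s)$-orbit lies in $W(L,\mf s) \chi_\delta X_\unr (M)$, with $Z(\mc S(G,K)^{\mf d})$-action by evaluation.

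Next I would unwind $HH_0 (\mc F^t_{M_i,\eta_i})$. In degree zero the generalized trace composed with the Hochschild--Kostant--Rosenberg map reduces to the usual trace, so $HH_0 (\mc F^t_{M_i,\eta_i})$ sends the class of $f$ to the smooth function $\chi \mapsto \mr{tr} \big( I_{P_i}^G (\eta_i \otimes \chi)(f) \big)$ on $X_\unr (M_i)$. This is exactly the pairing \eqref{eq:3.11} evaluated on the tempered family $\mf F^t (M_i,\eta_i)$. Consequently every element of $HH_0 (\mc S (G,K)^{\mf d})$ produces, via $HH_0 (\mc F_{\mf d}^t)$, a tuple of smooth functions that descends to a linear function on $\C \otimes_\Z R(\mc S (G,K)^{\mf d})$. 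Injectivity is immediate from Lemma \ref{lem:6.7}.a, since $HH_0 (\mc F_{\mf d}^t)$ is the restriction of the injective $HH_0 (\mc F_{\mf s}^t)$ to the direct summand $HH_0 (\mc S (G,K)^{\mf d})$.

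The core of the argument is surjectivity, which I expect to be the main obstacle. The construction of the families $\mf F (M_i,\eta_i)$ in Paragraph \ref{par:HHHG} (and the discussion after \eqref{eq:3.13}) ensures that the tempered members $\{ I_{P_i}^G (\eta_i \otimes \chi_i) : i \prec \mf d, \chi_i \in X_\unr (M_i) \}$ span $\C \otimes_\Z R^t (G)^{\mf d}$ once one quotients by the linear relations coming from the intertwining operators in Lemma \ref{lem:3.2}. A tuple $(f_i)_{i \prec \mf d} \in \bigoplus_{i \prec \mf d} C^\infty (X_\unr (M_i))$ therefore descends to a well-defined linear function on $\C \otimes_\Z R^t (G)^{\mf d}$ iff it respects precisely those relations; translated through $HH_0 (\tilde \phi_{\mf s}^*)$ (which is injective by Lemma \ref{lem:6.9}), these are exactly the $W(L,\mf s)$-equivariance conditions that carve out the target space of Theorem \ref{thm:6.4}, restricted to the $\mf d$-summand. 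Hence any such tuple lies in the image of $HH_0 (\mc F_{\mf d}^t)$ by Theorem \ref{thm:6.4}.

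Finally, part (b) follows by assembling part (a): the set of linear functions on $\C \otimes_\Z R(\mc S(G,K)^{\mf d})$ obtained from smooth tuples $(f_i)_{i \prec \mf d}$ is, by the very definition of $(\C \otimes_\Z R^t (G)^{\mf d})^*_\infty$ in Section \ref{sec:familiesG}, the space of smooth linear functions on $\C \otimes_\Z R^t (G)^{\mf d}$; the Morita equivalence between $\mc S (G)^{\mf s}$ and $\mc S (G,K)^{\mf s}$ noted around \eqref{eq:6.3} identifies $R(\mc S(G,K)^{\mf d})$ with $R^t (G)^{\mf d}$, and the $Z(\mc S (G,K)^{\mf s})$-linearity of the resulting isomorphism is built in through Lemma \ref{lem:6.8}.
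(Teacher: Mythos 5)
The overall shape of your argument matches the paper's, but there is a genuine gap at the crucial step, and your attribution of the relevant linear relations is incorrect.

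You reduce part (a) to the claim that a tuple $(f_i)_{i \prec \mf d}$ descends to a linear functional on $\C \otimes_\Z R^t(G)^{\mf d}$ if and only if its image under $HH_0(\tilde \phi_{\mf s}^*)$ satisfies the $W(L,\mf s)$-equivariance conditions in Theorem \ref{thm:6.4}. You assert this equivalence, citing the ``relations coming from the intertwining operators in Lemma \ref{lem:3.2}.'' But Lemma \ref{lem:3.2} only accounts for relations \emph{within a single} family $\mf F(M',\eta')$ (two members in the same $W(M',M,\eta')$-orbit having equal trace). The relations that produce the $W(L,\mf s)$-equivariance conditions are primarily \emph{inter-family}: decompositions of $I_{P_i}^G(\eta_i\otimes\chi_i)$ into subquotients belonging to different families, encoded through the virtual representations $\nu^1_{w,\chi}$ of \eqref{eq:5.31} and \eqref{eq:5.29}. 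Establishing that these relations are exactly the ones cut out by the $W(L,\mf s)$-invariance is precisely the content of Proposition \ref{prop:5.4}.a combined with Theorem \ref{thm:5.9}.b, which you never invoke.

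The paper's route is: use Proposition \ref{prop:5.4}.a and Theorem \ref{thm:5.9}.b to translate ``descends to a linear functional on $\C\otimes_\Z R(\mc H(G,K)^{\mf s})$'' into the algebraic version of condition \eqref{eq:6.38}; observe that this condition is local (checkable via formal completions); restrict to $R(\mc S(G,K)^{\mf d})$, which keeps only the conditions along $X_\unr(L)^w$; then identify the result with the description in Theorem \ref{thm:6.1}.a. Without a citation to Proposition \ref{prop:5.4}.a (or an independent derivation of that equivalence, which ultimately rests on the graded Hecke algebra results of \cite[Proposition 2.16.a]{SolTwist}), your surjectivity argument is circular: you are asserting the very identification of ``linear relations'' with ``$W(L,\mf s)$-invariance'' that is the hard content of the proposition. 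Your treatments of injectivity, of the trace interpretation of $HH_0(\mc F^t_{M_i,\eta_i})$, and of part (b) are all fine and agree with the paper.
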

\begin{proof}
(a) From Proposition \ref{prop:5.4}.a and Theorem \ref{thm:5.9}.b we see that the stated condition 
on $f \in \bigoplus_{i=1, i \prec \mf d}^{n_{\mf s}} C^\infty (X_\unr (M_i))$, only with 
$R(\mc H (G,K)^{\mf s})$, is equivalent to the condition
\begin{equation}\label{eq:6.38}
HH_n (\phi_{\mf s}^*) f \in \Big( \bigoplus\nolimits_{w \in W(L,\mf s)} 
\mc O (X_\nr (L)^w) \otimes \natural_{\mf s}^w \Big)^{W(L,\mf s)} .
\end{equation}
The condition in the statement is local, so can be checked locally in terms of \eqref{eq:6.38}.
If one restrict to $R (\mc S (G,K)^{\mf d})$, only the parts of the condition of the 
subvarieties $X_\unr (L)^w$ remain. Then we get exactly the description of 
$HH_0 (\mc S (G,K)^{\mf s})$ already established in part (a).\\
(b) This is analogous to Proposition \ref{prop:5.4}.b.
\end{proof}

To establish an analogue of Theorem \ref{thm:4.2} for $\mc S (G,K)^{\mf s}$, we first
study a smooth version of \cite[Theorem 1.2]{SolTwist}.

\begin{prop}\label{prop:6.13}
Let $W$ be a finite group acting by diffeomorphisms on a smooth real manifold $X$.
Let $\natural : W^2 \to \C^\times$ be a 2-cocycle and let $\{ T_w : w \in W\}$ be the
standard basis of $\C [W,\natural]$. Define $\natural^w (w') = T_w T_{w'} T_w^{-1} T_{w'}^{-1}
\in \C^\times T_{w w' w^{-1} w^{'-1}}$.

There is an isomorphism of Fr\'echet $C^\infty (X)^W$-modules
\[
HH_n \big( C^\infty (X) \rtimes \C [W, \natural] \big) \cong 
\Big( \bigoplus\nolimits_{w \in W} \Omega^n_{sm} (X^w) \otimes \natural^w \Big)^W .
\]
\end{prop}
\begin{proof}
Our argument is a modification of \cite[proof of Theorem 1.2]{SolTwist}. Let 
\[
1 \to Z \to \tilde W \to W \to 1
\]
be a finite central extension of $W$, such that the inflation of $\natural$ to $\tilde W$ 
becomes trivial in $H^2 (\tilde W,\C^\times)$. Then there exist a central idempotent 
$p_\natural \in \C [Z]$ and an algebra isomorphism $p_\natural \C [\tilde W] \to \C [W, \natural]$.
It sends $p_\natural \tilde w$ to $c_{\tilde w} w$, where $w \in W$ denotes the image of 
$\tilde w \in \tilde W$ and $c_{\tilde w} \in \C^\times$ is a suitable scalar. Then
\[
HH_n \big( C^\infty (X) \rtimes \C [W, \natural] \big) \cong 
HH_n \big( p_\natural (C^\infty (X) \rtimes \C [\tilde W]) \big) \cong
p_\natural HH_n \big( C^\infty (X) \rtimes \tilde W \big) . 
\]
By \cite{Bry1,Bry2} there is an isomorphism 
\[
HH_n \big( C^\infty (X) \rtimes \tilde W \big) \cong
\Big( \bigoplus\nolimits_{\tilde w \in \tilde W} \Omega^n_{sm} (X^{\tilde w}) \Big)^{\tilde W}. 
\]
With this at hand, the same analysis as in the analogous algebraic setting 
\cite[(1.5)--(1.14)]{SolTwist} proves the required isomorphism of $C^\infty (X)^W$-modules.

An easier version of Proposition \ref{prop:6.6} shows that 
$HH_n \big( C^\infty (X) \rtimes \C [W, \natural] \big)$ is a Fr\'echet space.
Clearly $\bigoplus\nolimits_{w \in W} \Omega^n_{sm} (X^w) \otimes \natural^w$ is a Fr\'echet space,
so its closed subspace $\Big( \bigoplus\nolimits_{w \in W} \Omega^n_{sm} (X^w) \otimes 
\natural^w \Big)^W$ is Fr\'echet as well. As described in \cite[(1.15)]{SolTwist}, the isomorphism
with $HH_n \big( C^\infty (X) \rtimes \C [W, \natural] \big)$ has two ingredients:
\begin{itemize}
\item the Connes--Hochschild--Kostant--Rosenberg theorem, which is a topological isomorphism
$HH_n (C^\infty (X^w)) \cong \Omega^n_{sm}(X^w)$,
\item some simple constructions in $\C [\tilde W]$.
\end{itemize}
This entails that our isomorphism of $C^\infty (X)^W$-modules is a continuous bijection between
Fr\'echet spaces. Then the open mapping theorem guarantees that it is a homeomorphism. 
\end{proof}

For an algebraic family of $\mc O (X_\nr (L)) \rtimes \C [W(L,\mf s), 
\natural_{\mf s}]$-representations $\mf F (M,\eta)$, parametrized by $X_\nr (M)$ and on a
vector space $V_{M,\eta}$, we define
\[
\begin{array}{cccc}
\mc F^t_{M,\eta} : & C^\infty (X_\unr (L)) \rtimes \C [W(L,\mf s), \natural_{\mf s}] & \to &
C^\infty (X_\unr (M)) \otimes \End_\C ( V_{M,\eta} ) \\
& f & \mapsto & [\chi \mapsto \mf F (M,\eta,\chi) (f) ]  
\end{array} .
\]
Recall from Theorem \ref{thm:4.1} that $\zeta^\vee$ restricts to a bijection
\[
\zeta^\vee_t :  R^t (G)^{\mf s} \cong R (\mc S (G,K)^{\mf s}) \longrightarrow
R \big( C^\infty (X_\unr (L)) \rtimes \C [W(L,\mf s),\natural_{\mf s}] \big) .
\]

\begin{thm}\label{thm:6.11}
There exists a unique isomorphism of Fr\'echet spaces
\[
HH_n (\zeta^\vee_t) : HH_n \big( C^\infty (X_\unr (L)) \rtimes \C [W(L,\mf s),
\natural_{\mf s}] \big) \to HH_n (\mc S (G,K)^{\mf s})
\]
such that
\[
HH_n (\mc F^t_{M,\eta}) \circ HH_n (\zeta^\vee_t) = HH_n (\mc F^t_{M,\zeta^\vee (\eta)})
\]
for all algebraic families $\mf F (M,\eta)$ in $\Mod (\mc H (G,K)^{\mf s})$.
\end{thm}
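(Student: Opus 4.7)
The plan is to mirror the proof of Theorem \ref{thm:4.2} step by step, replacing algebraic data by smooth data throughout. The backbone is the pair of Fr\'echet space isomorphisms: the smooth Hochschild--Kostant--Rosenberg description \eqref{eq:6.16} for the twisted crossed product, and Theorem \ref{thm:6.4} for $\mc S(G,K)^{\mf s}$. Both have target
\[
\Big( \bigoplus\nolimits_{w \in W(L,\mf s)} \Omega^n_{sm}(X_\unr(L)^w) \otimes \natural_{\mf s}^w \Big)^{W(L,\mf s)} ,
\]
so composing one with the inverse of the other gives a candidate for $HH_n(\zeta^\vee_t)$.

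First I would fix the scalar ambiguity in \eqref{eq:6.16}, exactly as in the proof of Theorem \ref{thm:4.2}: by \cite[(1.15) and (1.17)]{SolTwist} the non-canonicity is a single scalar per $W(L,\mf s)$-conjugacy class, and I pin these down by requiring that on the summand indexed by $w$ the isomorphism be induced by the tempered algebraic family of virtual representations $\{\nu_{w,\chi} : \chi \in X_\unr(L)^w\}$ of $C^\infty(X_\unr(L)) \rtimes \C[W(L,\mf s),\natural_{\mf s}]$; the smooth analog of \cite[Theorem 1.13]{SolTwist}, obtained by $C^\infty$-localization on $X_\unr(L)/W(L,\mf s)$ rather than algebraic localization on $X_\nr(L)/W(L,\mf s)$, guarantees that this recovers \eqref{eq:6.16}. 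Then I define
\[
HH_n(\zeta^\vee_t) := \big( HH_n(\tilde\phi^*_{\mf s}) \circ HH_n(\mc F^t_{\mf s}) \big)^{-1} \circ \eqref{eq:6.16} ,
\]
which is a Fr\'echet space isomorphism by Theorem \ref{thm:6.4} and the open mapping theorem.

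Next I check the compatibility $HH_n(\mc F^t_{M,\eta}) \circ HH_n(\zeta^\vee_t) = HH_n(\mc F^t_{M,\zeta^\vee(\eta)})$ for each tempered algebraic family. By construction and \eqref{eq:4.11} (whose right-hand sides $\nu_{w,\chi}$ and $\nu^1_{w,\chi}$ are tempered for $\chi \in X_\unr(L)^w$), the identity holds whenever $\mc F$ is one of the virtual families $\{\nu^1_{w,\chi} : \chi \in X_\unr(L)^w_c\}$. By the smooth version of \cite[Lemma 1.9]{SolTwist}, the virtual representations $\nu_{w,v}$ (with appropriate $2$-cocycle condition) form a basis of $\C \otimes_\Z R(C^\infty(X_\unr(L)) \rtimes \C[W(L,\mf s),\natural_{\mf s}])$, so every tempered algebraic family $\mf F(M,\zeta^\vee(\eta),\chi)$ is a linear combination of them with coefficients $c(w,v,\chi) \in \C$. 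By the bijectivity of $\zeta^\vee_t$ in Theorem \ref{thm:A}, the same coefficients express $\mf F(M,\eta,\chi)$ in terms of $\nu^1_{w,v}$, and the relation \eqref{eq:5.31} expresses each $\nu^1_{w,v}$ as a linear combination of the basis families $\mf F(M_i,\eta_i,v_i)$. Running this argument simultaneously for all $\chi \in X_\unr(M)$ gives the desired identity.

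Finally, uniqueness follows because, by Theorem \ref{thm:6.4}, the collection of maps $\{ HH_n(\mc F^t_{M_i,\eta_i}) \}_{i=1}^{n_{\mf s}}$ is jointly injective on $HH_n(\mc S(G,K)^{\mf s})$, so the compatibility condition determines $HH_n(\zeta^\vee_t)$. The main subtle point, as in \cite{SolTwist}, is the scalar calibration in the smooth HKR isomorphism \eqref{eq:6.16}; the rest is a transport of the algebraic argument through the smooth analogs already set up in Paragraph \ref{par:computation}. The topological upgrade to Fr\'echet isomorphism is automatic since both maps in the composition are continuous bijections between Fr\'echet spaces.
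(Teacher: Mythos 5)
Your proposal follows the paper's proof essentially verbatim: the paper's own argument for Theorem \ref{thm:6.11} is simply ``This is analogous to the proof of Theorem \ref{thm:4.2}. For the construction of $HH_n(\zeta_t^\vee)$ we use \eqref{eq:6.16} and Theorem \ref{thm:6.4} instead of \cite[Theorem 1.2]{SolTwist}, Theorem \ref{thm:5.9}.b and Lemma \ref{lem:5.6},'' plus the observation that temperedness of $\mf F(M,\eta,\chi)$ is exactly $\chi \in X_\unr(L)$, which lets the basis and uniqueness arguments transfer. You have unpacked precisely that transport: the scalar calibration via the tempered families $\{\nu_{w,\chi} : \chi \in X_\unr(L)^w\}$, the definition as $(HH_n(\tilde\phi^*_{\mf s}) \circ HH_n(\mc F^t_{\mf s}))^{-1}$ composed with \eqref{eq:6.16}, compatibility by expanding $\mf F(M,\zeta^\vee(\eta),\chi)$ in the $\nu_{w,v}$ basis and transferring through \eqref{eq:4.11} and \eqref{eq:5.31}, and uniqueness from joint injectivity of the $HH_n(\mc F^t_{M_i,\eta_i})$. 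This matches the intended proof in both structure and every key ingredient.
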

\begin{proof}
This is analogous to the proof of Theorem \ref{thm:4.2}. For the construction of 
$HH_n (\zeta_t^\vee)$ we use Proposition \ref{prop:6.13} (with $X_\nr (L), W(L,\mf s), 
\natural_{\mf s}$) and Theorem \ref{thm:6.4} instead of \cite[Theorem 1.2]{SolTwist}, Theorem 
\ref{thm:5.9}.b and Lemma \ref{lem:5.6}. In all the involved algebraic families of representations 
$\mf F (M,\eta)$, temperedness of $\mf F (M,\eta,\chi)$ is equivalent to $\chi \in X_\unr (L)$.
The uniqueness and further properties of the thus defined map $HH_n (\zeta_t^\vee)$ can be shown 
in exactly the same way as for $HH_n (\zeta^\vee)$. 
\end{proof}

Theorems \ref{thm:4.2} and \ref{thm:6.11} relate the Hochschild homology of 
$\mc H (G)^{\mf s}$ and $\mc S (G)^{\mf s}$ to that of the twisted crossed products 
\[
\mc O (X_\nr (L)) \rtimes \C [W(L,\mf s), \natural_{\mf s}] \quad \text{and} \quad 
C^\infty (X_\unr (L)) \rtimes \C [W(L,\mf s), \natural_{\mf s}] .
\]
These theorems can be considered as confirmations of the ABPS conjectures \cite{ABPS2}
on the level of Hochschild homology.

Finally, we take a closer look at $HH_n (\mc S (G,K)^{\mf d})$.
From the Plancherel isomorphism \eqref{eq:6.4} we get 
\begin{equation}\label{eq:6.18}
HH_n (\mc S (G,K)^{\mf s}) = 
\bigoplus\nolimits_{\mf d \in \Delta_G^{\mf s}} HH_n (\mc S (G,K)^{\mf d}).
\end{equation}
By Lemma \ref{lem:6.7}.b we can regard $HH_n (\mc H (G,K)^{\mf s})$ as a 
subspace of $HH_n (\mc S (G,K)^{\mf s})$.

\begin{thm}\label{thm:6.1}
\enuma{
\item The maps $HH_n (\mc F_{\mf d}^t)$ and $HH_n (\phi_{\mf s}^*)$ provide isomorphisms
between the Fr\'echet $Z(\mc S (G,K)^{\mf d})$-modules $HH_n (\mc S (G,K)^{\mf d})$,
\begin{align*}
& \bigoplus_{i=1, i \prec \mf d}^{n_{\mf s}} \Omega^n_{sm} (X_\unr (M_i)) 
\bigcap HH_n (\tilde \phi_{\mf s}^* )^{-1} \Big( \bigoplus_{w \in W(L,\mf s)} 
\Omega^n_{sm} (X_\unr (L)^w) \otimes \natural_{\mf s}^w \Big)^{W(L,\mf s)} \text {and} \\
& HH_n (\tilde \phi_{\mf s}^*) \Big( \bigoplus_{i=1, i \prec \mf d}^{n_{\mf s}} \Omega^n_{sm} 
(X_\unr (M_i)) \Big) \bigcap \Big( \bigoplus_{w \in W(L,\mf s)} 
\Omega^n_{sm} (X_\unr (L)^w) \otimes \natural_{\mf s}^w \Big)^{W(L,\mf s)} 
\end{align*}
\item $HH_n (\mc S (G,K)^{\mf d})$ is the closure of $HH_n (\mc H (G,K)^{\mf s})^{\mf d}$
in $HH_n (\mc S (G,K)^{\mf s})$. 
\item There is a natural isomorphism of Fr\'echet $Z(\mc S (G,K)^{\mf d})$-modules
\[
Z(\mc S (G,K)^{\mf d}) \underset{Z (\mc H (G,K)^{\mf s})}{\otimes} 
HH_n (\mc H (G,K)^{\mf s})^{\mf d} \longrightarrow HH_n (\mc S (G,K)^{\mf d}) .
\]
}
\end{thm}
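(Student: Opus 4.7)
The plan is to derive all three parts by restricting Theorem \ref{thm:6.4} to the direct summand $\mc S(G,K)^{\mf d}$ of $\mc S(G,K)^{\mf s}$, and then matching the outcome with the algebraic results from Paragraph \ref{par:component} through the inclusion $\mc H (G,K)^{\mf s} \hookrightarrow \mc S (G,K)^{\mf s}$.

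For part (a), I would first note that $\mc F^t_{M_i,\eta_i}(e_{\mf d}) = 0$ when $i \not\prec \mf d$, so that $HH_n (\mc F_{\mf d}^t)$ factors through $\bigoplus_{i \prec \mf d} \Omega^n_{sm}(X_\unr (M_i))$. Combining this with the decomposition \eqref{eq:6.18} and Theorem \ref{thm:6.4}, Lemma \ref{lem:6.8} gives the $Z(\mc S (G,K)^{\mf d})$-module structure on the image and singles out, inside the right-hand side of Theorem \ref{thm:6.4}, precisely the summand on which $Z(\mc S (G,K)^{\mf d})$ acts non-trivially. This identifies $HH_n (\mc F_{\mf d}^t)\, HH_n (\mc S(G,K)^{\mf d})$ with the second space in (a). The first space in (a) is its preimage under $HH_n (\tilde\phi_{\mf s}^*)$, and the bijection between the two follows from the injectivity of $HH_n (\tilde\phi_{\mf s}^*)$ (Lemma \ref{lem:6.9}) together with Lemma \ref{lem:6.5} (which, by Theorem \ref{thm:6.4}, is in fact an equality of images).

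For part (b), I would use Lemma \ref{lem:5.5} and Theorem \ref{thm:5.9}.b to identify $HH_n (\mc H (G,K)^{\mf s})^{\mf d}$ with the algebraic analogue of the first space in (a), namely
\[
\bigoplus_{i \prec \mf d}^{n_{\mf s}} \Omega^n (X_\nr (M_i)) \;\cap\; HH_n (\tilde\phi_{\mf s}^*)^{-1} \Bigl( \bigoplus_{w \in W(L,\mf s)} \Omega^n (X_\nr (L)^w) \otimes \natural_{\mf s}^w \Bigr)^{W(L,\mf s)}.
\]
The inclusion $\mc H (G,K)^{\mf s} \hookrightarrow \mc S (G,K)^{\mf s}$ respects the families $\mf F (M_i,\eta_i)$ and their tempered restrictions, so via (a) it corresponds to the inclusion of algebraic forms into smooth forms (restricted from $X_\nr (M_i)$ to $X_\unr (M_i)$). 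Density then follows from the Zariski-density of $X_\unr (M_i)$ in $X_\nr (M_i)$ combined with Proposition \ref{prop:6.12} applied to the $W(L,\mf s)$-averaging idempotent. For part (c), the natural map $z \otimes x \mapsto z \cdot x$ is well-defined using the $Z(\mc S (G,K)^{\mf d})$-module structure on $HH_n (\mc S (G,K)^{\mf d})$, and under the identifications of (a) and (b) it becomes precisely the map
\[
C^\infty (X_\unr (M))^{W(M,\mf d)} \underset{\mc O (X_\nr (M))^{W(M,\mf d)}}{\otimes} p\, \Omega^n (\tilde Y) \longrightarrow p\, \Omega^n_{sm}(Y),
\]
where $\tilde Y = \bigsqcup_{i \prec \mf d} X_\nr (M_i)$, $Y = \bigsqcup_{i \prec \mf d} X_\unr (M_i)$, and $p$ is the idempotent cutting out the intersection condition. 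This is an isomorphism by Proposition \ref{prop:6.12}, exactly as in the proof of Lemma \ref{lem:6.5}.

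\textbf{The main obstacle} is bookkeeping of the $Z(\mc S (G,K)^{\mf d})$-module structures across the identifications, particularly checking that the idempotent $p$ in part (c) is genuinely $C^\infty (X_\unr (M))^{W(M,\mf d)}$-linear and preserves the algebraic forms $\Omega^n (\tilde Y)$. This requires carefully matching the module structure of Lemma \ref{lem:5.5}.b (governed by $t_\delta^+$) with that of Lemma \ref{lem:6.8} (governed by $\chi_\delta$), and verifying that the embeddings $X_\nr (M_i) \hookrightarrow X_\nr (M)$ given by $\chi \mapsto \chi_{\eta_i,\delta}\chi$ satisfy the closedness and density hypotheses of Proposition \ref{prop:6.12}. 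Once this is settled, the rest of the argument is essentially a direct translation from the algebraic to the smooth setting.
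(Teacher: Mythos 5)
Your proposal is correct and follows essentially the same route as the paper: part (a) from Theorem \ref{thm:6.4} together with Lemma \ref{lem:6.8}, part (b) as a closure statement deduced from the algebraic description in Theorem \ref{thm:5.9} and the smooth description in part (a), and part (c) as an instance of Proposition \ref{prop:6.12} applied to a suitable idempotent. The one place where your implementation genuinely diverges is part (c): you propose to place the idempotent $p$ on the source side $\bigsqcup_{i \prec \mf d} X_\nr (M_i)$ of $HH_n (\tilde \phi_{\mf s}^*)$, whereas the paper keeps it on the target side $\bigsqcup_{w} X_\nr (L)^w$, where $p$ has already been constructed in the proof of Lemma \ref{lem:6.5} (after \eqref{eq:6.42}). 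Your variant works, but it requires the extra verification that the paper's $p$ stabilizes the image of $HH_n (\tilde \phi_{\mf s}^*)\big|_{\bigoplus_{i \prec \mf d}}$, so that $HH_n (\tilde \phi_{\mf s}^*)^{-1} p\, HH_n (\tilde \phi_{\mf s}^*)$ is actually an idempotent on $\bigoplus_{i \prec \mf d} \Omega^n_{sm}(X_\unr (M_i))$; this does hold because the image of $p$ is contained in $HH_n (\tilde \phi_{\mf s}^*)\big(\bigoplus_{i \prec \mf d} \Omega^n_{sm}(X_\unr (M_i))\big)$, but it is a step the paper's choice of side avoids entirely. You correctly identify this bookkeeping as the main obstacle; once it is settled your argument is sound.
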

\begin{proof}
(a) The $Z(\mc S (G,K)^{\mf d})$-linearity comes from Lemma \ref{lem:6.8} and the isomorphisms 
follow immediately from Theorem \ref{thm:6.4}. The range of $HH_n (\mc F_{\mf d}^t)$ is a 
Fr\'echet space because by the continuity of $HH_n (\tilde \phi_{\mf s}^*)$ it is closed in 
$\bigoplus_{i=1, i \prec \mf d}^{n_{\mf s}} \Omega^n_{sm} (X_\unr (M_i))$.
The range of $HH_n (\tilde \phi_{\mf s}^* )  HH_n (\mc F_{\mf d}^t)$ is Fr\'echet because
as checked directly after \eqref{eq:6.42} it is a direct summand of $\bigoplus_{w \in W(L,\mf s)} 
\Omega^n_{sm} (X_\unr (L)^w)$.\\
(b) Recall from Lemma \ref{lem:5.5} that 
\begin{equation}\label{eq:6.37}
HH_n (\mc H (G,K)^{\mf s})^{\mf d} = 
HH_n (\mc F_{\mf s})^{-1} HH_n (\mc F_{\mf d}) HH_n (\mc H (G,K)^{\mf s}) .
\end{equation}
From part (a) and Theorem \ref{thm:5.9}.b we see that the closure of 
\[
HH_n (\mc F_{\mf d}) HH_n (\mc H (G,K)^{\mf s}) \quad \text{in} \quad 
\bigoplus\nolimits_{i=1, i \prec \mf d}^{n_{\mf s}} \Omega^n_{sm} (X_\unr (M_i)) 
\]
equals $HH_n (\mc F_{\mf d}^t) HH_n (\mc S (G,K)^{\mf d})$. To this we apply 
$HH_n (\mc F_{\mf s}^t)^{-1}$, which exists and is continuous by part (a). 
We find that the closure of \eqref{eq:6.37} equals
\begin{multline*}
HH_n (\mc F_{\mf s}^t)^{-1} HH_n (\mc F_{\mf d}^t) HH_n (\mc S (G,K)^{\mf d}) = \\
HH_n (\mc F_{\mf s}^t)^{-1} HH_n (\mc F_{\mf s}^t) HH_n (\mc S (G,K)^{\mf d}) =
HH_n (\mc S (G,K)^{\mf d}) . 
\end{multline*}
(c) The map is induced by the algebra homomorphism
\[
\mc H (G,K)^{\mf s} \to \mc S (G,K)^{\mf d} : h \mapsto e_{\mf d} h 
\]
and the $Z(\mc S (G,K)^{\mf d})$-module structure of $HH_n (\mc S (G,K)^{\mf d})$, so it is
natural. Part (b) implies that the $Z(\mc H (G,K)^{\mf s})$-action on 
$HH_n (\mc H (G,K)^{\mf s})^{\mf d}$ factors through
\[
Z (\mc H (G,K)^{\mf s}) \to Z (\mc S (G,K)^{\mf d}) : z \mapsto e_{\mf d} z .
\]
Hence we may just as well consider it as an action of 
\[
e_{\mf d} Z(\mc H (G,K)^{\mf s}) \cong \mc O (X_\nr (M))^{W(M,\mf d)} .
\]
After \eqref{eq:6.42} we constructed a continuous $C^\infty (X_\unr (M))^{W(M,\mf d)}$-linear
idempotent endomorphism $p$ of $\bigoplus_{w \in W(L,\mf s)} \Omega^n_{sm} (X_\unr (L)^w)$,
with image \eqref{eq:5.21}. By part (a) $HH_n (\mc S (G,K)^{\mf d})$ is isomorphic as
$C^\infty (X_\unr (M))^{W(M,\mf d)}$-module to the image of $p$, via the map
$HH_n (\phi_{\mf s}^*) HH_n (\mc F_{\mf d}^t)$. Similarly Theorem \ref{thm:5.9} tells us that
$HH_n (\mc H (G,K)^{\mf s})^{\mf d}$ is isomorphic as $\mc O (X_\nr (M))^{W(M,\mf d)}$-module to 
\[
p \big( \bigoplus\nolimits_{w \in W(L,\mf s)} \Omega^n (X_\nr (L)^w) \big) ,
\]
via $HH_n (\phi_{\mf s}^*) HH_n (\mc F_{\mf d})$. Thus we have translated the statement to:
the natural map
\begin{multline*}
C^\infty (X_\unr (M))^{W(M,\mf d)} \underset{\mc O (X_\nr (M))^{W(M,\mf d)}}{\otimes}
p \big( \bigoplus\nolimits_{w \in W(L,\mf s)} \Omega^n (X_\nr (L)^w) \big) \\
\longrightarrow  p \big( \bigoplus\nolimits_{w \in W(L,\mf s)} \Omega^n_{sm} (X_\unr (L)^w) \big)
\end{multline*}
is an isomorphism of Fr\'echet $C^\infty (X_\unr (M))^{W(M,\mf d)}$-modules. As the action comes
from an embedding $X_\unr (L)^w \to X_\unr (M)$ for each relevant $w$, that claim is an instance 
of Proposition \ref{prop:6.12}.
\end{proof}

Let us record a consequence of Theorem \ref{thm:6.1}:
\begin{equation}\label{eq:6.44}
\bigoplus\nolimits_{\mf d \in \Delta_G^{\mf s}} Z(\mc S (G,K)^{\mf d}) 
\underset{Z (\mc H (G,K)^{\mf s})}{\otimes} HH_n (\mc H (G,K)^{\mf s})^{\mf d} 
\; \cong \; HH_n (\mc S (G,K)^{\mf s}) 
\end{equation}
as Fr\'echet $Z(\mc S (G,K)^{\mf s})$-modules. However, usually
\[
Z(\mc S(G,K)^{\mf s}) \underset{Z(\mc H (G,K)^{\mf s})}{\otimes} HH_n (\mc H (G,K)^{\mf s})
\]
is not isomorphic to $HH_n (\mc S (G,K)^{\mf s})$ as $Z(\mc S (G,K)^{\mf s})$-module. The
reason is that the terms 
\[
Z(\mc S(G,K)^{\mf d'}) \underset{Z(\mc H (G,K)^{\mf s})}{\otimes} 
HH_n (\mc H (G,K)^{\mf s})^{\mf d}
\]
with $\mf d' \neq \mf d$ can be nonzero, but do not occur in \eqref{eq:6.44}.

\section{Cyclic homology}
\label{sec:cyclic}

Recall from \cite[\S 2.1.7]{Lod} that the cyclic homology of a unital algebra $A$ can be 
computed as the total homology of a bicomplex $(\mc B (A), b, B)$. Here
\[
\mc B (A)_{p,q} = A^{\otimes p+1-q} \quad \text{if} \quad  p \geq q \geq 0,
\] 
and $\mc B (A)_{p,q}$ is zero otherwise.
The vertical differential $b$ is the same as in the bar-resolution, so each column of
$\mc B (A)$ computes the Hochschild homology of $A$. The horizontal differential $B$
induces a map $B : HH_n (A) \to HH_{n+1}(A)$. When $A = \mc O (V)$ for a nonsingular complex 
affine variety or $A = C^\infty (V)$ for a smooth manifold $V$, $B$ is the usual exterior
differential $d : \Omega^n (V) \to \Omega^{n+1}(V)$ \cite[\S 2.3.6]{Lod}.

For $A = \mc H (G,K)^{\mf s}$, we know from Theorem \ref{thm:5.9} that there is an isomorphism
\begin{equation}\label{eq:1.1}
HH_n (\mc H (G,K)^{\mf s}) \to \Big( \bigoplus\nolimits_{w \in W(L,\mf s)} \Omega^n (X_\nr (L)^w )
\otimes \natural_{\mf s}^w \Big)^{W(L,\mf s)} ,
\end{equation}
induced by the algebraic families of virtual representations
\[
\big\{ \nu^1_{w,\chi} : \chi \in X_\nr (L)^w_c \big\}  
\qquad w \in W(L,\mf s), c \in \pi_0 (X_\nr (L)^w) .
\]
By \eqref{eq:5.31} each of these families is a linear combination of algebraic families
$\mf F' (M_i,\eta_i)$ obtained from $\mf F (M_i,\eta_i)$ by composition with an algebraic map
from $X_\nr (L)^w_c$ to $X_\nr (M_i)$. In particular \eqref{eq:1.1} is a linear combination of maps
\begin{equation}\label{eq:1.2}
HH_n (\mc F'_{M_i,\eta_i}) : HH_n (\mc H (G,K)^{\mf s}) \to
HH_n \big( \mc O (X_\nr (L)^w_c) \otimes \End_\C \big( I_{P_i}^G (\eta_i)^K \big) \big) .
\end{equation}
By Morita invariance and the Hochschild--Kostant--Rosenberg theorem, the right hand side of
\eqref{eq:1.2} can be identified with
\begin{equation}\label{eq:1.4}
HH_n \big( \mc O (X_\nr (L)^w_c) \big) \cong \Omega^n (X_\nr (L)^w_c) .
\end{equation}
Via these maps, the natural differential $B$ on $HH_* (\mc H (G,K)^{\mf s})$ is transformed into
the exterior differential $d$ on $\Omega^* (X_\nr (L)^w_c)$. All the maps in \eqref{eq:1.2} and
\eqref{eq:1.4} (and between them) can be realized on the level of chain complexes. For 
$HH_n (\mc F'_{M_i,\eta_i})$ that is clear, the Morita equivalence between
\[
\mc O (X_\nr (L)^w_c) \otimes \End_\C \big( I_{P_i}^G (\eta_i)^K \big) 
\quad \text{and} \quad \mc O (X_\nr (L)^w_c)
\]
is implemented by the generalized trace map \cite[\S 1.2]{Lod} and \eqref{eq:1.4} comes from the
map $\pi_n$ in \cite[Lemma 1.3.14]{Lod}. Altogether these furnish a morphism of bicomplexes
\[
\big( \mc B (\mc H (G,K)^{\mf s}), b, B \big) \longrightarrow \bigoplus_{w \in W(L,\mf s), 
c \in \pi_0 (X_\nr (L)^w)} \big( \mc B \Omega^* (X_\nr (L)^w_c), 0,d \big) ,
\]
where $\mc B \Omega^* (V)$ is the bicomplex with $\Omega^{p-q}(V)$ in degree $(p,q)$, provided
that $p \geq q \geq 0$. From Theorem \ref{thm:5.9} we know that its image is actually smaller, 
we can restrict it to a morphism of bicomplexes
\begin{equation}\label{eq:1.5}
\big( \mc B (\mc H (G,K)^{\mf s}), b, B \big) \longrightarrow \Big( 
\big( \bigoplus\nolimits_{w \in W(L,\mf s)} \mc B \Omega^* (X_\nr (L)^w) \otimes \natural_{\mf s}^w 
\big)^{W(L,\mf s)}, 0,d \Big) ,
\end{equation}
Analogous considerations for $\mc S (G,K)^{\mf s}$, now using Theorem \ref{thm:6.4}, lead to a
morphism of bicomplexes
\begin{equation}\label{eq:1.6}
\big( \mc B (\mc S (G,K)^{\mf s}), b, B \big) \longrightarrow \Big( \big(
\bigoplus\nolimits_{w \in W(L,\mf s)} \mc B \Omega^*_{sm} (X_\unr (L)^w) \otimes \natural_{\mf s}^w 
\big)^{W(L,\mf s)}, 0,d \Big) .
\end{equation}
By Theorems \ref{thm:5.9} and \ref{thm:6.4}, the maps \eqref{eq:1.5} and \eqref{eq:1.6} induce
isomorphisms on the Hochschild homology of the involved bicomplexes. It follows from Connes'
periodicity exact sequence that \eqref{eq:1.5} and \eqref{eq:1.6} also induce isomorphisms
on cyclic homology, see \cite[\S 2.5]{Lod}.

\begin{thm}\label{thm:1.1}
There are isomorphisms of vector spaces
\begin{align*}
HC_n (\mc H (G,K)^{\mf s}) \; \cong \; & \Big( \bigoplus\nolimits_{w \in W(L,\mf s)} 
\Omega^n (X_\nr (L)^w) / d \Omega^{n-1} (X_\nr (L)^w) \otimes \natural_{\mf s}^w \Big)^{W(L,\mf s)} 
\oplus \\
& \bigoplus_{m=1}^{\lfloor n/2 \rfloor} \Big( \bigoplus_{w \in W(L,\mf s)} H_{dR}^{n - 2m} 
(X_\nr (L)^w) \otimes \natural_{\mf s}^w \Big)^{W(L,\mf s)}  ,\\
HC_n (\mc S (G,K)^{\mf s}) \; \cong \; & \Big( \bigoplus\nolimits_{w \in W(L,\mf s)} 
\Omega^n_{sm} (X_\unr (L)^w) / d \Omega^{n-1} (X_\unr (L)^w) \otimes \natural_{\mf s}^w 
\Big)^{W(L,\mf s)} \oplus \\
& \bigoplus_{m=1}^{\lfloor n/2 \rfloor} \Big( \bigoplus_{w \in W(L,\mf s)} H_{dR}^{n - 2m} 
(X_\unr (L)^w) \otimes \natural_{\mf s}^w \Big)^{W(L,\mf s)} .
\end{align*}
\end{thm}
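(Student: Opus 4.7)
The plan is to reduce the computation to classical cyclic homology computations for commutative algebras, by exploiting the morphisms of bicomplexes \eqref{eq:1.5} and \eqref{eq:1.6}.

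First, I would note that the total complex of the cyclic bicomplex $(\mc B(A), b, B)$ computes $HC_*(A)$, so any morphism of cyclic bicomplexes induces a map on cyclic homology. By Theorems \ref{thm:5.9}.b and \ref{thm:6.4}, the morphisms \eqref{eq:1.5} and \eqref{eq:1.6} induce isomorphisms on the column homology, i.e.\ on Hochschild homology. The standard argument via Connes' periodicity (SBI) exact sequence together with the five-lemma (cf.\ \cite[\S 2.2]{Lod}) then shows that any morphism of cyclic bicomplexes that induces isomorphisms on $HH_*$ also induces isomorphisms on $HC_*$. Thus
\[
HC_n (\mc H (G,K)^{\mf s}) \cong HC_n \Big( \big( \bigoplus_{w \in W(L,\mf s)} \mc B \Omega^* (X_\nr (L)^w) \otimes \natural_{\mf s}^w \big)^{W(L,\mf s)}, 0, d \Big),
\]
and analogously for $\mc S (G,K)^{\mf s}$ with $X_\unr (L)^w$ and $\Omega^*_{sm}$.

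Next I would compute the right-hand side. Since $W(L,\mf s)$ is finite and we work over $\C$, the functor of $W(L,\mf s)$-invariants is exact and commutes with taking homology; likewise the finite direct sum commutes with homology. Hence the cyclic homology of the target bicomplex is the $W(L,\mf s)$-invariants of the direct sum (over $w$) of the cyclic homologies of $(\mc B \Omega^*(X_\nr(L)^w), 0, d)$ (tensored by $\natural_{\mf s}^w$). Each of these is a bicomplex in which the vertical differential is already zero and the horizontal one is the exterior derivative. By the classical Connes--Loday computation \cite[\S 2.3, \S 3.4]{Lod} applied to the smooth affine variety $X_\nr(L)^w$ (resp.\ the smooth manifold $X_\unr(L)^w$), one obtains
\[
HC_n \big( \mc B \Omega^*(V), 0, d \big) \;\cong\; \Omega^n(V)/d\Omega^{n-1}(V) \;\oplus\; \bigoplus_{m=1}^{\lfloor n/2 \rfloor} H_{dR}^{n-2m}(V).
\]
Assembling these summands, taking $W(L,\mf s)$-invariants (with the twist by $\natural_{\mf s}^w$) and using that the $d$-action commutes with the $W(L,\mf s)$-action yields exactly the two formulas claimed.

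The main obstacle I anticipate is the Fr\'echet version: for $\mc S(G,K)^{\mf s}$ one has to check that the SBI comparison argument and the classical Connes computation work in the bornological/Fr\'echet setting with smooth differential forms. Here one would rely on the fact that \eqref{eq:1.6} is a morphism of bicomplexes of Fr\'echet spaces inducing topological isomorphisms on $HH_*$ (by Theorem \ref{thm:6.4} and the open mapping theorem), and that the smooth Poincar\'e lemma gives the expected $HC_*$ of $C^\infty(V)$ in terms of smooth de Rham cohomology (as in \cite[\S 3.4]{Lod}). The finite-group invariants cause no trouble because averaging over $W(L,\mf s)$ is continuous and exact; the twists by the characters $\natural_{\mf s}^w$ are handled summand-by-summand just as in \cite[\S 1]{SolTwist}. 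Everything else is essentially bookkeeping.
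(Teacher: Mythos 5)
Your proposal is correct and follows essentially the same route as the paper: both pass from the bicomplex morphisms \eqref{eq:1.5} and \eqref{eq:1.6} to isomorphisms on $HC_*$ via Connes' periodicity (SBI) exact sequence, then identify the cyclic homology of $(\mc B\,\Omega^*(V),0,d)$ by the standard formula \eqref{eq:1.7} and take $W(L,\mf s)$-invariants using exactness for finite-group actions over $\C$. Your extra remarks on the Fr\'echet/bornological side correctly articulate what the paper leaves implicit in the phrase ``the argument for $HC_n(\mc S(G,K)^{\mf s})$ is completely analogous.''
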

\begin{proof}
As explained above, it remains to identify the cyclic homology of the right hand sides of 
\eqref{eq:1.5} and \eqref{eq:1.6}. By design
\begin{equation}\label{eq:1.7} 
HC_n \big( \mc B \Omega^* (V),0,d \big) = \Omega^n (V) / d \Omega^{n-1}(V) \oplus 
\bigoplus\nolimits_{m=1}^{\lfloor n/2 \rfloor} H_{dR}^{n - 2m} (V) ,
\end{equation}
see \cite[\S 2.3]{Lod}. In our case $V = \bigsqcup_{w \in W(L,\mf s)} X_\nr (L)^w$ and the group 
$W(L,\mf s)$ acts on $\Omega^* (V)$, namely by the natural action on the underlying
space tensored with the characters $\natural_{\mf s}^w$. Taking invariants for an action of
a finite group commutes with homology, so we may just take the $W(L,\mf s)$-invariants in
\eqref{eq:1.7}. That yields $HC_n (\mc H (G,K)^{\mf s})$, and the argument for 
$HC_n (\mc S (G,K)^{\mf s})$ is completely analogous.
\end{proof}

From Theorem \ref{thm:1.1} we see that $HC_n (\mc H (G,K)^{\mf s})$ and $HC_n (\mc S (G,K)^{\mf s})$
stabilize: for $n > \dim_\C (X_\nr (L))$ they depend only on the parity of $n$. By 
\cite[Proposition 5.1.9]{Lod}, the periodic cyclic homology is the limit term:
\begin{align}
\label{eq:1.8} & HP_n (\mc H (G,K)^{\mf s}) \cong \bigoplus\nolimits_{m \in \Z} 
\Big( \bigoplus\nolimits_{w \in W(L,\mf s)} H_{dR}^{n + 2m} (X_\nr (L)^w) \otimes
 \natural_{\mf s}^w \Big)^{W(L,\mf s)} , \\
\label{eq:1.9} & HP_n (\mc S (G,K)^{\mf s}) \cong \bigoplus\nolimits_{m \in \Z} 
\Big( \bigoplus\nolimits_{w \in W(L,\mf s)} H_{dR}^{n + 2m} (X_\unr (L)^w) \otimes 
\natural_{\mf s}^w \Big)^{W(L,\mf s)} .
\end{align}
We point out that the right hand sides of, respectively, \eqref{eq:1.8} and \eqref{eq:1.9} 
are naturally isomorphic with the periodic cyclic homology groups of, respectively, 
\begin{equation}\label{eq:1.10}
\mc O (X_\nr (L)) \rtimes \C [W(L,\mf s),\natural_{\mf s}] \quad \text{and} \quad
C^\infty (X_\unr (L)) \rtimes \C [W(L,\mf s),\natural_{\mf s}] .
\end{equation}
That can be derived with similar arguments. Hence \eqref{eq:1.8} and \eqref{eq:1.9} are the
versions of Theorems \ref{thm:4.2} and \ref{thm:6.11} for periodic cyclic homology.

We note also that \eqref{eq:1.9} relates to the conjectural description of the topological 
K-theory of $\mc S (G,K)^{\mf s}$ in \cite[Conjecture 5]{ABPS2}. In \cite[\S 4]{ABPS2} things 
are formulated for the $C^*$-completion of $\mc S (G)^{\mf s}$, which has the same topological 
K-theory as $\mc S (G,K)^{\mf s}$ by \cite[(3.2)]{SolHPadic}. Since the Chern character
\[
K_* (\mc S (G,K)^{\mf s}) \otimes_\Z \C \to HP_* (\mc S (G,K)^{\mf s})
\]
is an isomorphism \cite[Theorem 3.2]{SolHPadic}, \eqref{eq:1.9} provides a description
of $K_* (\mc S (G,K)^{\mf s})$ modulo torsion. With the comments around \eqref{eq:1.10} 
we can formulate that as an isomorphism
\begin{equation}\label{eq:1.11}
K_* (\mc S (G,K)^{\mf s}) \otimes_\Z \C \cong 
HP_* \big( C^\infty (X_\unr (L)) \rtimes \C [W(L,\mf s),\natural_{\mf s}] \big) .
\end{equation}
Via the equivariant Chern character \cite{BaCo} for an action of a central extension of 
$W(L,\mf s)$ on $X_\unr (L)$, the right hand side of \eqref{eq:1.11} is isomorphic with 
\[
K_* \big( C^\infty (X_\unr (L)) \rtimes \C [W(L,\mf s),\natural_{\mf s}] \big) \otimes_\Z \C  
= K^*_{W(L,\mf s), \natural_{\mf s}} (X_\unr (L)) \otimes_\Z \C ,
\]
where the latter is the notation from \cite[\S 4.1]{ABPS2}. We have proved 
\cite[Conjecture 5]{ABPS2} modulo torsion:

\begin{thm}\label{thm:1.5}
There is an isomorphism of vector spaces
\[
K_* (\mc S (G,K)^{\mf s}) \otimes_\Z \C \cong 
K^*_{W(L,\mf s), \natural_{\mf s}} (X_\unr (L)) \otimes_\Z \C .
\]
\end{thm}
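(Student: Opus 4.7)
The plan is to assemble Theorem \ref{thm:1.5} as a chain of four isomorphisms, each of which has already been set up in the preceding discussion. The strategy is to travel from the K-theory side to the homological side through the Chern character, cross over to the twisted crossed product using the periodic cyclic homology computation, and then come back to K-theory via the equivariant Chern character.

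First, I would invoke the Chern character for the Fr\'echet algebra $\mc S (G,K)^{\mf s}$. By \cite[Theorem 3.2]{SolHPadic}, the Chern character
\[
\mr{ch} : K_* (\mc S (G,K)^{\mf s}) \otimes_\Z \C \longrightarrow HP_* (\mc S (G,K)^{\mf s})
\]
is an isomorphism. This reduces the problem to a computation of periodic cyclic homology. Then I would use \eqref{eq:1.9}, which expresses $HP_n (\mc S (G,K)^{\mf s})$ as
\[
\bigoplus\nolimits_{m \in \Z} \Big( \bigoplus\nolimits_{w \in W(L,\mf s)} H_{dR}^{n + 2m} (X_\unr (L)^w) \otimes \natural_{\mf s}^w \Big)^{W(L,\mf s)} .
\]

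Second, I would identify this right hand side with the periodic cyclic homology of the twisted crossed product algebra. The argument given for \eqref{eq:1.9} is formally parallel to the standard computation of periodic cyclic homology for $C^\infty (X_\unr (L)) \rtimes \C [W(L,\mf s), \natural_{\mf s}]$, as pointed out in the remarks following \eqref{eq:1.10}. So the plan is to record the resulting canonical isomorphism
\[
HP_* (\mc S (G,K)^{\mf s}) \cong HP_* \big( C^\infty (X_\unr (L)) \rtimes \C [W(L,\mf s),\natural_{\mf s}] \big) ,
\]
which is a periodic-cyclic version of Theorem \ref{thm:6.11}.

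Third, I would apply the equivariant Chern character of Baum--Connes \cite{BaCo} to the smooth crossed product, which gives an isomorphism
\[
K_* \big( C^\infty (X_\unr (L)) \rtimes \C [W(L,\mf s),\natural_{\mf s}] \big) \otimes_\Z \C \;\isom\; HP_* \big( C^\infty (X_\unr (L)) \rtimes \C [W(L,\mf s),\natural_{\mf s}] \big) ,
\]
and then invoke the definition from \cite[\S 4.1]{ABPS2} identifying the left hand side with $K^*_{W(L,\mf s),\natural_{\mf s}} (X_\unr (L)) \otimes_\Z \C$. Composing all four isomorphisms yields the claim.

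The one genuine point to be careful about is the second step: one must verify that the proof of \eqref{eq:1.9} really factors through an identification of bicomplexes matching the one produced by \eqref{eq:1.6} on the Schwartz side with its analogue for the twisted crossed product. Concretely, one needs that Theorem \ref{thm:6.11}, which is only stated on Hochschild homology, lifts (after applying Connes' $SBI$ sequence) to an isomorphism on $HP_*$; this should follow because $HH_n(\zeta^\vee_t)$ is induced by the morphisms of bicomplexes underlying \eqref{eq:1.5} and \eqref{eq:1.6}, and hence intertwines the $B$-differentials on both sides. Everything else is a direct concatenation of results already cited in the excerpt.
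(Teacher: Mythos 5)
Your proof is correct and follows essentially the same route as the paper: Chern character for the Fr\'echet algebra $\mc S(G,K)^{\mf s}$ (via \cite[Theorem 3.2]{SolHPadic}), the periodic cyclic homology computation \eqref{eq:1.9}, the identification of the right hand side with $HP_*$ of $C^\infty(X_\unr(L)) \rtimes \C[W(L,\mf s),\natural_{\mf s}]$ (the remarks around \eqref{eq:1.10}), and finally the Baum--Connes equivariant Chern character together with the definition in \cite[\S 4.1]{ABPS2}. The caveat you raise about the second step is handled in the paper simply by observing that the right-hand sides of \eqref{eq:1.8} and \eqref{eq:1.9} are computed for the twisted crossed products ``with similar arguments,'' so your more careful formulation via the compatibility of the $B$-differentials in \eqref{eq:1.5}--\eqref{eq:1.6} is a harmless refinement rather than a divergence.
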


Since $X_\unr (L)$ is a $W(L,\mf s)$-equivariant deformation retract of $X_\nr (L)$,
\[
H_{dR}^n (X_\nr (L)^w) = H_{dR}^n (X_\unr (L)^w) . 
\]
Combining that with \eqref{eq:1.8} and \eqref{eq:1.9}, we recover 
\cite[Theorem 3.3]{SolHPadic}:

\begin{cor}\label{cor:1.3}
The inclusion $\mc H (G,K)^{\mf s} \to \mc S (G,K)^{\mf s}$ induces an isomorphism on
periodic cyclic homology.
\end{cor}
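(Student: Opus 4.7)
The plan is to build a commutative square of bicomplex morphisms and then pass to periodic cyclic homology, where the comparison reduces to the pullback of de Rham cohomology along the inclusion $X_\unr(L)^w \hookrightarrow X_\nr(L)^w$.

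First I would show that the inclusion $\iota : \mc H (G,K)^{\mf s} \hookrightarrow \mc S (G,K)^{\mf s}$ is compatible with the bicomplex morphisms \eqref{eq:1.5} and \eqref{eq:1.6}. The key point is that both morphisms are constructed from the same algebraic families $\mf F (M_i,\eta_i)$: on $\mc H (G,K)^{\mf s}$ these are evaluated as algebraic endomorphism-valued functions on $X_\nr (M_i)$, while on $\mc S (G,K)^{\mf s}$ their restrictions to $X_\unr (M_i)$ give smooth endomorphism-valued functions. Combined with the restriction of algebraic to smooth differential forms along $X_\unr (L)^w \hookrightarrow X_\nr (L)^w$, this produces a commutative diagram of bicomplexes
\[
\begin{array}{ccc}
\big( \mc B (\mc H (G,K)^{\mf s}), b, B \big) & \longrightarrow &
\Big( \big( \bigoplus_{w} \mc B \Omega^* (X_\nr (L)^w) \otimes \natural_{\mf s}^w \big)^{W(L,\mf s)}, 0,d \Big) \\
\downarrow \iota_* & & \downarrow \\
\big( \mc B (\mc S (G,K)^{\mf s}), b, B \big) & \longrightarrow &
\Big( \big( \bigoplus_{w} \mc B \Omega^*_{sm} (X_\unr (L)^w) \otimes \natural_{\mf s}^w \big)^{W(L,\mf s)}, 0,d \Big) .
\end{array}
\]
The horizontal arrows induce isomorphisms on Hochschild homology by Theorems \ref{thm:5.9}.b and \ref{thm:6.4}, hence also on cyclic and periodic cyclic homology via Connes' periodicity exact sequence \cite[\S 2.5]{Lod}.

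Next I would identify the right vertical map on periodic cyclic homology. Passing to $HP$ in the bicomplexes $\mc B \Omega^*(V)$ picks out the de Rham cohomology, as recorded in \eqref{eq:1.8}--\eqref{eq:1.9}. The restriction of forms from $X_\nr (L)^w$ to $X_\unr (L)^w$ therefore becomes, on $HP$, the pullback
\[
\bigoplus_{m} \Big( \bigoplus_{w} H_{dR}^{n+2m}(X_\nr (L)^w) \otimes \natural_{\mf s}^w \Big)^{W(L,\mf s)}
\longrightarrow
\bigoplus_{m} \Big( \bigoplus_{w} H_{dR}^{n+2m}(X_\unr (L)^w) \otimes \natural_{\mf s}^w \Big)^{W(L,\mf s)} .
\]
Via Grothendieck's algebraic-de Rham comparison, $H_{dR}^{*}(X_\nr (L)^w)$ (with algebraic forms) agrees with the topological cohomology of the complex manifold $X_\nr (L)^w$, and similarly with smooth forms on the real manifold $X_\unr (L)^w$. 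The polar decomposition $X_\nr (L) \cong X_\unr (L) \times X_\nr^+ (L)$ is $W(L,\mf s)$-equivariant, so each fixed-point set factors as $X_\nr (L)^w \cong X_\unr (L)^w \times X_\nr^+ (L)^w$ with $X_\nr^+ (L)^w$ contractible. Thus $X_\unr (L)^w$ is a deformation retract of $X_\nr (L)^w$, the pullback on cohomology is an isomorphism, and taking $W(L,\mf s)$-invariants preserves this.

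The main obstacle is the first step: verifying that the bicomplex morphisms \eqref{eq:1.5} and \eqref{eq:1.6}, which were defined via Morita equivalence and the Hochschild--Kostant--Rosenberg map, fit into a strictly commutative square with $\iota_*$ at the level of chain complexes (not merely up to homotopy). Since all the intermediate steps (generalized trace, the map $\pi_n$ of \cite[Lemma 1.3.14]{Lod}, and evaluation via the families $\mf F (M_i,\eta_i)$) are functorial in the algebra, the square commutes on the nose; the only nontrivial verification is that the algebraic family $\mf F (M_i,\eta_i)$ and its tempered restriction $\mf F^t (M_i,\eta_i)$ yield compatible evaluation maps, which is built into the construction \eqref{eq:3.9}. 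Once this compatibility is granted, the three/five lemma on cyclic bicomplexes combined with the deformation retract argument delivers the corollary.
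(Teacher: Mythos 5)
Your proof is correct and follows essentially the same approach as the paper: both rest on the identifications \eqref{eq:1.8}--\eqref{eq:1.9} coming from the bicomplex morphisms \eqref{eq:1.5} and \eqref{eq:1.6}, together with the $W(L,\mf s)$-equivariant deformation retraction of $X_\nr(L)$ onto $X_\unr(L)$. You spell out the commutativity of the square of bicomplex morphisms with $\iota_*$, which the paper leaves implicit (it simply combines \eqref{eq:1.8}, \eqref{eq:1.9} and the deformation retract to ``recover'' the earlier result \cite[Theorem 3.3]{SolHPadic}); this added care is correct and makes the argument more self-contained.
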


With elementary Lie theory one sees that $X_\nr (L)^w$ is a finite union of cosets 
$X_\nr (L)^w_c$ of the complex torus $X_\nr (L)^{w,\circ}$. Since $X_\nr (L)^w$ is 
a commutative Lie group. its tangent spaces at any two points are canonically isomorphic.

\begin{lem}\label{lem:1.4}
$HP_n (\mc H (G,K)^{\mf s})$ can be represented by the elements of 
\[
\bigoplus\nolimits_{m \in \Z} HH_{n + 2m}(\mc H (G,K)^{\mf s})
\]
that are locally constant (as differential forms). The same holds for $\mc S (G,K)^{\mf s}$.
\end{lem}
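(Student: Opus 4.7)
The plan is to reduce the statement to the well-known fact that every de Rham class on a complex (resp.\ real) torus has a translation-invariant representative. Under the identifications of Theorem \ref{thm:5.9} and \eqref{eq:1.8}, $HP_n(\mc H(G,K)^{\mf s})$ is computed as the homology of the $\Z/2$-graded complex $\bigoplus_m HH_{n+2m}(\mc H(G,K)^{\mf s})$ with differential $B = d$, via the morphism of bicomplexes \eqref{eq:1.5}. So a class in $HP_n(\mc H(G,K)^{\mf s})$ is represented by a $d$-closed tuple $(\omega_m)$ with $\omega_m \in \bigl( \bigoplus_w \Omega^{n+2m}(X_\nr(L)^w) \otimes \natural_{\mf s}^w \bigr)^{W(L,\mf s)}$, and I need to show such a tuple may be taken locally constant.

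First I would recall that each $X_\nr(L)^w$ is a finite disjoint union of cosets of the complex algebraic torus $X_\nr(L)^{w,\circ}$. For a complex torus $T = \mr{Spec}(\C[X^*(T)])$, the algebraic de Rham complex $\Omega^*(T)$ is freely generated over $\mc O(T)$ by the translation-invariant $1$-forms; all such invariant forms are $d$-closed, and their classes realize the canonical isomorphism $H^*_{dR}(T) \cong \Lambda^*(X^*(T) \otimes_\Z \C)$. Consequently every de Rham class on $X_\nr(L)^w$ admits a unique translation-invariant representative, which is locally constant in the sense explained just before the lemma (constant coefficients in the canonical tangent-space framing). The same applies on each coset.

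Next I would check that the subspace of locally constant forms in $\bigoplus_w \Omega^*(X_\nr(L)^w) \otimes \natural_{\mf s}^w$ is $W(L,\mf s)$-stable. By the structure of $W(L,\mf s)$ (analogous to \eqref{eq:3.10}), this action is generated by translations in $X_\nr(L)$ (which act trivially on translation-invariant forms) and by elements of $W_{\mf s}$ acting as algebraic group automorphisms of the torus (which permute translation-invariant forms); averaging then produces a $W(L,\mf s)$-invariant locally constant representative of a given de Rham class. Via Theorem \ref{thm:5.9}, these representatives correspond to locally constant elements of $HH_{n+2m}(\mc H(G,K)^{\mf s})$, and, being automatically $d$-closed, the resulting tuple represents the prescribed class in $HP_n(\mc H(G,K)^{\mf s})$.

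For $\mc S(G,K)^{\mf s}$ I would run the identical argument with Theorem \ref{thm:6.4} and \eqref{eq:1.9} in place of Theorem \ref{thm:5.9} and \eqref{eq:1.8}, using that the smooth de Rham cohomology of the compact real torus $X_\unr(L)^{w,\circ}$ is likewise computed by its (finite-dimensional) space of translation-invariant forms. The one step that requires care in both cases is compatibility of local constancy with the $W(L,\mf s)$-averaging, which is the content of the previous paragraph and reduces to the observation that the $W(L,\mf s)$-action on $X_\nr(L)$ is built from maps respecting the translation-invariant structure of the underlying tori.
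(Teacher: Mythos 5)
Your proposal is correct and follows essentially the same route as the paper: both arguments rest on the fact that de Rham cohomology of a (complex or real) torus is realized by translation-invariant forms, then check that the subspace of locally constant forms in each $\Omega^*(X_\nr(L)^w_c)$ is stable under the relevant finite group action, and conclude via Theorem \ref{thm:5.9}, Theorem \ref{thm:6.4} and \eqref{eq:1.8}--\eqref{eq:1.9}. The only minor difference is cosmetic: the paper proves the torus statement by decomposing into one-dimensional factors and invoking the K\"unneth formula, whereas you cite it directly as $H^*_{dR}(T) \cong \Lambda^*(X^*(T) \otimes_\Z \C)$, and you give a slightly more explicit justification of the $W(L,\mf s)$-stability (decomposing the action into translations and torus automorphisms) than the paper, which merely asserts it.
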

\begin{proof}
First we consider a simpler setting, namely the graded algebra of differential forms 
$\Omega^* (T)$ on a complex algebraic torus $T$. Write $T$ as a direct product of onedimensional 
algebraic subtori $T_i$, then
\begin{equation}\label{eq:1.12}
\Omega^*  (T) = \bigotimes\nolimits_i \Omega^* (T_i) .
\end{equation}
For $T_i$ everything is explicit:
\[
H_{dR}^0 (T_i) = \C ,\quad H_{dR}^1 (T_i) = \C \textup{d}z ,
\]
and this equals the subspace of constant elements in
\[
\Omega^* (T_i) = \C[z,z^{-1}] \oplus \C [z,z^{-1}] \textup{d}z .
\]
By the K\"unneth formula
\begin{equation}\label{eq:1.13}
H_{dR}^* (T) = \bigotimes\nolimits_i H_{dR}^* (T_i) ,
\end{equation}
and in combination with \eqref{eq:1.12} we find that this is precisely the space of constant
differential forms in $\Omega^* (T)$. 

The above argument uses the structure of $T$ as algebraic variety, not as group, so it applies
to all the varieties $X_\nr (L)^w_c$. Further, the action of $Z_{W(L,\mf s)} (w,c)$ on 
$\Omega^* (X_\nr (L)^w_c)$ preserve the subspace of constant differential forms. Hence 
\[
\Big( \bigoplus\nolimits_{w \in W(L,\mf s)} 
H_{dR}^n (X_\nr (L)^w) \otimes \natural_{\mf s}^w \Big)^{W(L,\mf s)}
\]
can be represented by the elements of 
\[
\Big( \bigoplus\nolimits_{w \in W(L,\mf s)} 
\Omega^n (X_\nr (L)^w) \otimes \natural_{\mf s}^w \Big)^{W(L,\mf s)}
\]
that are locally constant (keeping the canonical identifications of different tangent spaces in mind).
Combining that with Theorem \ref{thm:5.9} and \eqref{eq:1.8}, we get the lemma for $\mc H (G,K)^{\mf s}$.
The above arguments involving $T$ also work for smooth differential forms on compact real tori.
With that, Theorem \ref{thm:6.4} and \eqref{eq:1.9}, we establish the lemma for $\mc S (G,K)^{\mf s}$. 
\end{proof}

Assume now that the 2-cocycle $\natural_{\mf s}$ is trivial, like in most examples.
Then \eqref{eq:1.8}--\eqref{eq:1.9} and Corollary \ref{cor:1.3} simplify to 
\begin{equation}\label{eq:2.7}
HP_n (\mc H (G,K)^{\mf s}) \cong HP_n (\mc S (G,K)^{\mf s}) \cong \bigoplus_{m \in \Z} 
\Big( \bigoplus_{w \in W(L,\mf s) \hspace{-3mm}} \hspace{-2mm} 
H_{dR}^{n + 2m} (X_\nr (L)^w) \Big)^{W(L,\mf s)} .
\end{equation}
For any nonsingular complex affine variety $V$, there is a natural isomorphism 
\[
H_{dR}^n (V) \cong H_{\mr{sing}}^n (V). 
\]
Here $H^*_{\mr{sing}}$ denotes singular cohomology with complex coefficients, and it is applied to $V$ 
with the analytic topology. If a finite group $\Gamma$ acts algebraically on $V$
and $V'$ is a $\Gamma$-equivariant deformation retract of $V$, then naturally
\[
H^n_{\mr{sing}} (V)^\Gamma \cong H^n_{\mr{sing}} (V / \Gamma) \cong H^n_{\mr{sing}} (V' / \Gamma) .
\]
Hence \eqref{eq:2.7} can be expressed as
\begin{equation}\label{eq:2.6}
\begin{aligned}
HP_n (\mc H (G,K)^{\mf s}) & \cong HP_n (\mc S (G,K)^{\mf s}) \\ 
& \cong \bigoplus\nolimits_{m \in \Z} 
H^{n+2m}_{\mr{sing}} \Big( \bigsqcup\nolimits_{w \in W(L,\mf s)} X_\unr (L)^w / W(L,\mf s) \Big) .
\end{aligned}
\end{equation}

\section{Examples}
\label{sec:ex}

The smallest nontrivial example is $G = SL_2 (F)$, where $F$ is any non-archimedean local field. 
The Hochschild homology for this group is known entirely from \cite{SolHomAHA}, here we work 
it out in our notations.

For every supercuspidal $G$-representation $V$ with $V^K \neq 0$, the corresponding direct
summand of $\mc H (G,K)$ is Morita equivalent with $\C$. This contributes a factor $\C$ 
to $HH_0 (\mc H (G,K))$, and nothing to $HH_n (\mc H (G,K))$ with $n > 0$. The same applies 
to $HH_* (\mc S (G,K))$.

Let $T \cong F^\times$ be the diagonal subgroup of $G$ and write $W = W(G,T) = \{w,e\}$.
Consider a $K \cap T$-invariant
character $\chi_0$ of the maximal compact subgroup $\mf o_F^\times$ of $T$. Let 
$\chi_1 : T \to \C^\times$ be an extension of $\chi_0$ and let Rep$(G)^{\mf s}$ be the Bernstein 
block associated to $(T,\chi_1)$. There are three different cases:
\begin{itemize}
\item{ord$(\chi_0 ) > 2$.} Here $W(T,\mf s)$ is trivial and $\mc H (G,K)^{\mf s}$ is 
Morita equivalent with $\mc O (X_\nr (T))$. Hence 
\[
HH_n (\mc H (G,K)^{\mf s}) \cong \Omega^n (X_\nr (T)) \cong \mc O (X_\nr (T)) \cong \C [z,z^{-1}]
\qquad n = 0, 1, 
\]
and $HH_n (\mc H (G,K)^{\mf s}) = 0$ for other $n$.
Similarly $\mc S (G,K)^{\mf s}$ is Morita equivalent with $C^\infty (X_\unr (T))$. Hence
\[
HH_n (\mc S (G,K)^{\mf s}) \cong \Omega^n_{sm} (X_\unr (T)) 
\cong C^\infty (X_\unr (T)) \cong C^\infty (S^1) \qquad n = 0, 1 ,
\]
and 0 in other degrees $n$. 
\item{ord$(\chi_0 ) = 2$.} Now $W(T,\mf s)$ equals $W$. If we pick a $\chi_1$ of
order 2 and regard it as basepoint of 
\[
\Irr (T)^{\mf s} = \chi_1 X_\nr (T) \cong \C^\times, 
\]
then $W$ acts on $\C^\times$ by inversion. The algebra $\mc H (G,K)^{\mf s}$ is Morita equivalent with 
$\mc O (X_\nr (T)) \rtimes W$. The representation $I_B^G (\chi_1 \chi)$ is irreducible unless 
$\chi_1 \chi$ has order 2, then it is a direct sum of two inequivalent irreducible representations.
In this case we need 3 algebraic families of $G$-representations in Rep$(G)^{\mf s}$, namely
\[
\mf F (T,\chi_1) = \{ I_B^G (\chi_1 \chi) : \chi \in X_\nr (T) \} ,
\]
one irreducible summand $\pi_1$ of $I_B^G (\chi_1)$ and one irreducible summand $\pi_2$ of 
$I_B^G (\chi_2)$, where $\chi_2$ is the other order 2 extension of $\chi_0$. The families of
virtual representations differ slightly, namely
\[
\begin{array}{lll}
\nu^1_{e,\chi} & = & I_B^G (\chi_1 \chi) \hspace{2cm} \chi \in X_\nr (T) , \\
\nu^1_{w,\chi_1} & = & \pi_1 - I_B^G (\chi_1) / 2 ,\\
\nu^1_{w,\chi_2} & = & \pi_2 - I_B^G (\chi_2) / 2 .
\end{array}
\]
(This works, but for the best normalization we should make sure that $\pi_1$ and $\pi_2$
are chosen so that $W$ acts trivially on their $K$-invariant vectors.) We find
\[
\begin{array}{lll}
HH_0 (\mc H (G,K)^{\mf s}) & \cong & \mc O (X_\nr (T))^W \oplus \C \oplus \C ,\\
HH_1 (\mc H (G,K)^{\mf s}) & \cong & \Omega^1 (X_\nr (T))^W \cong \mc O (X_\nr (T))^W , \\
HH_n (\mc H (G,K)^{\mf s}) & = & 0 \qquad \text{for } n > 1.
\end{array}
\] 
The centre $\mc O (X_\nr (T))^W$ acts on one factor $\C$ via $\chi_1$ and on the other via $\chi_2$.
Further $\mc S (G,K)^{\mf s}$ is Morita equivalent with $C^\infty (X_\unr (T)) \rtimes W$,
and one obtains
\[
\begin{array}{lll}
HH_0 (\mc S (G,K)^{\mf s}) & \cong & C^\infty (X_\unr (T))^W \oplus \C \oplus \C ,\\
HH_1 (\mc S (G,K)^{\mf s}) & \cong & \Omega^1_{sm} (X_\unr (T))^W \cong C^\infty (X_\unr (T))^W , \\
HH_n (\mc S (G,K)^{\mf s}) & = & 0 \qquad \text{for } n > 1.
\end{array}
\]
\item{ord$(\chi_0 ) = 1$.}
Now $\chi_1 = 1$ and $\mc H (G,K)^{\mf s}$ is Morita equivalent to an affine Hecke algebra of
type $A_1$ with equal parameters $q_F$. The representation $I_B^G (\chi)$ with $\chi \in X_\nr (T)$
is reducible if and only if the value of $\chi$ at a uniformizer $\varpi_F$ of $G$ lies in $\{-1, 
q_F, q_F^{-1} \}$. Like in the previous case we need three algebraic families of representations:
\[
\mf F (T,\chi_1) = \{ I_B^G (\chi) : \chi \in X_\nr (T) \} ,
\]
the Steinberg representation St and one irreducible summand $\pi_-$ of $I_B^G (\chi_-)$, where
$\chi_-$ is the unique unramified character of order 2. The algebraic fa\-mi\-lies of virtual
representations are:
\[
\begin{array}{lll}
\nu^1_{e,\chi} & = & I_B^G (\chi) \hspace{2cm} \chi \in X_\nr (T) , \\
\nu^1_{w,1} & = & I_B^G (\mr{triv}_T) / 2 - \mr{St} ,\\
\nu^1_{w,\chi_-} & = & \pi_- - I_B^G (\chi_-) / 2  .
\end{array}
\]
(For the correct normalization, we should pick $\pi_-$ such that via Theorem \ref{thm:4.1}
$W$ acts trivially on $\zeta^\vee (\pi_-)$.) The maps \eqref{eq:3.8} provide
isomorphisms of $Z(\mc H (G,K)^{\mf s})$-modules
\[
\begin{array}{lll}
HH_0 (\mc H (G,K)^{\mf s}) & \cong & \mc O (X_\nr (T))^W \oplus \C \oplus \C ,\\
HH_1 (\mc H (G,K)^{\mf s}) & \cong & \Omega^1 (X_\nr (T))^W \cong \mc O (X_\nr (T))^W , \\
HH_n (\mc H (G,K)^{\mf s}) & = & 0 \qquad \text{for } n > 1.
\end{array}
\] 
Here $\mc O (X_\nr (T))^W$ acts on one factor $\C$ via $\chi_-$, and on the other via unramified 
characters with values $\{q_F,q_F^{-1}\}$ at $\varpi_F$. 

As before, these findings extend naturally to $\mc S (G,K)^{\mf s}$:
\[
\begin{array}{lll}
HH_0 (\mc S (G,K)^{\mf s}) & \cong & C^\infty (X_\unr (T))^W \oplus \C \oplus \C ,\\
HH_1 (\mc S (G,K)^{\mf s}) & \cong & \Omega^1_{sm} (X_\unr (T))^W \cong C^\infty (X_\unr (T))^W , \\
HH_n (\mc S (G,K)^{\mf s}) & = & 0 \qquad \text{for } n > 1.
\end{array}
\]
\end{itemize}

Another well-studied example is the general linear group $G = GL_n (F)$. For this $G$ most
aspects are simpler than for other reductive $p$-adic groups. 
Consider an arbitrary inertial equivalence class $\mf s = [L,\sigma]$ for $G$. By picking suitable 
representatives, we can achieve that
\[
L = \prod\nolimits_{i=1}^\ell GL_{n_i}(F)^{e_i}, \quad \sigma = \boxtimes_{i=1}^\ell 
\sigma_i^{\boxtimes e_i} ,
\]
where $\sigma_i$ and $\sigma_{i'}$ with $i \neq i'$ are not equivalent up to unramified twists.
There are natural isomorphisms
\[
X_\nr (L) \cong \prod\nolimits_{i=1}^\ell (\C^\times)^{e_i} ,\quad
W(L,\mf s) \cong \prod\nolimits_{i=1}^\ell S_{e_i} .
\]
Let $M$ be a Levi subgroup of $G$ containing $L$ and let $\delta \in \Irr (M)$ be square-integrable 
modulo centre, such that $\delta \in \Rep (M)^{\mf s}$. Then $\mf d = [M,\delta]$ can be represented
by data
\[
M = \prod\nolimits_{i=1}^\ell \prod\nolimits_{j=1}^{\ell_i} GL_{n_i d_j} (F)^{e_{i,j}}, \quad 
\delta = \boxtimes_{i=1}^\ell \boxtimes_{j=1}^{\ell_i} \mr{St}(d_j,\sigma_i)^{\boxtimes e_{i,j}},
\] 
where $\sum_{j=1}^{\ell_i} d_j e_{i,j} = e_i$ and St$(d_j,\sigma_i)$ is the generalized Steinberg
representation associated to $\sigma_i^{\boxtimes d_j}$. Moreover we may assume that
St$(d_j,\sigma_i)$ and St$(d_{j'},\sigma_i)$ do not differ by an unramified twist if $j \neq j'$.
In this case there are natural isomorphisms
\[
X_\nr (M) \cong \prod\nolimits_{i=1}^\ell \prod\nolimits_{j=1}^{\ell_i} (\C^\times )^{e_{i,j}},
\quad W(M,\mf d) \cong \prod\nolimits_{i=1}^\ell \prod\nolimits_{j=1}^{\ell_i} S_{e_{i,j}} .
\]
It is known from \cite[Th\'eor\`eme B.2.d]{DKV} that $I_P^G$ preserves irreducibility for 
tempered representations. Hence the intertwining operators by which $W(M,\mf d)$ acts on 
$C^\infty (X_\unr (M)) \otimes \End_\C (I_P^G (\delta))$ must be scalar at every point of 
$X_\unr (M)$. In particular $W(L,\mf s)$ acts on $C^\infty (X_\unr (L)) \otimes 
\End_\C \big( I_{P_0 L}^G (\sigma)^K \big)$ as a group, not via a projective representation, 
so the 2-cocycle $\natural_{\mf s}$ is trivial.

Choose $K$ so that $\mc S (G)^{\mf s}$ is Morita equivalent to $\mc S (G,K)^{\mf s}$. 
The Plancherel isomorphism (Theorem \ref{thm:3.1}) provides an isomorphism of Fr\'echet algebras
\begin{equation}\label{eq:2.10}
\mc S (G,K)^{\mf d} \cong C^\infty (X_\unr (M))^{W(M,\mf d)} \otimes 
\End_\C \big( I_P^G (\delta)^K \big) .
\end{equation}
In particular $\mc S (G,K)^{\mf d}$ is Morita equivalent to $C^\infty (X_\unr (M))^{W(M,\mf d)}$,
an algebra whose irreducible representations are naturally parametrized by $X_\unr (M) / W(M,\mf d)$.
(An isomorphism like \eqref{eq:2.10} is rather specific for $GL_n (F)$, most other reductive $p$-adic 
groups have Bernstein components in which that fails.) From the above Morita equivalences and the 
decomposition \eqref{eq:2} of $\mc S (G)^{\mf s}$ we deduce that
\begin{equation}\label{eq:2.1}
\Irr^t (G)^{\mf s} \cong \Irr (\mc S (G,K)^{\mf s}) \cong 
\bigsqcup\nolimits_{\mf d \in \Delta_G^{\mf s}} X_\unr (M) / W(M,\mf d) .
\end{equation}
This space is naturally homeomorphic with
\begin{equation}\label{eq:2.8}
\big( \bigsqcup\nolimits_{w \in W(L,\mf s)} X_\unr (L)^w \big) / W(L,\mf s) ,
\end{equation}
via \cite[Theorem 1]{BrPl2} or Theorem \ref{thm:4.1}. That makes it clear how to choose our 
smooth families of representations $\mf F (M_i,\eta_i)$, we just take the $\mf F (M,\delta)$ 
with $[M,\delta] \in \Delta_G^{\mf s}$. Theorem \ref{thm:6.4} provides a natural 
isomorphism of Fr\'echet spaces
\begin{equation}\label{eq:2.3}
HH_n (\mc S (G,K)^{\mf s}) \longrightarrow 
\Big( \bigoplus\nolimits_{w \in W(L,\mf s)} \Omega^n_{sm} (X_\unr (L)^w) \Big)^{W(L,\mf s)} .
\end{equation}
By the homeomorphism between \eqref{eq:2.1} and \eqref{eq:2.8}, this implies that the map
\begin{equation}\label{eq:2.9}
\bigoplus\nolimits_{\mf d \in \Delta_G^{\mf s}} HH_n (\mc F^t_{M,\delta}) :  
HH_n (\mc S (G,K)^{\mf s}) \to
\bigoplus\nolimits_{\mf d \in \Delta_G^{\mf s}} \Omega^n_{sm} (X_\unr (M))^{W(M,\mf d)} 
\end{equation}
is bijective. In fact \eqref{eq:2.9} is an isomorphism of Fr\'echet 
$Z(\mc S (G,K)^{\mf s})$-modules, with respect to the module structure from Lemma \ref{lem:6.8}.
Then Theorem \ref{thm:6.1} shows that \eqref{eq:2.9} restricts to an isomorphism of
$Z(\mc S (G,K)^{\mf d})$-modules
\begin{equation}\label{eq:2.2}
HH_n (\mc F^t_{M,\delta}) : HH_n (\mc S (G,K)^{\mf d}) \to 
\Omega^n_{sm} (X_\unr (M))^{W(M,\mf d)} ,
\end{equation}
which was established before in \cite[p. 676]{BrPl}. 

The space $\Irr (G)^{\mf s}$ cannot decompose like in \eqref{eq:2.1} because it is connected, 
but still it comes close. Namely, by \cite[Theorem 1]{BrPl2} the homeomorphism \eqref{eq:2.1} 
extends naturally to a continuous bijection
\begin{equation}\label{eq:2.4}
\bigsqcup\nolimits_{\mf d \in \Delta_G^{\mf s}} X_\nr (M) / W(M,\mf d) \to \Irr (G)^{\mf s}. 
\end{equation}
Comparing Theorem \ref{thm:4.1} and \cite[Theorem 1]{BrPl2}, we see that \eqref{eq:2.4} can 
be obtained as $(\zeta^\vee)^{-1}$ followed by taking Langlands quotients of standard
$G$-representations. From Theorem \ref{thm:5.9} and \eqref{eq:2.3} we deduce that
the algebraic families $\mf F (M,\delta)$ induce $\C$-linear bijections 
\begin{multline}\label{eq:2.5}
HH_n (\mc H (G,K)^{\mf s}) \longrightarrow \bigoplus\nolimits_{\mf d \in \Delta_G^{\mf s}} 
\Omega^n (X_\nr (M))^{W(M,\mf d)} \\
\cong \Big( \bigoplus\nolimits_{w \in W(L,\mf s)} \Omega^n (X_\nr (L)^w) \Big)^{W(L,\mf s)} .
\end{multline}
By \eqref{eq:2.2}, \eqref{eq:2.5} restricts to an isomorphism of $Z(\mc H (G,K)^{\mf s})$-modules
\begin{equation*} 
HH_n (\mc F_{M,\delta}) : HH_n (\mc H (G,K)^{\mf s})^{\mf d} \to 
\Omega^n (X_\nr (M))^{W(M,\mf d)} .
\end{equation*}
Like in \eqref{eq:2.7} and \eqref{eq:2.6}, the corresponding direct summand of 
$HP_n (\mc H (G,K)^{\mf s})$ is canonically isomorphic with 
\begin{align*}
HP_n \big( \mc S (G,K)^{\mf d} \big) & \cong \bigoplus\nolimits_{m \in \Z} H^{n+2m}_{dR} 
(X_\unr (M))^{W(M,\mf d)} \\
& \cong \bigoplus\nolimits_{m \in \Z} H^{n+2m}_{\mr{sing}} (X_\unr (M) / W(M,\mf d)) .
\end{align*}

\end{document}